\documentclass[11pt]{amsart}
\usepackage{amsmath,amscd,amssymb}
\usepackage[mathscr]{eucal}
\usepackage{amsmath}
\usepackage{enumerate}
\usepackage{color}
\usepackage{tikz}
\usepackage{xcolor}
\usepackage{enumitem}
\usepackage{xcolor}
\usepackage{comment}
\usepackage{bm}

\usepackage{xcolor}

\numberwithin{equation}{section}
\newtheorem{theorem}{Theorem}[section]
\newtheorem{lemma}[theorem]{Lemma}
\newtheorem{definition}[theorem]{Definition}
\newtheorem{proposition}[theorem]{Proposition}
\newtheorem{corollary}[theorem]{Corollary}
\newtheorem{remark}[theorem]{Remark}
\theoremstyle{definition}
\newtheorem{example}[theorem]{Example}
\theoremstyle{plain}

\def\C{\mathbb{C}}
\def\N{\mathbb{N}}

\def\T{\mathbb{T}}

\def\L{L}

\def\D{\mathbb{D}}

\def\dist{\mathrm{dist}}

\def\length{\operatorname{length}}

\newcommand{\HS}{\mathrm{HS}}

\renewcommand{\L}{\mathcal L}

\title[Beurling--Kato theory]{Beurling--Kato theory, Hardy--Sobolev calculus and Ritt operators}
\author{A.~Borichev}
\address{Aix-Marseille University, CNRS, I2M\\
Marseille, France}
\email{alexander.borichev@math.cnrs.fr}
\author{A.~Gomilko}
\address{Faculty of Mathematics and Computer Science\\
Nicolaus Copernicus University\\
Chopin Street 12/18\\
87-100 Toru\'n, Poland}
\email{alex@gomilko.com}
\author{Yu.~Tomilov}
\address{
Institute of Mathematics\\
Polish Academy of Sciences\\
\'Sniadeckich 8\\
00-656 Warsaw, Poland\\
and\\
Faculty of Mathematics and Computer Science\\
Nicolaus Copernicus University\\
Chopin Street 12/18\\
87-100 Toru\'n, Poland
}
\email{ytomilov@impan.pl}

\thanks{The first author was supported by the grant  ANR-24-CE40-5470. The second and the third authors  were partially supported by the NAWA/NSF grant BPN/NSF/2023/1/00001. They were also partially supported by the NCN grant Weave-Unisono, 2024/06/Y/ST1/00044 and by the NCN grant UMO-2023/49/B/ST1/01961.}
\subjclass[2020]{Primary 47A60, 47A10, 47D03; Secondary 30H05, 30H50}
\keywords{holomorphic semigroups, Ritt operators, functional calculus, Hardy spaces}

\begin{document}
\begin{abstract}
We develop a discrete Beurling--Kato theory for bo\-und\-ed operators and relate it to a Hardy--Sobolev functional calculus on Stolz domains. Our central class is that of Ritt operators. We prove that a bounded operator is Ritt if and only if it admits a bounded Hardy--Sobolev calculus on a Stolz domain. The construction is based on a logarithmic reproducing formula and uniform bounds for the associated logarithmic kernels.
We then derive Kato-type results and characterise the Ritt property by Beurling--Kato defects formulated in terms of powers of the operator. This yields a discrete theory parallel in spirit to the sectorially bounded holomorphic semigroup setting, but intrinsically global in nature.
We also discuss examples and sharpness phenomena, and prove that the
Ritt property is preserved under convex combinations of powers and
under positive domination on Banach lattices.
\end{abstract}

\maketitle

\setcounter{tocdepth}{1}
\tableofcontents

\section{Introduction}\label{sec:introduction}

The Beurling--Kato theory reveals a striking rigidity phenomenon: for 
a $C_0$-semigroup $(e^{-tA})_{t \ge 0}$ on a Banach space $X,$ its holomorphicity may be encoded in a norm gap. One does
not need to inspect the generator $-A$ directly. It is enough to control suitable
polynomial expressions in $e^{-tA}$ as $t\downarrow0$. The familiar threshold $2$
in the estimate for $I-e^{-tA}$ near zero is only the simplest manifestation of a broader
function-theoretical principle.

Recall that a classical Beurling--Kato theorem says that
\begin{equation}\label{beurling}
  \limsup_{t\downarrow0}\|I-e^{-tA}\|<2
\end{equation}
forces holomorphicity of $(e^{-tA})_{t \ge 0},$ see e.g. the original papers \cite{Beurling} and \cite{Kato}. Its polynomial form is more revealing and stems from \cite{Neuberger}. In the form
emphasized by Fackler, see e.g. \cite{Fackler}, the relevant comparison is with the disc norm of a test polynomial $w$:
if there exists a polynomial $w$ such that
\begin{equation}\label{fackler}
  \limsup_{t\downarrow0}\|w(e^{-tA})\|<\|w\|_\infty:=\sup\{|w(z)|:\ |z|=1\},
\end{equation}
then  $(e^{-tA})_{t \ge 0}$ is holomorphic.

See \cite{Beurling,Kato,Fackler,Fackler-Thesis} for the classical zero--two criterion,
Beurling's polynomial theorem, and a modern reformulation. Related function-theoretical questions
were developed by Neuberger, see e.g. \cite{Neuberger1}, who in fact was behind the very first insight on norm gaps \cite{Neuberger}. See a nice historical discussion in \cite{NeubergerIn}.
Related work in a similar spirit can be found in \cite{Esterle}, but we omit its discussion here.

 One way to refine the zero--two norm-gap condition \eqref{beurling} is to consider
its power-averaged version.
 If, for some integer
\(N\geq 1\),
\begin{equation}\label{direct}
        \limsup_{t\downarrow 0}
        \bigl\|(I-e^{-tA})^N\bigr\|^{1/N}<2,
\end{equation}
then \((e^{-tA})_{t\geq 0}\) is holomorphic.
Conversely, if \((e^{-tA})_{t\geq 0}\) is holomorphic, then there exists
\(N_0\in\mathbb N\) such that for every \(N\geq N_0\) there is
\(k_0>0\) with
\begin{equation}\label{converse}
        \limsup_{t\downarrow 0}
        \bigl\|(I-e^{-tA})^N e^{-ktA}\bigr\|^{1/N}<2,
        \qquad k\geq k_0.
\end{equation}
Here \(e^{-ktA}\) is the regularizing factor corresponding to
\(z^k\). Thus the polynomial occurring in the displayed formula is
\(z^k(1-z)^N\).  More generally, for a polynomial \(w\) satisfying
\(|w(1)|<\|w\|_\infty\), the converse to \eqref{fackler} involves test polynomials
of the form \(z^kw(z)^N\).
See e.g. \cite[Theorems 2.2 and 5.4]{Fackler} for the proofs.
Fackler attributes the converse statement to Beurling \cite{Beurling},
although the formulation and proof in the original paper are rather indirect.

The additional factor \(z^k\) in the converse was essential in the
general Banach-space formulation, and it was removed only very recently,
\cite{Borichev1}.
This variant has the advantage of being invariant under equivalent
renormings, and it is, in general, finer than the original zero--two
condition \eqref{beurling}.
As a manifestation of its utility, one may recall Pisier's breakthrough proof of equivalence of K- and B-convexities in \cite{Pisier},
where the asymptotic variant played a decisive role.

If the gap condition \eqref{direct} holds (in particular, if the
stronger condition \eqref{beurling} holds) 
with $\limsup_{t \downarrow 0}$ replaced by $\sup_{t>0},$
and the semigroup is bounded,
 then the semigroup is
moreover sectorially bounded holomorphic, and its negative generator is a
sectorial operator of angle smaller than $\pi/2$. This point is already implicit in the arguments of
Beurling and Kato as well as in  Fackler's work.
However, it has not been elaborated there.

Thus,
in continuous time, a global norm defect at the identity detects global holomorphic
structure. This phenomenon is most naturally stated not for the particular
function $1-z$, but for general polynomial or holomorphic test functions. That
is the form in which the principle admits extensions, and it is also the form
that suggests a discrete counterpart.

This paper asks what remains of this principle in discrete time. Replacing a
semigroup $(e^{-tA})_{t\ge0}$ by the powers $(T^n)_{n\ge0}$ of a single bounded
operator changes the problem in an essential way. There is no small-time
regime for a single operator, and the relevant estimates must be formulated in
terms of the whole family of powers. 
The appropriate regularity class is the class of Ritt operators.  These are
bounded operators $T$ on $X$ whose spectrum is contained in \(\overline{\mathbb D}\)
and which satisfy the resolvent bound
\[
        \sup_{|z|>1}|z-1|\|(z-T)^{-1}\|<\infty,
\]
which, in particular, forces \(\sigma(T)\cap\mathbb T\subset\{1\}\).
 This class is fundamental in operator theory, and it has numerous
connections to harmonic analysis, ergodic theory, numerical analysis, and discrete maximal regularity. 
See, for example, \cite{Badea-Seifert,Blunck1,Blunck2,Cohen-Cuny-Lin, Coulhon-SaloffCoste,Cuny,ElFallah-Ransford,Hults-ReinholdLarsson,Kalton-Portal,Nevanlinna}.
A comprehensive account of the theory of Ritt operators and its
applications can be found in \cite{LeMerdy}.

 
Ritt operators are a discrete-time counterpart of 
sectorially bounded holomorphic \(C_0\)-semigroups
and many statements in both areas have parallel versions.
In this paper the analogy is
made intrinsic by replacing sectors by Stolz domains and by developing a
Hardy--Sobolev calculus adapted to those domains.

The transition from small times to powers has substantive consequences. In continuous
time one can test the semigroup at arbitrarily small times. In discrete time the family
$(T^n)_{n\ge0}$ is the natural object, and the estimates are global in the
power parameter. This leads to phenomena with no direct small-time analogue. First, the factor
$1-z$, which corresponds to the distinguished boundary point $1$, remains part
of the test function and cannot in general be removed from a defect estimate as
a separate normalization. Second, for a test polynomial $w$ with
$|w(1)|<\|w\|_\infty$, the relevant comparison with boundary values of
$w$ takes place on arcs of $\mathbb T\setminus\{1\}$
 on which $|w|$ is separated from
its value at $1$. These arcs may be chosen after $w$ is fixed.

To capture finer properties of Ritt operators, we construct an associated bounded
Hardy--Sobolev functional calculus.
We introduce Hardy--Sobolev algebras on Stolz domains and prove that a Banach space operator 
has  a bounded Hardy--Sobolev calculus on a
Stolz domain if and only if it is Ritt, see
Theorem~\ref{thm:Ritt-iff-HS}. The proof is based on a logarithmic reproducing
formula whose kernels are chosen using cuts adapted to the Stolz geometry. This
calculus is therefore not merely a consequence of the Ritt condition. It is the
analytic framework in which both the necessity and sufficiency parts of the
discrete Beurling--Kato theory are formulated.

Given a polynomial \(w\), or more generally a function from a weighted
Wiener algebra, we use the exponential defect to measure, on the
exponential scale, how closely the family \(w(T^n)\) can approach the
scalar bound \(\|w\|_\infty\):
\[
  \gamma(w,T)
  :=
  \lim_{N\to\infty}
  \Bigl(\sup_{n\ge1}\|w(T^n)^N\|\Bigr)^{1/N}.
\]
Equivalently, \(\gamma(w,T)\) is the spectral radius-type quantity
associated with the submultiplicative sequence
\[
  a_N(w,T):=\sup_{n\ge1}\|w(T^n)^N\|,
  \qquad N\in\mathbb N.
\]


The guiding principle is the following: a strict exponential defect for a
suitable test function $w$ should force $T$ to be Ritt, and conversely Ritt
operators should satisfy such strict defects for large classes of $w$. The
defect is inherently global in the power parameter and involves the whole positive-power family
$(T^n)_{n\ge1}$ rather than a single-step quantity. In simplified form, our
main results may be summarized as follows.

\begin{itemize}[leftmargin=2.2em]
\item[(i)] A bounded operator $T$ is Ritt if and only if it admits a bounded
Hardy--Sobolev calculus on a Stolz domain.

\item[(ii)] If $T$ is power bounded and $\sigma(T)\cap\T\subset\{1\}$, then $T$ is
Ritt if and only if, for some $\zeta\in\T\setminus\{1\}$,
\[
   \sup_{n\ge1}\|(\zeta -T^n)^{-1}\|<\infty.
\]
Moreover, for Ritt operators this condition holds for every
$\zeta\in\T\setminus\{1\}$.

\item[(iii)] If $T$ is power bounded, $\sigma(T)\cap\T\subset\{1\}$, and a polynomial 
$w$ has a strict defect on an arc
$I\subset\T\setminus\{1\}$, then $T$ is Ritt. In particular, the conclusion
follows under hypotheses of the form
\[
   \sup_{n\ge1}\|w(T^n)\|<\|w\|_\infty,
\]
or under the corresponding exponential defect condition
\[
   \gamma(w,T)<\|w\|_\infty.
\]

\item[(iv)] Conversely, if $T$ is Ritt and a polynomial $w$ satisfies $|w(1)|<\|w\|_\infty$, then
\[
   \gamma(w,T)<\|w\|_\infty.
\]
Moreover, the estimates are quantitative and uniform over natural classes of
test functions.

\item[(v)] The theory yields several sharpness statements and
permanence results for convex combinations of powers and for positive
domination on Banach lattices, illustrating the utility of the
Hardy--Sobolev calculus beyond the basic characterization of Ritt
operators.
\end{itemize}
In particular, for a bounded linear operator $T$ on  $X$ with $\sigma(T)\subset \mathbb D \cup\{1\}$
the uniform bound
\[
\sup_{n \ge 1}\|I-T^n\|<2
\]
implies that $T$ is Ritt.

Thus the paper develops a discrete Beurling--Kato theory parallel  to the one
in the theory of holomorphic semigroups. However, it is necessarily global in the power parameter, and thus it  
corresponds naturally to sectorially bounded holomorphic semigroups. Most of the arguments employed in the paper 
were motivated by a number of new problems encountered on the way of constructing the discrete Beurling--Kato theory.

The paper is organized as follows. Section~\ref{sec:preliminaries} recalls the
operator-theoretic background on Ritt operators, Stolz domains, and their
relation to the continuous-time sectorially bounded holomorphic semigroup picture. This is
the point where we fix the standing conventions and collect the geometric facts
used later. Section~\ref{sec:HS-spaces} develops the theory of Hardy--Sobolev spaces on Stolz domains
and deduces the corresponding reproducing formula.  Section~\ref{hcalculus} turns the scalar theory into the bounded Hardy--Sobolev calculus for Ritt operators, and shows
 that in fact the existence of such a calculus is equivalent to the Ritt property. This
provides the intrinsic analytic framework for the rest of the paper. The
forward Beurling--Kato mechanism is developed in
Section~\ref{sec:BK-sufficiency}. We first obtain a discrete analogue of Kato's resolvent
criterion deducing the Ritt
property by a geometric hitting argument together with a propagation
theorem for resolvent bounds. A similar technique leads 
to conditions for the Ritt property in terms  of ``defects'' for norms of test functions
evaluated at the positive powers
$(T^n)_{n \ge 1}.$ These results are illustrated by examples in Section~\ref{examples-sec}, where
we show in particular the limitations of the obtained statements.
In Section~\ref{sec:HS-back-powers} we establish converse results by means of the Hardy--Sobolev calculus constructed in Section~\ref{hcalculus}.
The calculus is used to derive the necessity side of the discrete Kato criterion,
quantitative necessity statements for defects, and the resulting ``if and only
if'' formulations that connect the analytic and power-based sides of the
theory. The final sections record permanence results for convex combinations of powers of Ritt operators (Section \ref{S8x}) 
and for domination by Ritt operators on Banach lattices (Section \ref{sec:positive-domination-ritt}).

A companion continuous-time paper \cite{Borichev1} will develop the corresponding glo\-bal
functional-calculus version of the Beurling--Kato--Fackler theory for
$C_0$-semigroups. The present paper should be read as its discrete counterpart: local
small-time estimates are replaced by global power estimates, sectorial domains
by Stolz domains, and the usual calculus for sectorially bounded holomorphic semigroups by the
Hardy--Sobolev calculus constructed below.

\section{Preliminaries on Ritt operators and Stolz domains}\label{sec:preliminaries}

This section collects the operator-theoretic background used thro\-ughout the
paper. In particular, it explains why Stolz domains are the natural geometric
objects in the discrete theory, and why the discrete analogue of the continuous
Kato--Beurling picture is necessarily global in the family of powers $(T^n)_{n \ge 0}$.

\subsection{Notation and standing conventions}\label{subsec:notation}

Throughout the paper,  we denote by $X$ a complex Banach space and by $\mathcal L(X)$
the space of bounded linear operators on $X.$
We let
$\sigma(T)$ and $\rho(T)$ stand for the spectrum and the resolvent set of an operator $T\in\L(X)$, respectively. 
We write $(T^n)_{n\ge0}$ for the full family of powers of an operator
$T$, including $T^0=I$.  When only positive powers are involved, as in
the power-resolvent and defect quantities below, the index starts at
$n=1$.

We denote by $\D$ and $\T$ the open unit disc and the unit circle, and by $\C$  the complex plane. Similarly, $D(z, r)$ will stand for the disc in $\mathbb C$ with center at $z$ and radius $r.$
When a single Stolz domain is involved, its parameter is usually
denoted by \(\sigma\). For nested Stolz domains, we use
\(
        1<\sigma<\delta<\tau,
\)
with \(\sigma\) and \(\tau\) denoting the inner and outer parameters,
respectively.
When a contour for the Riesz--Dunford functional calculus is needed,
we denote it by \(\Gamma\), with indices or superscripts when necessary.
For a rectifiable curve \(\Gamma\), we denote its arc length by
\(\operatorname{length}(\Gamma)\). The distance between bounded subsets $A$ and $B$ of $\mathbb C$ is denoted by ${\rm dist}(A, B).$
Spectral inclusions are formulated with the closure of the Stolz domain \(\overline{S_\sigma}\)
whenever boundary points may occur.

We shall work with the spaces of holomorphic functions ${\rm Hol}(\Omega)$ defined on domains $\Omega \subset \mathbb C,$ and we denote by 
$H^\infty(\Omega)$ the space of functions from ${\rm Hol}(\Omega)$ bounded
on $\Omega$ with the sup-norm.

We also use the Wiener algebra and the weighted Wiener algebra on $\mathbb D:$
\[
  A^1(\D)=\Bigl\{f(z)=\sum_{n\ge0}c_nz^n:\ \sum_{n\ge0}|c_n|<\infty\Bigr\},
\]
and
\[
  A^{1,1}(\D)=\Bigl\{f(z)=\sum_{n\ge0}c_nz^n:\ \sum_{n\ge0}(n+1)|c_n|<\infty\Bigr\},
\]
with norms
\[
  \|f\|_{A^1}:=\sum_{n\ge0}|c_n|,
  \qquad \text{and} \qquad
  \|f\|_{A^{1,1}}:=\sum_{n\ge0}(n+1)|c_n|,
\]
respectively. For a continuous function $w$ on $\overline{\mathbb D}$
we define
\[
  \|w\|_\infty:= \max_{|\zeta|=1} |w(\zeta)|,
\]
and for every nonempty closed arc \(I\subset\mathbb T\), we also set
\[
  m_I(w):=\min_{\eta\in I}|w(\eta)|.
\]

When no ambiguity is possible, limits of sequences are understood along the
index displayed in the notation. For instance, $a_k\to a$ means $a_k\to a$ as
$k\to\infty$.
\subsection{Ritt operators and operator powers}\label{poweritt}

Let $X$ be a complex Banach space and $T\in\L(X)$. We recall the standard definition of the class of Ritt operators.
\begin{definition}\label{def:Ritt-prelim}
We say that $T$ is a \emph{Ritt operator} if
\[
  \sigma(T)\subset \overline\D
\]
and
\begin{equation}\label{eq:Ritt-resolvent-prelim}
  \sup_{|z|>1}|z-1|\,\|(z-T)^{-1}\|<\infty.
\end{equation}
\end{definition}

An equivalent formulation states that $T$ is Ritt if and only if $T$ is power
bounded and the next discrete holomorphicity estimate holds:
\begin{equation}\label{analdiscr}
  \sup_{n\ge 1} n\,\|T^n-T^{n+1}\|<\infty,
\end{equation}
see e.g. \cite[Theorem 2.9]{LeMerdy}.
It seems to have been first obtained by Nevanlinna \cite{Nevanlinna}, \cite{Nevanlinna1}, although 
conditions of this form and their necessity for the Ritt property go back to \cite{Komatsu}. 
The estimate \eqref{analdiscr} is quite strong. Indeed, if
\(T\in\mathcal L(X)\), \(T\neq I\), and \(\sigma(T)=\{1\}\), or if
\(1\) is a limit point of \(\sigma(T)\), then
\[
        \limsup_{n\to\infty}
        n\,\|T^n-T^{n+1}\|
        \ge \frac1e;
\]
see \cite{Kalton-MontgomerySmith-Oleszkiewicz-Tomilov} and
\cite{Nevanlinna}, respectively. Nevertheless, as shown in
\cite{Kalton-MontgomerySmith-Oleszkiewicz-Tomilov},
\eqref{analdiscr} alone does not imply the Ritt property.
In the present paper we work with the resolvent form \eqref{eq:Ritt-resolvent-prelim},
because it is related directly to the Hardy--Sobolev calculus and the Stolz
geometry developed later on.

There are many straightforward ways to construct Ritt operators.  For instance, if $T$ is a contraction on $X$ and $0<\alpha<1$, then $I-(I-T)^\alpha$ is Ritt by Dungey's discrete subordination theorem \cite[Theorem 1.1]{Dungey}.  This useful fact was crucial, in particular, in \cite{Arhancet1}, where dilations of Ritt operators were studied. More general ways to produce Ritt operators out of contractions are discussed in \cite{GomTom-Indiana}.  Since contractions are easy to construct, these results provide a large supply of Ritt operators. 

\subsection{Stolz domains and spectral localization}

For $\sigma>1$ we write
\begin{equation}\label{eq:Stolz-Ssigma}
  S_\sigma:=\Bigl\{z\in\D:\ \frac{|1-z|}{1-|z|}<\sigma\Bigr\},
  \qquad \Gamma_\sigma:=\partial S_\sigma,
\end{equation}
where $\Gamma_\sigma$ is oriented positively. The boundary $\Gamma_\sigma$ is a
rectifiable convex Jordan curve which is piecewise $C^1$, consisting of two smooth arcs
meeting at the vertex $1$.
Note that in \cite{GomTom-Indiana}, $S_\sigma$  denoted the set in \eqref{eq:Stolz-Ssigma}
with adjoined $\{1\}.$ However, in this paper, we found it convenient to remove $\{1\}$
from the definition of $S_\sigma.$

The Ritt resolvent condition forces the spectrum to be contained in the closure 
\(\overline{S_\sigma}\) of a Stolz domain $S_\sigma$
 for a suitable \(\sigma>1\) and,
conversely, can be read off from resolvent control outside 
slightly larger Stolz
domains.
\begin{proposition}\label{prop:Stolz-localization-prelim}
Let $T\in\L(X)$. The following assertions are equivalent.
\begin{enumerate}
\item [(i)] $T$ is a Ritt operator.
\item [(ii)] There exists $\sigma>1$ such that $\sigma(T)\subset \overline{S_\sigma}$ and,
for every $\delta>\sigma$,
\begin{equation}\label{eq:off-Stolz-resolvent-prelim}
\sup_{z\in\C\setminus \overline{S_\delta}}
 |1-z|\,\|(z-T)^{-1}\|<\infty.
\end{equation}
\end{enumerate}
\end{proposition}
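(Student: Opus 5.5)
The plan is to prove (ii)$\Rightarrow$(i) directly and to reduce (i)$\Rightarrow$(ii) to a first-order perturbation of the resolvent near $\mathbb T$.

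\emph{(ii)$\Rightarrow$(i).} Assume (ii) and fix any $\delta>\sigma$. Since $S_\delta\subset\mathbb D$, the set $\{|z|>1\}$ lies in $\C\setminus\overline{S_\delta}$. Moreover $\sigma(T)\subset\overline{S_\sigma}\subset\overline{\mathbb D}$. The bound \eqref{eq:off-Stolz-resolvent-prelim} restricted to $|z|>1$ is then exactly \eqref{eq:Ritt-resolvent-prelim}, so $T$ is Ritt.

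\emph{(i)$\Rightarrow$(ii).} Let $C:=\sup_{|z|>1}|z-1|\|(z-T)^{-1}\|$.

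First, the resolvent estimate extends to the closed exterior. For $|\zeta|=1$, $\zeta\ne1$, and $z$ near $\zeta$ with $|z|>1$, we have $\|(z-T)^{-1}\|\le C/|z-1|$, which stays bounded as $z\to\zeta$. A resolvent norm must blow up on approach to the spectrum, so $\zeta\in\rho(T)$. Continuity then gives $|\zeta-1|\|(\zeta-T)^{-1}\|\le C$ on $\T\setminus\{1\}$.

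Second, we perturb inward. For $|\zeta|\ge1$, $\zeta\neq1$, a Neumann series shows that $z\in\rho(T)$ whenever $|z-\zeta|<|\zeta-1|/(2C)$. In that case
\[
\|(z-T)^{-1}\|\le 2\|(\zeta-T)^{-1}\|\le 2C/|\zeta-1|.
\]
Also $|z-1|\le (1+1/(2C))|\zeta-1|$, so $|z-1|\|(z-T)^{-1}\|\le C'$ on the union $U$ of these discs.

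Third, we check the geometry. $U$ contains a set $\{z\in\D: 1-|z|<\varepsilon|1-z|\}$ for some $\varepsilon>0$ depending only on $C$. Indeed, given such $z$, take $\zeta=z/|z|$. Then $|z-\zeta|=1-|z|$ and $|\zeta-1|\ge |z-1|-(1-|z|)$, and the disc condition follows when $\varepsilon$ is small. The exceptional case is a neighbourhood of $z=0$ where $|z|$ is small. That region is harmless because there $|1-z|/(1-|z|)$ is bounded, so it belongs to $S_\sigma$ once $\sigma$ is large.

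Now choose $\sigma>1/\varepsilon$. Then $\C\setminus\overline{S_\sigma}$ is contained in $\{|z|\ge 1\}\cup U$, and hence in $\rho(T)$. This gives $\sigma(T)\subset\overline{S_\sigma}$.

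Fourth, for $\delta>\sigma$ we bound the resolvent on $\C\setminus\overline{S_\delta}$. On $|z|\ge1$ and on $U\cap\D$, the bound $|1-z|\|(z-T)^{-1}\|\le C'$ holds by the previous steps. The remaining part of $\C\setminus\overline{S_\delta}$ is empty by the choice of $\sigma$. Note also that $1\notin\C\setminus\overline{S_\delta}$.

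Since $\delta>\sigma$ plays no essential role here, the bound even holds with $\delta=\sigma$ outside the closure. In the step where the spectrum is pushed off the circle, one must also handle $|z|>1$ far from $1$. This is covered by the Ritt hypothesis directly: for large $|z|$ the resolvent is $O(1/|z|)$, so $|1-z|\|(z-T)^{-1}\|$ stays bounded.

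The main obstacle is the Stolz geometry in the third step, namely showing that the union of the perturbation discs covers a full complement of a Stolz angle uniformly near $1$. Away from $1$ this is a compactness matter. Near $1$ the disc radii scale like $|\zeta-1|$, and this linear scaling is exactly what a Stolz cone needs.
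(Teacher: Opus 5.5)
Your proof is correct. The paper itself gives no argument for this proposition: it states it as standard background and refers to \cite{Lyubich} for the Stolz localization and to \cite{GomTom-Indiana} for its relation with the sectorial picture. Your write-up is therefore a self-contained substitute for that citation, not a variant of an in-paper proof.

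The implication (ii)$\Rightarrow$(i) is the trivial inclusion $\{|z|>1\}\subset\C\setminus\overline{S_\delta}$. For (i)$\Rightarrow$(ii) your three ingredients fit together without gaps:
\begin{enumerate}
\item the extension of $|\zeta-1|\,\|(\zeta-T)^{-1}\|\le C$ to $\T\setminus\{1\}$, using the blow-up of the resolvent near the spectrum;
\item Neumann-series discs of radius $|\zeta-1|/(2C)$, on which $|z-1|\,\|(z-T)^{-1}\|\le 2C+1$;
\item the radial projection $\zeta=z/|z|$.
\end{enumerate}
The geometry can be made fully explicit. From $|z-\zeta|=1-|z|$ and $|\zeta-1|\ge|z-1|-(1-|z|)$, the disc condition holds as soon as $(2C+1)(1-|z|)<|1-z|$. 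So every $\sigma>2C+1$ works, and the off-Stolz bound then holds already on $\C\setminus\overline{S_\sigma}$ with constant $2C+1$. What your route buys over the citation is this explicit Stolz parameter in terms of the Ritt constant, although it is in general far from optimal.

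Three cosmetic points:
\begin{enumerate}
\item The ``exceptional neighbourhood of $0$'' never occurs. Indeed $1-|z|<\varepsilon|1-z|\le\varepsilon(1+|z|)$ forces $|z|>(1-\varepsilon)/(1+\varepsilon)>0$, so $z/|z|$ is always defined, and $\zeta\ne1$ holds automatically since $\varepsilon<1$.
\item Your closing remark about large $|z|$ is superfluous, because the bound on all of $\{|z|>1\}$ is part of the Ritt hypothesis.
\item Only centres $\zeta\in\T\setminus\{1\}$ are needed in your second step.
\end{enumerate}
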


This equivalence is standard background. For the Stolz-domain localization and the original discrete setting we refer to \cite{Lyubich}. For a detailed discussion of the ratio-domain formulation used here  and its relation to sectorial geometry  see \cite{GomTom-Indiana}. For later developments related to functional calculi for Ritt operators consult, for example, \cite{LeMerdy}. 

The Stolz parameter in Proposition~\ref{prop:Stolz-localization-prelim} and in the Hardy--Sobolev calculus below is not merely a spectral enclosure. It records quantitative resolvent geometry.  Lyubich's one-point-spectrum examples show this sharply.  They exhibit  operators $T \in \L(X)$ satisfying Ritt's resolvent condition and such that 
\(
   \sigma(T)=\{1\},
\)
for which the maximal sector in the corresponding extended resolvent condition, together with the order of resolvent growth in the complementary sector, can be prescribed in advance, see \cite{Lyubich-single-point}.  Thus even the spectral equality $\sigma(T)=\{1\}$ does not determine the Stolz or sectorial geometry available for the calculus. That geometry is encoded in the resolvent bounds.


For \(\sigma>1\), we say that \(T\in\L(X)\) is of
\emph{Stolz type \(\sigma\)} if
\[
   \sigma(T)\subset\overline{S_\sigma}
\]
and \eqref{eq:off-Stolz-resolvent-prelim} holds for every
\(\delta>\sigma\).

\subsection{Continuous-time prototype and the discrete global re\-gime}

The continuous-time prototype is Kato's characterization of holomorphic $C_0$-semi\-groups. See \cite{Kato} and also \cite{Beurling,Fackler}. 
In one of its classical forms, a \(C_0\)-semigroup
\((e^{-tA})_{t\ge0}\) is holomorphic if and only if there exist
\(\zeta\in\mathbb T\setminus\{1\}\) and \(\delta>0\) such that
\(\zeta\in\rho(e^{-tA})\) for every \(0<t<\delta\) and
\begin{equation}\label{resest}
  \sup_{0<t<\delta}\|(\zeta-e^{-tA})^{-1}\|<\infty.
\end{equation}

 In particular, taking $\zeta=-1$ shows that the uniform small-time estimate
\[
  \|I-e^{-tA}\|\le 2-\varepsilon,\qquad 0<t<\delta,
\]
forces holomorphicity of \((e^{-tA})_{t\ge0}\).

If $(e^{-tA})_{t\ge0}$ is bounded and the 
 resolvent estimates \eqref{resest} are imposed uniformly for all $t>0$, 
then \((e^{-tA})_{t\ge0}\) is moreover sectorially bounded holomorphic.
Hence there is \(\theta\in[0,\pi/2)\) such that for every sector
\[
  \Sigma_{\theta'}:=\{z\in\mathbb C:|\arg z|<\theta'\}
\]
with \(\theta'\in(\theta,\pi/2)\), the negative generator \(A\)
satisfies
\begin{equation}\label{generat}
  \|(z-A)^{-1}\|
  \le C_{\theta'}/|z|,
  \qquad z\in\mathbb C\setminus\overline{\Sigma_{\theta'}},
\end{equation}
for some \(C_{\theta'}>0\).
Conversely,  \eqref{generat} implies that $-A$ generates a sectorially bounded holomorphic $C_0$-semigroup on $X,$ of angle $\pi/2-\theta.$
While this consequence does not seem to have been stated explicitly in \cite{Beurling,Kato}, it follows directly from the arguments there. See also \cite{Fackler}.

In discrete time there is no meaningful analogue of ``$t\downarrow0$'' for a single operator. However, the half-axis extension of the Beurling--Kato condition suggests that the only natural objects are the powers $(T^n)_{n\ge0}$ and conditions imposed globally on that family. Accordingly, the forward and reverse statements in the present paper are formulated in terms of arc-resolvent bounds for the powers, exponential defects, and Hardy--Sobolev calculus on Stolz domains.

The corresponding one-point resolvent statement is a discrete Kato criterion: under the standing peripheral spectral condition, a power-bounded operator is Ritt precisely when one resolvent point on $\T\setminus\{1\}$ is uniformly controlled for all powers.  This criterion is proved in Corollary~\ref{DisKato}; the direct implication belongs to the power estimates of Section~\ref{sec:BK-sufficiency}, while the converse uses the Hardy--Sobolev calculus.


A bridge between the continuous and discrete pictures is provided by the shifted operator $I-T$. The precise connection is that $T$ is Ritt if and only if 
$\sigma(T)\subset\D\cup\{1\}$ and $I-T$ is sectorial of angle $\theta\in [0,\pi/2).$
 The latter condition is equivalent to the fact that the semigroup $(e^{-t(I-T)})_{t\ge0}$ is sectorially bounded holomorphic of angle $\pi/2-\theta.$
This explains why the theories of sectorially bounded holomorphic semigroups and Ritt operators run in parallel. See, for example, \cite{Blunck1,Blunck2, Haase,Dungey,GomTom-Indiana}, and in particular, \cite[Chapter 2.2]{LeMerdy}.

For the purposes of the present paper, however, we keep the discrete side in the foreground: the geometry is encoded by Stolz domains, the functional calculus is built directly on such domains, and the link back to the semigroup picture is used only as motivation and comparison.

\subsection{Remarks on existing calculi and the present point of view}
Several bounded functional calculi for Ritt operators are already available in the literature. See, for instance, \cite{Arhancet-LeMerdy,Lancien,Nikolski,Schwenninger,Vitse-JFA,Vitse-Arch}. In particular, in \cite{Vitse-JFA} and \cite{Vitse-Arch}, Vitse
constructed bounded algebra calculi
based, respectively, on a multiplier algebra associated with
Cauchy--Stieltjes integrals and on a Besov-type algebra on the unit disc.
The norm of the former calculus is defined rather indirectly and is
difficult to estimate for concrete functions. The Besov calculus provides
more tractable function-norm estimates, but it does not reflect finer
spectral information about the operator, such as the location of its
spectrum in a particular Stolz domain and the corresponding resolvent
bounds.

An earlier related Cauchy-type construction on a weighted Smirnov class
was given by Solomyak \cite[Propositions~1.1 and~1.3]{Solomyak}. Under the Ritt
resolvent estimate, the corresponding class is essentially
\((1-z)H^1(\mathbb D)\), where $H^1(\mathbb D)$ stands for the Hardy space on $\mathbb D$. This class is generally not an algebra and does
not contain the Wiener algebra \(A^1(\mathbb D)\), 
which plays a basic role here. The estimates obtained
in this way are also weaker for the present purposes. For example, for
\(f_n(z)=1-z^n\), the bound for $I-T^n$ furnished by \cite{Solomyak} is of order
\(\log(n+1)\), whereas the estimate obtained below via a Hardy-Sobolev calculus yields
\(\sup_{n \ge 0}\|I-T^n\|\le C,\) with a constant $C$ depending only on $T$.

The estimates needed here involve precisely this finer Stolz geometry and
have to be compatible with power-based Beurling--Kato defects. We
therefore construct a Hardy--Sobolev calculus directly on Stolz domains
and use it to connect the analytic theory with the power-based conditions.

A continuous analogue of this theory was developed in \cite{BGT-Jussieu} and \cite{BGT-LMS},
with the latter paper revealing substantial applications to rational approximation theory.
While the approach in \cite{BGT-Jussieu} was also based on the reproducing ``arccot'' formula,
the corresponding details are much less demanding because of the simpler geometry 
and the more accessible half-plane Hardy–Sobolev algebras.

\section{Hardy--Sobolev spaces on Stolz domains}\label{sec:HS-spaces}

\subsection{Basic definitions}

Throughout this subsection we fix a Stolz domain \(S_\alpha\),
\(\alpha>1\), using the notation from \eqref{eq:Stolz-Ssigma}.  We write
\(E^1(S_\alpha)\) for the Hardy--Smirnov class on the 
domain \(S_\alpha\).  Equivalently, if \(\phi:\D\to S_\alpha\)
 is a
conformal map, then
\begin{equation}\label{defe}
        g\in E^1(S_\alpha)
        \quad\Longleftrightarrow\quad
        (g\circ\phi)\phi'\in H^1(\D).
\end{equation}
It is well-known that every $g \in E^1(S_\alpha)$ admits
the non-tangential boundary value on $\Gamma_\alpha,$ defined a.e.\ with respect to arc-length
measure on $\Gamma_\alpha.$ It will be denoted by the same symbol $g$ in the sequel.
Moreover, \eqref{defe} implies
\begin{equation}\label{ezero}
\int_{\Gamma_\alpha} g(\zeta)\, d\zeta=0, 
\end{equation}
and $g$ can be recovered from the boundary value by the standard Cauchy formula
\begin{equation}\label{cauchy}
g(z)=\frac{1}{2\pi i} \int_{\Gamma_\alpha}\frac{g(\zeta)}{\zeta-z}\, d\zeta, \qquad z \in S_\alpha.
\end{equation}
From \eqref{defe} it also follows that $E^1(S_\alpha)$ is a Banach space with the
norm given by
\[
        \|g\|_{E^1(S_\alpha)}
        :=
        \int_{\Gamma_\alpha}|g(\zeta)|\,|d\zeta|.
\]

We shall also use the following standard characterization of \(E^1\).  A
holomorphic function \(g\) belongs to \(E^1(S_\alpha)\) if and only if there
exists a sequence of rectifiable Jordan curves \((C_n)_{n \ge 1}\subset S_\alpha\), tending
to the boundary in the sense that \((C_n)_{n \ge 1}\) eventually surrounds every compact
subdomain of \(S_\alpha\), such that
\[
        \sup_{n\ge 1}\int_{C_n}|g(\zeta)|\,|d\zeta|<\infty.
\]
See, for example, \cite[Chapter~10]{Duren}.
Moreover, in this characterization one has
\begin{equation}\label{bounde}
        \int_{\Gamma_\alpha}|g(\zeta)|\,|d\zeta|
        \le
        \sup_{n\ge1}\int_{C_n}|g(\zeta)|\,|d\zeta|.
\end{equation}
Arguing as in \cite[Chapter 10]{Duren}, after passing to conformal
level curves, this estimate follows from Fatou's lemma.

The next inequality between integrals over nested convex curves
will be of fundamental importance.
\begin{lemma}\label{lem:Granados-convex}
There is a universal constant \(C_{\rm B}>0\) with the following
property.  Let \(\Gamma\) be a rectifiable Jordan curve bounding a convex
Jordan domain \(G\).  If \(g\) is holomorphic in \(G\) and has integrable
non-tangential boundary values on \(\Gamma\), and if \(\Lambda\) is a rectifiable convex
curve compactly contained in \(G\), then
\begin{equation}\label{eq:Granados-convex}
        \int_\Lambda |g(\zeta)|\,|d\zeta|
        \le
        C_{\rm B}
        \int_\Gamma |g(\zeta)|\,|d\zeta| .
\end{equation}
One may take, for instance, \(C_{\rm B}=4\).
\end{lemma}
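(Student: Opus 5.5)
The plan is to pass from $|g|$ on $\Lambda$ to $|g|$ on $\Gamma$ via harmonic measure, and then prove a universal bound on the integrated Poisson kernel of a convex domain.

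\emph{Representation of $|g|$.} First I reduce to the case where $g\in E^1(G)$. Since $G$ is convex, it is a Smirnov (indeed Lavrentiev) domain, and integrability of the non-tangential boundary values gives this membership; one can also see it by exhausting $G$ by the convex curves $\Gamma_r$ obtained from $\Gamma$ by homothety towards an interior point and using \eqref{bounde}. For $g\in E^1(G)$, $\log|g|$ has a harmonic majorant given by the Poisson integral of its boundary values. By Jensen's inequality this yields
\[
|g(z)|\le \int_\Gamma |g(\zeta)|\,\omega_z(\zeta)\,|d\zeta|,\qquad z\in G,
\]
where $\omega_z(\zeta)$ is the density of harmonic measure $\omega_z^G$ with respect to arc length. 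On a convex, hence Lipschitz, boundary this density exists and equals $\frac1{2\pi}\partial_{n_\zeta}G_G(z,\zeta)$. Integrating over $\Lambda$ and applying Tonelli, \eqref{eq:Granados-convex} reduces to the purely geometric bound
\[
K(\zeta):=\int_\Lambda \omega_z(\zeta)\,|dz|\le C_{\rm B}\qquad\text{for a.e. }\zeta\in\Gamma,
\]
uniformly over all convex $G$ and all convex $\Lambda\Subset G$.

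\emph{Localisation at $\zeta$.} Fix $\zeta\in\Gamma$ and a supporting line $L$ at $\zeta$, and let $H\supset G$ be the corresponding half-plane. By monotonicity of Green functions, $G_G\le G_H$, and both vanish at $\zeta$, so their normal derivatives compare. Hence $\omega^G_z(\zeta)\le \omega^H_z(\zeta)=\frac{d(z)}{\pi|z-\zeta|^2}$, where $d(z)=\dist(z,L)$. After normalising $\zeta=0$ and $H=\{\operatorname{Im} z>0\}$, the superlevel sets $\{y/|z|^2>t\}$ are the discs $D_t$ of radius $1/(2t)$ tangent to $L$ at $0$. The layer-cake formula then gives
\[
K(0)\le\frac1\pi\int_0^\infty \length(\Lambda\cap D_t)\,dt .
\]
A convex curve inside a convex set has length at most that set's perimeter, so $\length(\Lambda\cap D_t)\le \pi/t$, which settles the range of large $t$.

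\emph{Main obstacle.} The half-plane comparison alone loses a logarithm for small $t$, that is, for large discs. A long $\Lambda$ running parallel to $L$ but far from $\zeta$ shows that the bound must use more of $G$ than the single supporting line at $\zeta$. To close the estimate I would use the convexity of $\Lambda$ globally instead. The plan is to replace $\length(\Lambda\cap D_t)$ by the length of the part of $\Lambda$ that is ``visible'' from $\zeta$ inside $D_t$. Each point of $\Lambda$ contributes $d(z)/|z-\zeta|^2\,|dz|$, which is comparable to the angular measure $d\theta$ seen from $\zeta$ times a factor $\sin\angle(dz,\,z-\zeta)\le1$. Because $\Lambda$ is a closed convex curve, it is traversed in angle at most twice as seen from any exterior point. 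This gives $\int_\Lambda \frac{d(z)}{|z-\zeta|^2}|dz|\le 2\int_0^\pi \sin\theta\,d\theta$-type bounds, which are universal and would produce a constant like $4$ after the factor $1/\pi$ is absorbed.

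The delicate point is to justify this angular change of variables: one has to bound $d(z)|dz|/|z-\zeta|^2$ by $|d\arg(z-\zeta)|$ plus a term controlled by the tangential motion. I expect this to be the hardest step, and I would try first to handle it by splitting $\Lambda$ into its two arcs between the tangent rays drawn from $\zeta$, on each of which $\arg(z-\zeta)$ is monotone.
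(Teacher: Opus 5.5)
\textbf{Gap in the first reduction.} The paper does not prove this lemma; it quotes it from Beurling ($C_{\rm B}<3.7$) and Granados ($C_{\rm B}=4$). Your proposal has a genuine gap, and it is in your first reduction, not in the step you flag as the main obstacle.

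The bound $|g(z)|\le\int_\Gamma|g|\,\omega_z\,|d\zeta|$ uses only that $|g|$ is subharmonic. The boundary modulus of an outer function in $E^1(G)$ can be essentially any positive integrable function. So after this step, the inequality you need for all $g$ is \emph{equivalent} to $\operatorname{ess\,sup}_{\Gamma}K\le C_{\rm B}$, and that is false.

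Take $G=\D$ and $\Lambda_\varepsilon=[-1+\varepsilon,1-\varepsilon]$, or thin convex curves around it. From $\omega_x(e^{i\phi})=\frac{1}{2\pi}\frac{1-x^2}{|e^{i\phi}-x|^2}$ one gets $K(e^{i\phi})\ge c\log(1/\varepsilon)$ for $|\phi|<\varepsilon$. Equivalently, the positive harmonic functions $P[|J_\varepsilon|^{-1}\chi_{J_\varepsilon}]$, with $J_\varepsilon=\{e^{i\phi}:|\phi|<\varepsilon\}$, have boundary integral $1$ but integral at least $c\log(1/\varepsilon)$ along $\Lambda_\varepsilon$.

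This is exactly the Fej\'er--Riesz situation: $\int_{-1}^1|g(x)|\,dx\le\frac12\int_\T|g|\,|d\zeta|$ holds for $g\in H^1$ but fails for positive harmonic functions. The blow-up occurs already with the exact Poisson kernel of $G$. Hence no refinement of the geometric part (supporting half-planes, layer cake, angular counting) can close the argument. Analyticity must be used beyond subharmonicity of $|g|$.

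One way to do this uses $|g|^{1/2}\le P_G[|g|^{1/2}]$ for $g\in E^1(G)$. Then $|g|\le (P_Gv)^2$ with $v=|g|^{1/2}$ and $\int_\Gamma v^2|d\zeta|=\int_\Gamma|g|\,|d\zeta|$; by Cauchy--Schwarz this is sharper than $P_G[|g|]$. What remains is an $L^2$ estimate of Carleson type, $\int_\Lambda(P_Gv)^2|dz|\le C\int_\Gamma v^2|d\zeta|$, which the example above does not contradict. Alternatively, one can use a Cauchy-theorem argument as in the classical proof of Fej\'er--Riesz. Either way, this is where the convexity of $G$ and $\Lambda$ has to enter.

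\textbf{Two further problems.} First, your angular heuristic is inverted. In your normalisation ($\zeta=0$, $H$ the upper half-plane), writing $z=re^{i\theta}$ gives
$\frac{\operatorname{Im} z}{|z|^2}|dz|=\frac{\sin\theta}{|\sin\angle(dz,z)|}\,|d\theta|$. So radially oriented pieces of $\Lambda$ contribute even though $\theta$ does not change. Thin convex curves around $[ia,ib]$ give $\int_\Lambda\frac{\operatorname{Im} z}{|z|^2}|dz|\approx 2\log\frac ba$. The universal bound you hope to extract from convexity of $\Lambda$ therefore fails even for the half-plane kernel. By contrast, the long curve parallel to $L$ that you cite contributes at most $2\pi$.

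Second, integrable non-tangential boundary values do not imply $g\in E^1(G)$. For example, $g(z)=\exp\frac{1+z}{1-z}$ has unimodular boundary values on $\T\setminus\{1\}$, is not in $H^1(\D)$, and violates \eqref{eq:Granados-convex} on the circles $|z-(1-2\delta)|=\delta$ for small $\delta$. The hypothesis must be read as $g\in E^1(G)$, which is how the paper applies the lemma. Your exhaustion argument via \eqref{bounde} would need uniform bounds on $\int_{\Gamma_r}|g|$, which is essentially the inequality being proved.
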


The estimate is due to Beurling; in fact Beurling proved the stronger bound
\(C_{\rm B}<3.7\), see \cite[p.~456]{Beurling-CW1}.  A comparatively simple derivation
of \eqref{eq:Granados-convex} with $C_{\rm B}=4$ can be found in
\cite[p. 468]{Granados}.
Since a line segment is convex, \eqref{eq:Granados-convex} applies to 
line segments.

The next lemma relates  the usual
Hardy--Smirnov definition of the spaces $E^1(S_\alpha)$ given above to the definition of $E^1(S_\alpha)$ using inner boundaries.

\begin{lemma}\label{lem:touching-E1}
Let \(\alpha>1\), and let \(g\in \operatorname{Hol}(S_\alpha)\). Then
\[
        g\in E^1(S_\alpha)
        \quad\Longleftrightarrow\quad
        \sup_{1<\beta<\alpha}
        \int_{\Gamma_\beta}|g(\zeta)|\,|d\zeta|<\infty .
\]
Moreover, whenever these conditions hold,
\begin{equation}\label{eq:touching-E1-two-sided}
        \sup_{1<\beta<\alpha}
        \int_{\Gamma_\beta}|g(\zeta)|\,|d\zeta|
        \le
        C_{\rm B}
        \int_{\Gamma_\alpha}|g(\zeta)|\,|d\zeta|,
\end{equation}
and
\begin{equation}\label{eq:touching-E1-reverse}
        \int_{\Gamma_\alpha}|g(\zeta)|\,|d\zeta|
        \le
        C_{\rm B}
        \sup_{1<\beta<\alpha}
        \int_{\Gamma_\beta}|g(\zeta)|\,|d\zeta|,
\end{equation}
where \(C_{\rm B}\) is the constant from Lemma~\ref{lem:Granados-convex}.
\end{lemma}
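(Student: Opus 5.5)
The plan is to derive both directions and both inequalities from Lemma~\ref{lem:Granados-convex} and the curve characterization \eqref{bounde}, using that the Stolz domains are nested convex domains: $S_\beta\subset S_\alpha$ for $\beta<\alpha$, and they exhaust $S_\alpha$ as $\beta\uparrow\alpha$. Nestedness is immediate from the definition \eqref{eq:Stolz-Ssigma}. Exhaustion holds because every compact $K\subset S_\alpha$ satisfies $\max_K |1-z|/(1-|z|)<\alpha$, so $K\subset S_\beta$ for all $\beta$ close to $\alpha$.

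For ``$\Leftarrow$'' and \eqref{eq:touching-E1-reverse}, assume $M:=\sup_{1<\beta<\alpha}\int_{\Gamma_\beta}|g|\,|d\zeta|<\infty$. Take any sequence $\beta_n\uparrow\alpha$ and set $C_n=\Gamma_{\beta_n}$. These are rectifiable Jordan curves in $S_\alpha$ that eventually surround every compact subset, so the characterization quoted from \cite[Chapter~10]{Duren} gives $g\in E^1(S_\alpha)$, and \eqref{bounde} gives $\int_{\Gamma_\alpha}|g|\le M$. Since $C_{\rm B}\ge 1$ (as one sees by taking $\Lambda$ close to $\Gamma$), this is at least as strong as \eqref{eq:touching-E1-reverse}.

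For ``$\Rightarrow$'' and \eqref{eq:touching-E1-two-sided}, assume $g\in E^1(S_\alpha)$, so $g$ has integrable non-tangential boundary values on $\Gamma_\alpha$. Apply Lemma~\ref{lem:Granados-convex} with $G=S_\alpha$ and $\Gamma=\Gamma_\alpha$. The main obstacle is that $\Lambda=\Gamma_\beta$ is not compactly contained in $S_\alpha$: it passes through the common vertex $1\in\Gamma_\alpha$. I would handle this by truncation. For small $\varepsilon>0$, let $\Lambda_\varepsilon$ be the convex arc $\Gamma_\beta\setminus D(1,\varepsilon)$. Near $1$ the curve $\Gamma_\beta$ lies in a sector of half-opening $\arccos(1/\beta)$, which is strictly smaller than the half-opening for $\alpha$. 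Hence $\Lambda_\varepsilon$ is a compact convex curve inside $S_\alpha$. The lemma is stated for convex curves compactly contained in $G$, not necessarily closed, as the remark on segments indicates, so it gives $\int_{\Lambda_\varepsilon}|g|\,|d\zeta|\le C_{\rm B}\int_{\Gamma_\alpha}|g|\,|d\zeta|$. Letting $\varepsilon\downarrow0$, monotone convergence yields the bound for $\Gamma_\beta$, uniformly in $\beta$, which is \eqref{eq:touching-E1-two-sided}. Here $g$ on $\Gamma_\beta\setminus\{1\}$ means the values of the holomorphic function itself, since those points lie in $S_\alpha$.

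If one prefers to avoid open arcs, one can instead close $\Lambda_\varepsilon$ with the chord joining its two endpoints. The result is a closed convex curve compactly in $S_\alpha$, and the extra chord term only adds a nonnegative contribution on the left-hand side, so it can be discarded.
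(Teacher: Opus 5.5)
Your forward direction is the paper's argument: truncate $\Gamma_\beta$ near the vertex, apply Lemma~\ref{lem:Granados-convex} in $S_\alpha$, and let the truncation shrink. Your chord-closed variant is exactly what the paper does.

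The converse has a genuine gap. The curves $\Gamma_{\beta_n}$ are \emph{not} contained in $S_\alpha$: each passes through the vertex $1\in\Gamma_\alpha$, and $1\notin S_\alpha$, which is the very point you flagged in the forward direction. The exhaustion characterization and \eqref{bounde} require curves lying in the domain, and the contact point is not a harmless null set. Your argument uses the hypothesis only along a sequence $\beta_n\uparrow\alpha$, and that does not imply $g\in E^1(S_\alpha)$.

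Here is a counterexample. Put $k=2\pi/\psi_\alpha$ and $g(z)=\exp\bigl(-(1-z)^{-k}\bigr)$ (principal branch), so $|g(z)|=\exp(-r^{-k}\cos k\theta)$ for $1-z=re^{i\theta}$. Take $\beta_*\le\beta\le\alpha$ with $\beta_*$ close to $\alpha$. Then $\Gamma_\beta$ approaches $1$ at angles $\theta\to\pm\psi_\beta$ with $k\psi_\beta$ close to $2\pi$. Hence $|g|\le1$ on $\Gamma_\beta$ near $1$, and $\sup_{\beta_*\le\beta\le\alpha}\int_{\Gamma_\beta}|g|\,|d\zeta|<\infty$. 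But on the ray $\theta=\psi_\alpha/2$ one has $|g|=\exp(r^{-k})$. So $g$ is unbounded in $S_\alpha$ although its boundary values on $\Gamma_\alpha$ are bounded, and since $S_\alpha$ is a Smirnov domain, $g\notin E^1(S_\alpha)$.

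What is missing has two parts. First, truncate $\Gamma_{\beta_n}$ near $1$, e.g.\ by the chord $[a_\eta^-,a_\eta^+]$ as in the paper, so the curves lie compactly in $S_\alpha$. Second, bound the chord integrals uniformly; by the example, this bound must use the hypothesis for the whole range $1<\beta<\alpha$.

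The paper bounds the chords by Lemma~\ref{lem:Granados-convex} in $S_{\delta_n}$. Note that this tacitly requires $g\in E^1(S_{\delta_n})$, which is the same vertex problem. The example above is holomorphic in $S_{\delta_n}$ with bounded boundary values on $\Gamma_{\delta_n}$, yet its chord integrals blow up as $\eta\to0$. So that step does not close your gap for free.

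A self-contained route is averaging:
\begin{itemize}
\item For $\eta\in[\eta_0,2\eta_0]$ the chords are vertical segments whose abscissa moves with speed at least $1/\alpha$. They sweep a region of $\overline{S_{\beta_n}}\cap\overline{D(1,2\eta_0)}$ foliated by arcs of $\Gamma_\gamma$, $1<\gamma<\beta_n$.
\item In polar coordinates $1-z=re^{i\theta}$ one has $dA=r\,dr\,d\theta$, and $dr\le|d\zeta|$ along $\Gamma_\gamma$.
\item From $\cos\theta=1/\gamma+\tfrac r2(1-1/\gamma^2)$ one gets $|\partial\theta/\partial\gamma|\le\sqrt{2\alpha}\,(\gamma-1)^{-1/2}$ for $r\le 1/2$.
\item Fubini then gives $\int_{\eta_0}^{2\eta_0}\int_{[a_\eta^-,a_\eta^+]}|g|\,|dz|\,d\eta\le C_\alpha\eta_0 M$ with $C_\alpha=4\alpha\sqrt{2\alpha(\alpha-1)}$. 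So some chord carries $\int|g|\,|dz|\le C_\alpha M$, uniformly in $n$.
\end{itemize}
The exhaustion characterization then gives $g\in E^1(S_\alpha)$. After that, your constant-one inequality is fine, e.g.\ by Fatou's lemma along the radial parametrization $\zeta=r_{\beta_n}(\varphi)e^{i\varphi}$ of $\Gamma_{\beta_n}$, since radial approach to $\Gamma_\alpha$ is nontangential.
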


\begin{proof}
Assume first that \(g\in E^1(S_\alpha)\).  Fix \(1<\beta<\alpha\).  For
small enough \(\eta>0\), let \(a_\eta^\pm\) be the two points of \(\Gamma_\beta\)
satisfying \(|a_\eta^\pm-1|=\eta\), one on each side of the vertex.  Let
\(\Gamma_{\beta,\eta}\) be the curve obtained from \(\Gamma_\beta\) by
removing
\[
        \Gamma_\beta\cap\{|\zeta-1|<\eta\}
\]
and replacing the removed part by the chord \([a_\eta^-,a_\eta^+]\).  This
curve bounds a convex domain and is compactly contained in \(S_\alpha\).  By
Lem\-ma~\ref{lem:Granados-convex}, applied with outer boundary
\(\Gamma_\alpha\),
\[
        \int_{\Gamma_{\beta,\eta}}|g(\zeta)|\,|d\zeta|
        \le
        C_{\rm B}
        \int_{\Gamma_\alpha}|g(\zeta)|\,|d\zeta| .
\]
Since
\[
        \Gamma_{\beta,\eta}\setminus [a_\eta^-,a_\eta^+]
        =
        \Gamma_\beta\cap\{|\zeta-1|\ge\eta\},
\]
we obtain
\[
        \int_{\Gamma_\beta\cap\{|\zeta-1|\ge\eta\}}
        |g(\zeta)|\,|d\zeta|
        \le
        C_{\rm B}
        \int_{\Gamma_\alpha}|g(\zeta)|\,|d\zeta| .
\]
Letting \(\eta\downarrow0\) and using monotone convergence on the fixed curve
\(\Gamma_\beta\), we get
\[
        \int_{\Gamma_\beta}|g(\zeta)|\,|d\zeta|
        \le
        C_{\rm B}
        \int_{\Gamma_\alpha}|g(\zeta)|\,|d\zeta| .
\]
Taking the supremum over \(\beta\) proves 
\eqref{eq:touching-E1-two-sided}.

Conversely, assume that
\[
        M:=\sup_{1<\beta<\alpha}
        \int_{\Gamma_\beta}|g(\zeta)|\,|d\zeta|<\infty .
\]
  
Choose a sequence
\(
   (\beta_n)_{n\ge1}\subset(1,\alpha)
\)
such that \(\beta_n\uparrow\alpha\), and put
\(\delta_n:=(\beta_n+\alpha)/2, n \in \mathbb N\). Choose also a sequence
\(
   (\eta_n)_{n\ge1}\subset(0,\infty)
\)
such that \(\eta_n\downarrow0\), and define
\((\Gamma_{\beta_n,\eta_n})_{n \ge 1}\) as above.
We choose \(\eta_n\) so that the curves
\((\Gamma_{\beta_n,\eta_n})_{n \ge 1}\) tend to \(\Gamma_\alpha\) in the sense that they
eventually surround every compact subdomain of \(S_\alpha\).  This is possible
because increasing \(\beta_n\) enlarges the Stolz domain, while decreasing
\(\eta_n\) removes a smaller neighbourhood of the common vertex.

Applying Lemma~\ref{lem:Granados-convex} to the convex curve $\Gamma_{\beta_n,\eta_n}$ with outer boundary
\(\Gamma_{\delta_n}\), gives
\[
        \sup_{n\ge 1} \int_{\Gamma_{\beta_n,\eta_n}}
        |g(\zeta)|\,|d\zeta|
        \le C_{\rm B}M<\infty .
\]
The exhaustion characterization of \(E^1(S_\alpha)\) gives
\(g\in E^1(S_\alpha)\). 

In view of \eqref{bounde},
\[
        \int_{\Gamma_\alpha}|g(\zeta)|\,|d\zeta|
        \le
        \sup_{n\ge 1} \int_{\Gamma_{\beta_n,\eta_n}}
        |g(\zeta)|\,|d\zeta|,
\]
which proves  \eqref{eq:touching-E1-reverse}.
The proof is complete.
\end{proof}



\begin{definition}\label{def:HS0}
Define
\[
\HS(S_\alpha):= \{f\in \operatorname{Hol}(S_\alpha): f'\in E^1(S_\alpha)\}.
\]
For \(f\in {\rm Hol} (S_\alpha)\)
 put
\[
        [f]_{\HS_0(S_\alpha)}
        :=
        \sup_{1<\beta<\alpha}
        \int_{\Gamma_\beta}|f'(\zeta)|\,|d\zeta|,
\]
with the convention that the value may be infinite, and define
\[
        \HS_0(S_\alpha)
        :=
 \{f\in \operatorname{Hol}(S_\alpha):[f]_{\HS_0(S_\alpha)}<\infty\}.
 \]
\end{definition}

Only the spaces \(\HS(S_\alpha)\) will be used in the sequel.  However,
it is useful to keep \(\HS_0(S_\alpha)\) in mind as a more explicit,
and in a sense natural,
alternative to \(\HS(S_\alpha)\), though less convenient in applications.

Applying Lemma~\ref{lem:touching-E1} to \(g=f'\), the next statement is immediate.
\begin{proposition}\label{prop:touching-E1-main}
Let \(\alpha>1\).  Then
\[
        \HS_0(S_\alpha)=\HS(S_\alpha)
\]
as sets.  Moreover, for every \(f\in\HS_0(S_\alpha)\),
\begin{equation}\label{eq:HS-E1-equivalence}
        \int_{\Gamma_\alpha}|f'(\zeta)|\,|d\zeta|
        \le
        C_{\rm B} [f]_{\HS_0(S_\alpha)},
        \qquad
        [f]_{\HS_0(S_\alpha)}
        \le
        C_{\rm B}
        \int_{\Gamma_\alpha}|f'(\zeta)|\,|d\zeta|.
\end{equation}

\end{proposition}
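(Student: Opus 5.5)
The plan is to apply Lemma~\ref{lem:touching-E1} with $g=f'$ and then unwind the definitions. Fix $f\in\operatorname{Hol}(S_\alpha)$. Since $f$ is holomorphic in $S_\alpha$, the derivative $f'$ is also holomorphic there, so $g:=f'$ satisfies the hypothesis $g\in\operatorname{Hol}(S_\alpha)$ of Lemma~\ref{lem:touching-E1}.

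The set equality then follows by reading both sides through the lemma. By Definition~\ref{def:HS0}, $f\in\HS(S_\alpha)$ means exactly that $f'\in E^1(S_\alpha)$. Also by definition, $f\in\HS_0(S_\alpha)$ means exactly that
\[
\sup_{1<\beta<\alpha}\int_{\Gamma_\beta}|f'(\zeta)|\,|d\zeta|<\infty .
\]
The equivalence in Lemma~\ref{lem:touching-E1}, applied to $g=f'$, identifies these two conditions, so $\HS_0(S_\alpha)=\HS(S_\alpha)$ as sets.

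For the norm comparisons, take $f$ in this common set. The two conditions of the lemma then hold for $g=f'$. Inequality \eqref{eq:touching-E1-reverse} with $g=f'$ is precisely the first inequality in \eqref{eq:HS-E1-equivalence}, because its right-hand side is $C_{\rm B}[f]_{\HS_0(S_\alpha)}$. Inequality \eqref{eq:touching-E1-two-sided} with $g=f'$ is precisely the second inequality. Here $\int_{\Gamma_\alpha}|f'|\,|d\zeta|$ denotes the integral of the nontangential boundary values of $f'\in E^1(S_\alpha)$, as in the lemma.

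There is no real obstacle. The only point to check is that the hypotheses of Lemma~\ref{lem:touching-E1} involve nothing beyond holomorphy of $g$ in $S_\alpha$, and this is automatic for $g=f'$. Note also that neither the space $\HS(S_\alpha)$ nor the seminorm $[f]_{\HS_0(S_\alpha)}$ sees additive constants, so no normalization of $f$ is needed.
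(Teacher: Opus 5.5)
Your proposal is correct and is exactly the paper's argument: the paper also obtains the proposition immediately by applying Lemma~\ref{lem:touching-E1} to $g=f'$. As you say, \eqref{eq:touching-E1-reverse} and \eqref{eq:touching-E1-two-sided} then give the first and second inequalities in \eqref{eq:HS-E1-equivalence}, respectively.
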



\begin{corollary}\label{cor:HS-absolute-continuity}
Let $f\in\HS(S_\alpha)$.  Then $f$ extends continuously to
$\overline{S_\alpha}$, and its restriction to every compact subarc of
$\Gamma_\alpha\setminus\{1\}$ is absolutely continuous.  Moreover,
\[
    \operatorname{Var}_{\Gamma_\alpha}(f)
    \le
    \int_{\Gamma_\alpha}|f'(\zeta)|\,|d\zeta|.
\]
\end{corollary}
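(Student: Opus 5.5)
The plan is to reduce everything to the one-variable fact that a function with integrable derivative along a rectifiable curve is absolutely continuous. The reduction uses Proposition~\ref{prop:touching-E1-main} and Lemma~\ref{lem:Granados-convex} to control integrals of \(|f'|\) along curves inside \(S_\alpha\) uniformly.

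\textbf{Step 1 (uniform Cauchy property inside).} Fix \(z_1,z_2\in S_\alpha\). The segment \([z_1,z_2]\subset S_\alpha\) by convexity. Integrating \(f'\) along it gives
\[
|f(z_1)-f(z_2)|\le\int_{[z_1,z_2]}|f'(\zeta)|\,|d\zeta|.
\]
I want this to be small when \(z_1,z_2\) are close, uniformly up to the boundary. The tool is Lemma~\ref{lem:Granados-convex}, applied to the convex domain \(G=S_\alpha\) with \(g=f'\in E^1(S_\alpha)\). I would, however, apply it to a truncated convex domain. Let \(G'\) be \(S_\alpha\) cut by a line near \(z_1\) (or a small disc intersected with \(S_\alpha\)). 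Its boundary consists of an arc \(\Gamma'\) of \(\Gamma_\alpha\) of small length together with a chord. Segments inside \(G'\) then have \(|f'|\)-integral bounded by \(C_{\rm B}\) times the \(|f'|\)-integral over \(\partial G'\).

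The difficulty is the chord part. To handle it, I would choose the cutting line among a one-parameter family and use Fubini: the average of the \(|f'|\)-integral over the chords is controlled by the area integral of \(|f'|\) near the boundary point. That area integral is small because \(\int_{\Gamma_\beta}|f'|\) is uniformly bounded and these level curves sweep out the region. Combined with absolute continuity of the integral of \(|f'|\) on \(\Gamma_\alpha\), this shows that \(\int_{[z_1,z_2]}|f'|\to0\) as \(z_1,z_2\to\zeta_0\in\Gamma_\alpha\), including at the vertex \(\zeta_0=1\). Hence \(f\) is uniformly continuous on \(S_\alpha\) and extends continuously to \(\overline{S_\alpha}\).

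This localisation, needed to make Beurling's inequality yield smallness rather than mere boundedness, is the step I expect to be the main obstacle, especially at the corner \(1\). A fallback I would try first is to apply the smallness argument to the inner curves \(\Gamma_\beta\) from Lemma~\ref{lem:touching-E1}. Since \(\Gamma_\beta\to\Gamma_\alpha\), \(f'\) restricted to the pieces of \(\Gamma_\beta\) within \(\varepsilon\) of \(\zeta_0\) should have uniformly small integral. This follows from uniform integrability obtained by transplanting to \(H^1(\D)\) via \eqref{defe}, where \(F=(f'\circ\phi)\phi'\) has radial functions \(F_r\) converging to \(F\) in \(L^1(\T)\). That \(L^1\) convergence gives uniform integrability of \(|F_r|\) on small arcs, and conformal level curves replace the \(\Gamma_\beta\). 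This route is cleaner, so I would adopt it: work with the level curves \(C_r=\phi(\{|w|=r\})\), on which the \(|f'|\)-integrals are uniformly integrable with respect to arcs.

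\textbf{Step 2 (absolute continuity on compact subarcs).} Let \(J\subset\Gamma_\alpha\setminus\{1\}\) be a compact subarc. Pull it back to an arc \(\phi^{-1}(J)\subset\T\). For \(a,b\in J\) we have
\[
f(\phi(rb'))-f(\phi(ra'))=\int F_r,
\]
the integral taken over the corresponding arc. Letting \(r\to1\) and using Step 1 together with \(F_r\to F\) in \(L^1\), we obtain
\[
f(b)-f(a)=\int_a^b f'(\zeta)\,d\zeta
\]
along \(\Gamma_\alpha\), with \(f'\) the boundary values, which are integrable. Away from the vertex, \(\phi\) and arc length are comparable, which gives absolute continuity on \(J\).

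\textbf{Step 3 (variation bound).} For any partition of \(\Gamma_\alpha\), the formula of Step 2, together with the continuity at \(1\) from Step 1, gives
\[
\sum|f(\zeta_{k+1})-f(\zeta_k)|\le\sum\int_{[\zeta_k,\zeta_{k+1}]_{\Gamma_\alpha}}|f'|\,|d\zeta|=\int_{\Gamma_\alpha}|f'(\zeta)|\,|d\zeta|.
\]
Taking the supremum over partitions yields \(\operatorname{Var}_{\Gamma_\alpha}(f)\le\int_{\Gamma_\alpha}|f'(\zeta)|\,|d\zeta|\).
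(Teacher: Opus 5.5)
Your Steps 2 and 3 follow the paper's argument: integrate $f'$ along $\Gamma_\alpha$, then sum over a partition using continuity at the vertex. The paper obtains the continuous extension by citing the standard theorem on primitives of $E^1$-functions. Your Step 1 tries to prove that theorem, and as written it does not close.

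\emph{The Beurling route.} Uniform boundedness of $\int_{\Gamma_\beta}|f'|\,|d\zeta|$ only gives $\iint_{U\cap S_\alpha}|f'|\,dA=O(\operatorname{diam}U)$. So the average of $\int|f'|$ over your one-parameter family of chords is bounded, not small. Absolute continuity of $\int_{\Gamma_\alpha}|f'|\,|d\zeta|$ controls only the arc $\Gamma'$, not the chord. Smallness would need $o(\operatorname{diam}U)$, and this is exactly the estimate you leave open, especially at the vertex where all the $\Gamma_\beta$ pinch together.

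\emph{The level-curve route.} Uniform integrability of $|F_r|$ on short arcs shows that $|f(\phi(re^{ia}))-f(\phi(re^{ib}))|\le\int_a^b|F(re^{it})|\,dt$ is small for $|a-b|$ small, uniformly in $r$. This is equicontinuity \emph{along} each $C_r$. It says nothing about oscillation \emph{across} level curves, so ``hence $f$ is uniformly continuous on $S_\alpha$'' does not follow. This route could be completed: with $g=f\circ\phi$ and $g_r(e^{i\theta})=g(re^{i\theta})$, one has $g_r=P_{r/s}*g_s$ for $r<s$. Uniform equicontinuity plus the uniform bound on the oscillation of $g_s$ then makes $(g_r)$ uniformly Cauchy. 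You do not give this step.

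The cleanest repair uses something already in your setup: $F=(f'\circ\phi)\phi'=(f\circ\phi)'$.
\begin{itemize}
\item Thus $g=f\circ\phi$ satisfies $g'\in H^1(\D)$. The classical theorem (see \cite[Chapter~3]{Duren}) gives that $g$ is continuous on $\overline{\D}$ and absolutely continuous on $\T$, with $g(e^{i\theta_2})-g(e^{i\theta_1})=\int_{\theta_1}^{\theta_2}ie^{it}F(e^{it})\,dt$.
\item By Carath\'eodory's theorem, $\phi$ extends to a homeomorphism $\overline{\D}\to\overline{S_\alpha}$. This gives the continuous extension of $f$, vertex included.
\item Since $\Gamma_\alpha$ is rectifiable, $t\mapsto\phi(e^{it})$ is absolutely continuous with derivative $ie^{it}\phi'(e^{it})$ a.e. The boundary function of $f'$ at $\phi(e^{it})$ is $F(e^{it})/\phi'(e^{it})$. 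Hence the last integral equals $\int_a^b f'(\zeta)\,d\zeta$ along $\Gamma_\alpha$.
\end{itemize}
This is your Step 2, without the limit $r\to1$. It also removes the need for comparability of $\phi$ with arclength, which would itself require a Kellogg--Warschawski argument on the analytic arcs of $\Gamma_\alpha\setminus\{1\}$. With Step 1 replaced in this way, the rest of your proof goes through and coincides with the paper's.
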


\begin{proof}
By Proposition~\ref{prop:touching-E1-main}, we have
$f'\in E^1(S_\alpha)$.  The standard theorem on primitives of
$E^1$-functions on rectifiable Jordan domains therefore gives a
continuous extension of $f$ to $\overline{S_\alpha}$.
Setting
\(
    d:=\operatorname{length}(\Gamma_\alpha),
\)
let
\[
    \zeta:(0,d)\longrightarrow\Gamma_\alpha\setminus\{1\}
\]
be the positively oriented arclength parametrisation.  For every compact
interval $[a,b]\subset(0,d)$ and every $a\le u<v\le b$, one has
\[
    f(\zeta(v))-f(\zeta(u))
    =
    \int_u^v f'(\zeta(s))\zeta'(s)\,ds.
\]
Since
\[
    s\longmapsto f'(\zeta(s))\zeta'(s)
\]
belongs to $L^1(0,d)$, it follows that $f\circ\zeta$ is absolutely
continuous on every compact subinterval of $(0,d)$.  Hence $f$ is
absolutely continuous on every compact subarc of
$\Gamma_\alpha\setminus\{1\}$.

Moreover, for $0<a<b<d$,
\[
    \operatorname{Var}_{\zeta([a,b])}(f)
    \le
    \int_a^b |f'(\zeta(s))|\,ds.
\]
Letting $a\downarrow0$ and $b\uparrow d$, and using the continuity of
$f$ at the common endpoint $1$, we obtain
\[
    \operatorname{Var}_{\Gamma_\alpha}(f)
    \le
    \int_0^d |f'(\zeta(s))|\,ds
    =
    \int_{\Gamma_\alpha}|f'(\zeta)|\,|d\zeta|.
\]
\end{proof}

Next we equip \(\HS(S_\alpha)\)
with the norm
\begin{equation}\label{eq:HS-norm}
        \|f\|_{\HS(S_\alpha)}
        := \|f\|_{H^\infty(S_\alpha)}+
        \|f'\|_{E^1(S_\alpha)}=
        \sup_{z\in S_\alpha}|f(z)|
        +
        \int_{\Gamma_\alpha}|f'(\zeta)|\,|d\zeta|.
\end{equation}
We also equip $\HS_0(S_\alpha)$ with the norm
\[
        \|f\|_{\HS_0(S_\alpha)}
        :=
        \sup_{z\in S_\alpha}|f(z)|+[f]_{\HS_0(S_\alpha)}.
        \]

\begin{proposition}\label{prop:HS-Banach-algebra}
For every \(\alpha>1\), \(\HS(S_\alpha)\), equipped with the norm
\eqref{eq:HS-norm}, is a unital Banach algebra.
The norm on \(\HS_0(S_\alpha)\) is an equivalent Banach algebra norm,  with equivalence constants uniform in 
\(\alpha\).
\end{proposition}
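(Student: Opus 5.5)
We need to show that $\HS(S_\alpha)$ is complete, that it is closed under multiplication with $\|fg\|\le\|f\|\,\|g\|$, and that it contains the constants with $\|1\|=1$. The equivalence with the $\HS_0$ norm will come directly from Proposition~\ref{prop:touching-E1-main}.

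\emph{Algebra property.} Let $f,g\in\HS(S_\alpha)$. Then $fg$ is holomorphic and bounded, with $\|fg\|_{H^\infty}\le\|f\|_{H^\infty}\|g\|_{H^\infty}$, and $(fg)'=f'g+fg'$. Since $g\in H^\infty(S_\alpha)$ and $f'\in E^1(S_\alpha)$, the product $f'g$ lies in $E^1(S_\alpha)$. One way to see this is through the conformal characterization \eqref{defe}: $(f'g\circ\phi)\phi'=\bigl((f'\circ\phi)\phi'\bigr)(g\circ\phi)\in H^1(\D)$, because $H^1\cdot H^\infty\subset H^1$. Its boundary values are the products of the boundary values, so
\[
\int_{\Gamma_\alpha}|f'g|\,|d\zeta|\le \|g\|_{H^\infty}\|f'\|_{E^1}.
\]
The same holds for $fg'$. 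Adding the two estimates gives
\[
\|fg\|_{\HS}\le \|f\|_\infty\|g\|_\infty+\|g\|_\infty\|f'\|_{E^1}+\|f\|_\infty\|g'\|_{E^1}\le \|f\|_{\HS}\|g\|_{\HS},
\]
since the right-hand side is dominated by the expansion of $(\|f\|_\infty+\|f'\|_{E^1})(\|g\|_\infty+\|g'\|_{E^1})$. The constants satisfy $\|1\|_{\HS}=1$.

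\emph{Completeness.} This is the step requiring the most care. Let $(f_n)$ be Cauchy in $\HS(S_\alpha)$. Then $f_n\to f$ uniformly on $S_\alpha$, so $f\in H^\infty(S_\alpha)$, and $f_n'\to f'$ locally uniformly by the Cauchy estimates. Since $E^1(S_\alpha)$ is a Banach space, $f_n'\to h$ in $E^1(S_\alpha)$. Convergence in $E^1$ implies locally uniform convergence via the Cauchy formula \eqref{cauchy}: for $z$ in a compact set $K$, $|f_n'(z)-h(z)|\le (2\pi\,\dist(K,\Gamma_\alpha))^{-1}\|f_n'-h\|_{E^1}$. Hence $h=f'$, so $f\in\HS(S_\alpha)$ and $\|f_n-f\|_{\HS}\to0$.

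\emph{The $\HS_0$ norm.} By Proposition~\ref{prop:touching-E1-main}, the two sets coincide and
\[
C_{\rm B}^{-1}\|f\|_{\HS}\le\|f\|_{\HS_0}\le C_{\rm B}\|f\|_{\HS},
\]
with $C_{\rm B}$ independent of $\alpha$. So $\|\cdot\|_{\HS_0}$ is an equivalent complete norm with equivalence constants uniform in $\alpha$.

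It is not automatically submultiplicative, so we check this directly by repeating the product argument on each $\Gamma_\beta$, $1<\beta<\alpha$. The function $f'g$ is holomorphic on $S_\alpha$, hence continuous on $\Gamma_\beta\setminus\{1\}$, and $|g|\le\|g\|_\infty$ there. This gives
\[
\int_{\Gamma_\beta}|(fg)'|\,|d\zeta|\le\|g\|_\infty\int_{\Gamma_\beta}|f'|\,|d\zeta|+\|f\|_\infty\int_{\Gamma_\beta}|g'|\,|d\zeta|.
\]
Taking the supremum over $\beta$ yields $[fg]_{\HS_0}\le\|g\|_\infty[f]_{\HS_0}+\|f\|_\infty[g]_{\HS_0}$, and hence $\|fg\|_{\HS_0}\le\|f\|_{\HS_0}\|g\|_{\HS_0}$. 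Together with $\|1\|_{\HS_0}=1$, this makes $\|\cdot\|_{\HS_0}$ a Banach algebra norm as well.
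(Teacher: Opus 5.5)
Your proof is correct and follows essentially the same route as the paper. The algebra property comes from the product rule together with the sup-norm bound; completeness uses that $E^1$-convergence implies locally uniform convergence, which identifies the limit derivative; and the $\HS_0$-norm is handled by Proposition~\ref{prop:touching-E1-main} plus the same product estimate on each $\Gamma_\beta$. Your $H^1\cdot H^\infty\subset H^1$ argument via \eqref{defe} usefully makes explicit why $f'g\in E^1(S_\alpha)$, which the paper only states as summability of boundary values. The boundedness of $f$ and $g$ that you use tacitly is supplied by Corollary~\ref{cor:HS-absolute-continuity}.
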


\begin{proof}
Let \(f,g\in\HS(S_\alpha)\).
Then \(f',g'\in E^1(S_\alpha)\).  By
Corollary~\ref{cor:HS-absolute-continuity}, \(f\) and \(g\) extend
continuously to \(\overline{S_\alpha}\), so that they are bounded
on \(S_\alpha\).
 Therefore
\[
        (fg)'=f'g+fg'
\]
has summable boundary values on \(\Gamma_\alpha\), and
\[
\begin{aligned}
        \|fg\|_{\HS(S_\alpha)}
        &\le
        \sup_{S_\alpha}|f|\,\sup_{S_\alpha}|g|  \\
        &\quad+
        \sup_{S_\alpha}|f|
        \int_{\Gamma_\alpha}|g'(\zeta)|\,|d\zeta|
        +
        \sup_{S_\alpha}|g|
        \int_{\Gamma_\alpha}|f'(\zeta)|\,|d\zeta|       \\
        &\le
        \|f\|_{\HS(S_\alpha)}\,
        \|g\|_{\HS(S_\alpha)} .
\end{aligned}
\]
The constant function \(1\) is the unit.

If \((f_n)_{n \ge 1}\) is Cauchy in \(\HS(S_\alpha)\), then \((f_n)_{n \ge 1}\) converges
uniformly on \(S_\alpha\) to
 \(f \in H^\infty(S_\alpha)\), and
\((f_n')_{n \ge 1}\) is Cauchy in \(E^1(S_\alpha)\).  Since \(E^1(S_\alpha)\) is
complete, \(f_n'\to g\) in \(E^1(S_\alpha)\) for some \(g\in E^1(S_\alpha)\) as $n \to \infty$.


 Fix
\(z_0,z\in S_\alpha\) and join them by a rectifiable path
\(\gamma\subset S_\alpha\), so that 
\[
        f_n(z)-f_n(z_0)
        =
        \int_\gamma f_n'(\zeta)\,d\zeta .
\]
The convergence in \(E^1\) implies locally uniform convergence in
\(S_\alpha\). Thus
letting \(n\to\infty\), 
we obtain
\[
        f(z)-f(z_0)
        =
        \int_\gamma g(\zeta)\,d\zeta,
\]
so that \(f\) is holomorphic in \(S_\alpha\) and \(f'=g\).

Hence,
\(f\in\HS(S_\alpha)\), and \(f_n\to f\) in
\(\HS(S_\alpha)\) as $n \to \infty.$

The equivalence of the two norms, and hence the completeness of
\(\HS_0(S_\alpha)\), follows from
Proposition~\ref{prop:touching-E1-main}. The submultiplicativity of the norm
on \(\HS_0(S_\alpha)\) follows directly from the same argument as above,
with the boundary integrals replaced by the corresponding suprema over
\(1<\beta<\alpha\).
\end{proof}

The following boundary integral estimate for Stolz domains
will be crucial here and in Section~\ref{sec:HS-back-powers}
for obtaining $\HS$-norm bounds.

\begin{lemma}\label{lem:kernel-bound-Stolz}
Let \(\alpha>1\). Then there is a constant \(C_\alpha>0\) such that
\[
        \int_{\Gamma_\sigma} n|\zeta|^{n-1}\,|d\zeta|
        \le C_\alpha,
        \qquad 1<\sigma\le \alpha,\quad n\ge1 .
\]
\end{lemma}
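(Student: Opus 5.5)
Our plan is to estimate the integral directly from the geometry of the Stolz boundary near its vertex, after first removing the dependence on $\sigma$. The key observation is monotonicity in $\sigma$: for $1<\sigma\le\alpha$ we have $S_\sigma\subset S_\alpha$, and the relevant comparison is between $\Gamma_\sigma$ and $\Gamma_\alpha$. Rather than compare curves pointwise, we would use Lemma~\ref{lem:touching-E1}. The function $g(z)=nz^{n-1}$ is holomorphic on a neighbourhood of $\overline{\D}$ and hence lies in $E^1(S_\alpha)$. By \eqref{eq:touching-E1-two-sided},
\[
\int_{\Gamma_\sigma} n|\zeta|^{n-1}\,|d\zeta|\le C_{\rm B}\int_{\Gamma_\alpha} n|\zeta|^{n-1}\,|d\zeta|,\qquad 1<\sigma<\alpha .
\]
The case $\sigma=\alpha$ is trivial. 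It therefore suffices to prove $\sup_{n\ge1}\int_{\Gamma_\alpha} n|\zeta|^{n-1}|d\zeta|<\infty$ for the single fixed curve $\Gamma_\alpha$.

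For the fixed curve, we split $\Gamma_\alpha$ into the part near the vertex, $\Gamma_\alpha\cap\{|\zeta-1|<\varepsilon\}$, and the rest. On the rest, $|\zeta|\le r_0<1$ for some $r_0=r_0(\alpha,\varepsilon)$, because $\Gamma_\alpha\setminus\{1\}\subset\D$ and this set is compact. Hence the contribution there is at most $\length(\Gamma_\alpha)\, n r_0^{n-1}$, which is bounded uniformly in $n$.

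Near the vertex, the defining relation $|1-\zeta|=\alpha(1-|\zeta|)$ on $\Gamma_\alpha$ gives
\[
1-|\zeta|=|1-\zeta|/\alpha .
\]
Parametrise each of the two arcs near $1$ by $t=|1-\zeta|\in(0,\varepsilon)$. Since $\Gamma_\alpha$ meets $1$ at a nonzero angle (its two arcs form a corner with tangents not along the circle), $|d\zeta|\le c_\alpha\,dt$ there. Then $|\zeta|^{n-1}=(1-t/\alpha)^{n-1}\le e^{-(n-1)t/\alpha}$, and the near-vertex contribution is at most
\[
2c_\alpha\int_0^\infty n\,e^{-(n-1)t/\alpha}\,dt .
\]
For $n\ge2$ this equals $2c_\alpha\alpha n/(n-1)\le 4c_\alpha\alpha$. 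For $n=1$ the integrand is $1$ and the contribution is simply bounded by $\length(\Gamma_\alpha)$.

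The main point needing care is the bound $|d\zeta|\le c_\alpha\,dt$, i.e.\ that $t=|1-\zeta|$ is a valid bi-Lipschitz parameter on each arc near the vertex. We would verify it from the convexity and piecewise $C^1$ structure of $\Gamma_\alpha$ recorded in \eqref{eq:Stolz-Ssigma}. Alternatively, we would use the explicit description that, near $1$, $\Gamma_\alpha$ lies in a sector $|\arg(1-\zeta)|\le\theta_\alpha<\pi/2$ with $\cos\theta_\alpha\approx1/\alpha$. A convex arc issuing from the vertex has $|1-\zeta|$ monotone along it, and its length up to radius $t$ is comparable to $t$. If one prefers to avoid Lemma~\ref{lem:touching-E1}, the same direct computation works uniformly for $\sigma\in(1,\alpha]$. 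The reason is that the sector angles $\theta_\sigma\le\theta_\alpha$ and the relation $1-|\zeta|=|1-\zeta|/\sigma\ge|1-\zeta|/\alpha$ give the exponential bound with the worst constant at $\sigma=\alpha$.
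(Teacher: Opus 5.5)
Your proof is correct, but it takes a different route from the paper's.

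\textbf{The paper's route.} The paper never uses Lemma~\ref{lem:touching-E1}. It works directly on $\Gamma_\sigma$ for every $\sigma\in(1,\alpha]$, in polar coordinates centred at the origin, $\zeta=r_\sigma(\varphi)e^{i\varphi}$. From the identity $2r_\sigma(1-\cos\varphi)=(\sigma^2-1)(1-r_\sigma)^2$ it shows that $r_\sigma$ is decreasing and that $1-r_\sigma(\varphi)\ge a_\alpha\varphi$ with $a_\alpha=2/(\pi(\alpha+1))$. It then splits $|d\zeta|\le |dr_\sigma|+r_\sigma\,d\varphi$. The radial part is the telescoping integral $\int n r^{n-1}\,dr\le 1$. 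The angular part is at most $\int_0^\infty n e^{-a_\alpha n\varphi}\,d\varphi=1/a_\alpha$. This gives the explicit constant $2(1+1/a_\alpha)$, uniform in $\sigma$, with no appeal to Beurling's inequality.

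\textbf{Your route.} You first pay a factor $C_{\rm B}$ to reduce to the single curve $\Gamma_\alpha$. You then use polar coordinates centred at the vertex, where the exact relation $|\zeta|=1-|1-\zeta|/\alpha$ turns the integrand into the one-variable function $n(1-t/\alpha)^{n-1}$. This is a transparent way to see why the bound holds.

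\textbf{The step you flag.} The price of your approach is the Lipschitz bound $|d\zeta|\le c\,dt$, and it does hold. From $\cos\theta=1/\alpha+(t/2)(1-1/\alpha^2)$ one gets $t|\theta'(t)|\le 1$ for $0<t\le 1$, uniformly in $\alpha>1$. Hence $|d\zeta|\le\sqrt2\,dt$ there; this is the computation in Lemma~\ref{lem:geometry}\textup{(iii)}, which is independent of the present lemma. For $t\ge1$ one simply has $|\zeta|\le 1-1/\alpha$.

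Both bounds are uniform in $\sigma\le\alpha$, so your closing remark is right. The reduction via Lemma~\ref{lem:touching-E1} is dispensable, and your argument then becomes an elementary uniform estimate comparable to the paper's.

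\textbf{A caution on the alternative justification.} The convexity-based argument you sketch is not reliable as stated: distance from a boundary point need not be monotone along an arc of a general convex curve. A triangle with a suitably placed obtuse edge already gives a counterexample. For $\Gamma_\alpha$ the monotonicity holds for a specific reason: each circle $|1-\zeta|=t$ with $0<t<R_\alpha$ meets $\Gamma_\alpha$ in exactly two points, by Lemma~\ref{lem:geometry}\textup{(ii)}. So rely on the explicit formula rather than on general convexity.
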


\begin{proof}
By symmetry it is enough to estimate the integral along the upper half of \(\Gamma_\sigma\).
We write this arc in polar form as
\[
        \zeta=r_\sigma(\varphi)e^{i\varphi},
        \qquad 0\le \varphi\le \pi ,
\]
where \(r_\sigma\) is determined by the boundary identity
\[
        |1-r_\sigma(\varphi)e^{i\varphi}|
        =
        \sigma(1-r_\sigma(\varphi)).
\]
Equivalently, with \(b=\sigma^2-1\),
\begin{equation}\label{eq:Stolz-r-identity}
        2r_\sigma(\varphi)(1-\cos\varphi)
        =
        b(1-r_\sigma(\varphi))^2 .
\end{equation}
Thus \(r_\sigma\) is continuous on \([0,\pi]\), \(C^1\) on \((0,\pi]\), and
\[
        r_\sigma(0)=1,
        \qquad
        r_\sigma(\pi)=\frac{\sigma-1}{\sigma+1}.
\]
Differentiating \eqref{eq:Stolz-r-identity} on \((0,\pi]\) gives
\[
        r_\sigma'(\varphi)
        \bigl(1-\cos\varphi+b(1-r_\sigma(\varphi))\bigr)
        +r_\sigma(\varphi)\sin\varphi=0.
\]
Hence \(r_\sigma'(\varphi)<0\) for \(0<\varphi<\pi\), so \(r_\sigma\)
decreases on the upper half of \(\Gamma_\sigma\), 
and
\[
        r_\sigma(\varphi)\ge \frac{\sigma-1}{\sigma+1},
        \qquad 0\le\varphi\le\pi .
\]
Using that 
\[
        1-\cos\varphi\ge \frac{2}{\pi^2}\varphi^2,
        \qquad 0\le\varphi\le\pi ,
\]
in \eqref{eq:Stolz-r-identity}, we get
\[
        1-r_\sigma(\varphi)
        \ge
        \frac{2}{\pi(\alpha+1)}\,\varphi .
\]
Hence, with
\[
        a_\alpha:=\frac{2}{\pi(\alpha+1)},
\]
we have
\begin{equation}\label{eq:Stolz-r-exp-uniform}
        r_\sigma(\varphi)
        \le
        1-a_\alpha\varphi
        \le
        e^{-a_\alpha\varphi},
        \qquad 0\le\varphi\le\pi,\quad 1<\sigma\le\alpha .
\end{equation}

On $\Gamma_\sigma^+$, the upper half of \(\Gamma_\sigma\), we have 
\[
        |d\zeta|
        \le
        |dr_\sigma|+r_\sigma(\varphi)\,d\varphi .
\]
Therefore, for \(n\ge1\),
\[
\begin{aligned}
 \int_{\Gamma_\sigma^+} n|\zeta|^{n-1}\,|d\zeta|
 &\le
 \int_{\Gamma_\sigma^+} n r_\sigma^{\,n-1}\,|dr_\sigma|
 +
 \int_0^\pi n r_\sigma(\varphi)^n\,d\varphi .
\end{aligned}
\]
Since \(r_\sigma\) decreases from \(1\) to \((\sigma-1)/(\sigma+1)\) on $\Gamma_\sigma^+$,
\[
        \int_{\Gamma_\sigma^+} n r_\sigma^{\,n-1}\,|dr_\sigma|
        =
        \int_{(\sigma-1)/(\sigma+1)}^1 n r^{n-1}\,dr
        \le 1 .
\]
Moreover, by \eqref{eq:Stolz-r-exp-uniform},
\[
        \int_0^\pi n r_\sigma(\varphi)^n\,d\varphi
        \le
        \int_0^\infty n e^{-a_\alpha n\varphi}\,d\varphi
        =
        \frac1{a_\alpha}.
\]
Thus
\[
        \int_{\Gamma_\sigma^+} n|\zeta|^{n-1}\,|d\zeta|
        \le
        1+\frac1{a_\alpha}.
\]
The lower half is treated in the same way, and hence
\[
        \int_{\Gamma_\sigma} n|\zeta|^{n-1}\,|d\zeta|
        \le
        2\left(1+\frac1{a_\alpha}\right),
        \qquad 1<\sigma\le\alpha,\quad n\ge1 .
\]
This proves the assertion.
\end{proof}

Using Lemma \ref{lem:kernel-bound-Stolz}, we show next 
that the Wiener algebra $A^1(\D)$ is contained in any Hardy--Sobolev algebra
$\HS(S_\alpha), \alpha>1.$ This will allow us to measure the size of functions
from $A^1(\D)$ by $\HS(S_\alpha)$-norm.

\begin{proposition}\label{prop:A1-HS}
For every \(\alpha>1\),
if
\[
        f(z)=\sum_{n=0}^\infty c_nz^n\in A^1(\D),
\]
then \(f\in\HS(S_\alpha)\). More precisely,
\[
        \|f\|_{\HS(S_\alpha)}
        \le (1+C_\alpha)\|f\|_{A^1},
\]
where \(C_\alpha\) is the constant from Lemma~\ref{lem:kernel-bound-Stolz}.
\end{proposition}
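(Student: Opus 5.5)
The plan is to bound the two pieces of the norm \eqref{eq:HS-norm} separately, using absolute convergence of the power series and Lemma~\ref{lem:kernel-bound-Stolz}. Since \(S_\alpha\subset\D\), every \(z\in S_\alpha\) satisfies \(|f(z)|\le\sum_{n\ge0}|c_n||z|^n\le\|f\|_{A^1}\). Hence \(\|f\|_{H^\infty(S_\alpha)}\le\|f\|_{A^1}\). The function \(f\) is holomorphic on \(\D\supset S_\alpha\), with \(f'(z)=\sum_{n\ge1}nc_nz^{n-1}\) for \(|z|<1\).

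For the derivative term we avoid dealing with boundary values on \(\Gamma_\alpha\) directly. Instead we use the inner curves \(\Gamma_\beta\), \(1<\beta<\alpha\), which lie in \(\D\) except at the vertex \(1\). For every \(\zeta\in\Gamma_\beta\setminus\{1\}\) we have \(|\zeta|<1\), so termwise estimation gives \(|f'(\zeta)|\le\sum_{n\ge1}n|c_n||\zeta|^{n-1}\). Integrating and applying Tonelli's theorem, then Lemma~\ref{lem:kernel-bound-Stolz} with \(\sigma=\beta\le\alpha\), we get
\[
        \int_{\Gamma_\beta}|f'(\zeta)|\,|d\zeta|
        \le
        \sum_{n\ge1}|c_n|\int_{\Gamma_\beta}n|\zeta|^{n-1}\,|d\zeta|
        \le
        C_\alpha\sum_{n\ge1}|c_n|
        \le C_\alpha\|f\|_{A^1}.
\]
Taking the supremum over \(\beta\), we obtain \([f]_{\HS_0(S_\alpha)}\le C_\alpha\|f\|_{A^1}<\infty\). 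Lemma~\ref{lem:touching-E1} then gives \(f'\in E^1(S_\alpha)\), so \(f\in\HS(S_\alpha)\).

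The main obstacle is the constant. Passing back to \(\Gamma_\alpha\) through \eqref{eq:touching-E1-reverse} would cost a factor \(C_{\rm B}\), whereas the statement asks for \(1+C_\alpha\). To avoid this loss, I would apply the same termwise estimate on \(\Gamma_\alpha\) itself. Lemma~\ref{lem:kernel-bound-Stolz} allows \(\sigma=\alpha\), and \(\Gamma_\alpha\setminus\{1\}\subset\D\), where the series for \(f'\) converges absolutely. It remains to check that the non-tangential boundary value of \(f'\in E^1(S_\alpha)\) at each \(\zeta\in\Gamma_\alpha\setminus\{1\}\) is just \(f'(\zeta)\). This holds because \(f'\) is holomorphic in a neighbourhood of every such point, namely in \(\D\). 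Then
\[
\int_{\Gamma_\alpha}|f'(\zeta)|\,|d\zeta|\le\sum_{n\ge1}|c_n|\int_{\Gamma_\alpha}n|\zeta|^{n-1}|d\zeta|\le C_\alpha\|f\|_{A^1}.
\]
The single point \(\{1\}\) has arc-length measure zero, so it does not affect the integral. Adding this to the sup-norm bound yields \(\|f\|_{\HS(S_\alpha)}\le(1+C_\alpha)\|f\|_{A^1}\), as claimed.
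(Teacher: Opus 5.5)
Your proof is correct and is essentially the paper's argument: bound the supremum by $\|f\|_{A^1}$, then estimate $\int_{\Gamma_\alpha}|f'(\zeta)|\,|d\zeta|$ termwise via Tonelli and Lemma~\ref{lem:kernel-bound-Stolz} with $\sigma=\alpha$. The one addition is your preliminary pass through the inner curves $\Gamma_\beta$ and Lemma~\ref{lem:touching-E1}, which certifies $f'\in E^1(S_\alpha)$ before you identify its boundary values on $\Gamma_\alpha\setminus\{1\}$ with the pointwise values of $f'$; the paper's two-line proof leaves this membership step implicit.
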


\begin{proof}
Clearly,
\[
        \sup_{z\in S_\alpha}|f(z)|\le \|f\|_{A^1} .
\]
Then  Fubini's theorem and
Lemma~\ref{lem:kernel-bound-Stolz} give
\[
\begin{aligned}
        \int_{\Gamma_\alpha}|f'(\zeta)|\,|d\zeta|
        &\le
        \sum_{n=1}^\infty |c_n|
        \int_{\Gamma_\alpha} n|\zeta|^{n-1}\,|d\zeta|       \\
        &\le
        C_\alpha\sum_{n=1}^\infty |c_n|
        \le C_\alpha\|f\|_{A^1},
\end{aligned}
\]
and the statement follows.
\end{proof}

\subsection{Density of polynomials}

One of the basic features of \(\HS(S_\alpha)\) is that
the polynomials are dense in this space. 
This will considerably simplify several arguments
and is proved below.

\begin{theorem}\label{thm:density}
For every \(\alpha>1\), the polynomials are dense in \(\HS(S_\alpha)\). 
\end{theorem}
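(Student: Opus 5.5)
Our plan is a two-step approximation: first approximate $f\in\HS(S_\alpha)$ by functions holomorphic on a neighbourhood of $\overline{S_\alpha}$, then approximate those by polynomials via Runge's theorem. The second step is easy. If $h$ is holomorphic on a neighbourhood $U$ of the compact convex set $\overline{S_\alpha}$, then polynomials $p_k$ converge to $h$ uniformly on a slightly smaller convex neighbourhood $V$. By Cauchy estimates, $p_k'\to h'$ uniformly on $\overline{S_\alpha}$. Since $\Gamma_\alpha$ has finite length, this gives $\|h-p_k\|_{\HS(S_\alpha)}\le \sup_{S_\alpha}|h-p_k|+\length(\Gamma_\alpha)\sup_{\Gamma_\alpha}|h'-p_k'|\to0$.

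For the first step we use dilations centred at an interior point. Fix $z_0\in S_\alpha$, say $z_0=0$ after noting that $0\in S_\alpha$ since $1<\alpha$. For $0<r<1$ put $f_r(z):=f(rz)$. By convexity, $r\overline{S_\alpha}\subset S_\alpha\cup\{r\}$. To be careful about the vertex, note that the point $r$ lies on the real segment $(0,1)\subset S_\alpha$, and indeed $r\overline{S_\alpha}$ is a compact subset of $S_\alpha$. Hence $f_r$ is holomorphic on a neighbourhood of $\overline{S_\alpha}$. It remains to show $\|f_r-f\|_{\HS(S_\alpha)}\to0$ as $r\uparrow1$.

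The sup part follows because, by Corollary~\ref{cor:HS-absolute-continuity}, $f$ is continuous on the compact set $\overline{S_\alpha}$, hence uniformly continuous. This gives $\sup_{S_\alpha}|f(rz)-f(z)|\to0$.

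The derivative part is the main obstacle. We must show
\[
\int_{\Gamma_\alpha}|rf'(r\zeta)-f'(\zeta)|\,|d\zeta|\to0 .
\]
The plan is to use the Cauchy formula \eqref{cauchy} for $g=f'\in E^1(S_\alpha)$: for $\zeta\in\Gamma_\alpha$ and $r<1$ we have $g(r\zeta)=\frac1{2\pi i}\int_{\Gamma_\alpha}\frac{g(\xi)}{\xi-r\zeta}\,d\xi$. Approximate $g$ in $E^1(S_\alpha)$ by a function $\tilde g$ continuous on $\overline{S_\alpha}$, for instance by transferring via the conformal map $\phi$ and using the density of the disc algebra in $H^1$. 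For $\tilde g$ the convergence $\tilde g(r\cdot)\to\tilde g$ is uniform on $\Gamma_\alpha$. The remaining task is a uniform bound $\int_{\Gamma_\alpha}|g(r\zeta)|\,|d\zeta|\le C\|g\|_{E^1}$, independent of $r$. This follows from Lemma~\ref{lem:Granados-convex}, because $r\Gamma_\alpha$ is a convex curve inside $S_\alpha$ (compactly contained, as noted above), so
\[
\int_{\Gamma_\alpha}|g(r\zeta)|\,|d\zeta|=r^{-1}\int_{r\Gamma_\alpha}|g|\,|d\xi|\le r^{-1}C_{\rm B}\|g\|_{E^1(S_\alpha)}.
\]
The standard $\varepsilon/3$ argument then concludes: the $\tilde g$ term converges, and the error terms are controlled by the uniform bound.

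The density of functions continuous on $\overline{S_\alpha}$ in $E^1(S_\alpha)$ requires a little care, since the transfer introduces the factor $\phi'$. We will instead approximate $(g\circ\phi)\phi'$ by $F_\rho(w)=(g\circ\phi)(\rho w)\phi'(\rho w)$, which tends to it in $H^1$. Pulled back, $F_\rho$ is a function holomorphic near $\overline{S_\alpha}$ divided by the bounded derivative factor. Alternatively, we can avoid this entirely by taking $\tilde g=p$ a polynomial, justified by Smirnov's theorem that polynomials are dense in $E^1$ of a domain with rectifiable boundary whose conformal map has $\log|\phi'|$ representable appropriately, which holds for convex (Smirnov) domains. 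In either form the Beurling estimate is the key uniformity.
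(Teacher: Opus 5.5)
Your proof is correct, but it takes a different route from the paper's.

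\textbf{The paper's route.} The paper applies the polynomial density theorem for $E^1$ on Smirnov domains directly to $f'$. It takes polynomials $q_n\to f'$ in $E^1(S_\alpha)$ and sets $p_n(z)=f(0)+\int_0^z q_n$. It then controls $\sup_{S_\alpha}|f-p_n|$ by applying Lemma~\ref{lem:Granados-convex} to the segments $[0,z]$, so the $E^1$-error of the derivatives bounds the sup-error uniformly in $z$.

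\textbf{Your route.} You dilate, $f_r(z)=f(rz)$, and handle the sup part by uniform continuity of the extension from Corollary~\ref{cor:HS-absolute-continuity}. You handle the derivative part by $E^1$-density plus the Beurling bound on the convex curves $r\Gamma_\alpha$. You finish with Runge's theorem and Cauchy estimates.

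\textbf{Second form (polynomial $\tilde g$).} Here the dilation and Runge steps are superfluous. Once a polynomial $p$ approximates $f'$ in $E^1(S_\alpha)$, its primitive normalized at $0$ already approximates $f$ in $\HS(S_\alpha)$, by exactly the paper's segment argument.

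\textbf{First form (via $F_\rho$).} This form is independent of Smirnov's theorem, which is its real gain. It uses only that dilations converge in $H^1(\D)$ and the convex distortion bound $|\phi'(w)|\ge |\phi'(0)|/4$, which makes $1/\phi'$ bounded. Two small corrections apply.
\begin{itemize}
\item $F_\rho\circ\phi^{-1}$ is not holomorphic near $\overline{S_\alpha}$, since $\phi^{-1}$ does not continue analytically across the vertex. It is only continuous on $\overline{S_\alpha}$.
\item Continuity of $\tilde g_\rho=(F_\rho\circ\phi^{-1})/(\phi'\circ\phi^{-1})$ at the vertex would require the corner asymptotics of $\phi'$.
\end{itemize}
You do not actually need continuity. $\tilde g_\rho$ is bounded, and by convexity the points $r\zeta$ approach each $\zeta\in\Gamma_\alpha\setminus\{1\}$ non-tangentially. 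Hence $\tilde g_\rho(r\zeta)\to\tilde g_\rho(\zeta)$ a.e. on $\Gamma_\alpha$, and dominated convergence handles the middle term of your $\varepsilon/3$ estimate.

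\textbf{Comparison.} The paper's argument is shorter, and uses Beurling's lemma once, on segments. Yours uses Beurling's lemma on the dilated boundaries and trades the Smirnov density theorem for elementary conformal-map facts.
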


\begin{proof}
Let \(f\in\HS(S_\alpha)\), so that  \(f'\in E^1(S_\alpha)\).  Since
\(S_\alpha\) is a convex rectifiable Jordan domain, it is close-to-convex and thus a Smirnov domain,
see e.g.  \cite[p. 175]{Duren}.
The polynomial density theorem for \(E^1\) on such domains gives polynomials
\((q_n)_{n \ge 1}\) such that
\begin{equation}\label{eq:E1-density-derivatives}
        \int_{\Gamma_\alpha}|f'(\lambda)-q_n(\lambda)|\,|d\lambda|
        \longrightarrow0, \qquad n \to \infty.
\end{equation}
For every $n \in \mathbb N,$ define
\[
        p_n(z):=f(0)+\int_0^z q_n(\zeta)\,d\zeta,
        \qquad z\in S_\alpha,
\]
so that  \(p_n'=q_n\).  The
derivative part of the \(\HS\)-norm of $f-p_n$ converges to zero by
\eqref{eq:E1-density-derivatives}.  To estimate the supremum part, fix
\(z\in S_\alpha\).  Then
\[
        f(z)-p_n(z)=\int_{[0,z]}(f'(\zeta)-q_n(\zeta))\,d\zeta.
\]
The segment \([0,z]\) is compactly contained in \(S_\alpha\).  By
Lemma~\ref{lem:Granados-convex}, applied with outer boundary \(\Gamma_\alpha\)
and inner curve \([0,z]\),
\[
\begin{aligned}
        |f(z)-p_n(z)|
        &\le
        \int_{[0,z]}|f'(\zeta)-q_n(\zeta)|\,|d\zeta|       \\
        &\le
        C_{\rm B}
        \int_{\Gamma_\alpha}|f'(\lambda)-q_n(\lambda)|\,|d\lambda| .
\end{aligned}
\]
The right-hand side is independent of \(z\).  
Therefore
\[
      \lim_{n \to \infty}\sup_{z\in S_\alpha}|f(z)-p_n(z)|=0.
\]
Together with \eqref{eq:E1-density-derivatives}, this gives
\(\lim_{n \to \infty}p_n=f\) in \(\HS(S_\alpha)\).
\end{proof}

\subsection{The logarithmic kernel}\label{sec:HS-Stolz}

We next prepare the logarithmic kernels used in the Hardy--Sobolev reproducing
formula.  The construction involves two Stolz parameters, and we keep their
roles separate from the beginning.  We also collect here the elementary Stolz
geometry needed to build the cuts and to estimate the resulting kernels.

Fix
\[
        1<\delta<\tau .
\]
The parameter \(\tau\) is the Hardy--Sobolev parameter: \(f\) will belong to
\(\HS(S_\tau)\), and \(f'\) will be integrated over the boundary
\(\Gamma_\tau\).  The parameter \(\delta\) is the region on
which the resulting kernels are evaluated.  
In the operator part, if \(T\) is of Stolz type \(\sigma\), we shall choose
\[
        \sigma<\delta<\tau .
\]
Thus the scalar part itself uses only the pair \(\delta<\tau\), while the
operator part 
involves an additional spectral parameter \(\sigma\). 

For
\[
        \mu\in\Gamma_\tau\setminus\{1\}
\]
put
\[
        \varphi_\mu(z):=\frac{1-\mu z}{\mu-z},
        \qquad z\ne\mu .
\]
Then
\[
        \varphi_\mu(1)=-1,
\]
and \(\varphi_\mu\) has a zero at \(\mu^{-1}\) and a pole at \(\mu\).  
On \(S_\delta\) the scalar logarithmic kernel will be the branch of
\(\log\varphi_\mu\) whose continuous extension to the vertex \(1\)
is normalized by
\[
        \log\varphi_\mu(1)=i\pi.
\]
Since \(S_\delta\) is simply connected and \(\varphi_\mu\) has neither
zero nor pole there, this boundary normalization fixes the scalar
branch uniquely.

The auxiliary cut below does not choose a different scalar branch on
\(S_\delta\).  Rather, it gives a concrete extension of the already normalized
branch to a larger cut plane and, more importantly, an integral representation
controlled uniformly as \(\mu\to1\).  This point is already used
in the scalar estimates below: the function being estimated is defined on
\(S_\delta\), but the proof of the estimate uses the distinguished path.  The
same representation will later be transferred to the operator \(T\).

We therefore choose, for each \(\mu\), a cut \(\gamma_\mu\) joining
\(\mu^{-1}\) to \(\mu\), lying outside \(S_\tau\), and satisfying a uniform
logarithmic-length bound.  The scalar construction now has three tasks:
construct such cuts, verify that the corresponding integral represents the
normalized branch, and prove the reproducing formula.

\noindent\emph{Stolz geometry near the vertex.}
We first record the elementary geometry of Stolz domains near the vertex.
For a Stolz parameter \(\alpha>1\) write
\[
        \Omega_\alpha:=1-S_\alpha=\{u:\ 1-u\in S_\alpha\}.
\]
Also set
\[
        \psi_\alpha:=\arccos(1/\alpha)\in(0,\pi/2),
        \qquad
        R_\alpha:=\frac{2\alpha}{\alpha+1}\in(1,2).
\]

\begin{lemma}\label{lem:geometry}
Let \(\alpha>1\).
\begin{enumerate}[label=\textnormal{(\roman*)}]
\item If \(1-\mu=r e^{i\theta}\in\partial\Omega_\alpha\), with
\(r>0\) and \(\theta\in(-\pi/2,\pi/2)\), then
\begin{equation}\label{eq:costheta}
        \cos\theta
        =
        \frac1\alpha+\frac r2\Bigl(1-\frac1{\alpha^2}\Bigr).
\end{equation}
In particular \(\theta\to\pm\psi_\alpha\) as \(r\downarrow0\).
\item
\[
        \Omega_\alpha\subset B(0,R_\alpha)\cap \Sigma_{\psi_\alpha}.
\]
For \(0<r<R_\alpha\), put
\[
        \Theta_\alpha(r)
        :=\{\vartheta\in(-\pi/2,\pi/2):\ r e^{i\vartheta}\in\Omega_\alpha\}.
\]
Then \(\Theta_\alpha(r)=(-\theta_\alpha(r),\theta_\alpha(r))\), where
\(\theta_\alpha(r)\in[0,\psi_\alpha)\) is determined by
\[
        \cos\theta_\alpha(r)
        =
        \frac1\alpha+\frac r2\Bigl(1-\frac1{\alpha^2}\Bigr)
\]
whenever the right-hand side is at most \(1\), and
\(\Theta_\alpha(r)=\varnothing\) otherwise.
\item 
One has
\begin{equation}\label{eq:Stolz-local-length-main}
        \operatorname{length}\bigl(\Gamma_\alpha\cap\{|z-1|<t\}\bigr)
        \le 4 \pi C_{\rm B}t,\qquad t>0.
\end{equation}
Furthermore, for \(0<s<t\le 1\),
\[
        \int_{\Gamma_\alpha\cap\{s\le |z-1|<t\}}
             \frac{|dz|}{|z-1|}
        \le 2\sqrt2 \log \frac{t}{s}.
\]
\item Let \(1<\tau<\alpha\). For \(\eta>0\), set
\[
        A_\eta:=\partial D(1,\eta)\setminus S_\alpha.
\]
Then, for all sufficiently small \(\eta>0\), \(A_\eta\) is a closed arc
of length at most \(2\pi\eta\), and
\begin{equation}\label{curve}
        \Gamma_{\alpha,\eta}
        :=
        \bigl(\Gamma_\alpha\cap\{|z-1|\ge\eta\}\bigr)\cup A_\eta
\end{equation}
is a rectifiable Jordan curve
whose bounded complementary component
contains \(\overline{S_\tau}\).

\end{enumerate}
\end{lemma}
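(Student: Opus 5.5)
The plan is to work throughout in the shifted variable \(u=1-z\). In this variable the defining inequality of \(S_\alpha\) reads \(|u|<\alpha(1-|1-u|)\), and every item reduces to an elementary computation with this inequality.

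For (i), I write \(u=re^{i\theta}\) on \(\partial\Omega_\alpha\), so that \(|u|=r\) and \(|1-u|=1-r/\alpha\). Squaring gives
\[
1-2r\cos\theta+r^2=1-\frac{2r}{\alpha}+\frac{r^2}{\alpha^2}.
\]
Dividing by \(2r\) yields \eqref{eq:costheta}, and letting \(r\downarrow0\) gives \(\cos\theta\to1/\alpha\).

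For (ii), I run the same computation with the strict inequality. A point \(u=re^{i\vartheta}\) lies in \(\Omega_\alpha\) if and only if \(r<\alpha\) and
\[
\cos\vartheta>\frac1\alpha+\frac r2\Bigl(1-\frac1{\alpha^2}\Bigr).
\]
Two consequences follow directly. First, \(\cos\vartheta>1/\alpha\), which gives \(|\vartheta|<\psi_\alpha\). Second, the right-hand side is below \(1\) exactly when \(r<R_\alpha\). Since \(\cos\) is even and decreasing on \([0,\pi/2)\), the angular slice is the symmetric interval \((-\theta_\alpha(r),\theta_\alpha(r))\), which is empty once the right-hand side exceeds \(1\).

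For (iii), I use the polar representation from (ii): \(\partial\Omega_\alpha\) consists of the two graphs \(\vartheta=\pm\theta_\alpha(r)\). Arc length therefore satisfies \(|du|^2=dr^2+r^2\theta_\alpha'(r)^2\,dr^2\). To bound \(r|\theta_\alpha'|\), I differentiate the identity to get
\[
-\sin\theta_\alpha\,\theta_\alpha'=\frac12\Bigl(1-\frac1{\alpha^2}\Bigr).
\]
Near \(r=0\), the quantity \(\sin\theta_\alpha\) stays close to \(\sin\psi_\alpha>0\), so \(r|\theta'_\alpha|\) is small. The problem is near \(r=R_\alpha\), where \(\sin\theta_\alpha\to0\). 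I would avoid this by restricting the logarithmic estimate to \(t\le1\) and checking that \(r|\theta_\alpha'|\le1\) for \(r\le1\). This should work because \(\sin^2\theta_\alpha=1-\cos^2\theta_\alpha\) is then bounded below by a multiple of \(r(1-1/\alpha^2)\), hence \(r|\theta'|\lesssim\sqrt{r}\). This gives \(|du|\le\sqrt2\,dr\) on each of the two graphs, and integrating \(dr/r\) produces \(2\sqrt2\log(t/s)\).

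For the length bound in (iii), I use convexity instead. The set \(\Gamma_\alpha\cap\{|z-1|<t\}\) is contained in the boundary of the convex set \(\overline{S_\alpha}\cap\overline{D(1,t)}\). The perimeter of that convex set is at most the perimeter of \(D(1,t)\), which is \(2\pi t\). This already gives the claimed bound with room to spare, since \(4\pi C_{\rm B}t\ge2\pi t\).

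For (iv), the first step is to use (ii) to see that \(D(1,\eta)\cap S_\alpha\) is, for small \(\eta\), essentially a sector of half-angle about \(\psi_\alpha\). Its complement on the circle \(\partial D(1,\eta)\) is therefore a single closed arc \(A_\eta\) of length at most \(2\pi\eta\), whose endpoints lie on \(\Gamma_\alpha\). Joining \(A_\eta\) with \(\Gamma_\alpha\cap\{|z-1|\ge\eta\}\) produces a Jordan curve.

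Its interior is \(S_\alpha\cup(D(1,\eta)\setminus \overline{S_\alpha})\), up to boundary pieces, and this set contains \(\overline{S_\tau}\setminus\{1\}\) because \(\overline{S_\tau}\setminus\{1\}\subset S_\alpha\). The vertex \(1\) itself is interior because it is the center of the disc \(D(1,\eta)\), and a neighbourhood of \(1\) is covered by the part of the disc lying outside \(S_\alpha\) together with \(S_\alpha\). I expect the main obstacle to be the bookkeeping needed to show that this interior is exactly the bounded component and that \(\overline{S_\tau}\) is compactly inside it.
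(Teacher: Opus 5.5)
Your proposal is correct. For (i), (ii), the logarithmic estimate in (iii) and (iv), it follows the paper's proof essentially step by step. The paper also squares the two expressions for $|\mu|$ and reads off the angular slices from the strict inequality. It likewise reduces the logarithmic bound to $r|\theta_\alpha'(r)|\le1$ on $(0,1]$, so that $|dz|\le\sqrt2\,dr$ on each branch, and it identifies the interior of $\Gamma_{\alpha,\eta}$ via $\overline{S_\tau}\setminus\{1\}\subset S_\alpha$. Your remark that membership in $\Omega_\alpha$ also requires $r<\alpha$ before squaring is a small improvement.

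The genuinely different step is the length bound \eqref{eq:Stolz-local-length-main}. The paper gets it from Beurling's inequality, Lemma~\ref{lem:Granados-convex}, with $g\equiv1$, outer curve $\partial D(1,2t)$ and inner arc $\Gamma_\alpha\cap\overline{D(1,t)}$; that is where the factor $4\pi C_{\rm B}$ comes from. You instead use monotonicity of perimeter for the nested compact convex sets $\overline{S_\alpha}\cap\overline{D(1,t)}\subset\overline{D(1,t)}$. Your argument is more elementary and gives the sharper bound $2\pi t$. The paper's route simply reuses a tool it needs elsewhere.

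One thing to make precise when you verify $r|\theta_\alpha'|\le1$. Since $R_\alpha\downarrow1$ as $\alpha\downarrow1$, restricting to $r\le1$ does not keep you uniformly away from the tip. Consequently, a lower bound $\sin^2\theta_\alpha\gtrsim r(1-\alpha^{-2})$ with a constant independent of $\alpha$ is false, and an unspecified ``$\lesssim\sqrt r$'' does not yield the universal constant $2\sqrt2$. The bound does hold with an $\alpha$-dependent multiple, and that dependence is exactly what makes it work. With $a=1/\alpha$,
\[
\sin^2\theta_\alpha(r)=\frac{(1-a^2)\bigl(2-r(1+a)\bigr)\bigl(2+r(1-a)\bigr)}{4}\ge\frac{(1-a)(3-a)}{4}\,r\,(1-a^2),\qquad 0<r\le1,
\]
because the middle quadratic in $r$ is concave and equals $(1-a)(3-a)$ at $r=1$. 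Hence
\[
r|\theta_\alpha'(r)|=\frac{r(1-a^2)}{2\sin\theta_\alpha(r)}\le\sqrt{\frac{r(1+a)}{3-a}}\le1,
\]
which gives the claimed constant $2\sqrt2$ uniformly in $\alpha>1$.
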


\begin{proof}
(i) On \(\Gamma_\alpha\) one has
\[
        |1-\mu|=\alpha(1-|\mu|).
\]
Writing \(1-\mu=r e^{i\theta}\), this gives
\[
        |\mu|=1-\frac r\alpha .
\]
On the other hand, \(|\mu|=|1-r e^{i\theta}|\).  Squaring both identities gives
\[
        1+r^2-2r\cos\theta
        =
        1-\frac{2r}{\alpha}+\frac{r^2}{\alpha^2},
\]
which is equivalent to \eqref{eq:costheta}.  Letting \(r\downarrow0\) gives
\(\cos\theta\to1/\alpha\), hence \(\theta\to\pm\psi_\alpha\).

(ii) The boundary identity in (i) gives the two boundary angles at radius
\(r\).  The defining inequality of \(S_\alpha\), written in the coordinates
\(u=1-z=re^{i\vartheta}\), is equivalent to
\[
        \cos\vartheta
        >
        \frac1\alpha+\frac r2\Bigl(1-\frac1{\alpha^2}\Bigr)
\]
inside \(\Omega_\alpha\).  Hence the admissible angles form exactly the
interval stated above.  The inclusion in the sector follows by letting
\(r\downarrow0\), and the maximal radius is obtained at \(\vartheta=0\),
which gives \(R_\alpha=2\alpha/(\alpha+1)\).

(iii)
Fix \(t>0\). Applying
Lemma~\ref{lem:Granados-convex} with \(g\equiv 1\), with outer curve
\(\partial D(1,2t)\), and 
with
\(\Gamma_\alpha\cap\overline{D(1,t)}\) as the inner convex curve
 gives
\[
\begin{aligned}
    \operatorname{length}\bigl(
        \Gamma_\alpha\cap\{|z-1|<t\}
    \bigr)
    &\le
    C_{\rm B}
    \operatorname{length}\bigl(\partial D(1,2t)\bigr)
    \\
    &=4\pi C_{\rm B}t.
\end{aligned}
\]


For the second estimate, write the upper and the lower half-plane parts of
\(\Gamma_\alpha\cap\{0<|z-1|\le1\}\) as
\[
        1-z=re^{\pm i\theta_\alpha(r)},
        \qquad 0<r\le1.
\]
By \eqref{eq:costheta},
\[
        \cos\theta_\alpha(r)
        =
        \frac1\alpha
        +
        \frac r2\left(1-\frac1{\alpha^2}\right).
\]
A direct differentiation shows that
\[
        r|\theta_\alpha'(r)|\le1,
        \qquad 0<r\le1,
\]
uniformly in \(\alpha>1\). 
Since \(0<s<t\le1\), the estimate
above applies for \(s\le r\le t\).
Hence, on either of the parts,
\[
        |dz|
        =
        \sqrt{1+r^2|\theta_\alpha'(r)|^2}\,dr
        \le \sqrt2\,dr.
\]
Adding the two corresponding integrals, we obtain, for \(0<s<t\le1\),
\[
\begin{aligned}
        \int_{\Gamma_\alpha\cap\{s\le |z-1|<t\}}
             \frac{|dz|}{|z-1|}
        \le
        2\sqrt2\int_s^{t}\frac{dr}{r} 
        =
        2\sqrt2\log\frac{t}{s}.
\end{aligned}
\]



(iv) By (ii), for all sufficiently small \(\eta>0\), the circle
\(\partial D(1,\eta)\) meets \(\Gamma_\alpha\) at exactly two points,
so that \(A_\eta\) is a well-defined rectifiable Jordan arc and
\(\operatorname{length}(A_\eta)\le 2\pi\eta\).
Since
\(\overline{S_\tau}\setminus\{1\}\subset S_\alpha\), 
the bounded complementary component of
$\Gamma_{\alpha, \eta}$
contains \(\overline{S_\tau}\) for all
sufficiently small \(\eta>0\).

\end{proof}

\begin{figure}[ht]
\centering
\begin{tikzpicture}[scale=1.8]

\draw[->] (-1.2,0) -- (2.0,0);
\draw[->] (0,-1.6) -- (0,1.6);

\draw[thick] (0,0) circle (1);
\node[scale=0.8] at (0.55,1.05) {$\mathbb T$};

\draw[thick,dotted,domain=0:360,samples=900,smooth,variable=\theta]
plot (
{
    (
        (8 - 2*cos(\theta))
        - sqrt((8 - 2*cos(\theta))*(8 - 2*cos(\theta)) - 36)
    )/6 * cos(\theta)
},
{
    (
        (8 - 2*cos(\theta))
        - sqrt((8 - 2*cos(\theta))*(8 - 2*cos(\theta)) - 36)
    )/6 * sin(\theta)
}
);

\node[scale=0.8] at (0.25,-0.25) {$\Gamma_\tau$};

\pgfmathsetmacro{\mux}{21/32}
\pgfmathsetmacro{\muy}{3*sqrt(15)/32}

\fill (\mux,\muy) circle (1.3pt);
\node[above left,scale=0.8] at (\mux,\muy) {$\mu$};

\pgfmathsetmacro{\ix}{7/6}
\pgfmathsetmacro{\iy}{-sqrt(15)/6}

\fill (\ix,\iy) circle (1.3pt);
\node[below right,scale=0.8] at (\ix,\iy) {$\mu^{-1}$};

\pgfmathsetmacro{\dxA}{\mux - 1}
\pgfmathsetmacro{\dyA}{\muy}
\pgfmathsetmacro{\rA}{sqrt(\dxA*\dxA + \dyA*\dyA)}
\pgfmathsetmacro{\angA}{atan2(\dyA,\dxA)}

\pgfmathsetmacro{\dxB}{\ix - 1}
\pgfmathsetmacro{\dyB}{\iy}
\pgfmathsetmacro{\rB}{sqrt(\dxB*\dxB + \dyB*\dyB)}
\pgfmathsetmacro{\angB}{atan2(\dyB,\dxB)}

\pgfmathsetmacro{\xA}{1 + \rA}
\pgfmathsetmacro{\xB}{1 + \rB}


\draw[thick,dashed]
    (1,0) ++(\angA:\rA)
    arc (\angA:0:\rA);

\draw[very thick,<-]
    (\xA,0) -- (\xB,0);

\draw[thick,dashed]
    (1,0) ++(0:\rB)
    arc (0:\angB:\rB);

\fill (1,0) circle (1.3pt);
\node[below right,scale=0.8] at (1,0) {$1$};

\node[scale=0.9] at (1.25,0.75) {$\gamma_\mu$};

\end{tikzpicture}
\caption{The path $\gamma_\mu$ joining $\mu^{-1}$ to $\mu$}
\end{figure}

\begin{lemma}\label{lem:cutexists}
Fix \(\tau>1\).  For every
\(\mu\in\Gamma_\tau\setminus\{1\}\) there is a simple piecewise \(C^1\)
oriented curve \(\gamma_\mu\), joining \(\mu^{-1}\) to \(\mu\), such that
\[
        \gamma_\mu\subset\C\setminus S_\tau,
\]
\begin{equation}\label{eq:cut-away-from-vertex}
        |\xi-1|\ge |\mu-1|,
        \qquad \xi\in\gamma_\mu,
\end{equation}
and
\begin{equation}\label{eq:cut-integral-bound}
        \int_{\gamma_\mu}\frac{|d\xi|}{|\xi-1|}
        \le
        2\pi+\log\frac1{m_\tau},
        \qquad
        m_\tau:=\frac{\tau-1}{\tau+1}.
\end{equation}
Moreover, there exists $C_\tau>0$ such that
\begin{equation}\label{eq:cut-length-bound}
        \operatorname{length}(\gamma_\mu)
        \le C_\tau |1-\mu|,
\end{equation}
and, for every \(1<\delta<\tau\), there is \(c_{\delta,\tau}>0\) such that
\begin{equation}\label{eq:cut-distance-bound}
        \dist(S_\delta,\gamma_\mu)
        \ge c_{\delta,\tau}|1-\mu|.
\end{equation}
\end{lemma}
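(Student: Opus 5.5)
The plan is to take for \(\gamma_\mu\) the three-piece path drawn in the figure and verify the properties one at a time. Put \(r:=|1-\mu|\) and \(\rho:=|1-\mu^{-1}|=r/|\mu|\); then \(\rho\ge r\) because \(|\mu|<1\). The curve \(\gamma_\mu\) starts at \(\mu^{-1}\) and runs along the circle \(\partial D(1,\rho)\) to \(1+\rho\). It then runs along the real segment \([1+r,1+\rho]\) down to \(1+r\), and finally along the circle \(\partial D(1,r)\) from \(1+r\) to \(\mu\). On each circle we take the arc that does not cross the half-line \((-\infty,1)\). By the symmetry \(z\mapsto\bar z\) we may assume \(\operatorname{Im}\mu\ge0\). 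The case \(\mu\in\mathbb R\), that is \(\mu=-m_\tau\), is handled with the same arcs, choosing either one.

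The first step is to record the lower bound for \(|\mu|\) and deduce \eqref{eq:cut-away-from-vertex}. From the proof of Lemma~\ref{lem:kernel-bound-Stolz}, \(|\mu|\ge r_\tau(\pi)=m_\tau\) for every \(\mu\in\Gamma_\tau\). Every point of \(\gamma_\mu\) lies on a circle about \(1\) of radius between \(r\) and \(\rho\), so \eqref{eq:cut-away-from-vertex} is immediate.

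The second step is the inclusion \(\gamma_\mu\subset\C\setminus S_\tau\), which I would prove by a monotonicity argument.
\begin{itemize}
\item On a circle \(|z-1|=s\), \(|z|^2=1+s^2+2s\cos\varphi\) with \(\varphi=\arg(z-1)\). Moving along our arcs toward the positive real axis makes \(|\varphi|\) decrease, so \(|z|\) increases.
\item On the small arc, \(|1-z|=r\) is fixed while \(1-|z|\) decreases from its value at \(\mu\). At \(\mu\) one has \(r=\tau(1-|\mu|)\), so \(r\ge\tau(1-|z|)\) along the whole arc, and hence \(z\notin S_\tau\).
\item On the large arc, \(|z|\ge|\mu^{-1}|>1\), so the arc lies outside \(\D\supset S_\tau\).
\item The real segment lies in \((1,\infty)\), hence outside \(\D\).
\end{itemize}

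The third step gives \eqref{eq:cut-integral-bound} and \eqref{eq:cut-length-bound} by direct integration.
\begin{itemize}
\item On each circular arc, \(|d\xi|/|\xi-1|=d\varphi\), and each arc subtends an angle at most \(\pi\). The two arcs therefore contribute at most \(2\pi\).
\item The segment contributes \(\log(\rho/r)=\log(1/|\mu|)\le\log(1/m_\tau)\).
\item The total length is at most \(\pi r+\pi\rho+(\rho-r)\le r\bigl(\pi+(\pi+1)/m_\tau\bigr)\), which gives \eqref{eq:cut-length-bound} with \(C_\tau=\pi+(\pi+1)/m_\tau\).
\end{itemize}

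The main obstacle is the fourth step, \eqref{eq:cut-distance-bound}. I would reduce it to the geometric claim
\[
\dist(z,\C\setminus S_\tau)\ge c\,|1-z|,\qquad z\in S_\delta,
\]
with some \(c=c(\delta,\tau)>0\). Granting the claim, take \(z\in S_\delta\) and \(\xi\in\gamma_\mu\). If \(|z-1|\le r/2\), then \eqref{eq:cut-away-from-vertex} gives \(|z-\xi|\ge r/2\). Otherwise, since \(\xi\notin S_\tau\), we get \(|z-\xi|\ge c|1-z|>cr/2\). This yields \eqref{eq:cut-distance-bound} with \(c_{\delta,\tau}=\min(1,c)/2\).

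To prove the claim, split according to the distance from the vertex.
\begin{itemize}
\item Away from the vertex, \(\overline{S_\delta}\setminus D(1,\eta)\) is a compact subset of \(S_\tau\), because \(\overline{S_\delta}\setminus\{1\}\subset S_\tau\). So the ratio \(\dist(z,\C\setminus S_\tau)/|1-z|\) is bounded below there.
\item Near the vertex, use Lemma~\ref{lem:geometry}(ii). In the coordinates \(u=1-z=se^{i\vartheta}\), the half-opening angles satisfy \(\theta_\delta(s)<\psi_\delta\) and \(\theta_\tau(s)\to\psi_\tau\), with \(\psi_\delta<\psi_\tau\). Hence for small \(s\) the set \(\Omega_\delta\) lies in the sector \(\Sigma_{\psi_\delta}\), and \(\Omega_\tau\) contains a truncated sector of angle \((\psi_\delta+\psi_\tau)/2\). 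Elementary sector geometry then gives a distance at least \(c\,s\).
\end{itemize}
The only delicate point is making "near the vertex" uniform through the explicit formula for \(\cos\theta_\alpha(r)\).
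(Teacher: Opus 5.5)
Your proof is correct. The curve is the paper's: avoiding the half-line $(-\infty,1)$ picks the same arcs as the paper's ``part of the circle on which the reverse inequality holds'', and your rule makes the choice explicit when the outer circle does not meet $S_\tau$. Your estimates for \eqref{eq:cut-integral-bound} and \eqref{eq:cut-length-bound} coincide with the paper's.

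The differences lie in two verifications.

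For $\gamma_\mu\subset\C\setminus S_\tau$, the paper builds the exclusion into the choice of arcs through the angular description of Lemma~\ref{lem:geometry}(ii). You instead check it directly, using the monotonicity of $|z|$ along circles centred at $1$ and the boundary identity $|1-\mu|=\tau(1-|\mu|)$.

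For \eqref{eq:cut-distance-bound}, the paper inspects the cut itself. For $\mu$ near $1$ it shows that $\widetilde\gamma_\mu=1-\gamma_\mu$ lies in a small ball and in a cone $\{|\arg v|\ge a\}$, while $\Omega_\delta$ near $0$ lies in $\{|\arg u|<b\}$ with $b<a$. It handles the remaining $\mu$ by compactness over a closed subarc of $\Gamma_\tau\setminus\{1\}$. You instead prove once a $\mu$-independent cone condition, $\dist(z,\C\setminus S_\tau)\ge c\,|1-z|$ for $z\in S_\delta$, and combine it with the two properties already established. Your route removes both the case split and the compactness argument in $\mu$. It also shows that \eqref{eq:cut-distance-bound} holds for any cut lying outside $S_\tau$ and satisfying \eqref{eq:cut-away-from-vertex}. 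The paper's argument, in exchange, gives a more concrete picture of where the cut sits.

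The uniformity you flag follows directly from the formula in Lemma~\ref{lem:geometry}(ii). If $\psi_\delta<a<\psi_\tau$ and $0<s<2(\cos a-\tau^{-1})/(1-\tau^{-2})$, then $se^{i\vartheta}\in\Omega_\tau$ whenever $|\vartheta|<a$. Meanwhile $\Omega_\delta\subset\Sigma_{\psi_\delta}$ at every radius.
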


\begin{proof}
Put
\[
        u:=1-\xi,
        \qquad
        \Omega_\tau=1-S_\tau .
\]
Thus the vertex \(1\) is moved to the origin, and
\[
        |\xi-1|=|u|,
        \qquad
        |d\xi|=|du|.
\]
Let
\[
        u_\mu:=1-\mu,
        \qquad
        u_{\mu^{-1}}:=1-\mu^{-1}
        =-\frac{u_\mu}{\mu}.
\]
Since \(|\mu|<1\), one has \(|u_\mu|<|u_{\mu^{-1}}|\).

We construct first a path \(\widetilde\gamma_\mu\) in the \(u\)-plane from
\(u_{\mu^{-1}}\) to \(u_\mu\), and then put
\(\gamma_\mu:=1-\widetilde\gamma_\mu\).  The elementary fact needed in the construction is the following consequence of
Lemma~\ref{lem:geometry}\textup{(ii)}.  For each \(r>0\), the points
\(u=re^{i\theta}\) which belong to \(\Omega_\tau\) are exactly those for which
\[
        \cos\theta>
        \frac1\tau+\frac r2\Bigl(1-\frac1{\tau^2}\Bigr).
\]
Thus, on each circle \(|u|=r\), the complement of \(\Omega_\tau\) is described
by the reverse inequality.  This complementary set contains the negative real
point \(-r\).

Now \(u_\mu\in\partial\Omega_\tau\), while
\(u_{\mu^{-1}}\notin\Omega_\tau\), because \(\mu^{-1}\notin S_\tau\).  We
define \(\widetilde\gamma_\mu\) as follows.

First, on the circle
\[
        |u|=|u_{\mu^{-1}}|=\frac{|u_\mu|}{|\mu|},
\]
join \(u_{\mu^{-1}}\) to \(-|u_{\mu^{-1}}|\) along the part of the circle on which
the reverse inequality above holds.
Second, take the radial segment on the negative real axis from
\(-|u_{\mu^{-1}}|\) to \(-|u_\mu|\).  This segment is outside
\(\Omega_\tau\), since \(\Omega_\tau\subset\Sigma_{\psi_\tau}\) and
\(\psi_\tau<\pi/2\).  Finally, on the circle \(|u|=|u_\mu|\), join \(-|u_\mu|\) to \(u_\mu\) along
the part of the circle on which the reverse inequality holds.  This last arc
ends at \(u_\mu\in\partial\Omega_\tau\), which is harmless. 
 The only possible ambiguity happens when $u_\mu$ is real. In this situation, we can take either of two possible half-circles for the last arc. 
Thus
\[
        \widetilde\gamma_\mu\subset\C\setminus\Omega_\tau,
\]
and consequently
\(
   \gamma_\mu\subset\C\setminus S_\tau.
\)
The path is simple and piecewise \(C^1\).  Since every point of
\(\widetilde\gamma_\mu\) has modulus at least \(|u_\mu|\), we also have
\eqref{eq:cut-away-from-vertex}.

Let us estimate the integral in \eqref{eq:cut-integral-bound}.  Each circular
arc is traversed through an angle at most \(\pi\), and therefore the two
circular arcs contribute at most \(2\pi\) to \(\int |du|/|u|\).  The radial
segment contributes
\[
        \int_{|u_\mu|}^{|u_{\mu^{-1}}|}\frac{dt}{t}
        =
        \int_{|u_\mu|}^{|u_\mu|/|\mu|}\frac{dt}{t}
        =
        \log\frac1{|\mu|}.
\]
Thus
\[
        \int_{\gamma_\mu}\frac{|d\xi|}{|\xi-1|}
        =
        \int_{\widetilde\gamma_\mu}\frac{|du|}{|u|}
        \le
        2\pi+\log\frac1{|\mu|}.
\]
Since \(|\mu|\ge m_\tau\) on \(\Gamma_\tau\), this gives
\eqref{eq:cut-integral-bound}.

The estimate \eqref{eq:cut-length-bound} follows from a similar argument. 
Indeed, the two circular arcs have total length at most
\[
        \pi |u_\mu|+\pi\frac{|u_\mu|}{|\mu|},
\]
and the radial segment has length
\[
        \frac{|u_\mu|}{|\mu|}-|u_\mu|.
\]
Since \(|\mu|\ge m_\tau\), the total length is bounded by
\(C_\tau |u_\mu|=C_\tau|1-\mu|\).

It remains to prove \eqref{eq:cut-distance-bound}.  Fix \(1<\delta<\tau\).
We first consider \(\mu\) close to \(1\).  Choose numbers \(b,a\) such that
\[
        \psi_\delta<b<a<\psi_\tau .
\]
By Lemma~\ref{lem:geometry}\textup{(ii)}, there is \(t_0>0\) such that
\[
        \Omega_\delta\cap B(0,t_0)\subset\{u:\ |\arg u|<b\}.
\]
After decreasing \(t_0\), the construction of \(\widetilde\gamma_\mu\) gives,
for all sufficiently small \(|u_\mu|\),
\[
        \widetilde\gamma_\mu\subset B(0,t_0/2)
        \quad\text{and}\quad
        \widetilde\gamma_\mu
        \subset \{v:\ |\arg v|\ge a\}.
\]
Hence, for \(u\in\Omega_\delta\cap B(0,t_0)\) and
\(v\in\widetilde\gamma_\mu\),
\[
        |u-v|\ge c(|u|+|v|)\ge c|u_\mu|.
\]
If \(u\in\Omega_\delta\setminus B(0,t_0)\), then
\[
        |u-v|\ge t_0/2\ge c|u_\mu|
\]
after the smallness condition on \(|u_\mu|\) is strengthened if necessary.
Thus
\[
        \dist(\Omega_\delta,\widetilde\gamma_\mu)\ge c|u_\mu|
\]
for \(\mu\) sufficiently close to \(1\).

For the remaining values of \(\mu\), say \(|u_\mu|\ge\varepsilon\), the curves
\(\widetilde\gamma_\mu\) lie in \(\C\setminus\Omega_\tau\) and depend on
\(\mu\) over a closed subarc of \(\Gamma_\tau\setminus\{1\}\).  Since
\(\overline{\Omega_\delta}\setminus B(0,\varepsilon/2)\) is a compact subset
of \(\Omega_\tau\), and the part \(\Omega_\delta\cap B(0,\varepsilon/2)\) is
separated from all such curves by $\varepsilon/2$,  
the distance between
\(\Omega_\delta\) and \(\widetilde\gamma_\mu\) is bounded below by a positive
constant depending only on \(\delta,\tau,\varepsilon\).  Since \(|u_\mu|\) is
bounded above on \(\Gamma_\tau\), this lower bound is also at least
\(c_{\delta,\tau}|u_\mu|\).  Returning to \(\xi=1-u\), we obtain
\eqref{eq:cut-distance-bound}.
\end{proof}

This completes the geometric part of the scalar construction.  In the next  subsection, we shall use the
chosen cuts to write the normalized logarithmic branches in a form suitable for
both scalar and operator estimates.

\subsection{The logarithmic branches and the scalar reproducing  formula}

Fix \(1<\delta<\tau\).  For each
\(\mu\in\Gamma_\tau\setminus\{1\}\), let \(\gamma_\mu\) be the cut given by
Lemma~\ref{lem:cutexists}. 
By \eqref{eq:cut-distance-bound}, 
the cut does not meet \(S_\delta\).

We now define the branch associated with this particular cut.    Put
\begin{equation}\label{eq:c-mu-definition}
        c_\mu
        :=
        i\pi-\int_{\gamma_\mu}\frac{d\xi}{1-\xi},
\end{equation}
and set
\begin{equation}\label{eq:branch-normalisation}
        L_\mu(z)
        :=
        c_\mu+\int_{\gamma_\mu}\frac{d\xi}{z-\xi},
        \qquad z\in\C\setminus\gamma_\mu.
\end{equation}
Then
\[
        L_\mu(1)=i\pi.
\]
The notation \(L_\mu\) depends on the chosen cut \(\gamma_\mu\), which is
fixed throughout the paper. We suppress this dependence from the notation.
 The constant \(c_\mu\) is chosen only to impose
this normalization.  The next lemma identifies \(L_\mu\) with the
corresponding branch of \(\log\varphi_\mu\).

\begin{lemma}\label{lem:cutrepr}
Let \(\mu\in\Gamma_\tau\setminus\{1\}\).  The function \(L_\mu\) defined by
\eqref{eq:branch-normalisation} satisfies
\[
        \exp L_\mu(z)=\varphi_\mu(z),
        \qquad z\in\mathbb C\setminus\gamma_\mu.
\]
Hence \(L_\mu\) is the branch of
\(\log\varphi_\mu\) on \(\mathbb C\setminus\gamma_\mu\) normalized by
\(L_\mu(1)=i\pi\).
\end{lemma}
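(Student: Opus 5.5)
The plan is to show that $E(z):=\exp L_\mu(z)/\varphi_\mu(z)$ is identically $1$ on $\C\setminus\gamma_\mu$. We first check that it is holomorphic, then that it is constant, and finally evaluate it at $z=1$.

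First, $L_\mu$ is holomorphic on $\C\setminus\gamma_\mu$, since it is a Cauchy-type integral over a compact piecewise $C^1$ path. Its derivative is
\[
L_\mu'(z)=-\int_{\gamma_\mu}\frac{d\xi}{(z-\xi)^2}.
\]
The integrand is an exact differential in $\xi$, namely $d\bigl(1/(z-\xi)\bigr)$ up to sign. So the fundamental theorem of calculus along $\gamma_\mu$, which runs from $\mu^{-1}$ to $\mu$, gives
\[
L_\mu'(z)=\frac{1}{z-\mu}-\frac{1}{z-\mu^{-1}} .
\]
The sign must be checked: $\frac{d}{d\xi}\frac{1}{z-\xi}=\frac{1}{(z-\xi)^2}$, so $\int_{\gamma_\mu}\frac{d\xi}{(z-\xi)^2}=\frac1{z-\mu}-\frac1{z-\mu^{-1}}$. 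Hence $L_\mu'(z)=\frac1{z-\mu^{-1}}-\frac1{z-\mu}$.

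Next, compute the logarithmic derivative of $\varphi_\mu(z)=\frac{1-\mu z}{\mu-z}=\mu\frac{z-\mu^{-1}}{z-\mu}$. It is
\[
\frac{\varphi_\mu'}{\varphi_\mu}(z)=\frac{1}{z-\mu^{-1}}-\frac{1}{z-\mu},
\]
which coincides with $L_\mu'$. Hence $E$ is holomorphic on $\C\setminus\gamma_\mu$ with $E'/E=L_\mu'-\varphi_\mu'/\varphi_\mu=0$. The zero $\mu^{-1}$ and the pole $\mu$ of $\varphi_\mu$ are the endpoints of $\gamma_\mu$, so they are excluded from the domain and $\varphi_\mu$ is zero-free there.

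The one point needing care is connectedness. Since $\gamma_\mu$ is a simple arc (Lemma~\ref{lem:cutexists}), its complement in $\C$ is connected; this is the standard fact that a Jordan arc does not separate the plane. So $E$ is constant on $\C\setminus\gamma_\mu$.

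To identify the constant we evaluate at $z=1$. By \eqref{eq:cut-away-from-vertex}, $|\xi-1|\ge|\mu-1|>0$ for $\xi\in\gamma_\mu$, so $1\notin\gamma_\mu$. Moreover \eqref{eq:c-mu-definition}–\eqref{eq:branch-normalisation} give $L_\mu(1)=i\pi$, hence $\exp L_\mu(1)=-1=\varphi_\mu(1)$. Thus $E\equiv1$, that is, $\exp L_\mu=\varphi_\mu$ on $\C\setminus\gamma_\mu$.

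Finally, $L_\mu$ is a continuous logarithm of $\varphi_\mu$ on the connected set $\C\setminus\gamma_\mu$. Any two such logarithms differ by a constant in $2\pi i\mathbb Z$, so $L_\mu$ is the unique branch with $L_\mu(1)=i\pi$. Its restriction to $S_\delta\cup\{1\}$ is therefore the normalized branch described earlier, using that $\gamma_\mu$ avoids $S_\delta$ by \eqref{eq:cut-distance-bound}. Apart from sign bookkeeping, the only nontrivial input is the connectedness of the complement of the arc.
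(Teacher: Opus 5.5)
Your proposal is correct and matches the paper's proof step for step: compute $L_\mu'(z)=\frac1{z-\mu^{-1}}-\frac1{z-\mu}=\varphi_\mu'(z)/\varphi_\mu(z)$ from the cut integral, use that the complement of the Jordan arc $\gamma_\mu$ is connected to conclude that $\exp L_\mu/\varphi_\mu$ is constant, and fix the constant at $z=1$. You also make explicit two points the paper leaves implicit: that $1\notin\gamma_\mu$ by \eqref{eq:cut-away-from-vertex}, and that logarithms of $\varphi_\mu$ on the connected set $\mathbb C\setminus\gamma_\mu$ agreeing at $z=1$ must coincide.
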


\begin{proof}
By definition, \(L_\mu\) is holomorphic on
\(\mathbb C\setminus\gamma_\mu\), and \(L_\mu(1)=i\pi\).
Differentiating the cut integral gives
\[
        L_\mu'(z)
        =
        \frac1{z-\mu^{-1}}-\frac1{z-\mu},
        \qquad z\in\mathbb C\setminus\gamma_\mu .
\]
On the other hand,
\[
        \varphi_\mu(z)
        =
        \mu\,\frac{z-\mu^{-1}}{z-\mu},
\]
and hence
\[
        \frac{\varphi_\mu'(z)}{\varphi_\mu(z)}
        =
        \frac1{z-\mu^{-1}}-\frac1{z-\mu}.
\]
Thus
\[
        L_\mu'(z)=\frac{\varphi_\mu'(z)}{\varphi_\mu(z)},
        \qquad z\in\mathbb C\setminus\gamma_\mu .
\]
Therefore
\[
        z\mapsto \frac{\exp L_\mu(z)}{\varphi_\mu(z)}
\]
is locally constant on \(\mathbb C\setminus\gamma_\mu\).  Since the
complement of a Jordan arc is connected, this function is constant.  At
\(z=1\),
\[
        \frac{\exp L_\mu(1)}{\varphi_\mu(1)}
        =
        \frac{e^{i\pi}}{-1}
        =
        1.
\]
Hence \(\exp L_\mu=\varphi_\mu\) on
\(\mathbb C\setminus\gamma_\mu\).  
In particular, since
$S_\delta\subset\C\setminus\gamma_\mu$, the restriction of
$L_\mu$ to $S_\delta$ is the normalized branch of
$\log\varphi_\mu$ described above.
\end{proof}



Since we shall need to integrate the map $\mu\mapsto L_\mu,$ we also record the next simple lemma.

\begin{lemma}\label{lem:Lmu-parameter-regularity}
Let \(1<\delta<\tau\), and let \(K\) be a compact subarc of
\(\Gamma_\tau\setminus\{1\}\). 
 The map
\[
        (\mu,z)\mapsto L_\mu(z)
\]
is continuous on \(K\times \overline{S_\delta}\).  Moreover, for each fixed
\(z\in\overline{S_\delta}\), the function
\[
        \mu\mapsto L_\mu(z),\qquad \mu\in K,
\]
is the restriction of a holomorphic function of \(\mu\) in a neighbourhood
of \(K\).
\end{lemma}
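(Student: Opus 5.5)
The difficulty is that the cut \(\gamma_\mu\) is built piecewise and could jump as \(\mu\) varies. To avoid this, I will not use the explicit integral over \(\gamma_\mu\). Instead I will characterize \(L_\mu(z)\) for \(z\in\overline{S_\delta}\) intrinsically. By Lemma~\ref{lem:cutrepr}, \(L_\mu\) restricted to the convex (hence simply connected) set \(\overline{S_\delta}\) is the branch of \(\log\varphi_\mu\) with \(L_\mu(1)=i\pi\). Since \(\overline{S_\delta}\) is convex and contains \(1\), it can be written as
\[
  L_\mu(z)=i\pi+\int_{[1,z]}\Bigl(\frac1{\xi-\mu^{-1}}-\frac1{\xi-\mu}\Bigr)\,d\xi ,
  \qquad z\in\overline{S_\delta}.
\]
The segment \([1,z]\) lies in \(\overline{S_\delta}\), and \(\overline{S_\delta}\) avoids both \(\mu\) and \(\mu^{-1}\). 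Indeed, \(\overline{S_\delta}\setminus\{1\}\subset S_\tau\), \(\mu\in\Gamma_\tau\setminus\{1\}\), and \(|\mu^{-1}|>1\). The identity holds because both sides have the same derivative \(\varphi_\mu'/\varphi_\mu\) (Lemma~\ref{lem:cutrepr}) and agree at \(z=1\). The value at the vertex is consistent by the continuity of \(L_\mu\) at \(1\), since \(1\notin\gamma_\mu\) by \eqref{eq:cut-away-from-vertex}.

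Next I will fix the compact arc \(K\) and separate it from the integration set. By compactness there is \(\varepsilon>0\) with \(\dist(K,\overline{S_\delta})\ge 2\varepsilon\) and \(\dist(K^{-1},\overline{S_\delta})\ge2\varepsilon\), where \(K^{-1}=\{\mu^{-1}:\mu\in K\}\). This uses \(K\cap\overline{S_\delta}=\varnothing\) and the fact that \(|\mu^{-1}|\ge 1/\max_K|\mu|>1\). Let \(U\) be the open set of all \(\mu\) with \(\dist(\mu,K)<\varepsilon'\), with \(\varepsilon'\) small enough that \(\mu\) and \(\mu^{-1}\) stay at distance at least \(\varepsilon\) from \(\overline{S_\delta}\) for all \(\mu\in U\). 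On \(U\times\overline{S_\delta}\) I define \(\widetilde L(\mu,z)\) by the displayed formula.

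For fixed \(z\), the integrand is jointly continuous in \((\mu,\xi)\in U\times[1,z]\) and holomorphic in \(\mu\), and the path has length at most \(2\). Differentiation under the integral, or Morera together with Fubini, then shows that \(\mu\mapsto\widetilde L(\mu,z)\) is holomorphic on \(U\). Since \(\widetilde L(\mu,z)=L_\mu(z)\) for \(\mu\in K\), this gives the second claim.

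For joint continuity on \(K\times\overline{S_\delta}\), I will parametrize \(\xi=1+s(z-1)\) with \(s\in[0,1]\). The integrand, \((z-1)\) times the bracket, is then uniformly continuous on the compact set \(K\times\overline{S_\delta}\times[0,1]\), because its denominators are bounded below by \(\varepsilon\). Uniform continuity gives the continuity of \((\mu,z)\mapsto L_\mu(z)\).

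The only genuinely delicate point is the vertex \(z=1\), which lies on \(\partial S_\delta\) but where the kernel is still regular because \(\mu\) stays a positive distance from \(1\) on \(K\). The parametrization above handles this uniformly, so I expect no real obstacle.
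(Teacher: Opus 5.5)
Your proposal is correct and follows essentially the same route as the paper: integrate the logarithmic derivative $\frac1{\zeta-\mu^{-1}}-\frac1{\zeta-\mu}$ from the vertex using the normalization $L_\mu(1)=i\pi$, and read off joint continuity and holomorphy in $\mu$ from the positive separation of $\mu$ and $\mu^{-1}$ from $\overline{S_\delta}$ for $\mu\in K$. You make the path concrete (the segment $[1,z]$, using convexity) and spell out the neighbourhood $U$ and the uniform-continuity argument, which the paper leaves implicit.
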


\begin{proof}
We use only the branch identification already proved above.  On
\(\overline{S_\delta}\), \(L_\mu\) is the normalized branch satisfying
\(L_\mu(1)=\pi i\), and
\[
        \partial_z L_\mu(z)
        =
        \frac1{z-\mu^{-1}}-\frac1{z-\mu}.
\]
Since \(K\subset\Gamma_\tau\setminus\{1\}\) is compact and \(\delta<\tau\),
the points \(\mu\) and \(\mu^{-1}\), \(\mu\in K\), stay a positive distance
from \(\overline{S_\delta}\).  Hence the right-hand side is continuous in
\((\mu,z)\) on \(K\times\overline{S_\delta}\), and is locally holomorphic in
\(\mu\).

For \(z\in\overline{S_\delta}\), integrating the last identity along any
rectifiable path in \(\overline{S_\delta}\) from \(1\) to \(z\), and using
\(L_\mu(1)=\pi i\), gives
\[
        L_\mu(z)
        =
        \pi i+
        \int_1^z
        \left(
        \frac1{\zeta-\mu^{-1}}-\frac1{\zeta-\mu}
        \right)\,d\zeta .
\]
The integral is independent of the path, by the already established
holomorphy of \(L_\mu\) in the \(z\)-variable.  This formula gives the
asserted continuity and the local holomorphic dependence on \(\mu\).
\end{proof}

The following kernel estimate is the key for deriving the reproducing formula 
for $\HS(S_\tau).$ It uses two additional consequences of the construction of
\(\gamma_\mu\): the curve has length bounded by a constant times \(|1-\mu|\), while its
distance from the inner Stolz domain \(S_\delta\) is bounded below by a
constant multiple of \(|1-\mu|\).

\begin{lemma}\label{lem:Fmu-uniform}
Let \(1<\delta<\tau\).  There exists 
\(C_{\delta,\tau}>0\) such that
\begin{equation}\label{Fmu}
        \sup_{\mu\in\Gamma_\tau\setminus\{1\}}
        \sup_{z\in S_\delta}
        |L_\mu(z)|
        \le
        C_{\delta,\tau}.
\end{equation}
\end{lemma}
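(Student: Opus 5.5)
Since $L_\mu(1)=i\pi$, the plan is to write, for $z\in S_\delta$,
\[
        L_\mu(z)-i\pi
        =
        \int_{\gamma_\mu}\Bigl(\frac{1}{z-\xi}-\frac{1}{1-\xi}\Bigr)\,d\xi
        =
        \int_{\gamma_\mu}\frac{(1-z)\,d\xi}{(z-\xi)(1-\xi)} .
\]
This follows by subtracting \eqref{eq:branch-normalisation} at $z$ and at $1$; the constant $c_\mu$ cancels. We then estimate the integrand pointwise on $\gamma_\mu$, splitting into two cases according to the size of $|1-z|$ compared with $|1-\mu|$. The geometric inputs are \eqref{eq:cut-away-from-vertex}, \eqref{eq:cut-integral-bound}, \eqref{eq:cut-length-bound} and \eqref{eq:cut-distance-bound} from Lemma~\ref{lem:cutexists}.

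\emph{Case $|1-z|\le M|1-\mu|$,} with a constant $M$ depending on $\delta,\tau$ to be fixed in the other case. By \eqref{eq:cut-distance-bound}, $|z-\xi|\ge c_{\delta,\tau}|1-\mu|$, and by \eqref{eq:cut-away-from-vertex}, $|1-\xi|\ge|1-\mu|$. So the integrand is bounded by $M|1-\mu|/(c_{\delta,\tau}|1-\mu|^2)$. Multiplying by $\operatorname{length}(\gamma_\mu)\le C_\tau|1-\mu|$ gives a bound $MC_\tau/c_{\delta,\tau}$.

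\emph{Case $|1-z|> M|1-\mu|$.} Here it is better to bound the two terms separately: $|L_\mu(z)|\le |c_\mu|+\bigl|\int_{\gamma_\mu} d\xi/(z-\xi)\bigr|$. First, $|c_\mu|\le\pi+2\pi+\log(1/m_\tau)$ by \eqref{eq:cut-integral-bound}. For the remaining integral, every $\xi\in\gamma_\mu$ satisfies $|\xi-1|\le C'_\tau|1-\mu|$, since $\gamma_\mu$ starts at $\mu^{-1}$, with $|\mu^{-1}-1|\le|1-\mu|/m_\tau$, and has length at most $C_\tau|1-\mu|$. Choose $M=2C'_\tau$. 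Then $|z-\xi|\ge|1-z|-|1-\xi|\ge |1-z|/2\ge C'_\tau|1-\mu|$. Hence $\bigl|\int_{\gamma_\mu}d\xi/(z-\xi)\bigr|\le C_\tau|1-\mu|/(C'_\tau|1-\mu|)$, which is bounded.

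Combining the two cases gives \eqref{Fmu} with a constant depending only on $\delta,\tau$. There is no real obstacle here. The one point needing care is that all constants in Lemma~\ref{lem:cutexists} are uniform over $\mu\in\Gamma_\tau\setminus\{1\}$, including for $\mu$ far from $1$; this is exactly what that lemma provides. The first case alone would also suffice if $|1-z|$ is bounded, but since $S_\delta$ is bounded, $|1-z|\le 2$, so the split is only needed to handle $|1-\mu|$ small relative to $|1-z|$.
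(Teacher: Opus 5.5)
Your proof is correct, and it uses the same ingredients as the paper: the bounds from Lemma~\ref{lem:cutexists} on $c_\mu$, on $\operatorname{length}(\gamma_\mu)$, and on the distance of $\gamma_\mu$ from $S_\delta$. The case split, however, is unnecessary. The distance bound \eqref{eq:cut-distance-bound} gives $|z-\xi|\ge c_{\delta,\tau}|1-\mu|$ for \emph{every} $z\in S_\delta$ and $\xi\in\gamma_\mu$, so the direct estimate from your Case~2 works for all $z$ at once:
\[
|L_\mu(z)|\le |c_\mu|+\frac{\operatorname{length}(\gamma_\mu)}{\dist(S_\delta,\gamma_\mu)}\le C_\tau+\frac{C_\tau}{c_{\delta,\tau}}.
\]
This is exactly the paper's two-line proof.

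The subtraction of $L_\mu(1)=i\pi$ in Case~1 is harmless but gains nothing, since that case still relies on \eqref{eq:cut-distance-bound}. The same goes for the localization $|\xi-1|\le C'_\tau|1-\mu|$ in Case~2. Your closing remark about when the split is needed can simply be dropped.
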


\begin{proof}
Throughout the proof, $C_\tau$ denotes a positive constant depending
only on $\tau$, and $C_{\delta,\tau}$ a positive constant depending
only on $\delta$ and $\tau.$ Their values may increase from one
occurrence to the next.
By the definition of \(c_\mu\) and by \eqref{eq:cut-integral-bound},
\[
        |c_\mu|
        \le
        \pi+
        \int_{\gamma_\mu}\frac{|d\xi|}{|\xi-1|}
        \le C_\tau .
\]
Hence, for \(z\in S_\delta\),
\[
        |L_\mu(z)|
        \le
        |c_\mu|+
        \int_{\gamma_\mu}\frac{|d\xi|}{|z-\xi|}
        \le
        C_\tau+
        \frac{\operatorname{length}(\gamma_\mu)}
             {\dist(S_\delta,\gamma_\mu)} .
\]
Using \eqref{eq:cut-length-bound} and \eqref{eq:cut-distance-bound}, we get
\[
        |L_\mu(z)|
        \le
        C_\tau+
        \frac{C_\tau |1-\mu|}
             {c_{\delta,\tau}|1-\mu|}
        \le C_{\delta,\tau}.
\]
The bound is uniform in \(z\in S_\delta\) and
\(\mu\in\Gamma_\tau\setminus\{1\}\).
\end{proof}

Now we are ready to prove the reproducing formula for $\HS(S_\tau),$
where $L_\mu$ plays the role of reproducing kernel.
\begin{theorem}\label{thm:scalar-reproducing}
Let \(1<\delta<\tau\), and let \(f\in\HS(S_\tau)\).  Then, for every
\(z\in S_\delta\),
\begin{equation}\label{eq:scalar-reproducing}
        f(z)-f(1)
        =
        \frac{1}{2\pi i}
        \int_{\Gamma_\tau}f'(\mu)L_\mu(z)\,d\mu.
\end{equation}
\end{theorem}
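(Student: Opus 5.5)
The plan is to prove \eqref{eq:scalar-reproducing} by differentiating in \(z\): I will show that both sides are holomorphic on \(S_\delta\) with the same derivative, and then fix the additive constant at the vertex \(1\). Write
\[
F(z):=\frac{1}{2\pi i}\int_{\Gamma_\tau}f'(\mu)L_\mu(z)\,d\mu,\qquad z\in S_\delta .
\]
Since \(f'\in E^1(S_\tau)\), its boundary values are integrable on \(\Gamma_\tau\). Lemma~\ref{lem:Fmu-uniform} gives \(|L_\mu(z)|\le C_{\delta,\tau}\) uniformly in \(\mu\) and \(z\), so the integrand has the integrable majorant \(C_{\delta,\tau}|f'(\mu)|\). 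Measurability in \(\mu\) follows from Lemma~\ref{lem:Lmu-parameter-regularity}, by exhausting \(\Gamma_\tau\setminus\{1\}\) with compact subarcs. Hence \(F\) is well defined and bounded on \(S_\delta\).

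Next I differentiate under the integral sign. From the proof of Lemma~\ref{lem:cutrepr},
\[
\partial_zL_\mu(z)=\frac1{z-\mu^{-1}}-\frac1{z-\mu}.
\]
For \(z\) in a compact subset \(K'\) of \(S_\delta\), the distances from \(z\) to \(\mu\) and to \(\mu^{-1}\) are bounded below uniformly in \(\mu\in\Gamma_\tau\): for \(\mu\) this is because \(K'\subset S_\tau\) is compact, and for \(\mu^{-1}\) because \(|\mu^{-1}|\ge1>\sup_{K'}|z|\). So dominated convergence justifies differentiation, \(F\) is holomorphic, and
\[
F'(z)=\frac1{2\pi i}\int_{\Gamma_\tau}\frac{f'(\mu)}{\mu-z}\,d\mu+\frac1{2\pi i}\int_{\Gamma_\tau}\frac{\mu f'(\mu)}{z\mu-1}\,d\mu .
\]
The first term equals \(f'(z)\) by the Cauchy formula \eqref{cauchy} applied to \(f'\in E^1(S_\tau)\). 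For the second, fix \(z\) and note that \(|z\mu-1|\ge1-|z|>0\) for \(\mu\in\overline{S_\tau}\). Thus \(\mu\mapsto\mu/(z\mu-1)\) is bounded and holomorphic near \(\overline{S_\tau}\), so \(\mu f'(\mu)/(z\mu-1)\) lies in \(E^1(S_\tau)\), and its integral vanishes by \eqref{ezero}. Therefore \(F'=f'\) on \(S_\delta\), and \(F=f+c\) for some constant \(c\).

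To identify \(c\), I let \(z\to1\) within \(S_\delta\). For each fixed \(\mu\in\Gamma_\tau\setminus\{1\}\), \(L_\mu\) is continuous on \(\overline{S_\delta}\) by Lemma~\ref{lem:Lmu-parameter-regularity}, with \(L_\mu(1)=i\pi\). The uniform bound from Lemma~\ref{lem:Fmu-uniform} together with dominated convergence then gives
\[
F(z)\to\frac{i\pi}{2\pi i}\int_{\Gamma_\tau}f'(\mu)\,d\mu=0
\]
by \eqref{ezero}. On the other hand, \(f(z)\to f(1)\) by Corollary~\ref{cor:HS-absolute-continuity}. Hence \(c=-f(1)\), which is \eqref{eq:scalar-reproducing}.

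The only delicate point is the behaviour near the vertex, where \(\mu\to1\) and the cuts \(\gamma_\mu\) shrink toward \(S_\delta\). This is exactly what the uniform kernel bound of Lemma~\ref{lem:Fmu-uniform} controls. It is used twice: for integrability of \(F\), and for the limit \(z\to1\). The differentiation step needs only compact subsets of \(S_\delta\), so it causes no trouble.
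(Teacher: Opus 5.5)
Your proposal is correct and follows essentially the same route as the paper. You differentiate under the integral sign using uniform bounds on $\partial_z L_\mu$ over compact subsets of $S_\delta$, identify the derivative with $f'$ via the Cauchy formula \eqref{cauchy} together with the vanishing of the second term by \eqref{ezero}, and fix the constant by letting $z\to1$ with dominated convergence based on Lemma~\ref{lem:Fmu-uniform}; your explicit remark on measurability of $\mu\mapsto L_\mu(z)$ via Lemma~\ref{lem:Lmu-parameter-regularity} is a small point the paper leaves implicit.
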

\begin{proof}
Note first that by Lemma~\ref{lem:Fmu-uniform},
\[
    |L_\mu(z)|
    \le C_{\delta,\tau},
    \qquad
    \mu\in\Gamma_\tau\setminus\{1\},
    \quad z\in S_\delta.
\]
Since $f'\in L^1(\Gamma_\tau)$, the function
\[
    g(z)
    :=
    \frac{1}{2\pi i}
    \int_{\Gamma_\tau}f'(\mu)L_\mu(z)\,d\mu,
    \qquad z\in S_\delta,
\]
is therefore well-defined.


Next, we compute the derivative of \(g\).  
Fix \(z_0\in S_\delta\), and choose a disc
\(D(z_0,\rho)\Subset S_\delta\).
Since \(\delta<\tau\),
there is a constant \(c>0\) such that
\[
        |\mu-z|\ge c,\qquad |1-\mu z|\ge c,
\]
for all \(\mu\in\Gamma_\tau\) and all \(z\in D(z_0,\rho)\). Hence
\[
        \partial_z L_\mu(z)
        =
        \frac{1}{\mu-z}-\frac{\mu}{1-\mu z}
\]
is uniformly bounded for \(\mu\in\Gamma_\tau\) and \(z\in D(z_0,\rho)\).
Since \(f'\in L^1(\Gamma_\tau)\), differentiation under the integral is
justified near \(z_0\). As \(z_0\in S_\delta\) was arbitrary, this gives
\[
        g'(z)
        =
        \frac{1}{2\pi i}
        \int_{\Gamma_\tau}\frac{f'(\mu)}{\mu-z}\,d\mu
        -
        \frac{1}{2\pi i}
        \int_{\Gamma_\tau}f'(\mu)\frac{\mu}{1-\mu z}\,d\mu ,
        \qquad z\in S_\delta.
\]

By the Cauchy formula for
\(E^1(S_\tau)\) in \eqref{cauchy}, the first integral equals \(f'(z)\).  The second integral is zero, since for fixed
\(z\in S_\delta\) the function
\[
        \mu\mapsto f'(\mu)\frac{\mu}{1-\mu z}
\]
belongs to \(E^1(S_\tau)\), and thus by \eqref{ezero} its integral over the boundary
 \(\Gamma_\tau\)
vanishes. Hence
\[
        g'(z)=f'(z),\qquad z\in S_\delta .
\]
Therefore $g-f$
is constant on \(S_\delta\).

It remains to identify this constant. Let \(z\to1\) inside \(S_\delta\). Then
\(f(z)\to f(1)\), and moreover, for every fixed
\(\mu\in\Gamma_\tau\setminus\{1\}\),
\[
        L_\mu(z)\to L_\mu(1)=i\pi .
\]
By the uniform kernel bound and \(f'\in L^1(\Gamma_\tau)\), dominated
convergence gives
\[
        g(z)\to
        \frac{1}{2\pi i}
        \int_{\Gamma_\tau}f'(\mu)i\pi\,d\mu
        =
        \frac12\int_{\Gamma_\tau}f'(\mu)\,d\mu
        =
        0.
\]
Consequently the constant is \(-f(1)\), and hence
\[
        f(z)-f(1)
        =
        \frac{1}{2\pi i}
        \int_{\Gamma_\tau}f'(\mu)L_\mu(z)\,d\mu,
        \qquad z\in S_\delta,
\]
as required.
\end{proof}


\section{Hardy--Sobolev functional calculus}\label{hcalculus}

\subsection{The operator logarithmic kernel}

We now pass from the scalar logarithmic kernels to their operator analogues.  Let
\(
        1<\sigma<\tau,
\)
and let \(T\in\L(X)\) be a Ritt operator of Stolz type \(\sigma\).
The parameter \(\tau\) will define the corresponding Hardy--Sobolev algebra, while the auxiliary
separation parameter \(\delta\), with
\(
        \sigma<\delta<\tau,
\)
will be chosen only when estimates are needed.  For such a \(\delta\), the
Ritt resolvent estimate gives
\begin{equation}\label{eq:Ritt-delta-bound}
        M_{\delta}(T):=
\sup_{\xi\in\mathbb C\setminus(S_\delta\cup\{1\})}
|\xi-1|\,\|(\xi-T)^{-1}\|<\infty.
\end{equation}
The constant \(M_{\delta}(T)\) may blow up as \(\delta\downarrow\sigma\),
and we will call it \emph{the Stolz constant} of $T.$


For each \(\mu\in\Gamma_\tau\setminus\{1\}\), we keep the cut
\(\gamma_\mu\) constructed in Lemma~\ref{lem:cutexists}.  By
\eqref{eq:cut-away-from-vertex},
\[
        \gamma_\mu
        \subset
        \mathbb C\setminus(S_\tau\cup\{1\})
        \subset
        \mathbb C\setminus(S_\delta\cup\{1\}),
\]
and hence \((\xi-T)^{-1}\) is defined for every
\(\xi\in\gamma_\mu\).

 Recall also the
branch normalization
\[
        L_\mu(z)=c_\mu+\int_{\gamma_\mu}\frac{d\xi}{z-\xi},
        \qquad
        L_\mu(1)=i\pi,
\]
where
\[
        c_\mu=i\pi-\int_{\gamma_\mu}\frac{d\xi}{1-\xi}.
\]
We define the corresponding operator logarithmic kernel by
\begin{equation}\label{eq:LmuT-cut-definition}
        L_\mu[T]:=
        c_\mu I-\int_{\gamma_\mu}(\xi-T)^{-1}\,d\xi .
\end{equation}


Since $L_\mu$ is holomorphic in a neighbourhood of $\sigma(T)$,  the Riesz--Dunford calculus gives
\begin{equation}\label{rd}
        L_\mu(T)
        =
        \frac{1}{2\pi i}
        \int_{\Gamma^{\mu}}
             L_\mu(\lambda)(\lambda-T)^{-1}\,d\lambda,
\end{equation}
for every positively oriented rectifiable Jordan curve surrounding \( \sigma(T)\)
such that  $\gamma_\mu$ is
contained in the unbounded component of $\mathbb C\setminus\Gamma^{\mu}$.
(Since \(\gamma_\mu\) is a Jordan arc disjoint from the compact set \(\sigma(T)\) such curves exist.)
\begin{lemma}\label{lem:operator-cut-faithfulness}
Let $T\in \mathcal L(X)$ be a Ritt operator of Stolz type $\sigma,$ and let $\tau> \sigma.$
For fixed \(\mu\in\Gamma_\tau\setminus\{1\}\), if  \(L_\mu [T]\) is
given by \eqref{eq:LmuT-cut-definition}, and \(L_\mu(T)\) is defined by \eqref{rd}, then
\[
L_\mu[T]=L_\mu(T).
\]
\end{lemma}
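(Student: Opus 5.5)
We substitute the cut representation of $L_\mu$ into the Riesz--Dunford integral \eqref{rd} and interchange the order of integration. Fix $\mu\in\Gamma_\tau\setminus\{1\}$ and a contour $\Gamma^\mu$ as in \eqref{rd}: positively oriented, surrounding $\sigma(T)$, with $\gamma_\mu$ in the unbounded component of $\C\setminus\Gamma^\mu$. Since $\gamma_\mu$ is compact and disjoint from $\Gamma^\mu$, we have
\[
\operatorname{dist}(\gamma_\mu,\Gamma^\mu)>0 .
\]
Hence the representation
\[
L_\mu(\lambda)=c_\mu+\int_{\gamma_\mu}\frac{d\xi}{\lambda-\xi}
\]
holds for every $\lambda\in\Gamma^\mu$, and the integrand is jointly continuous and bounded on $\gamma_\mu\times\Gamma^\mu$. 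Substituting into \eqref{rd} gives
\[
L_\mu(T)=\frac{c_\mu}{2\pi i}\int_{\Gamma^\mu}(\lambda-T)^{-1}\,d\lambda
+\frac1{2\pi i}\int_{\Gamma^\mu}\int_{\gamma_\mu}\frac{(\lambda-T)^{-1}}{\lambda-\xi}\,d\xi\,d\lambda .
\]
The first term is $c_\mu I$ by the standard Riesz--Dunford identity for the constant function $1$.

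For the second term, everything is continuous and bounded on the compact product of two rectifiable curves. Fubini's theorem for Bochner integrals therefore lets us swap the integrals, reducing the term to
\[
\int_{\gamma_\mu}\Bigl(\frac1{2\pi i}\int_{\Gamma^\mu}\frac{(\lambda-T)^{-1}}{\lambda-\xi}\,d\lambda\Bigr)d\xi .
\]
For fixed $\xi\in\gamma_\mu$, the inner integral is the Riesz--Dunford expression for $r_\xi(\lambda)=(\lambda-\xi)^{-1}$. This function is holomorphic on a neighbourhood of the closed inside of $\Gamma^\mu$, because $\xi$ lies in the unbounded component. The calculus is multiplicative and sends $\lambda\mapsto\lambda-\xi$ to $T-\xi$, so the inner integral equals $(T-\xi)^{-1}=-(\xi-T)^{-1}$. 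If one prefers a direct argument, the resolvent identity
\[
(\lambda-T)^{-1}(\xi-T)^{-1}=\frac{(\lambda-T)^{-1}-(\xi-T)^{-1}}{\xi-\lambda}
\]
together with Cauchy's theorem gives the same value: $\int_{\Gamma^\mu}d\lambda/(\xi-\lambda)=0$ because $\xi$ is outside $\Gamma^\mu$. Collecting terms,
\[
L_\mu(T)=c_\mu I-\int_{\gamma_\mu}(\xi-T)^{-1}\,d\xi=L_\mu[T],
\]
which is \eqref{eq:LmuT-cut-definition}.

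The main point needing care is the geometry of the contour, not the analysis. We need $\Gamma^\mu$ to surround $\sigma(T)$, including the vertex $1$, while keeping the whole arc $\gamma_\mu$ outside. This is guaranteed because $\gamma_\mu\subset\C\setminus(S_\tau\cup\{1\})$ is a compact arc disjoint from $\sigma(T)\subset\overline{S_\sigma}$. The paper already notes that such curves exist. Independence of $L_\mu(T)$ from the admissible choice of $\Gamma^\mu$ is standard: $L_\mu$ is holomorphic on $\C\setminus\gamma_\mu$, which is connected and contains every such contour together with its inside. The interchange of integrals is routine, since $(\xi-T)^{-1}$ is continuous on $\gamma_\mu$ by \eqref{eq:Ritt-delta-bound}.
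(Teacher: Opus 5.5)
Your proposal is correct and follows essentially the same route as the paper: substitute the cut representation of $L_\mu$ into the Riesz--Dunford integral, interchange the integrals by Fubini's theorem, and identify the inner integral as the Riesz--Dunford value of $\lambda\mapsto(\lambda-\xi)^{-1}$, namely $-(\xi-T)^{-1}$. Your extra details, the constant term giving $c_\mu I$ and the alternative check via the resolvent identity and Cauchy's theorem, are accurate and simply make the paper's argument more explicit.
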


\begin{proof}


Substituting the cut representation
\[
    L_\mu(\lambda)
    =
    c_\mu+
    \int_{\gamma_\mu}\frac{d\xi}{\lambda-\xi}
\]
into the Riesz--Dunford integral and using Fubini's theorem gives
\[
\begin{aligned}
    \frac{1}{2\pi i}
    \int_{\Gamma^\mu}
         L_\mu(\lambda)(\lambda-T)^{-1}\,d\lambda
    &=
    c_\mu I
    \\
    &\quad+
    \int_{\gamma_\mu}
    \left(
    \frac{1}{2\pi i}
    \int_{\Gamma^\mu}
         \frac{(\lambda-T)^{-1}}{\lambda-\xi}\,d\lambda
    \right)d\xi.
\end{aligned}
\]
For every $\xi\in\gamma_\mu$, the inner integral is the
Riesz--Dunford value of the scalar function
$\lambda\mapsto(\lambda-\xi)^{-1}$, and hence equals
\[
    (T-\xi)^{-1}=-(\xi-T)^{-1}.
\]
Consequently,
\[
    \frac{1}{2\pi i}
    \int_{\Gamma^\mu}
         L_\mu(\lambda)(\lambda-T)^{-1}\,d\lambda
    =
    c_\mu I-
    \int_{\gamma_\mu}(\xi-T)^{-1}\,d\xi
    =
    L_\mu[T].
\]
This proves that the cut expression defining $L_\mu[T]$ is precisely
the Riesz--Dunford value $L_\mu(T)$.
\end{proof}

We finally record the logarithmic interpretation of $L_\mu[T]$.
Choose an open neighbourhood
\(
    U_\mu\subset\C\setminus\gamma_\mu
\)
of $\sigma(T)$, and put
\[
    V_\mu:=\varphi_\mu(U_\mu).
\]
Define
\[
    \log_\mu
    :=
    L_\mu\circ
    \bigl(\varphi_\mu|_{U_\mu}\bigr)^{-1}
    \qquad\text{on }V_\mu.
\]
Then
\[
    \exp\log_\mu(w)=w,
    \qquad w\in V_\mu,
\]
so that $\log_\mu$ is a holomorphic branch of the logarithm on
$V_\mu$.  Moreover,
\[
    \log_\mu\circ\varphi_\mu=L_\mu
    \qquad\text{on }U_\mu.
\]
Since
\[
    \sigma(\varphi_\mu(T))
    =
    \varphi_\mu(\sigma(T))
    \subset V_\mu,
\]
the composition rule for the holomorphic functional calculus and
Lemma~\ref{lem:operator-cut-faithfulness} yield
\[
\begin{aligned}
    \log_\mu\bigl(\varphi_\mu(T)\bigr)
    &=
    (\log_\mu\circ\varphi_\mu)(T)
    \\
    &=
    L_\mu(T)
    =
    L_\mu[T].
\end{aligned}
\]
Thus $L_\mu[T]$ is the logarithm of $\varphi_\mu(T)$ associated with
the scalar branch determined by the cut $\gamma_\mu$ and the
normalization $L_\mu(1)=i\pi$.

\subsection{Uniform operator logarithm bounds and operator reproducing formula}

The first operator estimate is the analogue of the scalar logarithmic bound,
but the proof uses a different source of control.  In the scalar estimate the
kernel \((z-\xi)^{-1}\) was controlled by the separation of \(S_\delta\) from
the cut.  In the operator estimate this kernel is replaced by the resolvent
\((\xi-T)^{-1}\), and the required control comes from the Ritt estimate \eqref{eq:off-Stolz-resolvent-prelim}
 outside a slightly larger Stolz domain.
The common ingredient is the same distinguished cut $\gamma_\mu$ and 
the uniform bound \eqref{eq:cut-integral-bound}.

To be able to write down the operator reproducing formula we need
a norm bound for the operator logarithmic kernel resembling the scalar estimate \eqref{Fmu}.
Such a bound is provided by the next result.

\begin{theorem}\label{thm:operator-log-bound}
Let \(1<\sigma<\tau\), and let \(T\in\L(X)\) be a Ritt operator
of Stolz type \(\sigma\).  For \(\mu\in\Gamma_\tau\setminus\{1\}\), let
\(L_\mu[T]\) be defined by \eqref{eq:LmuT-cut-definition}.  
Then there exists
\(C=C(T,\sigma,\tau)>0\) such that 
\begin{equation}\label{eq:operator-unif-log}
        \sup_{\mu\in\Gamma_\tau\setminus\{1\}}
        \|L_\mu[T]\|
        \le C.
\end{equation}
\end{theorem}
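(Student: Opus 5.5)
The plan is to bound $\|L_\mu[T]\|$ directly from the cut representation \eqref{eq:LmuT-cut-definition}. Fix an auxiliary $\delta$ with $\sigma<\delta<\tau$, so that the Stolz constant $M_\delta(T)$ in \eqref{eq:Ritt-delta-bound} is finite. By the triangle inequality,
\[
        \|L_\mu[T]\|
        \le
        |c_\mu|
        +
        \int_{\gamma_\mu}\|(\xi-T)^{-1}\|\,|d\xi| .
\]

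The scalar constant is handled exactly as in the proof of Lemma~\ref{lem:Fmu-uniform}. Using \eqref{eq:c-mu-definition} and \eqref{eq:cut-integral-bound}, we get
\[
        |c_\mu|\le \pi+2\pi+\log\frac1{m_\tau},
\]
which is uniform in $\mu\in\Gamma_\tau\setminus\{1\}$.

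For the resolvent integral, recall from Lemma~\ref{lem:cutexists} that $\gamma_\mu\subset\C\setminus S_\tau\subset\C\setminus S_\delta$. By \eqref{eq:cut-away-from-vertex}, every $\xi\in\gamma_\mu$ satisfies $|\xi-1|\ge|1-\mu|>0$, so $\gamma_\mu$ avoids the vertex $1$ and lies in $\C\setminus(S_\delta\cup\{1\})$. Hence \eqref{eq:Ritt-delta-bound} gives
\[
        \|(\xi-T)^{-1}\|\le \frac{M_\delta(T)}{|\xi-1|},
        \qquad \xi\in\gamma_\mu .
\]
Integrating along $\gamma_\mu$ and applying \eqref{eq:cut-integral-bound} once more, we obtain
\[
        \int_{\gamma_\mu}\|(\xi-T)^{-1}\|\,|d\xi|
        \le
        M_\delta(T)\Bigl(2\pi+\log\frac1{m_\tau}\Bigr).
\]
Combining the two bounds gives \eqref{eq:operator-unif-log} with
\[
        C=\bigl(1+M_\delta(T)\bigr)\Bigl(2\pi+\log\frac1{m_\tau}\Bigr)+\pi .
\]
One may, for instance, choose $\delta=(\sigma+\tau)/2$, so that $C$ depends only on $T,\sigma,\tau$.

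The only point that needs care is applying the resolvent bound on all of $\gamma_\mu$, including its endpoint $\mu\in\Gamma_\tau$. This endpoint lies outside the open set $S_\delta$ and away from $1$, so \eqref{eq:Ritt-delta-bound} applies there as well. We also need the operator-valued integral to be well defined, which holds because $\xi\mapsto(\xi-T)^{-1}$ is continuous on the compact arc $\gamma_\mu$.

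The key structural point is that the estimate uses the logarithmic length bound \eqref{eq:cut-integral-bound}, not the length/distance ratio used in the scalar case. This is why it is uniform as $\mu\to1$. The constant blows up only through $M_\delta(T)$ when $\delta\downarrow\sigma$, and that limit is avoided by keeping $\delta$ fixed strictly between $\sigma$ and $\tau$.
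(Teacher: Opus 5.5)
Your proposal is correct and follows the paper's proof essentially verbatim: the same triangle-inequality split into $|c_\mu|$ and the resolvent integral along $\gamma_\mu$, the same use of $M_\delta(T)$ with $\delta=(\sigma+\tau)/2$, and the same logarithmic length bound \eqref{eq:cut-integral-bound}. It ends with the identical constant $C=\pi+\bigl(1+M_\delta(T)\bigr)\bigl(2\pi+\log\frac1{m_\tau}\bigr)$.
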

\begin{proof}
Fix
\[
        \delta=\frac{\sigma+\tau}{2},
\]
and put
\[
        K_\tau
        :=
        2\pi+\log\frac{1}{m_\tau}
        =
        2\pi+\log\frac{\tau+1}{\tau-1}.
\]
By \eqref{eq:LmuT-cut-definition},
\begin{equation}\label{bound1}
        \|L_\mu[T]\|
        \le
        |c_\mu|
        +
        \int_{\gamma_\mu}\|(\xi-T)^{-1}\|\,|d\xi|.
\end{equation}
The scalar normalization and Lemma~\ref{lem:cutexists} give
\[
        |c_\mu|
        \le
        \pi+
        \int_{\gamma_\mu}\frac{|d\xi|}{|\xi-1|}
        \le
        \pi+K_\tau.
\]
Since
\[
        \gamma_\mu
        \subset
        \mathbb C\setminus(S_\tau\cup\{1\})
        \subset
        \mathbb C\setminus(S_\delta\cup\{1\}),
\]
the Ritt estimate \eqref{eq:Ritt-delta-bound} yields
\[
        \|(\xi-T)^{-1}\|
        \le
        \frac{M_\delta(T)}{|\xi-1|},
        \qquad
        \xi\in\gamma_\mu.
\]
Hence, again by Lemma~\ref{lem:cutexists},
\begin{equation}\label{bound2}
\begin{aligned}
        \int_{\gamma_\mu}\|(\xi-T)^{-1}\|\,|d\xi|
        &\le
        M_\delta(T)
        \int_{\gamma_\mu}\frac{|d\xi|}{|\xi-1|} \\
        &\le
        M_\delta(T)K_\tau.
\end{aligned}
\end{equation}
Combining \eqref{bound1} and \eqref{bound2}, we obtain
\[
        \|L_\mu[T]\|
        \le
        \pi+\bigl(1+M_\delta(T)\bigr)K_\tau,
        \qquad
        \mu\in\Gamma_\tau\setminus\{1\}.
\]
Thus, letting
\[
        C
        :=
        \pi+\bigl(1+M_\delta(T)\bigr)K_\tau,
\]
finishes the proof of \eqref{eq:operator-unif-log}.
\end{proof}


We now deduce the operator counterpart of the reproducing formula 
\eqref{eq:scalar-reproducing} for $\HS(S_\tau).$ 
  Let \(1<\sigma<\tau\), and let \(T\) be a Ritt operator of
Stolz type \(\sigma\).  For \(f\in\HS(S_\tau)\) put
\begin{equation}\label{eq:HS-op-formula}
        \Psi_T(f)
        :=
        f(1)+
        \frac1{2\pi i}
        \int_{\Gamma_\tau}f'(\mu)L_\mu [T]\,d\mu .
\end{equation}
Here \(L_\mu[T]\) is the operator logarithmic kernel defined from the
cut representation in \eqref{eq:LmuT-cut-definition}.  
  

Theorem~\ref{thm:scalar-reproducing} gives the scalar reproducing
identity on a Stolz region.  To transfer this identity to the
Riesz--Dunford calculus when possibly $1\in\sigma(T)$, we truncate
the boundary integral away from $1$ and estimate the resulting error.

\begin{lemma}\label{lem:rational-truncation-compatibility}
Let $1<\sigma<\tau$, and let $T$ be a Ritt operator of Stolz type
$\sigma$.  Let $f$ be holomorphic in a neighbourhood of $\overline{S_\tau}$.
For $\varepsilon>0$ put
\[
        K_\varepsilon
        :=
        \{\mu\in\Gamma_\tau:\ |\mu-1|\ge\varepsilon\},
\]
with the inherited orientation, and define
\[
        h_\varepsilon(z)
        :=
        \frac1{2\pi i}
        \int_{K_\varepsilon}f'(\mu)L_\mu(z)\,d\mu .
\]
Then
\[
        h_\varepsilon(T)\longrightarrow f(T)-f(1),
        \qquad \varepsilon\downarrow0,
\]
in the uniform operator topology, where $f(T)$ is defined by the Riesz--Dunford functional calculus.
\end{lemma}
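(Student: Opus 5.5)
The plan is to identify $h_\varepsilon(T)$ and then show that the remainder $f(T)-f(1)-h_\varepsilon(T)$ tends to zero in norm, estimating it through a Riesz--Dunford integral over a contour that touches the vertex $1$ only at scale $\varepsilon$.

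\emph{Step 1: $h_\varepsilon(T)$ is well defined.} Fix $\delta=(\sigma+\tau)/2$. For $\mu\in K_\varepsilon$, Lemma~\ref{lem:Lmu-parameter-regularity} gives joint continuity of $L_\mu(z)$. By \eqref{eq:cut-away-from-vertex} and \eqref{eq:cut-distance-bound}, every cut $\gamma_\mu$ with $\mu\in K_\varepsilon$ stays at distance at least $c_{\delta,\tau}\varepsilon$ from $S_\delta$ and satisfies $|\xi-1|\ge\varepsilon$ for $\xi\in\gamma_\mu$. Hence $h_\varepsilon$ is holomorphic on the $c\varepsilon$-neighbourhood of $\overline{S_\delta}$, which is a neighbourhood of $\sigma(T)$. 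So $h_\varepsilon(T)$ is defined by the Riesz--Dunford calculus. Fubini and Lemma~\ref{lem:operator-cut-faithfulness} give
\[
h_\varepsilon(T)=\frac1{2\pi i}\int_{K_\varepsilon}f'(\mu)L_\mu[T]\,d\mu .
\]
This is not needed for the statement itself, but it shows that the norm limit is the integral in \eqref{eq:HS-op-formula}.

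\emph{Step 2: scalar remainder.} Put $g_\varepsilon:=f-f(1)-h_\varepsilon$. This function is holomorphic near $\overline{S_\delta}$ minus the small region around $1$ excluded above. By Theorem~\ref{thm:scalar-reproducing}, applied to $f\in\HS(S_\tau)$, we have on $S_\delta$
\[
g_\varepsilon(z)=r_\varepsilon(z):=\frac1{2\pi i}\int_{\Gamma_\tau\cap\{|\mu-1|<\varepsilon\}}f'(\mu)L_\mu(z)\,d\mu .
\]
Fix $\delta'\in(\sigma,\delta)$. Take the contour $\Gamma^\varepsilon:=\Gamma_{\delta,\eta}$ from \eqref{curve} with $\eta=c\varepsilon/2$. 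It encloses $\sigma(T)$ and lies in the holomorphy region of $g_\varepsilon$, so
\[
g_\varepsilon(T)=\frac1{2\pi i}\int_{\Gamma^\varepsilon}g_\varepsilon(\lambda)(\lambda-T)^{-1}\,d\lambda .
\]
On $\Gamma^\varepsilon$, which lies outside $S_{\delta'}$, we use $\|(\lambda-T)^{-1}\|\le M_{\delta'}(T)/|\lambda-1|$.

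\emph{Step 3: the key pointwise bound $|g_\varepsilon(\lambda)|\le C\varepsilon$ on $\Gamma^\varepsilon$.} On the $\Gamma_\delta$-part this follows from $g_\varepsilon=r_\varepsilon$ (by continuity), the uniform kernel bound of Lemma~\ref{lem:Fmu-uniform}, and $\|f'\|_{L^\infty(\Gamma_\tau)}<\infty$ (since $f$ is holomorphic near $\overline{S_\tau}$). The arc $\Gamma_\tau\cap\{|\mu-1|<\varepsilon\}$ has length $O(\varepsilon)$ by \eqref{eq:Stolz-local-length-main}.

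On the small arc $A_\eta$ I expect the main obstacle, because there $\lambda$ lies outside $S_\delta$. I will argue by scaling. For $\mu$ with $|\mu-1|<\varepsilon$ the kernel $L_\mu(\lambda)$ is still the analytic continuation of $\log\varphi_\mu$. If necessary I shrink $\eta$ relative to $\varepsilon$, or replace $A_\eta$ by the arc $\partial D(1,\eta)\setminus S_{\delta''}$ for some $\delta''\in(\delta',\delta)$, so that $|L_\mu(\lambda)|$ stays bounded by the argument of Lemma~\ref{lem:Fmu-uniform}. This gives $|r_\varepsilon|\le C\varepsilon$ there as well, and $g_\varepsilon=r_\varepsilon$ holds by analytic continuation. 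Alternatively, the Cauchy integral representation gives $|g_\varepsilon(\lambda)-g_\varepsilon(1)|\le C\varepsilon$ together with $|g_\varepsilon(1)|=\tfrac12\bigl|\int_{|\mu-1|<\varepsilon}f'\,d\mu\bigr|\le C\varepsilon$.

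\emph{Step 4: conclusion.} Using Lemma~\ref{lem:geometry}(iii),
\[
\|g_\varepsilon(T)\|\le C\varepsilon\, M_{\delta'}(T)\int_{\Gamma^\varepsilon}\frac{|d\lambda|}{|\lambda-1|}\le C'\varepsilon\Bigl(\log\frac1\varepsilon+1\Bigr)\longrightarrow0 .
\]
Here the logarithm comes from the $\Gamma_\delta$-part, and the arc $A_\eta$ contributes at most $2\pi$. Hence $h_\varepsilon(T)\to f(T)-f(1)$ in the uniform operator topology.
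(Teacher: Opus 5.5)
Your overall plan is the paper's: the remainder $g_\varepsilon=f-f(1)-h_\varepsilon$, equal to $r_\varepsilon$ on $S_\delta$ by Theorem~\ref{thm:scalar-reproducing} and hence $O(\varepsilon)$ there by Lemma~\ref{lem:Fmu-uniform} and \eqref{eq:Stolz-local-length-main}; a truncated contour at scale $\eta\lesssim\varepsilon$ so that $h_\varepsilon$ stays holomorphic inside; and the $\log(1/\varepsilon)$ from Lemma~\ref{lem:geometry}(iii). Running the contour along $\Gamma_\delta$ and using $M_{\delta'}(T)$, instead of the paper's $\Gamma_{\delta_0}$ with $\delta_0<\delta$, is harmless. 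The gap is the closing arc $A_\eta$ in Step~3, which you flag yourself, and neither of your suggestions closes it as written.

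\textbf{Why the first suggestion fails.} The arc $A_\eta$, and likewise $\partial D(1,\eta)\setminus S_{\delta''}$, joins a point of $S_\tau$ to $1+\eta\notin\overline{S_\tau}$. So it contains some $\mu_0\in\Gamma_\tau$ with $|\mu_0-1|=\eta<\varepsilon$. This $\mu_0$ lies in the range of integration of $r_\varepsilon$, and $\operatorname{Re}L_{\mu_0}(\lambda)=\log|\varphi_{\mu_0}(\lambda)|\to+\infty$ as $\lambda\to\mu_0$ along the arc. Moreover, whenever $|\mu|\eta\le|\mu-1|\le\eta$, the cut $\gamma_\mu$ passes through $1+\eta\in A_\eta$. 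Consequently:
\begin{itemize}
\item no choice of $\eta<\varepsilon$ or $\delta''$ gives a uniform bound for $L_\mu$ on the arc;
\item ``$g_\varepsilon=r_\varepsilon$ by analytic continuation'' has no basis there, since $r_\varepsilon$ is built from the cut branches;
\item pushing the arc out to radius $\ge\varepsilon$ does not help, because cuts $\gamma_\mu$ with $\mu\in K_\varepsilon$ then meet it and $h_\varepsilon$ is no longer holomorphic inside the contour.
\end{itemize}

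\textbf{What is missing in the alternative.} Your ``alternatively'' is the right route (it is the paper's), but the bound $|g_\varepsilon(\lambda)-g_\varepsilon(1)|\le C\varepsilon$ is only asserted. What is needed is a derivative estimate for $h_\varepsilon$ near $1$.
\begin{itemize}
\item Take $\mu\in K_\varepsilon$ and $|\zeta-1|\le\eta\le\tfrac12|\mu-1|$. Your $\eta=c\varepsilon/2$ qualifies, since $c_{\delta,\tau}\le1$ (because $1\in\overline{S_\delta}$ and $\mu\in\gamma_\mu$).
\item Then $|\mu-\zeta|\ge\tfrac12|\mu-1|$ and, using $|\mu|\le1$, $|1-\mu\zeta|\ge\tfrac12|\mu-1|$. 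Hence $|\partial_\zeta L_\mu(\zeta)|=|1-\mu^2|/(|1-\mu\zeta|\,|\mu-\zeta|)\le 8/|\mu-1|$.
\item Lemma~\ref{lem:geometry}(iii) then gives $\sup_{|\zeta-1|\le\eta}|h_\varepsilon'(\zeta)|\le C\log(1/\varepsilon)$.
\item Combining this with $|h_\varepsilon(1)|\le C\varepsilon$ and $|f(\lambda)-f(1)|\le C\eta$ yields $|g_\varepsilon|\le C(\varepsilon+\eta\log(1/\varepsilon))$ on $A_\eta$.
\end{itemize}
With $\eta\asymp\varepsilon$ this is $O(\varepsilon\log(1/\varepsilon))$ rather than $O(\varepsilon)$. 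That still suffices, because $\int_{A_\eta}|d\lambda|/|\lambda-1|\le 2\pi$. The paper takes $\eta=\varepsilon^2$ to get exactly $O(\varepsilon)$.

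A genuine $O(\varepsilon)$ bound with $\eta\asymp\varepsilon$ is also attainable. Close $K_\varepsilon$ by the outer arc of $\partial D(1,\varepsilon)$ and apply Cauchy's formula to $f'$ in the integral for $h_\varepsilon'$; this gives $|h_\varepsilon'|\le C$ on $D(1,\varepsilon/2)$. Either way, you have to supply one of these arguments.
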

\begin{proof}
Choose \(\delta_0\) and \(\delta\) such that
\[
        \sigma<\delta_0<\delta<\tau .
\]
All constants below may depend on \(T,\delta_0,\delta,\tau\) and \(f\), but
not on \(\varepsilon\), and they may vary from line to line.  Put
\begin{equation}\label{eta}
        \eta=\eta(\varepsilon):=\varepsilon^2 .
\end{equation}
For \(\varepsilon>0\) sufficiently small, let
\begin{equation}\label{gammacur}
        \mathcal C_\varepsilon:=\Gamma_{\delta_0,\eta(\varepsilon)}
\end{equation}
be the positively oriented truncated curve defined as in \eqref{curve}.
Its bounded complementary component contains
\(\overline{S_\sigma}\). Moreover, 
\(\mathcal C_\varepsilon \subset \rho(T)\) and satisfies
\[
        \|(\lambda-T)^{-1}\|
        \le
        \frac{M_{\delta_0}(T)}{|\lambda-1|},
        \qquad \lambda\in\mathcal C_\varepsilon.
\]
For \(\varepsilon\) small, the compact set bounded by
\(\mathcal C_\varepsilon\) is contained in the domain of holomorphy of \(f\).


We first note that \(h_\varepsilon\) is holomorphic in a neighbourhood
of the compact set enclosed by \(\mathcal C_\varepsilon\).
Indeed, this compact set is contained in
\(
        \overline{S_{\delta_0}}\cup\overline{D(1,\eta)}.
\)
For \(\mu\in K_\varepsilon\), the cut \(\gamma_\mu\) lies outside
\(S_\tau\) and satisfies
\[
        |\xi-1|\ge|\mu-1|\ge\varepsilon>\eta,
        \qquad \xi\in\gamma_\mu.
\]
Since
\[
        \overline{S_{\delta_0}}\setminus\{1\}\subset S_\tau,
\]
the cuts do not meet the compact set enclosed by
\(\mathcal C_\varepsilon\).
Hence the Riesz--Dunford functional calculus gives
\[
        h_\varepsilon(T)-f(T)+f(1)
        =
        \frac{1}{2\pi i}
        \int_{\mathcal C_\varepsilon}
        E_\varepsilon(\lambda)(\lambda -T)^{-1}\,d\lambda ,
\]
where
\[
        E_\varepsilon(\lambda)
        :=
        h_\varepsilon(\lambda)-\bigl(f(\lambda)-f(1)\bigr).
\]


By Theorem~\ref{thm:scalar-reproducing}, for every
$\lambda\in S_\delta$,
\[
\begin{aligned}
    E_\varepsilon(\lambda)
    &=
    h_\varepsilon(\lambda)
    -
    \bigl(f(\lambda)-f(1)\bigr)
    \\
    &=
    -\frac{1}{2\pi i}
    \int_{\Gamma_\tau\setminus K_\varepsilon}
         f'(\mu)L_\mu(\lambda)\,d\mu.
\end{aligned}
\]


Put
\[
    M_f:=\sup_{\mu\in\Gamma_\tau}|f'(\mu)|.
\]
Applying Lemma~\ref{lem:Fmu-uniform} to the preceding integral, and then
using Lemma~\ref{lem:geometry} \textup{(iii)}, we obtain
\[
\begin{aligned}
    \sup_{\lambda\in S_\delta}|E_\varepsilon(\lambda)|
    &\le
    \frac{M_fC_{\delta,\tau}}{2\pi}
    \operatorname{length}
    \bigl(
        \Gamma_\tau\cap\{|\mu-1|<\varepsilon\}
    \bigr)
    \\
    &\le
    2C_{\rm B}C_{\delta,\tau}M_f\,\varepsilon.
\end{aligned}
\]

For \(\eta=\eta(\varepsilon)\) as in \eqref{eta}, write
\[
   \mathcal C_\varepsilon
   =
   \bigl(\Gamma_{\delta_0}\cap\{|\lambda-1|\ge\eta\}\bigr)
   \cup A_\eta .
\]
The first part of the union is contained in \(S_\delta\).  Therefore, by Lemma~\ref{lem:geometry}, 
\begin{gather}\label{delta0}
   \int_{\Gamma_{\delta_0}\cap\{|\lambda-1|\ge\eta\}}
   |E_\varepsilon(\lambda)|
   \|(\lambda-T)^{-1}\|\,|d\lambda| 
    \le
   C\varepsilon
   \int_{\Gamma_{\delta_0}\cap\{|\lambda-1|\ge\eta\}}
   \frac{|d\lambda|}{|\lambda-1|} \\
   \le
   C\varepsilon
   \Bigl(
      2\sqrt2\log\frac1\eta
      +
      \operatorname{length}(\Gamma_{\delta_0})
   \Bigr) 
   =
   C\varepsilon
   \Bigl(
      4\sqrt2\log\frac1\varepsilon
      +
      \operatorname{length}(\Gamma_{\delta_0})
   \Bigr).\notag
\end{gather}
Thus the contribution of this part tends to zero as $\varepsilon \downarrow 0.$


It remains to estimate the contribution of the closing arc \(A_\eta\).  Since \(L_\mu(1)=i\pi\),
\[
        h_\varepsilon(1)
        =
        \frac12 
        \int_{K_\varepsilon}f'(\mu)\,d\mu .
\]
As \(f'\) is holomorphic in a neighbourhood of \(\overline{S_\tau}\),
\[
        \int_{\Gamma_\tau}f'(\mu)\,d\mu=0,
\]
hence
\[
\begin{aligned}
        |h_\varepsilon(1)|
        \le
        \frac{M_f}{2}
        \operatorname{length}
        \bigl(\Gamma_\tau\cap\{|\mu-1|<\varepsilon\}\bigr) 
        \le
        2\pi C_{\rm B}M_f\,\varepsilon.
\end{aligned}
\]

We next estimate \(h_\varepsilon'\) near \(1\).  If
\(|\zeta-1|\le\eta\) and \(\mu\in K_\varepsilon\), then, for
\(\varepsilon\) small,
\[
        |\zeta-1|\le\eta=\varepsilon^2
        \le \frac12|\mu-1|.
\]
Consequently,
\begin{equation}\label{l1}
        |\mu-\zeta|
        \ge
        |\mu-1|-|\zeta-1|
        \ge
        \frac12|\mu-1|,
\end{equation}
and, since \(|\mu|\le1\),
\begin{gather}\label{l2}
        |1-\mu\zeta|
        =
        |(1-\mu)+\mu(1-\zeta)|  \\
        \ge
        |\mu-1|-|\mu|\,|\zeta-1|
        \ge
        \frac12|\mu-1|.\notag
\end{gather}

Using that
\[
        \partial_\zeta L_\mu(\zeta)
        =
        \frac{1-\mu^2}{(1-\mu\zeta)(\mu-\zeta)}
\]
and
\(
        |1-\mu^2|
        \le
        2|\mu-1|, 
\)
along with the two lower bounds \eqref{l1} and \eqref{l2} proved above, we obtain
\[
        |\partial_\zeta L_\mu(\zeta)|
        \le
        \frac{8}{|\mu-1|},
        \qquad
        |\zeta-1|\le\eta,\quad \mu\in K_\varepsilon.
\]
Therefore, arguing as in \eqref{delta0},
\[
\begin{aligned}
   \sup_{|\zeta-1|\le\eta}|h_\varepsilon'(\zeta)|
   &\le
   \frac4\pi
   \int_{K_\varepsilon}
        \frac{|f'(\mu)|}{|\mu-1|}\,|d\mu| \\
   &\le
   \frac{4M_f}{\pi}
   \left(
      2\sqrt2\log\frac1\varepsilon
      +
      \operatorname{length}(\Gamma_\tau)
   \right).
\end{aligned}
\]

For \(\lambda\in A_\eta\), this gives
\[
\begin{aligned}
        |h_\varepsilon(\lambda)|
        &\le
        |h_\varepsilon(1)|
        +
        |\lambda-1|
        \sup_{|\zeta-1|\le\eta}|h_\varepsilon'(\zeta)|        \\
        &\le
        C\varepsilon
        +
        C\eta\log \left(\frac{1}{\varepsilon}\right)
        \le
        C\varepsilon .
\end{aligned}
\]
Also,
\[
        |f(\lambda)-f(1)|
        \le C|\lambda-1|
        =
        C\eta
        =
        C\varepsilon^2,
        \qquad \lambda\in A_\eta .
\]
Thus
\[
        |E_\varepsilon(\lambda)|\le C\varepsilon,
        \qquad \lambda\in A_\eta .
\]
Since \(\operatorname{length}(A_\eta)\le 2\pi \eta\), we obtain
\[
\begin{aligned}
        \int_{A_\eta}
        |E_\varepsilon(\lambda)|
        \|(\lambda -T)^{-1}\|\,|d\lambda|
        &\le
        C\varepsilon
        \int_{A_\eta} \frac{|d\lambda|}{|\lambda-1|}        \\
        &\le
        C\varepsilon\, \frac{1}{\eta}\operatorname{length}(A_\eta)
        \le
        C\varepsilon .
\end{aligned}
\]
Hence the closing-arc contribution also tends to zero as \(\epsilon \downarrow 0\).

Combining the estimates on the two parts of
\(\mathcal C_\varepsilon\), we get
\[
        h_\varepsilon(T)\longrightarrow f(T)-f(1), \qquad  \varepsilon\downarrow0. 
\]
\end{proof}

\begin{theorem}\label{thm:operator-reproducing}
Let $1<\sigma<\tau$, and let $T\in\L(X)$ be a Ritt operator of Stolz type
$\sigma$.  For every $f\in\HS(S_\tau)$, the integral in
\eqref{eq:HS-op-formula} is absolutely convergent in the operator norm. 
Furthermore, 
there exists $C=C(T,\sigma,\tau)>0$ such that
\begin{equation}\label{eq:HS-op-bound}
        \|\Psi_T(f)\|
        \le
        |f(1)|
        +
        \frac{C}{2\pi}
        \int_{\Gamma_\tau}|f'(\mu)|\,|d\mu|
        \le
        \max\left(1,\frac{C}{2\pi}\right)\|f\|_{\HS(S_\tau)} .
\end{equation}
If $f$ is holomorphic in a neighbourhood of $\overline{S_\tau}$, then
\begin{equation}\label{permr}
        \Psi_T(f)=f(T),
\end{equation}
where the right-hand side is understood in the sense of the Riesz--Dunford functional calculus.
\end{theorem}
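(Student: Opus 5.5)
The proof splits into two parts. The bound \eqref{eq:HS-op-bound} comes from Theorem~\ref{thm:operator-log-bound}. The identification \eqref{permr} comes from Lemma~\ref{lem:rational-truncation-compatibility}.

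\emph{Convergence and bound.} First we check that the integrand $\mu\mapsto f'(\mu)L_\mu[T]$ is Bochner integrable on $\Gamma_\tau$. Lemma~\ref{lem:Lmu-parameter-regularity} shows that $\mu\mapsto L_\mu(z)$ depends continuously (indeed holomorphically) on $\mu$ on every compact subarc $K\subset\Gamma_\tau\setminus\{1\}$, locally uniformly in $z$ in a neighbourhood of $\sigma(T)$. By Lemma~\ref{lem:operator-cut-faithfulness}, $L_\mu[T]=L_\mu(T)$ is the Riesz--Dunford integral of $L_\mu$ over a fixed contour. For $\mu$ in $K$ one contour can serve for all $\mu$, because the cuts $\gamma_\mu$, $\mu\in K$, stay at a positive distance from $\overline{S_\delta}$ by \eqref{eq:cut-distance-bound}. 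Hence $\mu\mapsto L_\mu[T]$ is norm continuous on $\Gamma_\tau\setminus\{1\}$, and in particular strongly measurable. Theorem~\ref{thm:operator-log-bound} gives $\sup_\mu\|L_\mu[T]\|\le C$. Since $f'\in L^1(\Gamma_\tau)$, the integral converges absolutely in norm, with
\[
\Bigl\|\frac1{2\pi i}\int_{\Gamma_\tau}f'(\mu)L_\mu[T]\,d\mu\Bigr\|\le\frac{C}{2\pi}\int_{\Gamma_\tau}|f'(\mu)|\,|d\mu|.
\]
The term $f(1)$ is bounded by $\sup_{S_\tau}|f|$, which makes sense by the continuous extension in Corollary~\ref{cor:HS-absolute-continuity}. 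Adding the two estimates and comparing with \eqref{eq:HS-norm} yields \eqref{eq:HS-op-bound}.

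\emph{Compatibility with Riesz--Dunford.} Now let $f$ be holomorphic near $\overline{S_\tau}$. By the absolute convergence just established, the truncated operator integrals $\frac1{2\pi i}\int_{K_\varepsilon}f'(\mu)L_\mu[T]\,d\mu$ tend in norm to $\Psi_T(f)-f(1)$ as $\varepsilon\downarrow0$. It therefore suffices to show that each truncated operator integral equals $h_\varepsilon(T)$, with $h_\varepsilon$ as in Lemma~\ref{lem:rational-truncation-compatibility}; that lemma then gives $h_\varepsilon(T)\to f(T)-f(1)$, and \eqref{permr} follows.

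The identification is the step needing care. We fix a single contour $\mathcal C_\varepsilon$ as in the proof of Lemma~\ref{lem:rational-truncation-compatibility}. It surrounds $\sigma(T)$ and avoids every cut $\gamma_\mu$ with $\mu\in K_\varepsilon$, since those cuts satisfy $|\xi-1|\ge\varepsilon$. On this contour we write
\[
L_\mu[T]=\frac1{2\pi i}\int_{\mathcal C_\varepsilon}L_\mu(\lambda)(\lambda-T)^{-1}\,d\lambda .
\]
The function $(\mu,\lambda)\mapsto f'(\mu)L_\mu(\lambda)$ is bounded on $K_\varepsilon\times\mathcal C_\varepsilon$, and $\mathcal C_\varepsilon$ is a compact rectifiable curve in $\rho(T)$, so $\|(\lambda-T)^{-1}\|$ is bounded there. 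Fubini's theorem therefore applies, and exchanging the $\mu$- and $\lambda$-integrals gives
\[
\frac1{2\pi i}\int_{K_\varepsilon}f'(\mu)L_\mu[T]\,d\mu=\frac1{2\pi i}\int_{\mathcal C_\varepsilon}h_\varepsilon(\lambda)(\lambda-T)^{-1}\,d\lambda=h_\varepsilon(T).
\]
The only obstacle is ensuring that one contour works uniformly for all $\mu\in K_\varepsilon$. The choice $\eta=\varepsilon^2$ in Lemma~\ref{lem:rational-truncation-compatibility} already guarantees this, since then $\mathcal C_\varepsilon$ stays within distance $\varepsilon^2<\varepsilon$ of the vertex while the cuts stay at distance at least $\varepsilon$.
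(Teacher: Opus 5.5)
Your proposal is correct and follows essentially the same route as the paper. The bound comes directly from the uniform estimate of Theorem~\ref{thm:operator-log-bound}, and \eqref{permr} is obtained by applying Fubini on $K_\varepsilon\times\mathcal C_\varepsilon$ to identify the truncated operator integral with $h_\varepsilon(T)$, then invoking Lemma~\ref{lem:rational-truncation-compatibility} and letting the tail over $\Gamma_\tau\setminus K_\varepsilon$ vanish. Your extra strong-measurability argument usefully fills in a step the paper leaves implicit. Two small corrections: Lemma~\ref{lem:Lmu-parameter-regularity} is stated only on $\overline{S_\delta}$, so a contour passing around $1$ needs the same path-integral argument on a thin neighbourhood; and only the closing arc $A_\eta$ of $\mathcal C_\varepsilon$, not the whole contour, lies within $\varepsilon^2$ of the vertex.
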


\begin{proof}
The absolute convergence and the estimate follow immediately from
Theorem~\ref{thm:operator-log-bound}.  Indeed, fixing $\delta=(\sigma+\tau)/2$  
and using \eqref{eq:operator-unif-log}, we obtain
\[
\begin{aligned}
        \|\Psi_T(f)\|
        &\le
        |f(1)|
        +
        \frac1{2\pi}
        \int_{\Gamma_\tau}|f'(\mu)|\,\|L_\mu[T]\|\,|d\mu| \\
        &\le
        |f(1)|
        +
        \frac{C}{2\pi}
        \int_{\Gamma_\tau}|f'(\mu)|\,|d\mu| .
\end{aligned}
\]
It remains to prove the compatibility assertion. Let \(f\) be holomorphic in
a neighbourhood of \(\overline{S_\tau}\). Then \(f\in\HS(S_\tau)\).
For sufficiently small \(\varepsilon>0\), let \(K_\varepsilon\) and
\(h_\varepsilon\) be as in
Lemma~\ref{lem:rational-truncation-compatibility}.

Let \(\mathcal C_\varepsilon\) be the integration curve defined by
\eqref{gammacur} in the proof of
Lemma~\ref{lem:rational-truncation-compatibility}.
For every
$\mu\in K_\varepsilon$, Lemma~\ref{lem:operator-cut-faithfulness} implies that
\[
        L_\mu[T]=
        \frac1{2\pi i}
        \int_{\mathcal C_{\varepsilon}}
             L_\mu(\lambda)(\lambda-T)^{-1}\,d\lambda.
\]
Since \(K_\varepsilon\) is compact and
\(\mathcal C_\varepsilon\) stays a positive distance from the cuts
\(\gamma_\mu\), Fubini's theorem yields
\[
\begin{aligned}
        h_\varepsilon(T)
        =
        \frac1{2\pi i}
        \int_{\mathcal C_{\varepsilon}}
             h_\varepsilon(\lambda)(\lambda-T)^{-1}\,d\lambda       
        =
        \frac1{2\pi i}
        \int_{K_\varepsilon}f'(\mu)L_\mu[T]\,d\mu .
\end{aligned}
\]
By Lemma~\ref{lem:rational-truncation-compatibility},
\[
        h_\varepsilon(T)\longrightarrow f(T)-f(1), \qquad \epsilon \downarrow 0.
\]
On the other hand, Theorem~\ref{thm:operator-log-bound} gives
\[
        \left\|
        \frac1{2\pi i}
        \int_{\Gamma_\tau\setminus K_\varepsilon}
             f'(\mu)L_\mu[T]\,d\mu
        \right\|
        \le
        \frac{C}{2\pi}
        \int_{\Gamma_\tau\setminus K_\varepsilon}|f'(\mu)|\,|d\mu|
        \longrightarrow0 .
\]
Therefore
\[
        \frac1{2\pi i}
        \int_{\Gamma_\tau}f'(\mu)L_\mu[T]\,d\mu
        =f(T)-f(1),
\]
and adding $f(1)$ proves \eqref{permr}.
\end{proof}

\subsection{Definition of the calculus and first properties}

We now show that the formula \eqref{eq:HS-op-formula} induces an operator calculus on the 
Hardy--Sobolev algebra, and thus can be considered as an operator reproducing formula.

\begin{theorem}\label{thm:HS-calc-Ritt}
Let \(T\) be a Ritt operator of Stolz type \(\sigma\), and let
\(\tau>\sigma\).  Then the map
\[
        \Psi_T:\HS(S_\tau)\longrightarrow\L(X)
\]
defined by \eqref{eq:HS-op-formula} is a bounded unital Banach algebra
homomorphism.  
\end{theorem}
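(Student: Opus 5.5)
Linearity of $\Psi_T$ is immediate from \eqref{eq:HS-op-formula}, and boundedness is exactly \eqref{eq:HS-op-bound} of Theorem~\ref{thm:operator-reproducing}. Unitality also follows directly: for $f\equiv1$ we have $f'=0$, so $\Psi_T(1)=I$. The substance is multiplicativity, $\Psi_T(fg)=\Psi_T(f)\Psi_T(g)$ for $f,g\in\HS(S_\tau)$. I will prove it by density, combining Theorem~\ref{thm:density} with the compatibility statement \eqref{permr}.

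First, take polynomials $p,q$. They are holomorphic in a neighbourhood of $\overline{S_\tau}$, and so is $pq$. By \eqref{permr},
\[
\Psi_T(pq)=(pq)(T)=p(T)q(T)=\Psi_T(p)\Psi_T(q).
\]
The middle equality is multiplicativity of the Riesz--Dunford calculus, which for polynomials is simply the algebraic identity.

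Next, take general $f,g\in\HS(S_\tau)$. By Theorem~\ref{thm:density} there are polynomials $p_n\to f$ and $q_n\to g$ in $\HS(S_\tau)$. Proposition~\ref{prop:HS-Banach-algebra} says the norm is submultiplicative, so
\[
\|fg-p_nq_n\|_{\HS}\le\|f-p_n\|_{\HS}\|g\|_{\HS}+\|p_n\|_{\HS}\|g-q_n\|_{\HS}\to0,
\]
that is, $p_nq_n\to fg$ in $\HS(S_\tau)$. Since $\Psi_T$ is linear and bounded, hence continuous, we get $\Psi_T(p_nq_n)\to\Psi_T(fg)$. Also $\Psi_T(p_n)\Psi_T(q_n)\to\Psi_T(f)\Psi_T(g)$ in operator norm, because operator multiplication is jointly continuous on bounded sets. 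Passing to the limit in the polynomial identity gives the claim.

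The only point needing care is that \eqref{permr} is available for polynomials. This holds because Theorem~\ref{thm:operator-reproducing} applies to any function holomorphic near $\overline{S_\tau}$, and polynomials are entire. So the real analytic work was already done in Lemma~\ref{lem:rational-truncation-compatibility}, and no step here should be a serious obstacle. If one wants to avoid invoking the density theorem, an alternative would be a direct resolvent-identity computation with the kernels $L_\mu[T]$, but that would be far more delicate; the density route is the one I would follow.
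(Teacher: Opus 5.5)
Your proposal is correct and is essentially the paper's own proof. Linearity, unitality and boundedness come from Theorem~\ref{thm:operator-reproducing}, and multiplicativity comes from approximating $f,g$ by polynomials via Theorem~\ref{thm:density}, using the Banach algebra property of $\HS(S_\tau)$, applying the polynomial compatibility $\Psi_T(p_nq_n)=p_n(T)q_n(T)$ from \eqref{permr}, and passing to the limit.
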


\begin{proof}
Linearity, unitality and boundedness follow from
Theorem~\ref{thm:operator-reproducing}. 
 It remains to verify the
algebra-homomorphism property.

Let \(f,g\in\HS(S_\tau)\).  By Theorem~\ref{thm:density}, there exist
sequences of polynomials \((p_n)_{n\ge 1}\) and \((q_n)_{n \ge 1}\) such that
\[
        p_n\to f,
        \qquad
        q_n\to g,
        \qquad n\to\infty,
\]
in \(\HS(S_\tau)\).  Since \(\HS(S_\tau)\) is a Banach algebra,
\[
        p_nq_n\to fg,
        \qquad n\to\infty,
\]
in \(\HS(S_\tau)\). 
Hence, by the boundedness of $\Psi_T$,  
\[
        p_n(T)=\Psi_T(p_n)\to \Psi_T(f),
        \qquad
        q_n(T)=\Psi_T(q_n)\to \Psi_T(g),
\]
and similarly
\[
        \Psi_T(p_nq_n)\to \Psi_T(fg)
\]
as $n \to \infty$, in the operator norm.  Since polynomial compatibility gives
\[
        \Psi_T(p_nq_n)=(p_nq_n)(T)=p_n(T)q_n(T),
\]
passing to the limit yields 
\[
        \Psi_T(fg)=\Psi_T(f)\Psi_T(g).
\]
Thus \(\Psi_T\) is a bounded algebra homomorphism.
\end{proof}

We write
\[
        f_{\HS}(T):=\Psi_T(f),\qquad f\in\HS(S_\tau),
\]
for the corresponding \(\HS(S_\tau)\)-calculus.  
Thus by Theorem~\ref{thm:operator-reproducing}, 
there exists
\(C=C(T,\sigma,\tau)>0\) such that
\begin{equation}\label{eq:HS-calc-bound}
        \|f_{\HS}(T)\|
        \le
        C\,\|f\|_{\HS(S_\tau)}.
\end{equation}
Moreover, for every \(f\) holomorphic in a neighbourhood of \(\overline{S_\tau}\),
\[
        f_{\HS}(T)=f(T),
\]
where the right-hand side is defined by the Riesz--Dunford functional calculus.

Thus, if \(T\) is a Ritt operator of Stolz type \(\sigma\), then the preceding construction gives a bounded \(\HS(S_\tau)\)-calculus for every \(\tau>\sigma\).  The next proposition records the compatibility of these calculi for different values of \(\tau\).

\begin{proposition}\label{prop:HS-outer-independence}
Let $T$ be a Ritt operator of Stolz type $\sigma$, and let
$\tau_1,\tau_2>\sigma$.  Suppose that $f$ belongs to both
$\HS(S_{\tau_1})$ and $\HS(S_{\tau_2})$.  Then 
\[
   f_{\HS,\tau_1}(T)=f_{\HS,\tau_2}(T).
\]
\end{proposition}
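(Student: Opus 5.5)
Without loss of generality assume $\tau_1<\tau_2$, so that $S_{\tau_1}\subset S_{\tau_2}$. We then compare both calculi with a third one that sees $f$ through its restriction. The plan is to show that restriction $R:\HS(S_{\tau_2})\to\HS(S_{\tau_1})$, $f\mapsto f|_{S_{\tau_1}}$, is a bounded linear map. Granting this, the two homomorphisms
\[
\Psi_{T,\tau_2}\quad\text{and}\quad \Psi_{T,\tau_1}\circ R
\]
are both bounded on $\HS(S_{\tau_2})$ by Theorem~\ref{thm:operator-reproducing}. They agree on polynomials, since both equal $p(T)$ by \eqref{permr}. Polynomials are dense in $\HS(S_{\tau_2})$ by Theorem~\ref{thm:density}, so the two maps coincide on all of $\HS(S_{\tau_2})$. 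In particular they agree on $f$.

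For the boundedness of $R$, the sup-norm part is trivial. For the derivative part, Proposition~\ref{prop:touching-E1-main} (equivalently Lemma~\ref{lem:touching-E1} applied to $g=f'\in E^1(S_{\tau_2})$) gives
\[
\int_{\Gamma_{\tau_1}}|f'(\zeta)|\,|d\zeta|\le [f]_{\HS_0(S_{\tau_2})}\le C_{\rm B}\int_{\Gamma_{\tau_2}}|f'(\zeta)|\,|d\zeta|,
\]
because $\Gamma_{\tau_1}$ is one of the curves $\Gamma_\beta$ with $1<\beta<\tau_2$. Hence $\|Rf\|_{\HS(S_{\tau_1})}\le C_{\rm B}\|f\|_{\HS(S_{\tau_2})}$. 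If one prefers to avoid the touching-curve subtlety, the truncated convex curves $\Gamma_{\tau_1,\eta}$ from that proof together with Lemma~\ref{lem:Granados-convex} and monotone convergence give the same bound.

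The one point needing care is the phrase ``$f$ belongs to both''. Formally $f$ is a function on $S_{\tau_2}$, and its membership in $\HS(S_{\tau_1})$ means membership of $f|_{S_{\tau_1}}$; this restriction is automatic by the bound above. So the argument reduces to the density step, and the hypothesis is really only used for the larger domain. I expect no serious obstacle. The only thing to check is that the density approximants $p_n\to f$ in $\HS(S_{\tau_2})$ also converge in $\HS(S_{\tau_1})$, which is exactly the boundedness of $R$. Then
\[
f_{\HS,\tau_1}(T)=\lim_n p_n(T)=f_{\HS,\tau_2}(T)
\]
in operator norm.
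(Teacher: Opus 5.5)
Your proposal is correct and follows essentially the same route as the paper: assume $\tau_1<\tau_2$, show that restriction $\HS(S_{\tau_2})\to\HS(S_{\tau_1})$ is continuous via \eqref{eq:touching-E1-two-sided}, approximate $f$ by polynomials in $\HS(S_{\tau_2})$ (Theorem~\ref{thm:density}), and use that both calculi send $p_n$ to $p_n(T)$, so the two limits coincide. Framing this as two bounded homomorphisms that agree on a dense set is only a repackaging of the same argument.
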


\begin{proof}
Assume, without loss of generality, that $\tau_1<\tau_2$.  Choose
polynomials $(p_n)_{n \ge 1}$ such that
\[
   p_n\to f \quad\text{in }\HS(S_{\tau_2}) \quad\text{as } n\to\infty.
\]
The restriction map
$\HS(S_{\tau_2})\to\HS(S_{\tau_1})$ is continuous.  Indeed, the
supremum part of the norm decreases under restriction, while
the estimate \eqref{eq:touching-E1-two-sided}
 gives, for every
$g\in\HS(S_{\tau_2})$,
\[
   \int_{\Gamma_{\tau_1}} |g'(\zeta)|\,|d\zeta|
   \le  C_{\rm B} 
   \int_{\Gamma_{\tau_2}} |g'(\zeta)|\,|d\zeta|.
\]
Hence $p_n\to f$ in $\HS(S_{\tau_1})$  when $n \to \infty$ as well.

By Theorem~\ref{thm:HS-calc-Ritt}, applied with the two Stolz parameters
$\tau_1$ and $\tau_2$, we have
\[
   p_n(T)\to f_{\HS,\tau_1}(T),
   \qquad
   p_n(T)\to f_{\HS,\tau_2}(T)
\]
in the operator norm.  Thus the two operators are limits of the same sequence
\((p_n(T))\), and therefore they coincide.
\end{proof}

 Therefore, when the choice of an admissible Stolz parameter is irrelevant, we shall simply say that \(T\) admits the Hardy--Sobolev calculus and suppress the parameter \(\tau\) from the notation.

The next crucial corollary will allow us to estimate the operator norms
in $A^1$-calculus using the finer Hardy--Sobolev calculus.

\begin{corollary}\label{cor:A1-compatibility}
Let \(T\) be a Ritt operator of Stolz type \(\sigma\), and let
\[
        f(z)=\sum_{n=0}^\infty c_nz^n\in A^1(\D).
\]
Then, for every \(\tau>\sigma\), one has \(f\in\HS(S_\tau)\), and the
Hardy--Sobolev calculus for \(T\) is compatible with the usual
\(A^1\)-functional calculus:
\[
        f_{\HS}(T)=\sum_{n=0}^\infty c_nT^n,
\]
where the series on the right converges absolutely in the operator norm.
\end{corollary}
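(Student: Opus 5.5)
The plan is to approximate $f$ by its Taylor partial sums and pass to the limit in two norms at once. Membership $f\in\HS(S_\tau)$ is Proposition~\ref{prop:A1-HS}. Since $T$ is Ritt, it is power bounded, say $\sup_{n\ge0}\|T^n\|=:K<\infty$ (this is part of the standard equivalence recalled after Definition~\ref{def:Ritt-prelim}). Hence $\sum_n |c_n|\,\|T^n\|\le K\|f\|_{A^1}<\infty$, so the series $\sum_n c_nT^n$ converges absolutely in operator norm.

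Set $p_N(z):=\sum_{n=0}^N c_nz^n$. Each $p_N$ is a polynomial, hence holomorphic near $\overline{S_\tau}$. By \eqref{permr} in Theorem~\ref{thm:operator-reproducing}, $(p_N)_{\HS}(T)=p_N(T)=\sum_{n=0}^N c_nT^n$, with the Riesz--Dunford value of a polynomial being the usual polynomial in $T$. Next, Proposition~\ref{prop:A1-HS} applied to $f-p_N$ gives
\[
\|f-p_N\|_{\HS(S_\tau)}\le (1+C_\tau)\sum_{n>N}|c_n|\longrightarrow0,
\qquad N\to\infty .
\]
Consequently \eqref{eq:HS-calc-bound} yields $(p_N)_{\HS}(T)\to f_{\HS}(T)$ in operator norm. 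Comparing the two limits of $p_N(T)$ gives $f_{\HS}(T)=\sum_{n\ge0}c_nT^n$.

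The only point needing care is power boundedness of $T$, which I take from the cited equivalence. If one wanted it self-contained, it also follows from the calculus itself: $\|T^n\|=\|(z^n)_{\HS}(T)\|\le C\|z^n\|_{\HS(S_\tau)}\le C(1+C_\tau)$ by \eqref{eq:HS-calc-bound} and Lemma~\ref{lem:kernel-bound-Stolz}. This route is preferable since it uses only results proved above. No genuine obstacle remains; independence of $\tau$ is consistent with Proposition~\ref{prop:HS-outer-independence}.
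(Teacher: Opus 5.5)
Your proof is correct and follows the paper's argument essentially verbatim. Both use the Taylor partial sums, the tail estimate from Proposition~\ref{prop:A1-HS}, boundedness of the $\HS$-calculus with compatibility on polynomials, and power boundedness to identify the limit with $\sum_n c_nT^n$. Your optional self-contained derivation of power boundedness, $\|T^n\|\le C\|z^n\|_{\HS(S_\tau)}\le C(1+C_\tau)$ via Lemma~\ref{lem:kernel-bound-Stolz}, is also valid.
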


\begin{remark}
By Proposition~\ref{prop:HS-outer-independence}, the value of \(f_{\HS}(T)\)
is independent of the choice of admissible Stolz parameter $\tau$.
\end{remark}

\begin{proof}
The inclusion \(f\in\HS(S_\tau)\) follows from
Proposition~\ref{prop:A1-HS}.  Let
\[
        p_N(z)=\sum_{n=0}^N c_nz^n .
\]
The proof of Proposition~\ref{prop:A1-HS}, applied to the tail
\(f-p_N\), gives
\[
        \|f-p_N\|_{\HS(S_\tau)}
        \le (1+C_\tau)\sum_{n>N}|c_n|
        \to 0 .
\]
Hence, by the boundedness of the Hardy--Sobolev calculus and the compatibility with the Riesz--Dunford functional calculus established above,
\[
        p_N(T)=(p_N)_{\HS}(T)\longrightarrow f_{\HS}(T) \qquad \text{as} \quad N \to \infty
\]
in the operator norm.  On the other hand, since \(T\) is power bounded,
\[
        \left\|\sum_{n>N}c_nT^n\right\|
        \le
        \left(\sup_{k\ge0}\|T^k\|\right)
        \sum_{n>N}|c_n|
        \longrightarrow0 .
\]
Thus \(p_N(T)\to\sum_{n=0}^\infty c_nT^n\) as $N \to \infty$ in the operator norm, and the two
limits coincide.
\end{proof}

\subsection{Necessity: the Cauchy kernels and the Ritt resolvent estimate}

We now prove the converse implication.  Assume that a bounded
Hardy--Sobolev calculus is already available.  To recover the Ritt resolvent
estimate, we test the calculus on normalized Cauchy kernels.  The idea is
simple: these kernels have uniformly bounded Hardy--Sobolev norms, and their
images under the calculus are the corresponding resolvents.

\begin{lemma}\label{lem:Kz-HS}
For $|z|>1$ define
\begin{equation}\label{ckernel}
  K_z(\lambda):=\frac{z-1}{z-\lambda},\qquad \lambda\in\C\setminus\{z\}.
\end{equation}
Then $K_z \in \HS(S_\tau)$ for every $\tau>1$
and there exists a constant $C_\tau>0$ such that
\begin{equation}\label{eq:Kz-HS-unif}
  \sup_{|z|>1}\,\|K_z\|_{\HS(S_\tau)}\le C_\tau.
\end{equation}
\end{lemma}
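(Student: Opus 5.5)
We need to bound two quantities uniformly in $|z|>1$: the supremum norm $\sup_{\lambda\in S_\tau}|K_z(\lambda)|$ and the boundary integral $\int_{\Gamma_\tau}|K_z'(\lambda)|\,|d\lambda|$, where
\[
K_z'(\lambda)=\frac{z-1}{(z-\lambda)^2}.
\]
Since $|z|>1$ and $\overline{S_\tau}\subset\overline{\D}$ with $\overline{S_\tau}\cap\T=\{1\}$, the function $K_z$ is holomorphic in a neighbourhood of $\overline{S_\tau}$ except possibly near the vertex $1$ when $z$ is close to $1$. In every case $K_z'$ is holomorphic in $S_\tau$ with integrable boundary values, so $K_z\in\HS(S_\tau)$. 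The content of the lemma is the uniformity of the bound.

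\emph{Supremum part.} I would show that there is $c_\tau>0$ with
\[
|z-\lambda|\ge c_\tau|z-1|,\qquad \lambda\in S_\tau,\ |z|>1 .
\]
This is a geometric separation statement. In the shifted coordinate $u=1-\lambda$, Lemma~\ref{lem:geometry}(ii) gives $\Omega_\tau\subset\Sigma_{\psi_\tau}$ with $\psi_\tau<\pi/2$. The point $u_z=1-z$ satisfies $|z|>1$, so $1-u_z$ lies outside the disc; near the origin this forces $u_z$ into a region disjoint from the sector $\Sigma_{\psi'}$ for any $\psi'\in(\psi_\tau,\pi/2)$. More precisely, $\Re u_z<|u_z|^2/2$, so $\arg u_z$ is within $O(|u_z|)$ of $\pm\pi/2$ or beyond. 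I would split into two cases.
\begin{itemize}
\item If $|z-1|\le r_0$ for small $r_0$, then $u_z$ and $\Omega_\tau\cap B(0,2r_0)$ lie in sectors separated by a fixed positive angle. Hence $|u-u_z|\ge c(|u|+|u_z|)$, and points of $\Omega_\tau$ farther away satisfy $|u-u_z|\ge r_0\ge c|u_z|$.
\item If $|z-1|>r_0$, I use $\dist(z,S_\tau)\ge\dist(z,\overline{S_\tau})$. For $z$ near the circle away from $1$ this distance is comparable to $\max(|z|-1,\,\dist(\T\setminus\{\text{arc near }1\},\overline{S_\tau}))$, and compactness together with the large-$|z|$ regime (where $|z-\lambda|\ge|z|-1\ge |z-1|/3$ once $|z|$ is large) gives $|z-\lambda|\ge c|z-1|$. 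To handle $|z|$ slightly above $1$ uniformly, it is cleanest to note that $\overline{S_\tau}\setminus D(1,r_0/2)$ is a compact subset of $\D$, so its distance to $\{|z|\ge1\}$ is positive, while $|z-1|\le|z|+1$.
\end{itemize}
Thus $\sup_{S_\tau}|K_z|\le 1/c_\tau$.

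\emph{Derivative part.} Using the same separation,
\[
\int_{\Gamma_\tau}\frac{|z-1|}{|z-\lambda|^2}|d\lambda|\le\frac{1}{c_\tau}\int_{\Gamma_\tau}\frac{|d\lambda|}{|z-\lambda|}.
\]
This alone is not enough, since it gives a logarithm. Instead I would use the sharper separation $|z-\lambda|\ge c(|z-1|+|\lambda-1|)$ for $\lambda\in\Gamma_\tau$ near $1$, which the angular argument above provides. Then
\[
\int_{\Gamma_\tau\cap D(1,r_0)}\frac{|z-1|\,|d\lambda|}{(|z-1|+|\lambda-1|)^2}.
\]
By dyadic decomposition in $|\lambda-1|$ and Lemma~\ref{lem:geometry}(iii), which gives length of $\Gamma_\tau\cap\{|\lambda-1|<t\}$ at most $Ct$, this is bounded by
\[
C\sum_k\frac{|z-1|\,2^k|z-1|}{(|z-1|+2^k|z-1|)^2}\quad\text{(together with scales below }|z-1|\text{)},
\]
a convergent geometric sum independent of $z$. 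Away from $D(1,r_0)$, the quantity $|z-\lambda|\ge c\max(1,|z|)$ and $|z-1|\le|z|+1$, so the integrand is bounded by $C/\max(1,|z|)$ and the integral is at most $C\operatorname{length}(\Gamma_\tau)$.

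The main obstacle is the uniform two-sided geometric separation $|z-\lambda|\gtrsim|z-1|+|\lambda-1|$ near the vertex. I would prove it via the angle gap between $\Sigma_{\psi_\tau}$ and the exterior of the disc at $1$, which is tangent to $\{\Re u=0\}$.
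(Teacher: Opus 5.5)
Your argument is correct, but it takes a different route from the paper's in both steps.

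\emph{Supremum bound.} The paper uses no sector geometry and no compactness. For $w\in S_\tau$ and $|z|\ge1$ one has $1-|w|\le|z|-|w|\le|z-w|$ and $|1-w|\le\tau(1-|w|)$. Hence $|1-z|\le(1+\tau)|z-w|$, that is, $\dist(z,\overline{S_\tau})\ge|1-z|/(1+\tau)$.

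\emph{Integral bound.} The paper splits $\Gamma_\tau$ at $D(1,2r)$, where $r=|1-z|$. The inner piece is handled as you would, by the distance bound together with $\operatorname{length}(\Gamma_\tau\cap D(1,t))\le Ct$. On the outer piece the paper uses $|z-\lambda|\ge|1-\lambda|/2$. It then applies Beurling's inequality (Lemma~\ref{lem:Granados-convex}) to compare $\int|1-\lambda|^{-2}\,|d\lambda|$ with an explicit integral over the circle $|\lambda|=R_1$, where $1-R_1=r/\tau$.

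\emph{What each route buys.} Your two-sided separation $|z-\lambda|\gtrsim|z-1|+|\lambda-1|$, combined with a dyadic decomposition, needs only the linear length bound at all scales. It therefore avoids the second appeal to Beurling's lemma and is the more elementary, Carleson-type argument in the far region.

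Conversely, your derivation of the separation is more laborious than necessary. You use the angle gap between $\Sigma_{\psi_\tau}$ and the exterior of the disc, plus a compactness case that is only sketched (though your closing remark suffices). The paper's trick gives the separation globally in one line: adding $|1-w|\le\tau|z-w|$ to $|1-z|\le(1+\tau)|z-w|$ yields $|z-w|\ge(|1-z|+|1-w|)/(1+2\tau)$ for all $|z|\ge1$ and $w\in\overline{S_\tau}$, with no case distinctions.

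Finally, your hedge about holomorphy near the vertex is unnecessary. Since $z\notin\overline{S_\tau}$, $K_z$ is holomorphic on a neighbourhood of $\overline{S_\tau}$, so $K_z'\in E^1(S_\tau)$ trivially.
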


\begin{proof}
Since \(z\notin\overline{S_\tau}\) for \(|z|>1\), the function \(K_z\) is
holomorphic in a neighbourhood of \(\overline{S_\tau}\), and
\[
        K_z'(\lambda)=\frac{z-1}{(z-\lambda)^2}.
\]
Thus
\[
        \|K_z\|_{\HS(S_\tau)}
        =
        \sup_{\lambda\in S_\tau}\frac{|z-1|}{|z-\lambda|}
        +
        \int_{\Gamma_\tau}
        \frac{|z-1|}{|z-\lambda|^2}\,|d\lambda| .
\]
Since \(\lambda\mapsto (z-\lambda)^{-1}\) is holomorphic in a
neighbourhood of \(\overline{S_\tau}\), its modulus attains its maximum on
\(\Gamma_\tau\).  Hence it is enough to prove
\begin{equation}\label{eq:kernel-main-est}
        \sup_{|z|>1}
        |z-1|\left(
        \sup_{\lambda \in\Gamma_\tau}\frac1{|z-\lambda|}
        +
        \int_{\Gamma_\tau}\frac{|d\lambda|}{|z-\lambda|^2}
        \right)<\infty .
\end{equation}
Put \(r:=|1-z|\).  We first record the elementary estimate
\begin{equation}\label{eq:outside-Stolz-distance}
        \dist(z,\overline{S_\tau})\ge \frac{r}{1+\tau},
        \qquad |z|\ge1 .
\end{equation}
Indeed, if \(w\in S_\tau\), then
\[
        1-|w|\le |z|-|w|\le |z-w|,
        \qquad
        |1-w|\le \tau(1-|w|),
\]
and hence
\[
        |1-z|\le |1-w|+|w-z|\le (1+\tau)|z-w|.
\]
Taking the infimum over \(w\in S_\tau\) gives
\eqref{eq:outside-Stolz-distance}.  Consequently
\[
  r\sup_{\lambda\in\Gamma_\tau}\frac1{|z-\lambda|}
        \le 1+\tau .
\]

It remains to estimate the integral in \eqref{eq:kernel-main-est}.  
We write
\[
        \int_{\Gamma_\tau}\frac{|d\lambda|}{|z-\lambda|^2}=I_1+I_2,
\]
where the integral \(I_1\) is taken over \(\Gamma_\tau\cap D(1,2r)\), and \(I_2\) -- over
\(\Gamma_\tau\setminus D(1,2r)\).  By Lemma~\ref{lem:geometry}\textup{(iii)}
and \eqref{eq:outside-Stolz-distance},
\[
        I_1
        \le
        \frac{(1+\tau)^2}{r^2}
        \length(\Gamma_\tau\cap D(1,2r))
        \le
        \frac{C_\tau}{r}.
\]

For the second term, if \(\lambda \not \in D(1,2r)\), then
\[
        |z-\lambda|\ge |1-\lambda|-|1-z|\ge \frac12|1-\lambda|,
\]
and hence
\[
        I_2
        \le
        4\int_{\Gamma_\tau\setminus D(1,2r)}
             \frac{|d\lambda|}{|1-\lambda|^2}.
\]
On \(\Gamma_\tau\) one has \(|1-\lambda|=\tau(1-|\lambda|)\).  Thus
\[
        \Gamma_\tau\setminus D(1,2r)
        =
        \Gamma_\tau\cap \overline{D(0,R_0)},
        \qquad
        R_0:=1-\frac{2r}{\tau},
\]
with the convention that the set is empty if \(R_0\le0\).  Assume \(R_0>0\)
and put \(R_1:=(1+R_0)/2\).  Then \(R_0<R_1<1\).  The domain
\(S_\tau\cap D(0,R_0)\) is convex, and its boundary is compactly contained in
\(D(0,R_1)\).  Applying Lemma~\ref{lem:Granados-convex} in the disc
\(D(0,R_1)\) to the function \(\lambda\mapsto(1-\lambda)^{-2}\) and to the inner curve
\(\partial(S_\tau\cap D(0,R_0))\), we obtain
\[
        \int_{\partial(S_\tau\cap D(0,R_0))}
             \frac{|d\lambda|}{|1-\lambda|^2}
        \le
        C_{\rm B}
        \int_{|\lambda|=R_1}
             \frac{|d\lambda|}{|1-\lambda|^2}.
\]
Since
\[
        \Gamma_\tau\setminus D(1,2r)
        \subset \partial(S_\tau\cap D(0,R_0)),
\]
it follows that
\[
        \int_{\Gamma_\tau\setminus D(1,2r)}
             \frac{|d\lambda|}{|1-\lambda|^2}
        \le
        C_{\rm B}
        \int_{|\lambda|=R_1}
             \frac{|d\lambda|}{|1-\lambda|^2}.
\]
The last integral is explicit:
\[
        \int_{|\lambda|=R_1}\frac{|d\lambda|}{|1-\lambda|^2}
        =
        \frac{2\pi R_1}{1-R_1^2}
        \le
        \frac{2\pi}{1-R_1}.
\]
Since \(1-R_1=(1-R_0)/2=r/\tau\), we obtain \(I_2\le C_\tau/r\).  Combining
this with the estimate for \(I_1\), we get
\[
        \int_{\Gamma_\tau}\frac{|d\lambda|}{|z-\lambda|^2}
        \le \frac{C_\tau}{r}.
\]
Together with the supremum estimate this proves \eqref{eq:kernel-main-est}, and hence implies the claim.
\end{proof}

To characterize Ritt operators by means of Hardy--Sobolev functional calculi, we now formulate an abstract notion independent of the preceding construction.

\begin{definition}\label{def:bounded-HS}
Let \(\tau>1\) and let \(T\in\L(X)\).  We say that \(T\) admits a bounded
\(\HS(S_\tau)\)-calculus if there exists a bounded unital Banach algebra
homomorphism
\[
        \Phi:\HS(S_\tau)\longrightarrow\L(X)
\]
such that
\begin{equation}\label{perm}
        \Phi(p)=p(T)
\end{equation}
for every polynomial \(p\).  In this case we write
\[
        f_{\HS}(T):=\Phi(f),\qquad f\in\HS(S_\tau).
\]
\end{definition}

By Theorem~\ref{thm:density}, such a homomorphism is unique if it exists. Thus
there is no ambiguity in the notation \(f_{\HS}(T)\) for a fixed \(\tau\).

\begin{theorem}\label{thm:Ritt-iff-HS}
For \(T\in\L(X)\), the following assertions are equivalent.
\begin{enumerate}[label=\textnormal{(\alph*)}]
\item \(T\) is a Ritt operator.
\item \(T\) admits a bounded \(\HS(S_\tau)\)-calculus for some \(\tau>1\).
\end{enumerate}
Moreover, if \(T\) is of Stolz type \(\sigma\), then \(T\) admits the bounded \(\HS(S_\tau)\)-calculus for every \(\tau>\sigma\).
\end{theorem}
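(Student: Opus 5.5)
The implication (a)$\Rightarrow$(b), together with the final assertion, is essentially already established. If \(T\) is Ritt, then Proposition~\ref{prop:Stolz-localization-prelim} shows that \(T\) is of Stolz type \(\sigma\) for some \(\sigma>1\). Theorem~\ref{thm:HS-calc-Ritt} then shows that, for every \(\tau>\sigma\), the map \(\Psi_T\) is a bounded unital Banach algebra homomorphism on \(\HS(S_\tau)\). By Theorem~\ref{thm:operator-reproducing}, \(\Psi_T(p)=p(T)\) for every polynomial \(p\), since polynomials are entire. Hence \(\Psi_T\) satisfies Definition~\ref{def:bounded-HS}. This gives (b) and the ``moreover'' part.

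For (b)$\Rightarrow$(a), let \(\Phi\) be a bounded \(\HS(S_\tau)\)-calculus. The plan is to test \(\Phi\) on the Cauchy kernels \(K_z\) from Lemma~\ref{lem:Kz-HS}.

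\emph{Step 1: locate the spectrum.} Fix \(|z|>1\). Then \(g_z(\lambda):=z-\lambda\) is a polynomial with \(\Phi(g_z)=z-T\). Also \(K_z\in\HS(S_\tau)\) and \(g_zK_z\equiv z-1\). Since \(\Phi\) is a unital homomorphism,
\[
(z-T)\,\Phi(K_z)=\Phi(g_zK_z)=(z-1)I=\Phi(K_z)\,(z-T).
\]
For \(z\ne1\), which holds automatically when \(|z|>1\), this shows that \(z\in\rho(T)\) and
\[
(z-T)^{-1}=\frac{\Phi(K_z)}{z-1}.
\]
Therefore \(\sigma(T)\subset\overline{\D}\).

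\emph{Step 2: the resolvent bound.} By the previous identity and \eqref{eq:Kz-HS-unif},
\[
|z-1|\,\|(z-T)^{-1}\|=\|\Phi(K_z)\|\le\|\Phi\|\,C_\tau\qquad\text{for all } |z|>1.
\]
This is exactly \eqref{eq:Ritt-resolvent-prelim}, so \(T\) is Ritt.

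The only point needing care is that \(K_z\) need not be a polynomial, so \(\Phi(K_z)\) is not a priori given by a Riesz--Dunford formula. This is why the argument uses multiplicativity and the polynomial \(g_z\) rather than any formula for \(\Phi(K_z)\) itself. The identity \(g_zK_z=z-1\) holds as an equality in \(\HS(S_\tau)\), and \(\Phi\) is multiplicative on all of \(\HS(S_\tau)\), so the argument is complete. The substantive analytic work was already done in Lemma~\ref{lem:Kz-HS}, namely the uniform bound on the kernels, and in Theorem~\ref{thm:operator-log-bound}.
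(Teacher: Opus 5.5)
Your proposal is correct and follows the paper's proof: for (a)\(\Rightarrow\)(b) you use Theorem~\ref{thm:HS-calc-Ritt} together with polynomial compatibility, and for (b)\(\Rightarrow\)(a) you apply \(\Phi\) to the Cauchy kernels \(K_z\) and use the uniform bound of Lemma~\ref{lem:Kz-HS}. The only cosmetic difference is in locating the spectrum. The paper first proves \(\sigma(T)\subset\overline{S_\tau}\) using \((a-e)^{-1}\) for all \(a\notin\overline{S_\tau}\). You instead obtain invertibility of \(z-T\) only for \(|z|>1\), directly from \(g_zK_z=z-1\), which is all that Definition~\ref{def:Ritt-prelim} requires.
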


\begin{proof}
If \(T\) is Ritt, choose \(\sigma>1\) such that \(T\) is of Stolz type
\(\sigma\). Then Theorem~\ref{thm:HS-calc-Ritt} gives a bounded
\(\HS(S_\tau)\)-calculus for every \(\tau>\sigma\). This proves
\textup{(a)}\(\Rightarrow\)\textup{(b)} and the final assertion.

Conversely, assume that \(T\) admits a bounded \(\HS(S_\tau)\)-calculus for
some \(\tau>1\), and denote it by
\[
        \Phi:\HS(S_\tau)\to\L(X).
\]
Thus \(\Phi\) is a bounded unital homomorphism and \(\Phi(p)=p(T)\) for every
polynomial \(p\). Let \(e(\lambda)=\lambda\).
If \(a\notin\overline{S_\tau}\), then \((a-e)^{-1}\) is holomorphic in a
neighbourhood of \(\overline{S_\tau}\). Hence \((a-e)^{-1}\in \HS(S_\tau)\),
and \(aI-T=\Phi(a-e)\) is invertible.
Thus
\[
        \sigma(T)\subset\overline{S_\tau}.
\]

For \(|z|>1\), let
\(
 K_z
\) be defined by \eqref{ckernel}.
By Lemma~\ref{lem:Kz-HS},
\[
        \sup_{|z|>1}\|K_z\|_{\HS(S_\tau)}<\infty .
\]
Since \(K_z=(z-1)(z-e)^{-1}\) in \(\HS(S_\tau)\), we have
\[
        \Phi(K_z)=(z-1)(z-T)^{-1}.
\]
Therefore
\[
        \sup_{|z|>1}\|(z-1)(z-T)^{-1}\|
        \le
        \|\Phi\|\sup_{|z|>1}\|K_z\|_{\HS(S_\tau)}<\infty .
\]
Together with \(\sigma(T)\subset\overline{S_\tau}\subset\D\cup\{1\}\), this is
the Ritt resolvent condition.
\end{proof}

The following immediate corollary is one of the main results of the paper.
\begin{corollary}
An operator \(T\in\L(X)\) is Ritt if and only if it admits a bounded Hardy--Sobolev functional calculus on some Stolz domain.
\end{corollary}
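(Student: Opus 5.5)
This corollary is a restatement of Theorem~\ref{thm:Ritt-iff-HS}. The plan is to read ``admits a bounded Hardy--Sobolev functional calculus on some Stolz domain'' in the precise sense of Definition~\ref{def:bounded-HS}. That is, there exist \(\tau>1\) and a bounded unital algebra homomorphism \(\Phi:\HS(S_\tau)\to\L(X)\) with \(\Phi(p)=p(T)\) for all polynomials \(p\). With this reading, both implications come directly from the theorem.

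For the forward direction, suppose \(T\) is Ritt. By Proposition~\ref{prop:Stolz-localization-prelim}, \(T\) is of Stolz type \(\sigma\) for some \(\sigma>1\). Fix any \(\tau>\sigma\). Theorem~\ref{thm:HS-calc-Ritt} then gives the bounded unital homomorphism \(\Psi_T\) on \(\HS(S_\tau)\). Polynomials are holomorphic near \(\overline{S_\tau}\), so the compatibility \eqref{permr} of Theorem~\ref{thm:operator-reproducing} gives \(\Psi_T(p)=p(T)\). Hence \(\Psi_T\) is a bounded \(\HS(S_\tau)\)-calculus in the sense of Definition~\ref{def:bounded-HS}.

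For the converse, suppose such a \(\Phi\) exists for some \(\tau\). We repeat the argument of Theorem~\ref{thm:Ritt-iff-HS}.

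\emph{Spectral inclusion.} Take \(a\notin\overline{S_\tau}\). Then \((a-e)^{-1}\) is holomorphic near \(\overline{S_\tau}\) and so lies in \(\HS(S_\tau)\). Since \(\Phi\) is a unital homomorphism with \(\Phi(a-e)=aI-T\), the operator \(aI-T\) is invertible. This gives \(\sigma(T)\subset\overline{S_\tau}\subset\D\cup\{1\}\).

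\emph{Resolvent bound.} For \(|z|>1\), apply \(\Phi\) to the Cauchy kernels \(K_z\) of \eqref{ckernel}. This yields \(\Phi(K_z)=(z-1)(z-T)^{-1}\). The uniform estimate of Lemma~\ref{lem:Kz-HS} then bounds \(|z-1|\,\|(z-T)^{-1}\|\) by \(\|\Phi\|C_\tau\), which is the Ritt resolvent condition.

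There is no real obstacle here, since all the substantive work is already contained in Theorem~\ref{thm:Ritt-iff-HS} and the lemmas it uses. The only point needing care is to fix this definitional reading of the phrase ``bounded Hardy--Sobolev functional calculus on some Stolz domain''. Density of polynomials (Theorem~\ref{thm:density}) makes such a calculus unique, so the statement is unambiguous.
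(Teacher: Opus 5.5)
Your proposal is correct and is essentially the paper's own argument. The corollary is stated as an immediate consequence of Theorem~\ref{thm:Ritt-iff-HS}, whose proof likewise obtains the forward direction from Theorem~\ref{thm:HS-calc-Ritt} with polynomial compatibility, and the converse from spectral inclusion together with the uniformly bounded Cauchy kernels of Lemma~\ref{lem:Kz-HS}.
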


\section{Beurling--Kato: sufficiency}\label{sec:BK-sufficiency}

In this section we obtain the main \emph{discrete} counterpart of Beurling--Kato
type theorems. The absence of a small-time parameter leads to arguments that are
specific to the power family $(T^n)_{n\ge1}$.

\subsection{The discrete power Beurling--Kato mechanism}\label{subsec:discrete-KB-mechanism}

The decisive 
point in discrete time is that one should work with the whole tail
of powers $\{T^n:n\ge1\}$ rather than with a single operator $w(T)$. The
boundary singularity is concentrated at $1$, and the scale $n=n(\xi)$ plays the
role of the continuous-time parameter when one tries to recover the Ritt factor
$(\xi-1)$.

The common core of the section is therefore a \emph{powerwise arc-resolvent}
estimate:
\[
  \sup_{n\ge1}\sup_{\eta\in I}\|(\eta -T^n)^{-1}\|<\infty
\]
on a closed arc $I\subset\T\setminus\{1\}$ of positive length. Once this is
available, a geometric hitting lemma places suitable powers of a boundary point
$\xi$ close to $1$ inside $I$, and the propagation theorem upgrades the
resulting local Ritt estimate to the full Ritt resolvent bound.

There are two natural ways to produce this powerwise arc-resolvent estimate.
The first one starts from a one-point resolvent bound
\[
  \sup_{n\ge1}\|(\zeta -T^n)^{-1}\|<\infty,
  \qquad \zeta\in\T\setminus\{1\},
\]
and leads directly to the discrete Kato theorem. 
The second one starts from a
Beurling--Kato defect
\[
  \rho:=\sup_{n\ge1}\|w(T^n)\|<m_I(w),
  \qquad w\in A^{1,1}(\D),
\]
for a closed arc $I\subset \mathbb T\setminus \{1\},$ where, as above, $m_I(w)=\min_{\eta\in I}|w(\eta)|$, 
and converts it into the same arc-resolvent estimate by means of uniform control
of divided differences.

The section is organised accordingly: first the transfer
mechanism itself, then the one-point resolvent criterion, and finally the complementary defect
criterion and its examples.

\subsection{Propagation of resolvent bounds}\label{subsec:Lyubich-propagation}

The following propagation statement 
turns a local function estimate on a boundary arc
into the same type of estimate in a one-sided neighbourhood of the arc.
While we are interested only in resolvent bounds,
we prove this step in greater generality,
in view of future potential applications
in the study of Ritt operators.

\begin{lemma}\label{lem:local-propagation}
Let \(X\) be a Banach space, let \(0<\varepsilon<1\), and put
\[
        \Omega
       :=
        \{re^{i\theta}:\ 1<r<1+\varepsilon,\, |\theta|<\varepsilon\}.
\]
Let \(F:\Omega\to X\) be holomorphic and assume that \(F\) has continuous
boundary values on the two arcs
\[
        \{e^{i\theta}:\ 0<|\theta|<\varepsilon\}.
\]
Assume that, for some \(A,B>0\),
\begin{equation}\label{eq:local-radial-bound}
        (|z|-1)\|F(z)\|\le A,
        \qquad z\in\Omega,
\end{equation}
and
\begin{equation}\label{eq:local-boundary-bound}
        \|(\xi-1)F(\xi)\|\le B,
        \qquad \xi=e^{i\theta},\quad 0<|\theta|<\varepsilon .
\end{equation} 
Then
\[
       \|(z-1)F(z)\|\le 32\max\{B,4A\},
        \qquad
        1<|z|<1+\varepsilon/2,\quad |\arg z|<\varepsilon/2.
\]
\end{lemma}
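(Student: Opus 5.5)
The plan is to work with $G(z):=(z-1)F(z)$ and show that it is bounded by $32\max\{B,4A\}$ in the smaller box by a Phragmén--Lindelöf argument. To make the geometry explicit, pass to logarithmic coordinates $w=\log z=s+i\theta$. Then $\Omega$ becomes the rectangle $Q:=\{0<s<\log(1+\varepsilon),\ |\theta|<\varepsilon\}$, the boundary arcs become $\{s=0,\ 0<|\theta|<\varepsilon\}$, and $|z|-1=e^s-1\asymp s$ and $|z-1|\asymp|w|$ with absolute constants, since $\varepsilon<1$. Hypothesis \eqref{eq:local-radial-bound} then reads $\|G\|\le A\,|z-1|/(|z|-1)\lesssim A|w|/s$. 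Hypothesis \eqref{eq:local-boundary-bound} says $\|G\|\le B$ on the edge $s=0$ away from the corner $w=0$. To handle vector-valued $G$, fix $x^*$ in the unit ball of $X^*$ and work with the scalar function $g=x^*\circ G$, so that $\log|g|$ is subharmonic.

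The first step is the ``Stolz part''. In the cone $\{|\theta|\le s\}$ we have $|w|\le\sqrt2\,s$. Hence the radial hypothesis alone gives $|g|\le cA$ there, with an absolute constant $c$; with the normalization above, $c\le 4$ should be achievable after tracking $e^s-1\ge s$ and $|z-1|\le e^s|w|$. The target region is $\{|z|<1+\varepsilon/2,\ |\arg z|<\varepsilon/2\}$. Its part outside this cone lies in the two curved wedges $W_\pm:=\{0<s<|\theta|,\ \pm\theta>0\}\cap Q$. By symmetry it suffices to treat $W_+$, which is a sector of opening $\pi/4$ at the vertex $w=0$ with its outer end cut off by $Q$.

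Next, apply a maximum principle in $W_+$, or in a slightly enlarged truncated version of it. Its boundary has three pieces: the edge $s=0$, where $|g|\le B$ by continuity of the boundary values; the diagonal $\theta=s$, where $|g|\le 4A$; and the far side $\theta=\varepsilon$ (or a cut at $\theta=3\varepsilon/4$). On that far side one only has $|g|\lesssim A\varepsilon/s$, which blows up at the corner $(0,\varepsilon)$. To absorb this, I would use the domain where the two half-sized constraints hold and compare with harmonic majorants. The function $\log(A/s)$ has an integrable singularity along the edge $s=0$, so its Poisson integral over the far side, evaluated at points at distance $\gtrsim\varepsilon$ from that side, contributes only a bounded multiplicative constant. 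This is where the numerical factor $32$ and the passage from $\varepsilon$ to $\varepsilon/2$ come from.

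The main obstacle is the corner $w=0$, where $g$ is not known to be continuous and the a priori bound $A|w|/s$ is not uniform. I would handle it with the standard Phragmén--Lindelöf device for a sector of angle $\pi/4$. Since along rays $\theta=ks$ we have $|g|\le A\sqrt{1+k^{2}}$, the growth near $0$ on each fixed subsector is at most polynomial in $1/s$, which is far below the critical rate $|w|^{-4}$ for this opening. So consider $g_\eta(w):=g(w)\,(w/(w+\eta))^{N}$, or alternatively shift $g$ inward by $\eta$ in $s$. Bound $g_\eta$ by the boundary maximum, and let $\eta\downarrow0$. The one delicate point is that near the edge $s=0$ the bound $A|w|/s$ is not $O(|w|^{-m})$ uniformly. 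If the power trick fails there, I would first prove boundedness of $g$ near $0$ by a two-constants estimate on small half-discs centred on the edge, using the continuous boundary values, and only then run the sector argument.
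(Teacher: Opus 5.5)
\paragraph{Where the proposal breaks down.} The crux is the vertex $w=0$, and the devices you lead with do not handle it. Near $0$ the only quantitative information is $|g|\le A|w|/s$, which blows up as $s\downarrow0$ at every scale $|w|$. You also have continuity up to the edge, but it is not uniform as $\theta\downarrow0$.

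\begin{itemize}
\item The multiplier $(w/(w+\eta))^N$ only damps growth in $1/|w|$. Along paths hugging the edge, $\limsup_{w\to0}|g_\eta(w)|$ therefore remains uncontrolled.
\item The remark about polynomial growth on fixed subsectors does not help. Those subsectors stay away from the edge, which is exactly where the bound $B$ lives.
\item The inward shift $w\mapsto w+\eta$ makes the function bounded, but it replaces the edge data by $g(\eta+i\theta)$. For these values you only know $|g|\le A e^{\eta}|\eta+i\theta|/\eta$, not $B$.
\end{itemize}

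So the Phragm\'en--Lindel\"of step, and with it the sector argument, is unproved. The attribution of the constant $32$ to Poisson integrals is an assertion, not a computation.

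\paragraph{Your fallback is the right idea.} In fact it makes the sector argument superfluous. Take $0<\theta_0<\varepsilon/2$ and $D=\{|w-i\theta_0|<\theta_0/2,\ s>0\}$.
\begin{itemize}
\item $g$ is continuous on $\overline D$, since $D$ avoids the vertex.
\item On the diameter, $\log|g|\le\log B$. At the point of the semicircle with angle $t$, $\log|g|\le\log(cA/\sin t)$, which is an integrable singularity.
\item Compare $\log|g|$ with the Poisson integral of these data; the two corners form a polar set. Harmonic measure seen from points with $|w-i\theta_0|\le\theta_0/4$ has density bounded by an absolute constant times $dt$ on the semicircle. This gives $|g|\le C\max\{A,B\}$ there.
\item The bound is uniform in $\theta_0$, because $|w|/s$ is scale invariant.
\item For $s\ge\theta_0/4$ the radial hypothesis alone gives $|g|\le C'A$.
\end{itemize}
Done with explicit constants, this even lands below $32\max\{B,4A\}$. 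None of it is in the proposal, however.

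\paragraph{How the paper does it.} The paper sidesteps the vertex and all harmonic-measure estimates with an elementary multiplier. Work on the box $\Omega_\delta=\{1<|z|<1+\varepsilon/2,\ \delta/2<\arg z<3\varepsilon/4\}$. Its two bottom corners are interior points of the boundary arc, so $F$ is continuous on $\overline{\Omega_\delta}$. Put $H(z)=(z-a)(z-b)F(z)$ with $a=e^{i\delta/2}$ and $b=e^{3i\varepsilon/4}$. Then $\|H\|\le\varepsilon\max\{2B,8A\}$ on $\partial\Omega_\delta$:
\begin{itemize}
\item on the radial sides, $|z-a|=|z|-1$ (respectively $|z-b|=|z|-1$) cancels the radial singularity;
\item on the unit circle, $|z-a|\le|z-1|$;
\item on the outer arc, $|z|-1=\varepsilon/2$.
\end{itemize}
The ordinary maximum principle extends this bound to $\Omega_\delta$. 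On $2\delta<\arg z<\varepsilon/2$ one has $|z-1|\le2|z-a|$ and $|z-b|\ge\varepsilon/8$, which yields $16\max\{2B,8A\}=32\max\{B,4A\}$. Letting $\delta\downarrow0$ finishes the proof. The vanishing factor $(z-a)$ at the box corner next to $1$ is precisely what replaces your Phragm\'en--Lindel\"of step.
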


\begin{proof}
%
%
For a small $\delta\in(0,\varepsilon/4)$ set
\[
        a=e^{i\delta/2},
        \qquad
        b=e^{3i\varepsilon/4},
\]
and consider the box
\[
        \Omega_\delta
        =
        \{\rho e^{i\varphi}:\ 1<\rho<1+\varepsilon/2,\ 
          \delta/2<\varphi<3\varepsilon/4\}.
\]
Define
\[
        H(z)
        =
        (z-a)(z-b)F(z),
        \qquad z\in\Omega_\delta.
\]
By the assumed boundary continuity of $F$, we have
\[
        H\in C(\overline{\Omega_\delta};X)
        \cap {\rm Hol}(\Omega_\delta;X).
\]

Set
\(
        K:=\max\{2B,8A\}.
\)
We claim that
\begin{equation}\label{hfunct}
        \|H(z)\|\le K\varepsilon,
        \qquad z\in\partial\Omega_\delta.
\end{equation}
On $\partial\Omega_\delta\cap\mathbb T$ we have \(z=e^{i\varphi}\) with
\(\delta/2\le\varphi\le3\varepsilon/4\). Hence, 
\[
        |z-a|\le |z-1|,\qquad
        |z-b|\le2\varepsilon.
\]
Thus, by \eqref{eq:local-boundary-bound},
\[
        \|H(z)\|
        \le\varepsilon K,\qquad z\in \partial\Omega_\delta\cap\mathbb T.
\]
Next, on $\partial\Omega_\delta\cap(1+\varepsilon/2)\mathbb T$, we have 
\[
        |z-a|\le2\varepsilon,\qquad
        |z-b|\le2\varepsilon,\qquad |z|-1=\varepsilon/2.
\]
Therefore, by \eqref{eq:local-radial-bound},
\[
        \|H(z)\|
        \le\varepsilon K,\qquad z\in \partial\Omega_\delta\cap(1+\varepsilon/2)\mathbb T.
\]

On the radial side \(z=\rho a\),
\(1\le \rho\le1+\varepsilon/2\), 
we have
\[
        |z-a|=\rho-1,\qquad |z-b|\le2\varepsilon.
\]
Thus
\[
        \|H(z)\|
        \le \varepsilon K.
\]
The side \(z=\rho b\), \(1< \rho\le 1+\varepsilon/2\),  is identical.  This proves the boundary
estimate.

Applying the scalar maximum principle to the scalar-valued function
\(x^*\circ H\), where \(x^*\in X^*\) and \(\|x^*\|\le1\), and then
taking the supremum over such \(x^*\), extends \eqref{hfunct} to
\[
        \|H(z)\|\le K\varepsilon,
        \qquad z\in\Omega_\delta.
\]



Now, let 
$$        
\Omega'_\delta=\{\rho e^{i\varphi}:\ 1<\rho<1+\varepsilon/2,\ 
          2\delta<\varphi<\varepsilon/2\}.
$$

For $z\in\Omega'_\delta,$ elementary geometry gives  
\[
        \frac{|z-1|}{|z-a|}\le 1+ \frac{|a-1|}{|z-a|} \le 2,
        \qquad
        |z-b|\ge |e^{3i\varepsilon/4}-e^{i\varepsilon/2}|\ge{\varepsilon/8}.
\]
Since $\Omega'_\delta\subset\Omega_\delta$, it follows that
\[
\begin{aligned}
   \|(z-1)F(z)\|
   &=
   \frac{|z-1|}{|z-a|\,|z-b|}\,\|H(z)\|  \\
   &\le
   2\,\frac{8}{\varepsilon}\,\varepsilon K
   =16K,
   \qquad z\in\Omega'_\delta.
\end{aligned}
\]

Letting $\delta\to0$ we conclude that 
$$
\|(\rho e^{i\varphi}-1)F(\rho e^{i\varphi})\|
        \le 16K,\qquad  1<\rho<1+\varepsilon/2,\ 
          0\le \varphi<\varepsilon/2,
$$
and by symmetry, 
$$
\|(\rho e^{i\varphi}-1)F(\rho e^{i\varphi})\|
        \le 16K,\qquad  1<\rho<1+\varepsilon/2,\ 
          |\varphi|<\varepsilon/2.
$$
\end{proof}

If $F$ is a resolvent, then the local bound obtained in Lemma \ref{lem:local-propagation}
can be further transformed into a global resolvent bound
outside $\D$ as the next statement shows. 
This fact was already noted in the literature,
see e.g. \cite[Theorem 2.1]{Borovykh}, and thus the proof is given for completeness and clarity of exposition.

\begin{corollary}
\label{cor:Lyubich-propagation}
Let \(T\in\L(X)\) be power bounded and assume that
\[
        \sigma(T)\cap\T\subset\{1\}.
\]
Suppose that, for some \(\varepsilon>0\),
\[
        \sup_{0<|\theta|<\varepsilon}
        \|(e^{i\theta}-1)(e^{i\theta}-T)^{-1}\|<\infty .
\]
Then \(T\) is a Ritt operator.
\end{corollary}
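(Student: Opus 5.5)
The plan is to apply Lemma~\ref{lem:local-propagation} to the resolvent $F(z):=(z-T)^{-1}$, obtain the Ritt bound near $1$, and then use compactness together with power boundedness to cover the rest of $\{|z|>1\}$.

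First we verify the hypotheses of the lemma. Since $T$ is power bounded, say $M:=\sup_{n\ge0}\|T^n\|$, we have $r(T)\le1$. Hence $\sigma(T)\subset\overline{\D}$, and the Neumann series gives
\[
\|(z-T)^{-1}\|\le \sum_{n\ge0}\frac{M}{|z|^{n+1}}=\frac{M}{|z|-1},\qquad |z|>1.
\]
This is \eqref{eq:local-radial-bound} with $A=M$. Because $\sigma(T)\cap\T\subset\{1\}$, the resolvent is holomorphic on a neighbourhood of $\T\setminus\{1\}$. In particular $F$ is continuous up to the arcs $\{e^{i\theta}:0<|\theta|<\varepsilon\}$. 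The assumed bound on these arcs is exactly \eqref{eq:local-boundary-bound} with a finite $B$. We may shrink $\varepsilon$ so that $\varepsilon<1$. Lemma~\ref{lem:local-propagation} then yields
\[
\|(z-1)(z-T)^{-1}\|\le C_0,\qquad 1<|z|<1+\varepsilon/2,\ |\arg z|<\varepsilon/2 .
\]

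It remains to bound $|z-1|\,\|(z-T)^{-1}\|$ on the rest of $\{|z|>1\}$. We split this set into three regions.

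\emph{Large $|z|$.} For $|z|\ge1+\varepsilon/2$ we have
\[
|z-1|\,\|(z-T)^{-1}\|\le \frac{(|z|+1)M}{|z|-1}\le C,
\]
since $(|z|+1)/(|z|-1)$ is bounded there.

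\emph{Away from $1$ near the circle.} Consider $1<|z|<1+\varepsilon/2$ with $|\arg z|\ge\varepsilon/2$. This set has compact closure
\[
K:=\{1\le|z|\le1+\varepsilon/2,\ |\arg z|\ge\varepsilon/2\},
\]
and $K$ avoids $\sigma(T)$ because $\sigma(T)\cap\T\subset\{1\}$ and $\sigma(T)\subset\overline{\D}$. The resolvent is continuous on $K$, so it is bounded there. Since $|z-1|\le 3$ on $K$, the product $|z-1|\,\|(z-T)^{-1}\|$ is bounded as well.

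\emph{Near $1$.} On the remaining region $1<|z|<1+\varepsilon/2$, $|\arg z|<\varepsilon/2$, the bound $C_0$ from the lemma applies.

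Combining the three regions gives
\[
\sup_{|z|>1}|z-1|\,\|(z-T)^{-1}\|<\infty .
\]
Together with $\sigma(T)\subset\overline{\D}$, this is Definition~\ref{def:Ritt-prelim}, so $T$ is Ritt.

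The only step requiring care is checking the hypotheses of Lemma~\ref{lem:local-propagation}, namely the continuous boundary values and the radial bound. Both follow directly from the spectral assumption and power boundedness, so no genuine obstacle is expected.
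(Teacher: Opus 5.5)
Your proposal is correct and follows the paper's proof: you apply Lemma~\ref{lem:local-propagation} to $F(z)=(z-T)^{-1}$, with $A=M$ coming from the Neumann series, bound the rest of the thin annulus by compactness using $\sigma(T)\cap\T\subset\{1\}$, and handle large $|z|$ with the Neumann series again. Your explicit step of shrinking $\varepsilon$ below $1$, so that the lemma's hypothesis $0<\varepsilon<1$ holds, is a small detail the paper leaves implicit.
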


\begin{proof}
Put
\[
        F(z)=(z-T)^{-1},\qquad |z|>1.
\]
If
\(
        M:=\sup_{n\ge0}\|T^n\|,
\)
then the Neumann series gives
\[
        (|z|-1)\|F(z)\|\le M,\qquad |z|>1.
\]
Moreover, by the spectral assumption, \(F\) has continuous boundary values on
\(\T\setminus\{1\}\) near \(1\), and the hypothesis gives
\[
B:=
\sup_{0<|\theta|<\varepsilon}
\|(e^{i\theta}-1)F(e^{i\theta})\|<\infty.
\]

Applying Lemma~\ref{lem:local-propagation} to \(F\) with $A=M$ and $B$ as above, we obtain that for some $\varepsilon_0>0$, 
\((z-1)(z-T)^{-1}\) is bounded in
\[
        \{z:\ 1<|z|<1+\varepsilon_0,\ |\arg z|<\varepsilon_0\}.
\]
On the remaining part of the annulus \(1<|z|<1+\varepsilon_0\), the function
is bounded by compactness, since \(\sigma(T)\cap\T\subset\{1\}\). Hence
\[
        \sup_{1<|z|<1+\varepsilon_0}
        \|(z-1)(z-T)^{-1}\|<\infty .
\]
Finally, for \(|z|\ge 1+\varepsilon_0\), the Neumann series gives
\[
        \|(z-1)(z-T)^{-1}\|
        \le
        M\,\frac{|z-1|}{|z|-1}
        \le
        M\,\frac{|z|+1}{|z|-1}
        \le
        M\,\frac{2+\varepsilon_0}{\varepsilon_0}.
\]
Therefore
\[
        \sup_{|z|>1}\|(z-1)(z-T)^{-1}\|<\infty .
\]
\end{proof}

\subsection{A Wiener--Sobolev control of divided differences}\label{subsec:divdiff}

To pass from invertibility of $w(\eta)I-w(T^n)$ to
resolvent bounds for $(\eta -T^n)^{-1}$ one needs uniform control of the divided
differences
\[
  h_\eta(z)=\frac{w(\eta)-w(z)}{\eta-z}.
\]

The next simple lemma serves this purpose.

We use the notation $A^1(\D)$, $A^{1,1}(\D)$ and the corresponding norms introduced in Subsection~\ref{subsec:notation}.

\begin{lemma}\label{lem:divdiff-A11}
Let $w(z)=\sum_{n\ge0} c_n z^n\in A^{1,1}(\D)$ and fix $\eta\in\T$.
Define
\[
  h_\eta(z):=
  \begin{cases}
    \dfrac{w(\eta)-w(z)}{\eta-z}, & z\neq \eta,\\[1ex]
    w'(\eta), & z=\eta.
  \end{cases}
\]
Then $h_\eta\in A^1(\D)$ and
\begin{equation}\label{eq:h-eta-A1}
  \|h_\eta\|_{A^1}\le \|w\|_{A^{1,1}}, 
  \qquad \eta\in\T.
\end{equation}
Moreover, for every power-bounded operator $T$,
\begin{equation}\label{eq:divdiff-operator-id}
  w(\eta)-w(T)=(\eta-T)\,h_\eta(T),
\end{equation}
where $h_\eta(S)$ is defined by the absolutely convergent $A^1$-series.
\end{lemma}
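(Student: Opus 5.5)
The plan is to compute the Taylor coefficients of $h_\eta$ explicitly from the geometric-series expansion of the divided difference of monomials, and then transfer the resulting identity to $T$ through the $A^1$-calculus.

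\emph{Scalar identity.} For each $n\ge1$ and $z\neq\eta$ we have
\[
  \frac{\eta^n-z^n}{\eta-z}=\sum_{k=0}^{n-1}\eta^{n-1-k}z^k,
\]
and for $n=0$ the quotient vanishes. Since $|\eta|=1$, each monomial quotient has $A^1$-norm exactly $n$. We therefore define
\[
  h_\eta(z):=\sum_{n\ge1}c_n\sum_{k=0}^{n-1}\eta^{n-1-k}z^k .
\]
This double series converges absolutely in $A^1$, with
\[
  \sum_{n\ge1}|c_n|\,n\le\|w\|_{A^{1,1}} .
\]
So its sum lies in $A^1(\D)$ and satisfies \eqref{eq:h-eta-A1}.

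\emph{Identification with the divided difference.} For $z\in\overline{\D}\setminus\{\eta\}$, summing the identity termwise (all series converge absolutely) gives $h_\eta(z)=(w(\eta)-w(z))/(\eta-z)$. At $z=\eta$ each inner sum equals $n\eta^{n-1}$, so the value is $\sum_n nc_n\eta^{n-1}=w'(\eta)$. This derivative series converges absolutely because $w\in A^{1,1}$, and it agrees with the limit of the difference quotients by continuity of the $A^1$-function $h_\eta$ on $\overline{\D}$.

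\emph{Operator identity.} Let $T$ be power bounded with $M=\sup_k\|T^k\|$. The $A^1$-calculus $f\mapsto\sum_n c_n(f)T^n$ is a bounded unital algebra homomorphism $A^1(\D)\to\L(X)$ of norm at most $M$. Multiplicativity holds on polynomials and extends by density together with continuity of the Cauchy product in $A^1$. In $A^1(\D)$ we have the identity $(\eta-z)h_\eta(z)=w(\eta)-w(z)$, since both sides are $A^1$-functions agreeing on $\D$, hence with equal coefficients. Applying the homomorphism yields \eqref{eq:divdiff-operator-id}, and $h_\eta(T)$ commutes with $T$.

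There is no real obstacle. The only point needing care is justifying the rearrangement of the double series, which is covered by absolute convergence: the estimate $\sum_n|c_n|\,n<\infty$ simultaneously controls the coefficient bound and the operator-norm convergence $\|h_\eta(T)\|\le M\|w\|_{A^{1,1}}$.
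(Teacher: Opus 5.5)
Your proposal is correct and follows essentially the paper's route: expand each monomial divided difference as a geometric sum, bound $\|h_\eta\|_{A^1}$ by $\sum_{n\ge1} n|c_n|\le\|w\|_{A^{1,1}}$, and transfer the identity to $T$ through the absolutely convergent $A^1$-series. The only cosmetic difference is that you invoke multiplicativity of the $A^1$-calculus, whereas the paper applies the factorisation $\eta^n-T^n=(\eta-T)\sum_{k=0}^{n-1}\eta^{n-1-k}T^k$ termwise and sums; this amounts to the same argument.
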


\begin{proof}
For each $n\ge1$ we have
\[
  \frac{\eta^n-z^n}{\eta-z}=\sum_{k=0}^{n-1}\eta^{n-1-k}z^k,
\]
hence
\[
  h_\eta(z)=\sum_{n\ge1} c_n\sum_{k=0}^{n-1}\eta^{n-1-k}z^k.
\]
Taking $A^1$--norms and using $|\eta|=1$ gives
\[
  \|h_\eta\|_{A^1}
  \le \sum_{n\ge1}|c_n|\sum_{k=0}^{n-1}1
  =\sum_{n\ge1}n|c_n|
  \le \sum_{n\ge0}(n+1)|c_n|=\|w\|_{A^{1,1}}.
\]
Identity \eqref{eq:divdiff-operator-id} follows by applying the same algebraic identity
termwise in the $A^1$-series (which converges since $T$ is power-bounded).
\end{proof}

\subsection{A geometric hitting lemma}\label{subsec:arc-selection}

Once one has resolvent control for $T^n$ on a fixed arc
$I\subset\T\setminus\{1\}$, the next task is to transfer this information back to
$(\xi -T)^{-1}$ for $\xi$ near $1$. The elementary lemma below does exactly this by
choosing $n=n(\xi)$ so that $\xi^n\in I$ while $n|\xi-1|$ stays uniformly
bounded.

\begin{lemma}
\label{lem:hitting-arc}
Let $I\subset\T\setminus \{1\}$ be a closed arc of positive length.
Then there exists $\theta_*\in(0,1)$ with the following property:

for every $\xi=e^{i\theta}\in\T$ with $0<|\theta|<\theta_*$ there exists an integer
$n=n(\xi)\ge 1$ such that
\[
\xi^n\in I
\qquad\text{and}\qquad
n\,|\xi-1|\le 2\pi .
\]
\end{lemma}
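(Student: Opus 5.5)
We parametrise the arc as \(I=\{e^{i\phi}:\ \phi\in[\alpha,\beta]\}\) with \(0<\alpha<\beta<2\pi\), where \(\ell:=\beta-\alpha>0\) is the angular length of \(I\); this is possible because \(I\) is a closed arc of positive length avoiding \(1\). By the symmetry \(\xi\mapsto\bar\xi\) we cannot simply reduce to \(\theta>0\), since \(\bar I\) is a different arc. Instead we handle \(\theta<0\) by the same argument applied to the arc \(\bar I\), whose angular parametrisation is \([2\pi-\beta,2\pi-\alpha]\subset(0,2\pi)\), of the same length \(\ell\). It therefore suffices to treat \(0<\theta<\theta_*\) for an arbitrary arc of this form.

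The key step is a discrete intermediate-value (``no jumping over'') argument. For \(0<\theta<\ell\), consider the increasing sequence of angles \(n\theta\), \(n=1,2,\dots\). Consecutive terms differ by \(\theta<\ell\), and \(\theta<\alpha\) as soon as \(\theta_*\le\alpha\). Hence the sequence cannot skip the interval \([\alpha,\beta]\). Let \(n\) be the smallest integer with \(n\theta\ge\alpha\). Then \((n-1)\theta<\alpha\) gives \(n\theta<\alpha+\theta<\alpha+\ell=\beta\), so \(n\theta\in[\alpha,\beta]\) and \(\xi^n=e^{in\theta}\in I\). Moreover \(n\ge2\ge1\) since \(\theta<\alpha\).

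Next we bound \(n|\xi-1|\). Minimality gives \(n\theta<\alpha+\theta<\beta<2\pi\), and we use \(|\xi-1|=2|\sin(\theta/2)|\le|\theta|\). Hence
\[
n|\xi-1|\le n\theta<2\pi .
\]
Accordingly we set
\[
\theta_*:=\min\{\alpha,\ \ell,\ 2\pi-\beta,\ 1/2\},
\]
which lies in \((0,1)\) and is symmetric under passing to \(\bar I\): the roles of \(\alpha\) and \(2\pi-\beta\) are interchanged, while \(\ell\) is unchanged. For \(-\theta_*<\theta<0\) we then apply the argument to \(\bar\xi\) and \(\bar I\), and conjugate back to obtain \(\xi^n\in I\).

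We do not expect a real obstacle here. The only point needing care is the bookkeeping of the arc in angular coordinates. In particular, one must ensure that the lower endpoint \(\alpha\) is positive and that \(\beta<2\pi\), so that the first entry of \(n\theta\) into \([\alpha,\beta]\) occurs before the angle completes a full turn; this is exactly what yields the bound \(2\pi\) rather than something larger.
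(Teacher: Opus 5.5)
Your proof is correct and is essentially the paper's argument: take the smallest $n$ with $n\theta$ at or beyond the lower endpoint of the arc, so that a step size $\theta$ smaller than the arc length forces $n\theta$ into the arc while keeping $n\theta<2\pi$. The only differences are cosmetic. The paper works with a symmetric sub-arc $J\subset I$ of half-width $\delta=1/n_0$ and sets $\theta_*=\delta$, whereas you use the whole arc $[\alpha,\beta]$ and choose $\theta_*$ invariant under $I\mapsto\overline I$, which makes the reduction of $\theta<0$ to $\theta>0$ fully explicit.
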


\begin{proof}
It suffices to treat $0<\theta<\theta_*$, since the case $\theta<0$ follows by applying the positive case to the conjugate arc $\overline I$.
Choose $\phi_0\in(0,2\pi)$ and $n_0\in\N$ so large that, with $\delta:=1/n_0$,
\[
        J:=\{e^{i\phi}:\ |\phi-\phi_0|\le \delta\}\subset I,
        \qquad
        0<\phi_0-\delta<\phi_0+\delta<2\pi .
\]
Set $\theta_*:=\delta,$
let $0<\theta<\theta_*$, and write $\xi=e^{i\theta}$.  
Let $n\in \mathbb N$ be the smallest number such that
\[
        n\theta\ge \phi_0-\delta .
\]
Then
\[
        (n-1)\theta<\phi_0-\delta,
\]
and hence
\[
        n\theta<\phi_0-\delta+\theta<\phi_0 .
\]
Thus
\[
        n\theta\in[\phi_0-\delta,\phi_0]\subset[\phi_0-\delta,\phi_0+\delta],
\]
so that $\xi^n=e^{in\theta}\in J\subset I$. Finally,
\[
        n|\xi-1|=n|e^{i\theta}-1|\le n\theta<\phi_0<2\pi .
\]
This proves the assertion.
\end{proof}

\subsection{A unified power Beurling--Kato principle}\label{subsec:power-KB}

Now we are able to develop the sufficiency direction in the discrete Beurling--Kato theory,
leading to Ritt resolvent bounds.
We elaborate two routes 
based on the discrete analogue of Kato's resolvent criterion
and on norm-gap assumptions in Beurling's spirit.
Their common core is a uniform resolvent estimate for \(T^n\) on a fixed
arc \(I\subset\mathbb T\setminus\{1\}\).
Once this estimate is available, Lemma~\ref{lem:hitting-arc} transfers it  back to a local
Ritt estimate for $T$ near $1$, and the propagation Corollary \ref{cor:Lyubich-propagation} finishes the argument.
We therefore isolate the transfer step first and then record two ways of producing
its hypothesis.

\begin{theorem}\label{thm:arc-resolvent-powers-ritt}
Let \(T\in\L(X)\) be power-bounded and assume that
\[
  \sigma(T)\cap\T\subset\{1\}.
\]
Assume that there exist a closed arc $I\subset\T\setminus\{1\}$ of positive length
and a constant $K\ge1$ such that
\begin{equation}\label{eq:powerwise-arc-resolvent}
  \sup_{n\ge1}\sup_{\eta\in I}\|(\eta -T^n)^{-1}\|\le K.
\end{equation}
Then $T$ is a Ritt operator.
\end{theorem}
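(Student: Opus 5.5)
The plan is to reduce the statement to Corollary~\ref{cor:Lyubich-propagation}: it suffices to find $\varepsilon>0$ with
\[
  \sup_{0<|\theta|<\varepsilon}\|(e^{i\theta}-1)(e^{i\theta}-T)^{-1}\|<\infty .
\]
Because $T$ is power bounded and $\sigma(T)\cap\T\subset\{1\}$, the resolvent $(\xi-T)^{-1}$ exists for every $\xi\in\T\setminus\{1\}$. So the only issue is a uniform estimate as $\xi\to1$.

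Fix $\theta_*$ as in Lemma~\ref{lem:hitting-arc}, applied to the arc $I$, and let $\xi=e^{i\theta}$ with $0<|\theta|<\theta_*$. Choose $n=n(\xi)\ge1$ with $\xi^n\in I$ and $n|\xi-1|\le2\pi$. Then $\xi^n\in\rho(T^n)$, and \eqref{eq:powerwise-arc-resolvent} gives $\|(\xi^n-T^n)^{-1}\|\le K$.

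The key identity is the finite geometric factorisation
\[
  \xi^n-T^n=(\xi-T)\,Q_n(\xi,T)=Q_n(\xi,T)\,(\xi-T),
  \qquad
  Q_n(\xi,T):=\sum_{k=0}^{n-1}\xi^{n-1-k}T^k .
\]
The factors commute, and $\xi^n-T^n$ is invertible. It follows that
\[
  (\xi-T)^{-1}=Q_n(\xi,T)(\xi^n-T^n)^{-1}.
\]
This identity also shows directly, without appealing to the spectral hypothesis, that $\xi\in\rho(T)$. Writing $M:=\sup_{k\ge0}\|T^k\|$, we have $\|Q_n(\xi,T)\|\le nM$ because $|\xi|=1$. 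Hence
\[
  |\xi-1|\,\|(\xi-T)^{-1}\|\le |\xi-1|\,nM K\le 2\pi MK,
\]
uniformly for $0<|\theta|<\theta_*$. This is exactly the hypothesis of Corollary~\ref{cor:Lyubich-propagation} with $\varepsilon=\theta_*$, and that corollary then yields that $T$ is Ritt.

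There is no analytic obstacle once Lemma~\ref{lem:hitting-arc} and Corollary~\ref{cor:Lyubich-propagation} are available. The substantive point is that the linear growth $nM$ of the factor $Q_n$ is exactly compensated by the hitting bound $n|\xi-1|\le2\pi$. This is why the hitting lemma is stated with that bound, rather than merely asserting that some power of $\xi$ lands in $I$.

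Two details need checking. First, the case $\theta<0$ is already covered by the hitting lemma, via the conjugate arc. Second, we need that $\xi^n\ne1$ and that the invertibility of $\xi^n-T^n$ transfers to $\xi-T$. The latter holds because, when a product of commuting operators is invertible, each factor is invertible.
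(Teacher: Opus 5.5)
Your proof is correct and follows the paper's argument essentially step for step. Both use Lemma~\ref{lem:hitting-arc} to choose $n=n(\xi)$ and the factorisation $(\xi-T)^{-1}=Q_n(\xi,T)(\xi^n-T^n)^{-1}$ with $\|Q_n(\xi,T)\|\le nM$, absorb the factor $n$ via $n|\xi-1|\le 2\pi$, and conclude with Corollary~\ref{cor:Lyubich-propagation}.
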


\begin{proof}
Let $\sup_{n \ge 0} \|T^n\|:=M$  and
let $\theta_*>0$ be given by Lemma~\ref{lem:hitting-arc}. Choose $\varepsilon>0$ so small that
whenever $\xi=e^{i\theta}\in\T$ satisfies $0<|\xi-1|<\varepsilon$, one has $0<|\theta|<\theta_*$. Fix such a point $\xi$.
Since $\sigma(T)\cap\T\subset\{1\}$, we have $\xi\in\rho(T)$. By Lemma~\ref{lem:hitting-arc} there exists
$n=n(\xi)\ge1$ such that $\eta:=\xi^n\in I\subset\T\setminus\{1\}$ and
\[
  n|\xi-1|\le 2\pi.
\]

Using the factorisation
\[
  \xi^n -T^n=(\xi -T)\,Q_n(\xi,T),
  \qquad
  Q_n(\xi,T)=\sum_{k=0}^{n-1}\xi^{n-1-k}T^k,
\]
we obtain
\[
  (\xi -T)^{-1}=Q_n(\xi,T)\,(\xi^n -T^n)^{-1}.
\]
Hence $\|Q_n(\xi,T)\|\le \sum_{k=0}^{n-1}\|T^k\|\le Mn$, and therefore
\[
  \|(\xi -T)^{-1}\|\le MnK.
\]
Multiplying by $|\xi-1|$ and using $n|\xi-1|\le 2\pi$ yields
\begin{equation}\label{eq:local-Ritt-circle-arc}
  \sup\Bigl\{\|(\xi-1)(\xi -T)^{-1}\|:\ \xi\in\T,\ 0<|\xi-1|<\varepsilon\Bigr\}<\infty.
\end{equation}
Applying Corollary~\ref{cor:Lyubich-propagation},
we conclude that \(T\) is a Ritt operator.
\end{proof}


The one-point resolvent criterion is now immediate.

\begin{theorem}\label{thm:one-point-power-Kato}
Let $T\in\L(X)$ be power-bounded and assume that
\[
  \sigma(T)\cap\T\subset\{1\}.
\]
Assume that for some $\zeta\in\T\setminus\{1\}$,
\[
  \sup_{n\ge1}\|(\zeta -T^n)^{-1}\|<\infty.
\]
Then $T$ is a Ritt operator.
\end{theorem}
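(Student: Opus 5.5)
The plan is to reduce Theorem~\ref{thm:one-point-power-Kato} to Theorem~\ref{thm:arc-resolvent-powers-ritt}. To do this we upgrade the one-point bound at $\zeta$ to a uniform bound on a small closed arc $I$ centred at $\zeta$. Put
\[
K_0:=\sup_{n\ge1}\|(\zeta-T^n)^{-1}\|<\infty .
\]
The tool is the standard perturbation estimate for inverses. If $A$ is invertible and $|\eta-\zeta|\,\|A^{-1}\|\le 1/2$, then $\eta-A=(\zeta-A)\bigl(I+(\eta-\zeta)(\zeta-A)^{-1}\bigr)$ is invertible. Its inverse is given by the Neumann series, and
\[
\|(\eta-A)^{-1}\|\le \frac{\|(\zeta-A)^{-1}\|}{1-|\eta-\zeta|\,\|(\zeta-A)^{-1}\|}\le 2\|(\zeta-A)^{-1}\| .
\]

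First, we apply this with $A=T^n$ for each $n\ge1$. Let $I$ be the closed arc of $\T$ centred at $\zeta$ consisting of those $\eta\in\T$ with $|\eta-\zeta|\le 1/(2K_0)$. We shrink it further if necessary so that $I\subset\T\setminus\{1\}$ and $I$ has positive length. The smallness condition $|\eta-\zeta|\,\|(\zeta-T^n)^{-1}\|\le 1/2$ then holds for every $n$ simultaneously, because $K_0$ does not depend on $n$. Hence
\[
\sup_{n\ge1}\sup_{\eta\in I}\|(\eta-T^n)^{-1}\|\le 2K_0 .
\]

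Second, we set $K:=\max\{1,2K_0\}$. Power boundedness and $\sigma(T)\cap\T\subset\{1\}$ are hypotheses of the present theorem. Thus all assumptions of Theorem~\ref{thm:arc-resolvent-powers-ritt} hold, including \eqref{eq:powerwise-arc-resolvent}, and that theorem gives that $T$ is Ritt.

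The only real point is that the radius of the arc is chosen uniformly in $n$. This is exactly what the uniform one-point hypothesis provides. The harder work, namely the hitting Lemma~\ref{lem:hitting-arc} and the propagation Corollary~\ref{cor:Lyubich-propagation}, is already contained in Theorem~\ref{thm:arc-resolvent-powers-ritt}, so no serious obstacle is expected.
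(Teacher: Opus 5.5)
Your proposal is correct and follows the paper's proof: the paper likewise uses the Neumann series to pass from the uniform one-point bound at $\zeta$ to a uniform bound on a small closed arc $I\ni\zeta$ in $\T\setminus\{1\}$, and then invokes Theorem~\ref{thm:arc-resolvent-powers-ritt}. You correctly identify the key point, namely that the arc radius $1/(2K_0)$ is independent of $n$.
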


\begin{proof}
Put
\[
        K_\zeta:=\sup_{n\ge1}\|(\zeta -T^n)^{-1}\|<\infty.
\]
By the Neumann series, there is a closed arc
$I\subset\T\setminus\{1\}$ of positive length, containing $\zeta$, such that
\[
        \sup_{n\ge1}\sup_{\eta\in I}\|(\eta -T^n)^{-1}\|<\infty.
\]
Thus \eqref{eq:powerwise-arc-resolvent} holds, and
Theorem~\ref{thm:arc-resolvent-powers-ritt} implies that $T$ is Ritt.
\end{proof}

The norm-gap, or defect, route is a complementary way of
producing the same
powerwise arc-resolvent estimate.
It is conceptually parallel, but analytically different: here one starts from the strict inequality
\[
  \sup_{n\ge1}\|w(T^n)\|<m_I(w)
\]
on a nonempty closed arc $I\subset\T\setminus\{1\}$ and produces the same uniform
arc-resolvent estimate through the estimates for divided differences in $A^{1,1}(\D)$.
The argument relies on the following variation upon Kato's discrete resolvent bound.

\begin{proposition}\label{prop:defect-to-arc-resolvent}
Let $T\in\L(X)$ be power-bounded: $\|T^n\|\le M$ for $n\ge0$.
Let $w\in A^{1,1}(\D)$ and let 
$I\subset\T\setminus\{1\}$ be a closed arc
of positive length. Assume that
\begin{equation}\label{eq:defect-wTn}
  \rho:=\sup_{n\ge1}\|w(T^n)\| \,<\, m_I(w),
\end{equation}
Then
\begin{equation}\label{eq:resolvent-Tn-on-I}
  \sup_{n\ge1}\sup_{\eta\in I}\|(\eta -T^n)^{-1}\|
  \le \frac{M\|w\|_{A^{1,1}}}{m_I(w)-\rho}.
\end{equation}
\end{proposition}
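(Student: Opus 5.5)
The plan is to apply the divided-difference identity of Lemma~\ref{lem:divdiff-A11} to the power-bounded operator $S:=T^n$ rather than to $T$, and then invert $w(\eta)-w(T^n)$ by a Neumann series. Fix $n\ge1$ and $\eta\in I$. Since $\|S^k\|=\|T^{nk}\|\le M$ for all $k\ge0$, the operator $S$ is power bounded with the same constant $M$. Hence the $A^1$-calculus for $S$ is well-defined, with $\|g(S)\|\le M\|g\|_{A^1}$. Lemma~\ref{lem:divdiff-A11} then gives
\[
  w(\eta)I-w(T^n)=(\eta-T^n)\,h_\eta(T^n)=h_\eta(T^n)\,(\eta-T^n),
\]
and the two factors commute, since both are norm limits of polynomials in $T^n$. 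Moreover,
\[
  \|h_\eta(T^n)\|\le M\|h_\eta\|_{A^1}\le M\|w\|_{A^{1,1}}.
\]

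Next we invert the left-hand side. Since $|w(\eta)|\ge m_I(w)>\rho\ge\|w(T^n)\|$, we may write
\[
  w(\eta)I-w(T^n)=w(\eta)\bigl(I-w(\eta)^{-1}w(T^n)\bigr),
\]
and the Neumann series shows that this operator is invertible with
\[
  \|(w(\eta)I-w(T^n))^{-1}\|\le\frac{1}{|w(\eta)|-\rho}\le\frac1{m_I(w)-\rho}.
\]
Set $R:=(w(\eta)I-w(T^n))^{-1}$, which commutes with $\eta-T^n$ and with $h_\eta(T^n)$. Then $(\eta-T^n)h_\eta(T^n)R=I$ and $R\,h_\eta(T^n)(\eta-T^n)=I$. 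Thus $\eta-T^n$ has a right inverse $h_\eta(T^n)R$ and a left inverse $R\,h_\eta(T^n)$, so it is invertible with
\[
  (\eta-T^n)^{-1}=h_\eta(T^n)\,R .
\]

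The estimate then follows directly:
\[
  \|(\eta-T^n)^{-1}\|\le\frac{M\|w\|_{A^{1,1}}}{m_I(w)-\rho},
\]
and this bound is uniform in $n\ge1$ and $\eta\in I$, which is \eqref{eq:resolvent-Tn-on-I}.

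The only delicate point is justifying the operator identity and the commutation when it is applied to $T^n$ rather than $T$. This is handled by observing that $T^n$ is power bounded with constant $M$, and that all the operators involved lie in the closed commutative algebra generated by $T^n$.
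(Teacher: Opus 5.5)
Your proposal is correct and follows essentially the same route as the paper: apply the divided-difference factorization of Lemma~\ref{lem:divdiff-A11} to the power-bounded operator $T^n$, invert $w(\eta)I-w(T^n)$ by a Neumann series, and combine this with the bound $\|h_\eta(T^n)\|\le M\|w\|_{A^{1,1}}$. Your explicit construction of left and right inverses just spells out the paper's remark that commuting factors of an invertible product are themselves invertible.
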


\begin{proof}
Fix $n\ge1$ and $\eta\in I$. Since $|w(\eta)|\ge m_I(w)>\rho\ge \|w(T^n)\|$,
the operator $w(\eta)I-w(T^n)$ is invertible and
\[
  \|(w(\eta)-w(T^n))^{-1}\|
  \le \frac{1}{|w(\eta)|-\|w(T^n)\|}
  \le \frac{1}{m_I(w)-\rho}.
\]
Let $h_\eta$ be as in Lemma~\ref{lem:divdiff-A11}. Then
\[
  w(\eta)I-w(T^n)=(\eta-T^n)\,h_\eta(T^n).
\]
Both factors on the right-hand side are functions of $T^n$, hence they commute.
Since their product $w(\eta)I-w(T^n)$ is invertible, each factor is invertible as well.
In particular $\eta\in\rho(T^n)$. For commuting invertible factors, the inverse of the
product is the product of the inverses, 
so we obtain
\[
  (\eta -T^n)^{-1}
  =(w(\eta)-w(T^n))^{-1}h_\eta(T^n).
\]
Moreover,
\[
  \|h_\eta(T^n)\|
  \le \|h_\eta\|_{A^1}\,\sup_{k\ge0}\|T^{nk}\|
  \le M\,\|w\|_{A^{1,1}}.
\]
This yields \eqref{eq:resolvent-Tn-on-I}.
\end{proof}

Combining the preceding proposition with the transfer step, we obtain the
following defect condition ensuring the Ritt resolvent bound.

\begin{theorem}\label{thm:power-KB-arc}
Let $T\in\L(X)$ be power-bounded and assume that
\[
  \sigma(T)\cap\T\subset\{1\}.
\]
Let $w\in A^{1,1}(\D)$ and
let 
$I\subset\T\setminus\{1\}$ be a closed arc
of positive length. If
\[
  \sup_{n\ge1}\|w(T^n)\|< \min_{\eta\in I} |w(\eta)|,
\]
then $T$ is a Ritt operator.
\end{theorem}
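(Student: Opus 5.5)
The proof will simply chain Proposition~\ref{prop:defect-to-arc-resolvent} with Theorem~\ref{thm:arc-resolvent-powers-ritt}. First, since $T$ is power bounded, I fix $M:=\sup_{n\ge0}\|T^n\|<\infty$. Because $w\in A^{1,1}(\D)\subset A^1(\D)$, every operator $w(T^n)$ is defined by an absolutely convergent series, with $\|w(T^n)\|\le M\|w\|_{A^1}$. Hence the quantity $\rho:=\sup_{n\ge1}\|w(T^n)\|$ is finite, and by hypothesis $\rho<m_I(w)$. Note that $m_I(w)>\rho\ge0$, so $w$ has no zeros on $I$. This is exactly hypothesis \eqref{eq:defect-wTn}.

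Second, Proposition~\ref{prop:defect-to-arc-resolvent} yields the powerwise arc-resolvent bound
\[
\sup_{n\ge1}\sup_{\eta\in I}\|(\eta-T^n)^{-1}\|\le K:=\max\Bigl\{1,\frac{M\|w\|_{A^{1,1}}}{m_I(w)-\rho}\Bigr\}.
\]
I take the maximum with $1$ only to match the normalization $K\ge1$ in \eqref{eq:powerwise-arc-resolvent}. The proposition also shows that $I\subset\rho(T^n)$ for every $n\ge 1$.

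Third, $T$ is power bounded, $\sigma(T)\cap\T\subset\{1\}$, and $I$ is a closed arc of positive length in $\T\setminus\{1\}$. So all hypotheses of Theorem~\ref{thm:arc-resolvent-powers-ritt} are met, and that theorem concludes that $T$ is Ritt. The theorem's own proof handles the rest: the hitting Lemma~\ref{lem:hitting-arc} gives a local Ritt bound on $\T$ near $1$, and Corollary~\ref{cor:Lyubich-propagation} propagates it to all of $\{|z|>1\}$.

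There is no real obstacle left, since the analytic work (divided differences, hitting, propagation) has already been done. The only points to check are bookkeeping ones. One is that $\rho$ is finite, which follows from $A^1$-boundedness and power-boundedness. The other is that the commuting-factor argument in Proposition~\ref{prop:defect-to-arc-resolvent} really gives invertibility of $\eta-T^n$ for every $n\ge1$ and $\eta\in I$, which it does, since $h_\eta(T^n)$ is an $A^1$-function of $T^n$.
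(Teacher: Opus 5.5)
Your proposal is correct and follows the paper's proof exactly: Proposition~\ref{prop:defect-to-arc-resolvent} converts the defect into the powerwise arc-resolvent bound \eqref{eq:powerwise-arc-resolvent}, and Theorem~\ref{thm:arc-resolvent-powers-ritt} then gives the Ritt property. Your remarks on the finiteness of $\rho$ and on normalizing $K\ge1$ are harmless bookkeeping that the paper leaves implicit.
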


\begin{proof}
By Proposition~\ref{prop:defect-to-arc-resolvent}, condition \eqref{eq:defect-wTn}
produces the arc-resolvent estimate \eqref{eq:powerwise-arc-resolvent}.
Applying Theorem~\ref{thm:arc-resolvent-powers-ritt}, we conclude that $T$ is Ritt.
\end{proof}

The preceding argument shows how the Ritt factor $(\xi-1)$ is produced \emph{dynamically} via the scale selection $n=n(\xi)$, rather than by a static factorisation of $w$. The role of $A^{1,1}(\D)$ in the defect route is precisely to control divided differences uniformly in $\eta\in\T$, so that invertibility of $w(\eta)I-w(T^n)$ can be converted into resolvent control of $(\eta -T^n)^{-1}$, uniformly in $n$.


The defect also forces spectral exclusion on $\T$. We shall use the following
standard spectral inclusion and boundary estimate, see, for instance,
\cite[Theorem 3.35]{LeMerdy}.

\begin{lemma}\label{lem:approx-point}
Let $T\in\L(X)$ be power bounded.
If $w\in A^1(\D)$, then
\begin{equation}\label{eq:A1-spectral-mapping}
        \sigma(w(T))\subset w(\sigma(T)).
\end{equation}
Moreover, if $\lambda\in\sigma(T)\cap\T$, then
\begin{equation}\label{eq:w-lambda-bound}
        |w(\lambda)|\le \|w(T)\|.
\end{equation}
\end{lemma}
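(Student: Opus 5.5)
To prove Lemma~\ref{lem:approx-point}, I will use the $A^1$-calculus $f\mapsto f(T)=\sum c_nT^n$. It is a bounded unital algebra homomorphism from $A^1(\D)$ into $\L(X)$, with $\|f(T)\|\le M\|f\|_{A^1}$, where $M=\sup_n\|T^n\|$.

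For \eqref{eq:A1-spectral-mapping}, the plan is to fix $\mu\notin w(\sigma(T))$ and show that $\mu-w(T)$ is invertible. The natural route is Gelfand theory in a suitable commutative Banach algebra. Let $\mathcal B$ be the closure of $\{f(T): f\in A^1(\D)\}$ in $\L(X)$; equivalently, take the bicommutant of $T$, which is commutative, full and inverse-closed in $\L(X)$. The spectrum of $T$ in the bicommutant coincides with $\sigma(T)$. Every character $\chi$ of this algebra is continuous and satisfies $\chi(T)\in\sigma(T)\subset\overline{\D}$. Since $w(T)=\sum c_nT^n$ converges in norm, continuity gives
\[
\chi(w(T))=\sum c_n\chi(T)^n=w(\chi(T)),
\]
and the series converges because $|\chi(T)|\le1$. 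Hence
\[
\sigma(w(T))=\{\chi(w(T))\}\subset w(\sigma(T)),
\]
the first equality holding because the spectrum in the bicommutant equals the spectrum in $\L(X)$. This step is routine.

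For \eqref{eq:w-lambda-bound}, let $\lambda\in\sigma(T)\cap\T$. The key observation is that $\lambda$ lies on the topological boundary of $\sigma(T)$, because $\sigma(T)\subset\overline{\D}$. It is therefore an approximate eigenvalue: there are unit vectors $x_k$ with $\|(\lambda-T)x_k\|\to0$. I then claim that $\|(\lambda^n-T^n)x_k\|\to0$ for each fixed $n$, via the factorisation $\lambda^n-T^n=Q_n(\lambda,T)(\lambda-T)$. This convergence is not uniform in $n$, so I will truncate.

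Given $\epsilon>0$, choose $N$ with $\sum_{n>N}|c_n|<\epsilon$. Then
\[
\|(w(\lambda)-w(T))x_k\|\le\sum_{n\le N}|c_n|\,\|(\lambda^n-T^n)x_k\|+(1+M)\epsilon .
\]
Letting $k\to\infty$ gives $\limsup_k\|(w(\lambda)-w(T))x_k\|\le(1+M)\epsilon$, and since $\epsilon$ is arbitrary the limit is zero. Consequently
\[
|w(\lambda)|=\lim_k\|w(\lambda)x_k\|=\lim_k\|w(T)x_k\|\le\|w(T)\|.
\]

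The only mildly delicate points are two. The first is justifying the approximate-eigenvalue claim; this follows from the standard fact that $\partial\sigma(T)\subset\sigma_{ap}(T)$. The second is the tail truncation, which is handled by power boundedness. I do not expect a real obstacle.
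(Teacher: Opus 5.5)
The paper gives no proof of this lemma; it quotes it as a standard fact from \cite[Theorem 3.35]{LeMerdy}. Your argument is a correct self-contained proof, with one inaccuracy to fix.

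\textbf{The ``equivalently'' is false.} The closure of $\{f(T): f\in A^1(\D)\}$ is not equivalent to the bicommutant $\{T\}''$, and in general it is not inverse-closed. Take the bilateral shift on $\ell^2(\mathbb Z)$. There this closure is isometrically the disc algebra, in which $T$ is not invertible. So the spectrum of $T$ in that algebra is $\overline{\D}$, not $\sigma(T)=\T$. A Gelfand argument carried out in that algebra would only give $\sigma(w(T))\subset w(\overline{\D})$. Drop the identification and work only in $\{T\}''$. That algebra is commutative, norm-closed and inverse-closed, and these are the properties you actually use.

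\textbf{Your Gelfand step proves more than you use.} Since $\{\chi(T)\}=\sigma(T)$, it gives the equality $\sigma(w(T))=w(\sigma(T))$. Hence \eqref{eq:w-lambda-bound} follows at once, for every $\lambda\in\sigma(T)$ and not only on $\T$, from $|w(\lambda)|\le r(w(T))\le\|w(T)\|$. The approximate-eigenvalue and truncation argument, which is correct as written, is therefore not needed for the statement as given.

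\textbf{What the second route buys.} It gives a sharper, Gelfand-free fact. For $\lambda\in\sigma(T)\cap\T$, the same unit vectors $x_k$ are approximate eigenvectors of $w(T)$ for the value $w(\lambda)$, so $w(\lambda)\in\sigma_{ap}(w(T))$. This reverse-inclusion information is exactly what the paper later invokes in Remark~\ref{rem:w1-from-gamma}. There it needs $w(1)\in\sigma(w(T^n))$, which is not literally contained in \eqref{eq:A1-spectral-mapping}. Either your spectral-mapping equality or your approximate-eigenvector argument supplies it.
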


The next proposition records the spectral exclusion that already follows from the same defect hypothesis.
\begin{proposition}\label{prop:power-KB-spectrum-exclusion}
Let \(T\in\mathcal L(X)\) be power bounded, let
\(w\in A^{1,1}(\mathbb D)\), and let
\(I\subset\mathbb T\setminus\{1\}\) be a closed arc of positive length.
Assume that
\[
        \sup_{n\ge1}\|w(T^n)\|<m_I(w).
\]
Then every point of \(\sigma(T)\cap\T\) is a root of unity.

If, moreover, \(I\) meets the orbit
\(\{\lambda^n:n\ge1\}\) of every root of unity
\(\lambda\neq1\), then
\[
        \sigma(T)\cap\T\subset\{1\}.
\]
\end{proposition}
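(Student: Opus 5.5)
Fix $\lambda\in\sigma(T)\cap\T$. The plan is to test the defect hypothesis against Lemma~\ref{lem:approx-point}, applied to the power-bounded operators $T^n$. Since $w\in A^{1,1}(\D)\subset A^1(\D)$ and $T^n$ is power bounded, we need a point of $\sigma(T^n)\cap\T$ for every $n\ge1$. By the spectral mapping theorem for polynomials, $\sigma(T^n)=\{\mu^n:\mu\in\sigma(T)\}$, so $\lambda^n\in\sigma(T^n)\cap\T$. Then \eqref{eq:w-lambda-bound} gives
\[
        |w(\lambda^n)|\le \|w(T^n)\|\le \rho:=\sup_{k\ge1}\|w(T^k)\|<m_I(w),\qquad n\ge1 .
\]
By the definition of $m_I(w)$, this means $\lambda^n\notin I$ for every $n\ge1$. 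The whole forward orbit of $\lambda$ therefore avoids the closed arc $I$ of positive length.

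Next we show that $\lambda$ is a root of unity. Suppose not, and write $\lambda=e^{2\pi i x}$ with $x$ irrational. The orbit $\{\lambda^n:n\ge1\}$ is then dense in $\T$; this is Kronecker/Dirichlet, since the points $\lambda^n$ are pairwise distinct, so by pigeonhole some $\lambda^{m}$ with $m\ge1$ lies arbitrarily close to $1$ but is not equal to $1$, and its successive powers step around the circle with arbitrarily small increments. Hence the orbit meets the interior of $I$, which is a nonempty open arc. This contradicts the previous step, so every point of $\sigma(T)\cap\T$ is a root of unity.

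For the second assertion, let $\lambda\in\sigma(T)\cap\T$. By the first part $\lambda$ is a root of unity. If $\lambda\ne1$, the added hypothesis says that $I$ meets $\{\lambda^n:n\ge1\}$, which contradicts the orbit-avoidance established in the first step. Hence $\lambda=1$, and $\sigma(T)\cap\T\subset\{1\}$.

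The only step needing care is the first one: justifying $\lambda^n\in\sigma(T^n)\cap\T$, and checking that Lemma~\ref{lem:approx-point} is applied to $T^n$ rather than to $T$. This is legitimate because $T^n$ is itself power bounded with the same constant $M$. Everything after that is elementary dynamics of rotations.
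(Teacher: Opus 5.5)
Your proposal is correct and follows essentially the same route as the paper: you apply Lemma~\ref{lem:approx-point} to the power-bounded operators $T^n$ to see that the orbit $\{\lambda^n:n\ge1\}$ avoids $I$, then use density of irrational rotation orbits, and finally invoke the extra hypothesis to exclude roots of unity other than $1$. The only difference is that you spell out two details the paper leaves implicit: why $\lambda^n\in\sigma(T^n)\cap\T$ (spectral mapping), and why irrational orbits are dense (the pigeonhole argument).
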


\begin{proof}
Put
\[
        \rho:=\sup_{n\ge1}\|w(T^n)\|.
\]
Let \(\lambda\in\sigma(T)\cap\T\). By
Lemma~\ref{lem:approx-point},
\[
        |w(\lambda^n)|
        \le
        \|w(T^n)\|
        \le\rho
        <
        m_I(w),
        \qquad n\ge1.
\]
Hence
\[
        \{\lambda^n:n\ge1\}\cap I=\emptyset.
\]
If \(\lambda\) is not a root of unity, its orbit is dense in \(\T\)
and therefore meets 
 \(I\), a
contradiction. The additional assumption excludes all roots of unity
different from \(1\).
\end{proof}

This completes the sufficiency core of the defect approach to resolvent bounds. Starting from hypotheses on the
powers $(T^n)_{n \ge 0}$, we have obtained the common arc-resolvent mechanism, derived
the one-point Kato criterion and the complementary defect criterion, and isolated
the spectral information that already follows at this stage. 
The subsequent examples and sharpness results remain within this power-based picture.
Later, in Section~\ref{sec:HS-back-powers}, we shall return to the same
quantities from the converse direction, using the Hardy--Sobolev calculus
of Section~\ref{hcalculus}.

\section{Examples and sharpness}\label{examples-sec}

We record three elementary examples of norm-gap conditions implying
the Ritt property: model polynomial symbols, rational variants of these
symbols, and an entire symbol.
 The sharpness results following these
examples have a different role and describe limitations of the criterion.

\par\medskip
\noindent\textbf{The model symbols $w(z)=1-z^m$.}
For \(w(z)=1-z^m\) one has \(\|w\|_{\infty}=2\), and the corresponding defect
condition is exactly a zero--two condition for the powers \(T^{mn}\).

\begin{corollary}\label{bk-example}
\label{cor:1-zm}
Let \(T\in\L(X)\) be power bounded, and let \(m\in\N\).  Suppose that
\(\sigma(T)\cap\T\subset\{1\}\) and
\begin{equation}\label{eq:defect-1-zm}
        \sup_{n\ge1}\|I-T^{mn}\|<2 .
\end{equation}
Then \(T\) is a Ritt operator.
\end{corollary}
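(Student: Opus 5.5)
The plan is to apply Theorem~\ref{thm:power-KB-arc} with the polynomial $w(z)=1-z^m$, which lies in $A^{1,1}(\D)$ with $\|w\|_{A^{1,1}}=1+(m+1)$. Then $w(T^n)=I-T^{mn}$, so by \eqref{eq:defect-1-zm}
\[
        \rho:=\sup_{n\ge1}\|w(T^n)\|=\sup_{n\ge1}\|I-T^{mn}\|<2 .
\]
It remains to find a closed arc $I\subset\T\setminus\{1\}$ of positive length with $m_I(w)>\rho$. Power boundedness and $\sigma(T)\cap\T\subset\{1\}$ are given, so no further hypotheses need checking.

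On $\T$ we have $|w(\eta)|=|1-\eta^m|=2|\sin(m\theta/2)|$ for $\eta=e^{i\theta}$. This takes the value $2$ at $\eta_0=e^{i\pi/m}$, and $\eta_0\neq1$. By continuity, since $\rho<2$, there is $\epsilon>0$ small such that
\[
        I:=\{e^{i\theta}:\ |\theta-\pi/m|\le\epsilon\}
\]
satisfies $I\subset\T\setminus\{1\}$ and $\min_{\eta\in I}|1-\eta^m|>\rho$. Concretely, we choose $\epsilon$ with $2\cos(m\epsilon/2)>\rho$ and $\epsilon<\pi/m$.

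With this arc, the hypothesis $\sup_{n\ge1}\|w(T^n)\|<m_I(w)$ of Theorem~\ref{thm:power-KB-arc} holds, and that theorem yields that $T$ is Ritt. The argument is essentially routine. The only point requiring care is choosing the arc away from $1$ where $|w|$ is close to $\|w\|_\infty=2$; this is immediate from the explicit formula for $|1-\eta^m|$.
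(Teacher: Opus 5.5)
Your proposal is correct and follows the paper's proof: take $w(z)=1-z^m$ and note $w(T^n)=I-T^{mn}$. Then choose a small closed arc around a point $\eta_0$ with $\eta_0^m=-1$ on which $m_I(w)$ exceeds the supremum, and apply Theorem~\ref{thm:power-KB-arc}. Your explicit choice of $\epsilon$ (with $2\cos(m\epsilon/2)>\rho$ and $\epsilon<\pi/m$) just makes the paper's continuity step concrete.
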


\begin{proof}
Let \(w(z)=1-z^m\).  Then \(w\in A^{1,1}(\D)\), \(w(1)=0\), and
\(\|w\|_{\infty}=2\).  Choose \(\eta_0\in\T\) with
\(\eta_0^m=-1\).  
By continuity, every sufficiently small closed arc
$I\subset\T\setminus\{1\}$ containing $\eta_0$ satisfies
\[
        m_I(w)>\sup_{n\ge1}\|I-T^{mn}\|.
\]

Since
\[
        w(T^n)=I-T^{mn},\qquad n\ge1,
\]
Theorem~\ref{thm:power-KB-arc} applies and gives that \(T\) is Ritt.
\end{proof}

\begin{remark}
For \(m=1\) this is a power analogue of Beurling--Kato's zero--two form.  In the
contraction case the constant \(2\) is also the natural boundary constant.
\end{remark}

\par\medskip
\noindent\textbf{A rational variant of the model symbols.}
We next replace the model polynomial symbol \(1-z^m\) by the rational
symbol
\[
        w_r(z):=\frac{1-z^m}{1-rz^m},
        \qquad 0<r<1.
\]
This gives the following rational version of the preceding criterion.

\begin{corollary}\label{cor:rational-regularised}
Let \(T\in\L(X)\) be power bounded, let \(m\in\N\), and let \(0<r<1\).
Suppose that \(\sigma(T)\cap\T\subset\{1\}\) and
\begin{equation}\label{eq:rational-regularised-defect}
        \sup_{k\ge1}
        \left\|
        (I-T^{mk})(I-rT^{mk})^{-1}
        \right\|
        <
        {2\over 1+r}.
\end{equation}
Then \(T\) is a Ritt operator.
\end{corollary}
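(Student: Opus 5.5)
We argue exactly as in Corollary~\ref{cor:1-zm}, applying Theorem~\ref{thm:power-KB-arc} with the symbol
\[
        w_r(z)=\frac{1-z^m}{1-rz^m},\qquad 0<r<1 .
\]

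First we check that \(w_r\in A^{1,1}(\D)\). Expanding the geometric series gives
\[
        w_r(z)=(1-z^m)\sum_{j\ge0}r^jz^{mj}
        =1-(1-r)\sum_{j\ge1}r^{j-1}z^{mj}.
\]
The coefficients decay geometrically, so \(\sum_n (n+1)|c_n|<\infty\).

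Second, we identify \(w_r(T^k)\). Since \(T\) is power bounded, \(\|(rT^{mk})^j\|\le Mr^j\). Hence the Neumann series for \((I-rT^{mk})^{-1}\) converges absolutely. Multiplying it by \(I-T^{mk}\) gives the \(A^1\)-series of \(w_r\) evaluated at \(T^k\), so
\[
        w_r(T^k)=(I-T^{mk})(I-rT^{mk})^{-1},\qquad k\ge1 .
\]
Consequently the hypothesis \eqref{eq:rational-regularised-defect} reads
\[
        \rho:=\sup_{k\ge1}\|w_r(T^k)\|<\frac{2}{1+r}.
\]

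Third, we choose the arc \(I\). Pick \(\eta_0\in\T\) with \(\eta_0^m=-1\). Then \(\eta_0\neq1\) and
\[
        |w_r(\eta_0)|=\frac{2}{1+r}.
\]
Since \(w_r\) is continuous on \(\T\), there is a sufficiently small closed arc \(I\subset\T\setminus\{1\}\) of positive length containing \(\eta_0\) on which \(m_I(w_r)>\rho\).

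Theorem~\ref{thm:power-KB-arc} then applies and shows that \(T\) is Ritt. The only point needing care is the identification in the second step, that the \(A^1\)-calculus value coincides with the rational expression; the absolute convergence of the Neumann series settles it.
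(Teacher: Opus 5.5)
Your proposal is correct and follows essentially the same route as the paper. Like the paper, you verify $w_r\in A^{1,1}(\D)$ via the geometric series, identify $w_r(T^k)=(I-T^{mk})(I-rT^{mk})^{-1}$, pick $\eta_0$ with $\eta_0^m=-1$ so that $|w_r(\eta_0)|=2/(1+r)$, and apply Theorem~\ref{thm:power-KB-arc} on a small arc around $\eta_0$.
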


\begin{proof}
Let
\[
        w(z):={1-z^m\over 1-rz^m},\qquad z\in\D .
\]
Then \(w\in A^{1,1}(\D)\), since
\[
        {1\over 1-rz^m}=\sum_{j\ge0}r^jz^{mj},
        \qquad
        w(z)=\sum_{j\ge0}r^j\bigl(z^{mj}-z^{m(j+1)}\bigr).
\]
Moreover \(w(1)=0\), and, putting \(u=z^m\), we get
\[
        \|w\|_\infty
        =
        \max_{|u|=1}\left|{1-u\over 1-ru}\right|
        =
        {2\over 1+r}.
\]
Indeed, for \(u=e^{i\theta}\),
\[
        \left|{1-u\over 1-ru}\right|^2
        =
        {2(1-\cos\theta)\over 1+r^2-2r\cos\theta},
\]
and the right-hand side is maximal at \(\cos\theta=-1\).

Choose \(\eta_0\in\T\) with \(\eta_0^m=-1\).  By continuity, every
sufficiently small closed arc \(I\subset\T\setminus\{1\}\) containing
\(\eta_0\) satisfies
\[
        m_I(w)>
        \sup_{k\ge1}
        \left\|
        (I-T^{mk})(I-rT^{mk})^{-1}
        \right\|.
\]
For every \(k\ge1\),
\[
        w(T^k)=(I-T^{mk})(I-rT^{mk})^{-1},
\]
where
\[
        (I-rT^{mk})^{-1}=\sum_{j\ge0}r^jT^{mkj}
\]
converges in norm.  Thus Theorem~\ref{thm:power-KB-arc} applies and gives
that \(T\) is Ritt.
\end{proof}

\begin{remark}
For \(r=0\) one formally recovers the model symbol \(1-z^m\) and the
constant \(2\).  For \(0<r<1\) the threshold becomes \(2/(1+r)\), while 
the corresponding rational operator expression is
\[
        (I-T^{mk})(I-rT^{mk})^{-1}.
\]
The case \(m=1\) gives the M\"obius family \((1-z)/(1-rz)\).
\end{remark}

\par\medskip
\noindent\textbf{The entire symbol \(e^{-z}\).}
The same method applies to the entire symbol \(w(z)=e^{-z}\) and yields the following defect condition for the Ritt property.
\begin{corollary}\label{ex:exp-symbol}
Let \(T\in\L(X)\) be power bounded and suppose that
\(\sigma(T)\cap\T\subset\{1\}\).  If
\begin{equation}\label{eq:exp-symbol-defect}
        \sup_{n\ge1}\|e^{-T^n}\|<e,
\end{equation}
then \(T\) is a Ritt operator.
\end{corollary}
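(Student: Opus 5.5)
The plan is to follow the proofs of Corollaries~\ref{cor:1-zm} and~\ref{cor:rational-regularised} and apply Theorem~\ref{thm:power-KB-arc} with the test function $w(z)=e^{-z}$.

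First, $w\in A^{1,1}(\D)$. Indeed, $e^{-z}=\sum_{k\ge0}(-1)^kz^k/k!$, and
\[
\|w\|_{A^{1,1}}=\sum_{k\ge0}\frac{k+1}{k!}=2e<\infty.
\]
For every $n\ge1$, the absolutely convergent $A^1$-series
\[
w(T^n)=\sum_{k\ge0}\frac{(-1)^k T^{nk}}{k!}
\]
coincides with the exponential $e^{-T^n}$ of the bounded operator $T^n$. Hence the hypothesis \eqref{eq:exp-symbol-defect} reads
\[
\rho:=\sup_{n\ge1}\|w(T^n)\|<e.
\]

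Next we choose the arc. For $\eta=e^{i\theta}\in\T$ we have $|w(\eta)|=e^{-\cos\theta}$. This is maximal, equal to $e$, exactly at $\eta_0=-1$, which lies in $\T\setminus\{1\}$. By continuity of $\theta\mapsto e^{-\cos\theta}$ and the strict inequality $\rho<e$, there is $\epsilon\in(0,\pi/2)$ such that
\[
e^{-\cos\theta}>\rho \qquad \text{for } |\theta-\pi|\le\epsilon.
\]
Concretely, $\epsilon$ is chosen with $\cos\epsilon>\log\rho$ when $\rho>0$, and arbitrarily otherwise. Put $I:=\{e^{i\theta}:|\theta-\pi|\le\epsilon\}$. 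This is a closed arc of positive length contained in $\T\setminus\{1\}$, and
\[
m_I(w)=e^{\cos\epsilon}>\rho.
\]

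Together with power boundedness and the assumption $\sigma(T)\cap\T\subset\{1\}$, all hypotheses of Theorem~\ref{thm:power-KB-arc} are now satisfied, and that theorem yields that $T$ is Ritt.

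There is no real obstacle. The only point to state explicitly is that $w(T^n)$, defined via the $A^1$-calculus, agrees with the holomorphic exponential $e^{-T^n}$. This is immediate because both are given by the same norm-convergent power series.
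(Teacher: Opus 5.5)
Your proof is correct and matches the paper's argument: take $w(z)=e^{-z}\in A^{1,1}(\D)$, identify $w(T^n)=e^{-T^n}$, observe $\|w\|_\infty=e$ is attained at $-1$, choose a small closed arc $I\ni -1$ with $m_I(w)>\sup_{n\ge1}\|e^{-T^n}\|$, and apply Theorem~\ref{thm:power-KB-arc}. Your explicit values $\|w\|_{A^{1,1}}=2e$ and $m_I(w)=e^{\cos\epsilon}$ just make the paper's continuity step concrete.
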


\begin{proof}
Let
\[
        w(z)=e^{-z},\qquad z\in\D .
\]
Then \(w\in A^{1,1}(\D)\), since
\[
        e^{-z}=\sum_{k\ge0}{(-1)^k\over k!}z^k
\]
and the corresponding weighted coefficient sum converges.  For every
power-bounded operator \(T\),
\[
        w(T^n)=e^{-T^n},\qquad n\ge1,
\]
where the exponential is defined by the absolutely convergent power series.
On the unit circle,
\(
        \|w\|_{\infty}=e,
\)
and this maximum is attained at \(-1\), while \(|w(1)|=e^{-1}<e\).
By continuity, one can choose a closed arc
\(I\subset\T\setminus\{1\}\) containing \(-1\) such that
\[
        m_I(w)>\sup_{n\ge1}\|e^{-T^n}\|.
\]
Theorem~\ref{thm:power-KB-arc} therefore applies and gives that \(T\) is
Ritt.
\end{proof}

\noindent\textbf{Sharpness and limitations.}
\label{subsec:examples_thm35}
The preceding examples illustrate sufficient conditions.  We now record two
limitations.  A single small defect \(\|w(T)\|\) for a normal operator $T$ on a Hilbert space need not imply the Ritt property:
the powers may still attain the boundary maximum \(\|w\|_\infty\) along a spectrum
approaching \(1\) tangentially.  
The last two results illustrate, in complementary ways, the role of the
full family of powers in the one-point resolvent characterization of the
Ritt property. In the first, the property fails along every subsequence;
in the second, it holds along a sufficiently sparse subsequence although
the operator is not Ritt.

We use the Stolz domains introduced in \eqref{eq:Stolz-Ssigma}.

\begin{proposition}
\label{prop:sharpness_abstract_w_exact}
Let a non-zero $w\in A^{1,1}(\D)$ satisfy $w(1)=0.$
Then for every $\varepsilon>0$ there exists a normal contraction $T$ on $\ell^2:=\ell^2(\mathbb N\cup \{0\})$ with
$\sigma(T)\subset \D\cup\{1\}$ such that:
\begin{enumerate}
\item[(i)] \(\sigma(T)\not\subset\overline{S_\sigma}\) for every
\(\sigma>1\) (hence \(T\) is not Ritt).
\item[(ii)] $\|w(T)\|<\varepsilon$.
\item[(iii)] $\displaystyle \sup_{n\ge1}\|w(T^n)\|=\|w\|_\infty$.
\end{enumerate}
\end{proposition}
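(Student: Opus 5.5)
Take $T$ diagonal on $\ell^2$, $Te_k=\lambda_ke_k$ for $k\ge0$, with eigenvalues $\lambda_k\in\D$ to be chosen, and put $\lambda_k\to1$ tangentially. Then $T$ is a normal contraction, $\sigma(T)=\overline{\{\lambda_k\}}\subset\D\cup\{1\}$, and $\|g(T)\|=\sup_k|g(\lambda_k)|$ for every $g\in A^1(\D)$. In particular $w(T^n)$ is diagonal with entries $w(\lambda_k^n)$. Because $w(1)=0$ and $w$ is continuous on $\overline\D$, we can fix $r>0$ with $|w(z)|<\varepsilon$ for $z\in\overline\D\cap D(1,r)$. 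Since $w\ne0$ has $\|w\|_\infty>0=|w(1)|$, we can also fix $\eta_0\in\T\setminus\{1\}$ with $|w(\eta_0)|=\|w\|_\infty$.

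\emph{Construction.} Write $\eta_0=e^{i\phi_0}$ with $\phi_0\in(0,2\pi)$ (replace the arc by its conjugate if that is more convenient). For $k\ge1$ set $\theta_k:=\phi_0/n_k$ for a rapidly increasing sequence $n_k\to\infty$. Then put $\lambda_k:=(1-s_k)e^{i\theta_k}$ with $s_k:=\theta_k^3$, or more generally any $s_k=o(\theta_k)$ chosen so that $n_ks_k\to0$.

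This gives the following.

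\emph{Tangential approach.} We have $1-|\lambda_k|=s_k$ and $|1-\lambda_k|\approx\theta_k$. Hence $|1-\lambda_k|/(1-|\lambda_k|)\to\infty$, so for every $\sigma>1$ all but finitely many $\lambda_k$ lie outside $\overline{S_\sigma}$. This gives (i), and by Proposition~\ref{prop:Stolz-localization-prelim} $T$ is not Ritt.

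\emph{Small single defect.} Drop the finitely many $k$ with $|\lambda_k-1|\ge r$, for instance by starting the index late. Then $\lambda_k\in D(1,r)$ for all $k$, so $\|w(T)\|=\sup_k|w(\lambda_k)|\le\sup_{z\in\overline\D\cap D(1,r)}|w(z)|<\varepsilon$, which is (ii). Fill the coordinate $e_0$ with some $\lambda_0$ also in $D(1,r)\cap\D$.

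\emph{Powers reach the maximum.} Compute $\lambda_k^{n_k}=(1-s_k)^{n_k}e^{i\phi_0}\to\eta_0$, since $n_ks_k\to0$. Continuity of $w$ on $\overline\D$ then gives $\|w(T^{n_k})\|\ge|w(\lambda_k^{n_k})|\to\|w\|_\infty$. Conversely, the maximum principle and $|\lambda_j^n|<1$ give $\|w(T^n)\|\le\|w\|_\infty$ for every $n$. So the supremum equals $\|w\|_\infty$, which is (iii). It need not be attained, and the statement only asks for the supremum.

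The only delicate point is balancing the two scales: the angle must satisfy $\theta_k\sim\phi_0/n_k$ so that a suitable power lands near $\eta_0$, while the radial defect must satisfy $s_k\ll1/n_k$ so that the modulus of that power stays close to $1$. Tangential approach is exactly what allows both at once; inside a Stolz angle, $s_k\gtrsim\theta_k$ would force $|\lambda_k^{n_k}|\le e^{-c\phi_0}$, which explains why this counterexample must be non-Ritt.
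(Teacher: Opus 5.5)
Your proposal is correct and follows essentially the paper's construction: a diagonal normal contraction with eigenvalues $(1-s_k)e^{i\theta_k}$, $s_k=o(\theta_k)$, and powers $n_k\approx\phi_0/\theta_k$ sending $\lambda_k^{n_k}$ to a boundary maximiser $\eta_0$ of $|w|$. The differences are cosmetic: you use $\theta_k=\phi_0/n_k$ exactly where the paper uses $n_k=\lfloor\phi_0/\theta_k\rfloor$, you take $s_k=\theta_k^3$ where the paper takes $\theta_k^2$, and you take $\lambda_0\in D(1,r)\cap\D$ where the paper takes $\lambda_0=1$. There is one small slip in (ii): the bound $\sup_{z\in\overline\D\cap D(1,r)}|w(z)|<\varepsilon$ does not follow from the pointwise strict inequality, since that supremum could equal $\varepsilon$; either choose $r$ so that $|w|<\varepsilon/2$ there, or argue as the paper does that $w(\lambda_k)\to w(1)=0$, so $\sup_k|w(\lambda_k)|$ is attained or zero and hence $<\varepsilon$.
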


\begin{proof}
Let
\(\eta_0=e^{i\varphi_0}\) with $\varphi_0\in(0,2\pi)$ be such that $|w(\eta_0)|=\|w\|_\infty.$
By continuity of $w$ on $\overline\D$ and $w(1)=0$, choose $\delta_0>0$ such that
$|w(z)|<\varepsilon$ whenever $|z-1|<\delta_0$.

Pick any sequence \((\theta_k)_{k\ge1}\subset\mathbb R\) such that
\(\theta_k\downarrow0\),
\[
   2\theta_k<\delta_0,\qquad
   \theta_k\le1,\qquad
   \theta_k<\varphi_0.
\]

For every $k \in \mathbb N,$ set
\[
\varepsilon_k:=\theta_k^2,\qquad \lambda_k:=(1-\varepsilon_k)e^{i\theta_k}\in\D.
\]
Then for every $k \in \mathbb N$ we have
 \[ |1-\lambda_k|\le \varepsilon_k+|1-e^{i\theta_k}|\le \theta_k^2+\theta_k\le 2\theta_k<\delta_0,\]
  hence
$|w(\lambda_k)|<\varepsilon$ for all $k$.
Since \(\lambda_k\to1\) as \(k\to\infty\) and \(w(1)=0\), it follows that
\[
        \sup_{k\ge0}|w(\lambda_k)|<\varepsilon,
        \qquad \lambda_0:=1.
\]

Let $T$ be diagonal on $\ell^2$ with eigenvalues $\{\lambda_0,\lambda_1,\lambda_2,\dots\}$.
Then $T$ is normal, $\|T\|\le1$, and
\[
        \|w(T)\|   =
        \sup_{\mu\in\sigma(T)}|w(\mu)|
        =   \sup_{k\ge0}|w(\lambda_k)| <\varepsilon,
\]
proving \textup{(ii)}.

Moreover, \(r_k:=|\lambda_k|=1-\theta_k^2\uparrow1\) and
\(|1-\lambda_k|\asymp\theta_k\) as \(k\to\infty\), so
\[
\frac{|1-\lambda_k|}{1-r_k}
\asymp
\frac{\theta_k}{\theta_k^2}
=
\frac1{\theta_k}
\longrightarrow\infty.
\]
This proves \textup{(i)}, and in particular \(T\) is not Ritt.

Finally, define
\[
        n_k:=\left\lfloor\frac{\varphi_0}{\theta_k}\right\rfloor,
        \qquad k\in\mathbb N.
\]
Then, as \(k\to\infty\),
\[
        n_k\theta_k\longrightarrow\varphi_0,
        \qquad
        n_k\varepsilon_k
        =
        n_k\theta_k^2
        \sim
        \varphi_0\theta_k
        \longrightarrow0.
\]
Hence
\[
        \lambda_k^{n_k}
        =
        (1-\varepsilon_k)^{n_k}e^{in_k\theta_k}
        \longrightarrow
        e^{i\varphi_0}
        =
        \eta_0.
\]
By continuity of \(w\) on \(\overline\D\),
\[
        |w(\lambda_k^{n_k})|
        \longrightarrow
        |w(\eta_0)|
        =
        \|w\|_\infty.
\]
Since \(\lambda_k^{n_k}\in\sigma(T^{n_k})\), it follows that
\[
        \|w(T^{n_k})\|
        \ge
        |w(\lambda_k^{n_k})|
        \longrightarrow
        \|w\|_\infty,
\]
and hence
\(
        \sup_{n\ge1}\|w(T^n)\|
        \ge
        \|w\|_\infty.
\)
On the other hand, since \(T^n\) is normal,
\[
   \|w(T^n)\|
   =
   \max_{\lambda\in\sigma(T^n)}|w(\lambda)|
   \le\|w\|_\infty.
\]
Thus $\sup_{n\ge1}\|w(T^n)\|=\|w\|_\infty$, proving (iii).
\end{proof}

Since each \(w(T^n)\) is normal, for every \(N\in \mathbb N\),
\[
\begin{aligned}
   a_N(w,T)
   =
   \sup_{n\ge1}\|w(T^n)^N\|
   =
   \left(\sup_{n\ge1}\|w(T^n)\|\right)^N
   =
   \|w\|_\infty^N,
\end{aligned}
\]
hence \(\gamma(w,T)=\|w\|_\infty\).
So, this example shows that a strict norm gap for \(w(T)\) does not force
a strict exponential defect along the power orbit: one may have
\(\|w(T)\|<\|w\|_\infty\) while
\(\gamma(w,T)=\|w\|_\infty\).
Moreover, tangential approach of the spectrum to \(1\) is already
enough to destroy the Ritt property
as the next simple proposition shows.

\begin{proposition}
\label{lem:curve_kills_subsequence}
Let \(T\) be a normal contraction on $\ell^2$ satisfying
\(
        \sigma(T)\cap\T\subset\{1\},
\)
and
 assume that for some $\theta_0\in(0,\pi)$ there exists
$\psi:(0,\theta_0]\to[0,\infty)$ with $\psi(\theta)/\theta\to0$ as $\theta\downarrow0$ such that
\[
\Gamma:=\{(1-\psi(\theta))e^{i\theta}:\ 0<\theta\le\theta_0\}\subset\sigma(T).
\]
Let $(n_k)_{k \ge 1}\subset \mathbb N$ be any strictly increasing sequence.
Then, for every \(\lambda\in\T\setminus\{1\}\), as \(k\to\infty\),
\[
\begin{aligned}
\dist(\lambda,\sigma(T^{n_k}))&\longrightarrow0,\\
\|(\lambda-T^{n_k})^{-1}\|&\longrightarrow\infty.
\end{aligned}
\]
\end{proposition}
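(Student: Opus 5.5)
The plan is to exploit normality: for a normal operator $S$ on a Hilbert space, $\|(\lambda-S)^{-1}\|=1/\dist(\lambda,\sigma(S))$. By the spectral mapping theorem, $\sigma(T^{n_k})=\{\mu^{n_k}:\mu\in\sigma(T)\}\supset\{z^{n_k}:z\in\Gamma\}$. So the second assertion follows from the first, and it suffices to show that for every fixed $\lambda=e^{i\varphi}$ with $\varphi\in(0,2\pi)$ there are points $z_k\in\Gamma$ with $z_k^{n_k}\to\lambda$.

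To construct $z_k$, set $\theta_k:=(\varphi+2\pi m_k)/n_k$, where the integer $m_k\ge0$ is chosen so that $\theta_k\in(0,\theta_0]$ and $\theta_k\to0$. Taking $m_k=0$ gives $\theta_k=\varphi/n_k\to0$, which is admissible once $n_k>\varphi/\theta_0$; only finitely many $k$ are excluded. Put $z_k:=(1-\psi(\theta_k))e^{i\theta_k}\in\Gamma$. Then $e^{in_k\theta_k}=e^{i\varphi}=\lambda$ exactly, so it remains to control the modulus:
\[
|z_k^{n_k}-\lambda|=1-(1-\psi(\theta_k))^{n_k}\le n_k\psi(\theta_k),
\]
using $0\le 1-(1-x)^n\le nx$ for $x\in[0,1]$. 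The inequality requires $\psi(\theta_k)\le1$, which holds for large $k$ because $\psi(\theta)=o(\theta)$.

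Since $n_k=\varphi/\theta_k$, we get $n_k\psi(\theta_k)=\varphi\,\psi(\theta_k)/\theta_k\to0$ by the hypothesis $\psi(\theta)/\theta\to0$. Hence $\dist(\lambda,\sigma(T^{n_k}))\le|z_k^{n_k}-\lambda|\to0$.

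If $\lambda\in\sigma(T^{n_k})$ for some $k$, the resolvent norm is infinite by convention. Otherwise normality gives $\|(\lambda-T^{n_k})^{-1}\|=\dist(\lambda,\sigma(T^{n_k}))^{-1}\to\infty$. The only mildly delicate points are the choice of branch $m_k=0$ and checking $\theta_k\le\theta_0$ for large $k$; both are immediate. There is no real obstacle: the key mechanism is that tangential approach makes the radial decay $n_k\psi(\theta_k)$ negligible on the scale $n_k\theta_k\sim\varphi$.
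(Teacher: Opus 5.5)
Your proof is correct and follows the paper's argument: you take $\theta_k=\varphi/n_k$ and $z_k=(1-\psi(\theta_k))e^{i\theta_k}\in\Gamma$, control the modulus via $n_k\psi(\theta_k)=\varphi\,\psi(\theta_k)/\theta_k\to0$, and use normality to get the resolvent norm. The one difference is in handling the case $\lambda\in\sigma(T^{n_k})$: you appeal to a convention, whereas the paper combines the hypothesis $\sigma(T)\cap\T\subset\{1\}$ with the spectral mapping theorem to show $\lambda\in\rho(T^{n_k})$ for every $k$, so the resolvent is always defined.
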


\begin{proof}
Fix $\lambda=e^{i\varphi},$ with \(0<\varphi<2\pi\).
For large $k$, $\theta_k:=\varphi/n_k\in(0,\theta_0]$, so $z_k:=(1-\psi(\theta_k))e^{i\theta_k}\in\sigma(T)$.
Then $z_k^{n_k}\in\sigma(T^{n_k})$ and
\[
z_k^{n_k}=(1-\psi(\theta_k))^{n_k}e^{i\varphi}.
\]
As \(k\to\infty\),
\[
        n_k\psi(\theta_k)
        =
        \frac{\varphi}{\theta_k}\psi(\theta_k)
        \longrightarrow0.
\]
Hence
\[
        (1-\psi(\theta_k))^{n_k}\longrightarrow1,
        \qquad
        z_k^{n_k}\longrightarrow e^{i\varphi}=\lambda.
\]
Therefore
\[
        \dist(\lambda,\sigma(T^{n_k}))
        \le
        |\lambda-z_k^{n_k}|
        \longrightarrow0.
\]
Moreover, \(\lambda\in\rho(T^{n_k})\) for every \(k\), since otherwise
the spectral mapping theorem would give some
\(\mu\in\sigma(T)\cap\mathbb T\) such that
\(\mu^{n_k}=\lambda\), which is impossible as
\(\sigma(T)\cap\mathbb T\subset\{1\}\).
Normality yields
\[
        \|(\lambda-T^{n_k})^{-1}\|
        =
        \frac1{\dist(\lambda,\sigma(T^{n_k}))}
        \longrightarrow\infty.
\]
\end{proof}


Next we show
that the estimate in 
Theorem~\ref{thm:one-point-power-Kato}
cannot, in general, be tested
only along a subsequence.

\begin{proposition}
\label{prop:unbounded_quotients_subseq_resolvent}
Let $(n_k)_{k \ge 1}\subset \mathbb N$ be strictly increasing with $\sup_{k\ge 2}\frac{n_k}{n_{k-1}}=\infty$.
Fix $\lambda\in\T\setminus\{1\}$.
Then there exists a normal contraction $T$ on $\ell^2$ with $\sigma(T)\subset\D\cup\{1\}$ such that \(\sigma(T)\not\subset\overline{S_\sigma}\) for every \(\sigma>1\) (hence $T$ is not Ritt), but
\[
\sup_{k\ge1}\|(\lambda -T^{n_k})^{-1}\|<\infty.
\]
\end{proposition}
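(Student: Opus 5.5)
We will take $T$ diagonal on $\ell^2$ with eigenvalues $\lambda_0:=1$ and a sequence $\mu_j\in\D$, $\mu_j\to1$, which approaches $1$ tangentially. Then $T$ is a normal contraction with $\sigma(T)=\{1\}\cup\{\mu_j\}\subset\D\cup\{1\}$. For normal operators,
\[
\|(\lambda-T^{n_k})^{-1}\|=\dist(\lambda,\sigma(T^{n_k}))^{-1},
\]
so the task reduces to a scalar one. We must choose the $\mu_j$ so that $|1-\mu_j|/(1-|\mu_j|)\to\infty$, which gives $\sigma(T)\not\subset\overline{S_\sigma}$ for every $\sigma$, exactly as in Proposition~\ref{prop:sharpness_abstract_w_exact}. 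At the same time we need
\[
\inf_{k}\inf_j|\lambda-\mu_j^{n_k}|>0,
\]
together with $|\lambda-1|>0$, which takes care of the eigenvalue $1$. The set $\sigma(T^{n_k})$ is the closure of $\{1\}\cup\{\mu_j^{n_k}\}$, so this infimum bound is all that is needed.

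Write $\lambda=e^{i\varphi}$ with $0<\varphi<2\pi$. The lacunarity hypothesis lets us pass to a subsequence $k_j$ with $n_{k_j}/n_{k_j-1}\to\infty$; we put $N_j:=n_{k_j}$ and $P_j:=n_{k_j-1}$. We then set
\[
\mu_j:=(1-\varepsilon_j)e^{i\theta_j},\qquad \theta_j:=\frac{c}{\sqrt{P_jN_j}}\ \ (\text{or a similar geometric mean}),\qquad \varepsilon_j:=\frac{1}{N_j}\ \ (\text{times a large constant}).
\]
Tangency then amounts to $\theta_j/\varepsilon_j=c\sqrt{N_j/P_j}\to\infty$, which holds by the choice of subsequence.

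The separation is checked by splitting the indices into two cases.

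\emph{Case $n_k\ge N_j$.} Here $|\mu_j^{n_k}|\le(1-\varepsilon_j)^{N_j}\le e^{-A}$, where $A$ is the large constant in $\varepsilon_j$. So $\mu_j^{n_k}$ lies in a disc of radius $e^{-A}<1$, which is at distance at least $1-e^{-A}$ from $\lambda$.

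\emph{Case $n_k\le P_j$.} Here $n_k\theta_j\le c\sqrt{P_j/N_j}\to0$ and $|\mu_j^{n_k}|\le1$. Hence $\mu_j^{n_k}$ lies in a small sector around the positive real axis inside $\overline\D$, which stays at distance roughly $\min(|\sin\varphi|,\dots)$, and in general at distance bounded below by a constant depending on $\varphi$, from $\lambda\neq1$. A point $re^{i\psi}$ with $|\psi|$ small is far from $e^{i\varphi}$ uniformly in $r\in[0,1]$.

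Because the $n_k$ are strictly increasing, there is no $n_k$ strictly between $P_j$ and $N_j$, so these two cases cover every pair $(j,k)$. Only finitely many $j$ fall outside the asymptotic regime, and these are harmless: $|\mu_j|<1$, so the countably many points $\mu_j^{n_k}$ for a fixed $j$ have $\lambda$ as a limit point only if they accumulate at $\lambda$, which is impossible because their moduli tend to $0$. Hence each such $j$ contributes a positive distance, and we may also simply start the construction at large $j$.

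The main obstacle is this bookkeeping of exponents. One has to choose $\theta_j$ so that $P_j\theta_j\to0$ while $N_j\varepsilon_j$ stays large, with $\varepsilon_j\ll\theta_j$. The geometric mean choice together with $N_j/P_j\to\infty$ is exactly what makes these three requirements compatible.
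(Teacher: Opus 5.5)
Your proposal is correct and follows essentially the paper's construction. Both use a diagonal operator with eigenvalues $1$ and $\mu_j=(1-\varepsilon_j)e^{i\theta_j}$, where $\varepsilon_j\asymp 1/n_{K(j)}$. Both get tangential approach from the large ratios $n_{K(j)}/n_{K(j)-1}$. Both use the same two-case split: $n_k\ge n_{K(j)}$ makes $|\mu_j^{n_k}|$ small, while $n_k\le n_{K(j)-1}$ keeps $\mu_j^{n_k}$ away from $\lambda$. The difference in the second case is cosmetic: the paper takes $\theta_j=j\varepsilon_j$ and shows $|\mu_j^{n_k}-1|\le|\lambda-1|/8$, whereas you take a geometric-mean angle and use only that a thin sector around $[0,1]$ in $\overline{\mathbb D}$ stays away from $\lambda$.
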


\begin{proof}
Fix $\lambda=e^{i\varphi}\neq1$ and set $\delta_\lambda:=|\lambda-1|\in(0,2]$.
Choose a strictly increasing sequence $(K(j))_{j \ge 1}\subset \mathbb N$ with $K(j)\ge2$ for every $j$, such that
\[
\frac{n_{K(j)}}{n_{K(j)-1}}\ \ge\ \frac{16j\log2}{\delta_\lambda},\qquad j\ge1.
\]
Define
\[
\varepsilon_j:=\frac{\log2}{n_{K(j)}},\qquad
\theta_j:=j\,\varepsilon_j,\qquad
\mu_j:=(1-\varepsilon_j)e^{i\theta_j}, \qquad j \ge 1,
\]
and let $T$ be diagonal with eigenvalues $\{1,\mu_1,\mu_2,\dots\}$.
By the choice of \((K(j))_{j\ge1}\), as \(j\to\infty\),
\[
        \varepsilon_j\longrightarrow0,
        \qquad
        0<\theta_j
        =
        \frac{j\log2}{n_{K(j)}}
        \le
        \frac{\delta_\lambda}{16\,n_{K(j)-1}}
        \longrightarrow0,
        \qquad
        \mu_j\longrightarrow1.
\]

First, as in the  proof of Proposition \ref{prop:sharpness_abstract_w_exact},
$1-|\mu_j|=\varepsilon_j$ and $|1-\mu_j|\asymp \theta_j$, hence
\[
\frac{|1-\mu_j|}{1-|\mu_j|}
\asymp
\frac{\theta_j}{\varepsilon_j}
=
j
\longrightarrow\infty.
\]
So
\(
   \sigma(T)\not\subset\overline{S_\sigma}\)
for every $\sigma>1,$
and \(T\) is not Ritt.

Next,
fix $k$ and $j$ in $\mathbb N,$ and put $m:=n_k$.  We are going to estimate from below $|\lambda-\mu^{n_k}_j|$.

\emph{Case 1: $k\ge K(j)$.}
Then $m\ge n_{K(j)}$ and
\[
|\mu_j|^m\le |\mu_j|^{n_{K(j)}}\le e^{-\varepsilon_j n_{K(j)}}=e^{-\log2}=\frac12.
\]
Since $|\lambda|=1$, this gives $|\lambda-\mu_j^m|\ge 1-\frac12=\frac12$.

\emph{Case 2: $k\le K(j)-1$.}
Then $m\le n_{K(j)-1}$ and thus
\[
m\varepsilon_j\le n_{K(j)-1}\varepsilon_j=(\log2)\frac{n_{K(j)-1}}{n_{K(j)}}
\le \frac{\delta_\lambda}{16j},
\]
and similarly
\[
|m\theta_j|\le n_{K(j)-1}\,j\,\varepsilon_j
=j(\log2)\frac{n_{K(j)-1}}{n_{K(j)}}
\le \frac{\delta_\lambda}{16}.
\]
Therefore, using that $|(1-\varepsilon)^m-1|\le m\varepsilon$ and $|e^{it}-1|\le |t|$, we obtain
\begin{align*}
|\mu_j^m-1|
=|(1-\varepsilon_j)^m e^{im\theta_j}-1|
\le& |(1-\varepsilon_j)^m-1|+|e^{im\theta_j}-1|
\\
\le& m\varepsilon_j+|m\theta_j|
\le \frac{\delta_\lambda}{8}.
\end{align*}
Therefore
\[
|\lambda-\mu_j^m|
\ge |\lambda-1|-|\mu_j^m-1|
\ge \delta_\lambda-\frac{\delta_\lambda}{8}
=\frac{7\delta_\lambda}{8}.
\]

Combining both cases yields, for all $j,k$,
\[
|\lambda-\mu_j^{n_k}|\ge \min\Bigl\{\frac12,\frac{7|\lambda-1|}{8}\Bigr\}=:d_\lambda>0.
\]
Since also $|\lambda-1|>0$ for the eigenvalue $1$, we get
$\dist(\lambda,\sigma(T^{n_k}))\ge d_\lambda$ for all $k$. Hence, by normality,
\[
\|(\lambda -T^{n_k})^{-1}\|=\frac{1}{\dist(\lambda,\sigma(T^{n_k}))}\le \frac1{d_\lambda},
\qquad k\ge1,
\]
as required.
\end{proof}


Together with Propositions~\ref{prop:sharpness_abstract_w_exact} and
\ref{lem:curve_kills_subsequence}, this illustrates the sharpness
picture: exact boundary saturation may persist along the full power
family, tangentially rich spectra rule out subsequence resolvent control
altogether, yet very sparse subsequences need not detect non-Ritt
behaviour.

\section{Beurling--Kato: necessity. From Hardy--Sobolev calculus back to powers}\label{sec:HS-back-powers}


The preceding Beurling--Kato sufficiency results show that suitable
power
defects force the Ritt property.  We now turn to the opposite
question: if \(T\) is already Ritt, which defects must necessarily occur?
In this section  we explain why
the norm gaps and exponential defects used above are 
pertinent objects in the discrete situation.  The answer uses the Hardy--Sobolev calculus constructed in
Section~\ref{hcalculus}: once \(T\) is Ritt, the bounded \(\HS\)-calculus allows us to pass
from estimates of scalar functions on Stolz domains to uniform estimates
for the families \(w(T^n)\).  
Thus the calculus constructed earlier yields the necessary estimates in the Beurling--Kato theory.

We first record two elementary facts used in the necessity arguments:
stability of Stolz domains under powers and submultiplicativity of the
exponential defect sequence.

Throughout the geometric preliminaries at the beginning of this section we fix $\sigma>1$ and the Stolz domain
$S_\sigma$ given by \eqref{eq:Stolz-Ssigma}.

\begin{lemma}\label{lem:Stolz-union-compact}
For $\sigma>1$ we have
\[
   S_{\sigma,n} := \{z^n : z\in S_\sigma\}\subset S_\sigma, \qquad n\in \N.
\]
\end{lemma}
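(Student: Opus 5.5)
The plan is to verify the defining inequality of $S_\sigma$ directly for $z^n$, using the telescoping factorisation $1-z^n=(1-z)(1+z+\dots+z^{n-1})$. Fix $n\in\N$ and $z\in S_\sigma$. By \eqref{eq:Stolz-Ssigma}, $z\in\D$ and $|1-z|<\sigma(1-|z|)$. Since $|z|<1$, also $|z^n|=|z|^n<1$, so $z^n\in\D$. It therefore remains to show
\[
   |1-z^n|<\sigma\bigl(1-|z|^n\bigr).
\]

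To do this, I would first estimate the second factor by the triangle inequality:
\[
   |1+z+\dots+z^{n-1}|\le \sum_{k=0}^{n-1}|z|^k=\frac{1-|z|^n}{1-|z|}.
\]
This sum is strictly positive, because its $k=0$ term equals $1$. Next I would multiply the strict inequality $|1-z|<\sigma(1-|z|)$ by this positive quantity. This gives
\[
   |1-z^n|\le |1-z|\sum_{k=0}^{n-1}|z|^k<\sigma(1-|z|)\,\frac{1-|z|^n}{1-|z|}=\sigma\bigl(1-|z|^n\bigr).
\]
Hence $|1-z^n|/(1-|z^n|)<\sigma$, that is, $z^n\in S_\sigma$.

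No step here is a genuine obstacle. The only point needing a little care is keeping the inequality strict, since the paper defines $S_\sigma$ with a strict inequality and with the vertex $1$ removed. Strictness holds because we multiply a strict inequality by a strictly positive number. The vertex cannot occur, because $|z^n|<1$ forces $z^n\neq 1$.
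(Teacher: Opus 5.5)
Your proof is correct and follows essentially the same route as the paper. The paper also uses $1-z^n=(1-z)\sum_{k=0}^{n-1}z^k$, and writes $\frac{|1-z^n|}{1-|z^n|}$ as $\frac{|\sum_{k=0}^{n-1} z^k|}{\sum_{k=0}^{n-1}|z|^k}\cdot\frac{|1-z|}{1-|z|}$, where the first factor is at most $1$ by the triangle inequality.
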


\begin{proof}
The assertion is immediate for $n=1$.  Let
$z\in S_\sigma$ and $n\ge2$.  Then $z^n\in\D$ and
\[
\frac{|1-z^n|}{1-|z^n|}
=\frac{\left|\sum_{k=0}^{n-1}z^k\right|}{\sum_{k=0}^{n-1}|z|^k}\,
\frac{|1-z|}{1-|z|}
\le \frac{|1-z|}{1-|z|}<\sigma .
\]
Thus $z^n\in S_\sigma$.
\end{proof}

As a consequence, if $w$ is holomorphic in $\D$, continuous on $\overline\D$
and satisfies
\[
   |w(1)| < \|w\|_\infty
      := \sup_{|z|=1}|w(z)|,
\]
then for every $\sigma>1$ we have
\begin{equation}\label{eq:geom-C1}
   \sup_{n\in\N}\sup_{z\in S_\sigma}|w(z^n)|
      \le \sup_{z\in S_\sigma}|w(z)|
      < \|w\|_\infty.
\end{equation}



Throughout the necessity arguments, constants $C(\sigma)$ depend only on $\sigma$,
and constants $C(\sigma,w)$ depend only on $\sigma$ and on $w$.

The condition $|w(1)|<\|w\|_\infty$ in the next theorem is not a normalization.
It means that the boundary maximum of $|w|$ is separated from the distinguished
point $1$, so that comparison arcs can be chosen away from $1$.  Moreover, when
$1\in\sigma(T)$ this separation is forced by any strict ordinary or exponential
defect. See Remark~\ref{rem:w1-from-gamma} below.

For $T$ and $w$ as they occur below, put, for $N\in\N$,
\begin{equation}\label{eq:aN-def}
  a_N(w,T):=\sup_{n\ge1}\,\|w(T^n)^N\|.
\end{equation}

\begin{lemma}
\label{lem:submult-radius}
The sequence $N\mapsto a_N(w,T)$ is submultiplicative:
\[
  a_{N+K}(w,T)\le a_N(w,T)\,a_K(w,T), \qquad N,K\in\N.
\]
Consequently, the limit
\begin{equation}\label{eq:gamma-def}
  \gamma(w,T):=\lim_{N\to\infty} a_N(w,T)^{1/N}
  =\inf_{N\ge1} a_N(w,T)^{1/N}
\end{equation}
exists. 
\end{lemma}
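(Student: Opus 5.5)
The plan is to prove submultiplicativity directly from the definition and then obtain the limit from Fekete's lemma, applied to logarithms.

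\emph{Submultiplicativity.} Fix $N,K\in\N$ and $n\ge1$. The operators $w(T^n)^N$ and $w(T^n)^K$ are powers of the single operator $w(T^n)$, so
\[
  w(T^n)^{N+K}=w(T^n)^N\,w(T^n)^K .
\]
Hence
\[
  \|w(T^n)^{N+K}\|\le \|w(T^n)^N\|\,\|w(T^n)^K\|\le a_N(w,T)\,a_K(w,T).
\]
Taking the supremum over $n\ge1$ gives $a_{N+K}(w,T)\le a_N(w,T)a_K(w,T)$. For this we need only that $w(T^n)$ is a well-defined bounded operator and that the $a_N$ are finite. Under the standing assumptions ($T$ power bounded with $\sup_k\|T^k\|=M$, and $w\in A^1(\D)$), we have $\|w(T^n)\|\le M\|w\|_{A^1}$ uniformly in $n$. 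It follows that $a_1<\infty$, and by submultiplicativity every $a_N<\infty$.

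\emph{Existence of the limit.} There are two cases.

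\emph{Case 1: $a_{N_0}=0$ for some $N_0$.} Then $a_N\le a_{N_0}a_{N-N_0}=0$ for all $N>N_0$. So $a_N^{1/N}\to0=\inf_N a_N^{1/N}$, and the claim holds trivially.

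\emph{Case 2: all $a_N>0$.} Set $b_N:=\log a_N\in\R$. The sequence $(b_N)$ is subadditive. Fekete's lemma then gives
\[
  \lim_{N\to\infty} b_N/N=\inf_{N\ge1} b_N/N\in[-\infty,\infty).
\]
Exponentiating yields \eqref{eq:gamma-def}, with the conventions $e^{-\infty}=0$ and $\gamma(w,T)\in[0,\infty)$.

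\emph{Where the care goes.} The only mild point is Fekete's lemma itself. I would recall its standard proof for completeness. Fix $K$, write $N=qK+r$ with $0\le r<K$, and use $b_N\le qb_K+b_r$, with the convention $b_0:=0$ corresponding to $a_0=\|I\|=1$. Dividing by $N$ and letting $N\to\infty$ gives $\limsup_N b_N/N\le b_K/K$. Taking the infimum over $K$ then shows $\limsup_N b_N/N\le\inf_K b_K/K\le\liminf_N b_N/N$. Nothing here is a genuine obstacle; the step most worth checking is the finiteness of the $a_N$, which is where power boundedness enters.
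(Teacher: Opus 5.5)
Your proof is correct and follows the paper's argument: you use the pointwise bound $\|w(T^n)^{N+K}\|\le\|w(T^n)^N\|\,\|w(T^n)^K\|$, take the supremum over $n$, and apply Fekete's lemma to $\log a_N$. Your separate treatment of the case $a_{N_0}=0$, where $\log a_N=-\infty$, and your check that $a_1\le M\|w\|_{A^1}<\infty$ make explicit two points that the paper leaves implicit.
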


\begin{proof}
For each fixed $n$,
\[
  \|w(T^n)^{N+K}\|\le \|w(T^n)^N\|\,\|w(T^n)^K\|, \qquad N, K \in \mathbb N.
\]
Taking the supremum over $n$ on both sides of the above inequality yields submultiplicativity.
The existence of the limit in \eqref{eq:gamma-def} follows from Fekete's lemma
applied to $\log a_N$. 
\end{proof}

\begin{theorem}\label{thm:HS-disc-necessity-A11}
Let $T\in\L(X)$ be a Ritt operator of Stolz type $\sigma>1$.
Let $w
\in A^{1,1}(\D)$ satisfy 
\begin{equation}\label{eq:w1-strict}
   |w(1)|<\|w\|_\infty.
\end{equation}
Then there exist constants $C>0$ and $\eta\in(0,\|w\|_\infty)$ such that
\begin{equation}\label{eq:KB-exp-decay-A11}
   \sup_{n\ge1}\|w(T^n)^N\|\le C\,\eta^N,
   \qquad N\ge1.
\end{equation}
In particular, $\gamma(w,T)<\|w\|_\infty$.
\end{theorem}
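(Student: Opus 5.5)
Our plan is to apply the bounded Hardy--Sobolev calculus of Theorem~\ref{thm:HS-calc-Ritt} to the functions $g_{n,N}(z):=w(z^n)^N$ on a Stolz domain $S_\tau$ with $\tau>\sigma$ fixed. By Corollary~\ref{cor:A1-compatibility} and multiplicativity, $(g_{n,N})_{\HS}(T)=w(T^n)^N$, since $z\mapsto w(z^n)$ lies in $A^1(\D)$. From \eqref{eq:HS-calc-bound} we then get
\[
\|w(T^n)^N\|\le C_T\Bigl(\sup_{S_\tau}|w(z^n)|^N+N\int_{\Gamma_\tau}|w(\zeta^n)|^{N-1}\,n|\zeta|^{n-1}|w'(\zeta^n)|\,|d\zeta|\Bigr).
\]
Set $\eta_0:=\sup_{z\in S_\tau}|w(z)|$. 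By \eqref{eq:geom-C1}, $\eta_0<\|w\|_\infty$. This uses that $w$ is continuous on $\overline{\D}$, and that on $\overline{S_\tau}$ the modulus $|w|$ is strictly less than $\|w\|_\infty$ except possibly near the vertex, where it tends to $|w(1)|<\|w\|_\infty$. In more detail, $\overline{S_\tau}\cap\T=\{1\}$, so compactness together with the maximum principle give $\sup_{S_\tau}|w|<\|w\|_\infty$. Lemma~\ref{lem:Stolz-union-compact} shows that $\sup_{S_\tau}|w(z^n)|\le\eta_0$ for all $n$.

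For the derivative term we first bound $|w(\zeta^n)|^{N-1}\le\eta_0^{N-1}$, using $\zeta^n\in\overline{S_\tau}$ and continuity. It then remains to show that
\[
\int_{\Gamma_\tau}n|\zeta|^{n-1}|w'(\zeta^n)|\,|d\zeta|
\]
is bounded uniformly in $n$. Writing $w'(u)=\sum_k kc_ku^{k-1}$ and $|w'(\zeta^n)|\le\sum_k k|c_k||\zeta|^{n(k-1)}$, the integrand is dominated by $\sum_k |c_k|\,nk|\zeta|^{nk-1}$. Lemma~\ref{lem:kernel-bound-Stolz}, applied with exponent $nk$, gives $\int_{\Gamma_\tau}nk|\zeta|^{nk-1}|d\zeta|\le C_\tau$. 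The integral is therefore at most $C_\tau\|w\|_{A^1}$. So only $A^1$ is really needed here, with $A^{1,1}$ guaranteeing $w'$ is well-behaved, and the bound is uniform in $n$. This is the key uniformity step; without Lemma~\ref{lem:kernel-bound-Stolz} the chain-rule factor $n$ would be fatal.

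Combining the two estimates gives
\[
\sup_n\|w(T^n)^N\|\le C_T\bigl(\eta_0^N+C_\tau\|w\|_{A^1}N\eta_0^{N-1}\bigr).
\]
For any $\eta\in(\eta_0,\|w\|_\infty)$ we have $N\eta_0^{N-1}\le C'\eta^N$, which yields \eqref{eq:KB-exp-decay-A11} and hence $\gamma(w,T)\le\eta<\|w\|_\infty$. The main point to check carefully is the strict inequality $\sup_{S_\tau}|w|<\|w\|_\infty$ near the vertex $1$. This follows from the continuity of $w$ on $\overline{\D}$, the fact that $\overline{S_\tau}$ touches $\T$ only at $1$, and \eqref{eq:w1-strict}. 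One also needs the compatibility of the HS calculus with the composed functions, which reduces to the $A^1$ series via Corollary~\ref{cor:A1-compatibility}.
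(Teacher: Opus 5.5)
Your proof is correct and follows the paper's argument. You apply the bounded $\HS(S_\tau)$-calculus to $w(z^n)^N$, use invariance of $S_\tau$ under $z\mapsto z^n$ together with compactness and the maximum principle to get $\sup_{S_\tau}|w|<\|w\|_\infty$, and control the derivative term uniformly in $n$ by Lemma~\ref{lem:kernel-bound-Stolz}.

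The only difference is in that last step. The paper bounds $|w'(\zeta^n)|$ by $\sup_{\overline{\D}}|w'|\le\|w'\|_{A^1}$, which is where $w\in A^{1,1}(\D)$ is used, and then applies the lemma with exponent $n$. You instead apply the lemma termwise with exponents $nk$ and obtain the bound $C_\tau\|w\|_{A^1}$. So your argument actually proves the estimate for every $w\in A^1(\D)$ with $|w(1)|<\|w\|_\infty$; the $A^{1,1}$ hypothesis is needed only on the sufficiency side.
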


\begin{proof}
Choose $\widetilde\sigma>\sigma$ and set
\[
   \alpha:=\sup_{z\in S_{\widetilde\sigma}}|w(z)|.
\]
By Lemma~\ref{lem:Stolz-union-compact},
\[
   z\in S_{\widetilde\sigma} \ \Longrightarrow\ z^n\in S_{\widetilde\sigma}, \quad n\ge1.
\]
Moreover $\alpha<\|w\|_\infty$. Indeed, 
if
\(\alpha=\|w\|_\infty\), choose a sequence
\(
   (z_j)_{j\ge1}\subset S_{\widetilde\sigma}
\)
such that
\(
   |w(z_j)|\to\|w\|_\infty
\)
as $j \to \infty.$
 By compactness of
$\overline{S_{\widetilde\sigma}}$ we may assume, after passing to a subsequence, that
$z_j\to z_0\in\overline{S_{\widetilde\sigma}}$ as $j\to\infty$. Hence
$|w(z_0)|=\|w\|_\infty$ by continuity. If $z_0\neq1$, then
$z_0\in\D$ and the maximum modulus principle forces $w$ to be constant,
a contradiction. If $z_0=1$, then this contradicts
$|w(1)|<\|w\|_\infty$.

Now fix $N,n\ge1$ and set $F_{N,n}(z):=w(z^n)^N$. Then for
$z\in S_{\widetilde\sigma}$ we have $z^n\in S_{\widetilde\sigma}$, hence
\[
   \sup_{z\in S_{\widetilde\sigma}}|F_{N,n}(z)|\le \alpha^N.
\]

Since $w\in A^{1,1}(\D)$, its derivative $w'$ belongs to the Wiener algebra.
In particular,
\begin{equation}\label{eq:L-def}
   L:=\sup_{|z|\le1}|w'(z)|
   \le 
   \|w'\|_{A^1}<\infty.
\end{equation}

For $z\in\Gamma_{\widetilde\sigma}$ we compute
\[
   F'_{N,n}(z)=N\,w(z^n)^{N-1}\,w'(z^n)\,n z^{n-1}.
\]
Using that $|w(z^n)|\le\alpha$ and by \eqref{eq:L-def}, we obtain 
\[
   |F'_{N,n}(z)|
   \le N\,\alpha^{N-1}\,L\,n|z|^{n-1}.
\]
Therefore Lemma~\ref{lem:kernel-bound-Stolz}, applied with
$\widetilde\sigma$ in place of $\sigma$, yields
\[
   \int_{\Gamma_{\widetilde\sigma}}|F'_{N,n}(z)|\,|dz|
   \le N\,\alpha^{N-1}\,L
      \int_{\Gamma_{\widetilde\sigma}}n|z|^{n-1}\,|dz|
   \le C_{\widetilde\sigma}\,L\,N\,\alpha^{N-1},
\]
and thus
\begin{equation}\label{eq:HS-FNn-A11}
   \|F_{N,n}\|_{\HS(S_{\widetilde\sigma})}
   \le \alpha^N + C_{\widetilde\sigma}L\,N\,\alpha^{N-1}
   \le C_1(\widetilde\sigma,w)\,(N+1)\,\alpha^{N-1}.
\end{equation}

Since $T$ is of Stolz type $\sigma$ and $\widetilde\sigma>\sigma$,
Theorem~\ref{thm:HS-calc-Ritt} yields a bounded \(\HS\)-calculus on
$S_{\widetilde\sigma}$. Hence there exists
$C_{\mathrm{HS}}=C_{\mathrm{HS}}(T,\widetilde\sigma)\ge1$ such that
\[
   \|F_{\HS}(T)\|\le C_{\mathrm{HS}}\,\|F\|_{\HS(S_{\widetilde\sigma})}.
\]
Moreover, for $F_{N,n}(z)=w(z^n)^N$ we have
$F_{N,n,\HS}(T)=w(T^n)^N$. Indeed, $F_{N,n}\in A^1(\D)$, and its Taylor
polynomials converge to $F_{N,n}$ in $A^1(\D)$, hence in
$\HS(S_{\widetilde\sigma})$ by Proposition~\ref{prop:A1-HS}. Since the
$\HS$-calculus agrees with the polynomial calculus, the identity follows.
Hence
\[
   \|w(T^n)^N\|
   =\|F_{N,n,\HS}(T)\|
   \le C_{\mathrm{HS}}\,C_1(\widetilde\sigma,w)\,(N+1)\,\alpha^{N-1}, 
   \qquad n,N\ge1.
\]

Choose any $\eta$ with $\alpha<\eta<\|w\|_\infty$.
Since $\alpha/\eta<1$, there exists $C_2\ge1$ such that
\[
   (N+1)\alpha^{N-1}\le C_2\,\eta^N, \qquad N\ge1.
\]
Taking the supremum over $n\ge1$ gives \eqref{eq:KB-exp-decay-A11} with
$C:=C_{\mathrm{HS}}\,C_1\,C_2$.  The existence of the limit defining
$\gamma(w,T)$ follows from the submultiplicativity argument in
Lemma~\ref{lem:submult-radius}, and the estimate
$\gamma(w,T)\le \eta<\|w\|_\infty$ follows by taking $N$th roots.
\end{proof}



\begin{remark}\label{rem:w1-from-gamma}
Assume that $T$ is power-bounded and $1\in\sigma(T)$,
and let $w\in A^{1}(\D)$. Then $1\in\sigma(T^n)$ for all $n\ge1$, hence by Lemma~\ref{lem:approx-point}
one has
$w(1)\in\sigma\!\bigl(w(T^n)\bigr)$ for all $n\ge1.$
Therefore $r\!\bigl(w(T^n)\bigr)\ge |w(1)|$ and so
\[
\|w(T^n)^N\|^{1/N}\ge r\!\bigl(w(T^n)\bigr)\ge |w(1)|, \qquad n,N\ge1.
\]
Taking supremum over $n\ge1$ and then letting $N\to\infty$ yields
\[
        |w(1)|\le \gamma(w,T).
\]
In particular, the exponential defect
\(
        \gamma(w,T)<\|w\|_\infty
\)
implies
\(
        |w(1)|<\|w\|_\infty.
\)
Similarly, if
\[
        \sup_{n\ge1}\|w(T^n)\|<\|w\|_\infty,
\]
then $|w(1)|<\|w\|_\infty$.  Thus the separation condition at $1$ is not
merely an artefact of the proof.
\end{remark}

\subsection{The discrete Kato criterion}

Using a simple trick, we now show the necessity direction for the analogue of
Kato's theorem in Theorem~\ref{thm:one-point-power-Kato}.  That theorem gives
the forward implication: a uniform one-point resolvent estimate for the powers
$(T^n)_{n \ge 1}$ implies the Ritt property.  We prove the converse statement, and obtain
a somewhat surprising statement about Ritt constants of powers as a byproduct.
The
first step is the next lemma, providing uniformity for resolvent estimates for
$(T^n)_{n \ge 1}$.

We shall use the following standard characterisation of the Ritt condition, discussed in Subsection \ref{poweritt}.  An
operator \(T\in \mathcal L(X)\) is Ritt if and only if
\[
  M_T:=\sup_{n\ge 0}\|T^n\|<\infty
\qquad \text{and} \qquad
  N_T:=\sup_{n\ge 1} n\|T^n(I-T)\|<\infty.
\]
We shall also need the Banach space of bounded vector-valued sequences defined by $\ell^\infty(X):=\{(x_n)_{n \ge 1}\subset X: \sup_{n \ge 1}\|x_n\|<\infty\}.$
\begin{lemma}\label{lem:direct-sum-powers-ritt}
Let $T$ be a Ritt operator on $X$, and define
\[
  S_T=\bigoplus_{m\ge1}T^m
\]
on $\ell^\infty(X)$, that is,
\[
  S_T(x_1,x_2,\ldots)=(Tx_1,T^2x_2,T^3x_3,\ldots), \qquad (x_1,x_2,\ldots) \in \ell^\infty(X).
\]
Then $S_T$ is a Ritt operator. More precisely,
\[
  \sup_{n\ge0}\|S_T^n\|\le M_T
\qquad \text{and} \qquad
  \sup_{n\ge1} n\|S_T^n(I-S_T)\|\le N_T.
\]
\end{lemma}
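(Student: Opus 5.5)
The plan is to compute the powers of $S_T$ coordinatewise and bound each coordinate by the two Ritt constants $M_T$ and $N_T$ of $T$. The operator $S_T$ is diagonal on $\ell^\infty(X)$, so
\[
  S_T^n(x_m)_{m\ge1}=(T^{mn}x_m)_{m\ge1},
  \qquad
  S_T^n(I-S_T)(x_m)_{m\ge1}=\bigl(T^{mn}(I-T^m)x_m\bigr)_{m\ge1}.
\]
For a diagonal operator on $\ell^\infty(X)$ with bounded entries $A_m$, the norm equals $\sup_m\|A_m\|$. The upper bound, which is all we need, follows directly from the definition of the sup-norm. In particular, $S_T$ is bounded with $\|S_T\|\le M_T$, and it maps $\ell^\infty(X)$ into itself.

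\textbf{Power bound.} This is immediate: $\|S_T^n\|\le\sup_{m\ge1}\|T^{mn}\|\le M_T$ for every $n\ge0$.

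\textbf{Discrete holomorphicity bound.} The key step is to show that, for $n\ge1$ and $m\ge1$,
\[
  n\,\|T^{mn}(I-T^m)\|\le N_T .
\]
The first attempt would be to telescope $I-T^m=\sum_{k=0}^{m-1}T^k(I-T)$, which gives
\[
  \|T^{mn}(I-T^m)\|\le\sum_{k=0}^{m-1}\|T^{mn+k}(I-T)\|\le\sum_{k=0}^{m-1}\frac{N_T}{mn+k}\le\frac{m\,N_T}{mn}=\frac{N_T}{n}.
\]
This is exactly the required estimate, with constant $N_T$ and no loss. Taking the supremum over $m$ yields $n\|S_T^n(I-S_T)\|\le N_T$.

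\textbf{Conclusion.} The characterisation recalled just before the lemma, namely $M_T<\infty$ and $N_T<\infty$ if and only if Ritt (see \cite[Theorem 2.9]{LeMerdy}), applies to any bounded operator on any Banach space, in particular to $S_T$ on $\ell^\infty(X)$. Hence $S_T$ is Ritt.

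The only point needing care is the telescoping estimate; the rest is bookkeeping. One should also check that $N_T$ is used with indices $mn+k\ge1$, which holds since $n,m\ge1$, and that the norm of a diagonal operator is controlled by the supremum of the norms of its entries. I do not expect a real obstacle.
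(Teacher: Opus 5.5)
Your proof is correct and is essentially the paper's argument: you compute $S_T^n$ and $S_T^n(I-S_T)$ coordinatewise, bound the powers by $\sup_m\|T^{mn}\|\le M_T$, and telescope $I-T^m=\sum_{k=0}^{m-1}T^k(I-T)$ to get $n\sum_{k=0}^{m-1}N_T/(mn+k)\le N_T$. As in the paper, the Ritt property of $S_T$ then follows from the characterisation via $M_T$ and $N_T$, which is valid on any Banach space, in particular on $\ell^\infty(X)$.
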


\begin{proof}
If $n\ge0$, then
\[
  S_T^n=\bigoplus_{m\ge1}T^{mn}.
\]
Hence
\[
  \sup_{n\ge0}\|S_T^n\|
  =\sup_{n\ge0}\sup_{m\ge1}\|T^{mn}\|
  \le M_T .
\]
Moreover,
\[
  S_T^n(I-S_T)=\bigoplus_{m\ge1}T^{mn}(I-T^m).
\]
For $m \ge1$,
\[
  I-T^m=(I-T)(I+T+\cdots+T^{m-1}),
\]
and therefore for all $n \ge 1,$
\[
  T^{mn}(I-T^m)
  =\sum_{j=0}^{m-1}T^{mn+j}(I-T).
\]
Thus
\[
\begin{aligned}
  n\|T^{mn}(I-T^m)\|
  &\le n\sum_{j=0}^{m-1}\|T^{mn+j}(I-T)\|  \\
  &\le N_T\, n\sum_{j=0}^{m-1}\frac1{mn+j}
  \le N_T .
\end{aligned}
\]
Taking the supremum over $m$ gives the required estimate for
$S_T^n(I-S_T)$.
\end{proof}

If $T$ is Ritt, then a simple argument as above shows
that each power of $T$ is Ritt.

 The preceding lemma also gives uniform control of the resolvent Ritt constants of the powers of a Ritt operator.
 For a Ritt operator $T$, define its Ritt constant by 
\[
        R(T):=\sup_{|z|>1}|z-1|\|(z-T)^{-1}\|.
\]
There are other related definitions of Ritt constants in the literature.
We omit their discussion here. 
\begin{theorem}\label{ritt_constants}
Let $T$ be a Ritt operator on $X$. Then
\begin{equation}\label{rm}
  \sup_{m\ge1}R(T^m)
  <\infty.
\end{equation}
\end{theorem}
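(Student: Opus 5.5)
The plan is to use Lemma~\ref{lem:direct-sum-powers-ritt}: the operator $S_T=\bigoplus_{m\ge1}T^m$ on $\ell^\infty(X)$ is Ritt. Hence its Ritt constant
\[
  R(S_T)=\sup_{|z|>1}|z-1|\,\|(z-S_T)^{-1}\|
\]
is finite. For $|z|>1$ the resolvent of a block-diagonal operator is block-diagonal,
\[
  (z-S_T)^{-1}=\bigoplus_{m\ge1}(z-T^m)^{-1}.
\]
Each block acts as a compression of $(z-S_T)^{-1}$ onto the $m$-th coordinate. We embed $x\in X$ in the $m$-th slot and read off the $m$-th component, and both maps have norm one on $\ell^\infty(X)$. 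This gives $\|(z-T^m)^{-1}\|\le\|(z-S_T)^{-1}\|$ for every $m$ and every $|z|>1$, hence $R(T^m)\le R(S_T)$ uniformly in $m$, which is \eqref{rm}.

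The only point needing care is that $z\in\rho(S_T)$ implies $z\in\rho(T^m)$ and that the diagonal formula holds. It is cleaner to argue in the reverse direction. Since $S_T$ is Ritt, $\sigma(S_T)\subset\overline{\D}$, so for $|z|>1$ the inverse $(z-S_T)^{-1}$ exists. It commutes with $S_T$ and with the coordinate projections $P_m$, because $P_m$ commutes with $S_T$, and hence with its resolvent, which is a norm limit of polynomials in $S_T$ (Neumann series for large $|z|$, then analytic continuation in $z$ on the connected set $\{|z|>1\}$). Restricting $(z-S_T)^{-1}$ to the range of $P_m$, which is $X$ in slot $m$, yields a two-sided inverse of $z-T^m$. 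This gives both the invertibility and the norm bound.

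The main substantive work is that $S_T$ is Ritt, that is, the uniform estimate $n\|T^{mn}(I-T^m)\|\le N_T$. This is already contained in Lemma~\ref{lem:direct-sum-powers-ritt}. The passage from the power-bounded and Nevanlinna characterization to the resolvent form is the cited equivalence from Subsection~\ref{poweritt}, and that is where the finiteness of $R(S_T)$ comes from.

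An alternative route avoids $\ell^\infty(X)$. The equivalence between the resolvent and power formulations holds with constants controlled quantitatively: $R(S)\le\Phi(M_S,N_S)$ for some function $\Phi$. The lemma's computation shows directly that $M_{T^m}\le M_T$ and $N_{T^m}\le N_T$, so this bound also gives \eqref{rm}. However, the direct-sum argument is preferable, since it does not require tracking the constants in that equivalence.
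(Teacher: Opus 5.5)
Your proof is correct and follows the paper's argument: Lemma~\ref{lem:direct-sum-powers-ritt} makes $S_T$ Ritt on $\ell^\infty(X)$, and the block-diagonal form of $(z-S_T)^{-1}$ gives $\|(z-T^m)^{-1}\|\le\|(z-S_T)^{-1}\|$, hence $R(T^m)\le R(S_T)$ for all $m$. Your invertibility discussion is more elaborate than necessary but valid: the paper simply uses $\sigma(T^m)\subset\overline{\D}$, and the commutation of $P_m$ with the resolvent follows algebraically from $P_mS_T=S_TP_m$, with no Neumann series or analytic continuation needed.
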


\begin{proof}
By the preceding lemma, $S_T$ is Ritt on $\ell^\infty(X)$. Applying the
resolvent characterization of Ritt operators to this single operator
$S_T$ gives
\[
  \sup_{|z|>1}|z-1|\,\|(z-S_T)^{-1}\|<\infty .
\]
For $|z|>1$, each $z-T^m$ is invertible, and
\[
  z-S_T=\bigoplus_{m\ge1}(z-T^m).
\]
Thus the inverse is the diagonal operator on $\ell^\infty(X)$ given by
\[
  (z-S_T)^{-1}=\bigoplus_{m\ge1}(z-T^m)^{-1},
\]
and hence
\[
  \|(z-S_T)^{-1}\|
  =\sup_{m\ge1}\|(z-T^m)^{-1}\|.
\]
This proves \eqref{rm}.

\end{proof}

 The next corollary provides a converse to Kato's condition for the Ritt property.
\begin{corollary}\label{ritt_converse}
Let $T$ be a Ritt operator. 
Then for every $\xi\in\mathbb T \setminus\{1\}$,
\[
  \xi\in\rho (T^m),\qquad m\ge1,
\]
and
\begin{equation}\label{rm1}
  \sup_{m\ge1}\|(\xi -T^m)^{-1}\|
  \le \frac{C}{|\xi-1|},
\end{equation}
where the constant $C$ depends only on $T$. In particular, for
every closed arc $I\subset\mathbb T\setminus\{1\}$,
\[
  \sup_{\xi\in I}\sup_{m\ge1}\|(\xi -T^m)^{-1}\|<\infty .
\]
\end{corollary}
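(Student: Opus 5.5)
The plan is to read Corollary~\ref{ritt_converse} off Theorem~\ref{ritt_constants}, with one extra step to reach points on the unit circle. The resolvent bound in Theorem~\ref{ritt_constants} is stated only for $|z|>1$, so we must show two things: $\xi\in\rho(T^m)$ for $|\xi|=1$, $\xi\neq1$, and the uniform bound extends to such $\xi$ by a limiting argument.

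\emph{Spectral step.} Each $T^m$ is Ritt, either by the remark after Lemma~\ref{lem:direct-sum-powers-ritt} or because $S_T$ is Ritt and $T^m$ is a diagonal block of $S_T$. A Ritt operator satisfies $\sigma(T^m)\cap\T\subset\{1\}$. Alternatively, argue directly with the spectral mapping theorem: $\sigma(T^m)=\sigma(T)^m$ and $\sigma(T)\subset\D\cup\{1\}$, so $\sigma(T^m)\subset\D\cup\{1\}$. Either way, $\xi\in\rho(T^m)$ for every $m\ge1$.

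\emph{Bound on the circle.} Put $R:=\sup_{m\ge1}R(T^m)<\infty$, which is finite by Theorem~\ref{ritt_constants}. For $r>1$ we have
\[
\|(r\xi-T^m)^{-1}\|\le \frac{R}{|r\xi-1|}.
\]
Since $\xi\in\rho(T^m)$ and the resolvent is continuous on the open set $\rho(T^m)$, we may let $r\downarrow1$ for each fixed $m$. This gives $\|(\xi-T^m)^{-1}\|\le R/|\xi-1|$. The right-hand side does not depend on $m$, so taking the supremum over $m$ yields \eqref{rm1} with $C=R$. The constant depends only on $T$ through $M_T$, $N_T$, and the Ritt constant of $S_T$.

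\emph{Arc statement.} On a closed arc $I\subset\T\setminus\{1\}$ we have $|\xi-1|\ge\dist(1,I)>0$. Hence the supremum over $\xi\in I$ and $m\ge1$ is at most $C/\dist(1,I)$.

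The only point needing care is the passage to the boundary. It is harmless precisely because each $\xi$ is already known to lie in $\rho(T^m)$, so we take a pointwise limit for each $m$ separately and never need any uniformity in $m$ from continuity. The uniformity in $m$ comes entirely from Theorem~\ref{ritt_constants}.
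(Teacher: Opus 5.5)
Your proposal is correct and matches the paper's proof: the spectral mapping argument gives $\xi\in\rho(T^m)$, you apply the uniform bound from Theorem~\ref{ritt_constants} at $r\xi$ with $r>1$, and you let $r\downarrow1$ for each fixed $m$ using norm continuity of the resolvent. Your explicit bound $C/\dist(1,I)$ for the arc statement just makes the paper's ``follows immediately'' concrete.
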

\begin{proof}
Fix $\xi\in\mathbb T \setminus\{1\}$. We first check that
$\xi\in\rho (T^m)$ for every $m\ge1$. If
$\xi\in\sigma(T^m)$, then by spectral mapping there exists
$\lambda\in\sigma(T)$ such that
\(
  \lambda^m=\xi .
\)
Since $T$ is power bounded, $\sigma(T)\subset\overline{\mathbb D}$.
Thus $|\lambda|=1$. Since $T$ is Ritt,
\[
  \sigma(T)\cap\mathbb T \subset\{1\},
\]
and so $\lambda=1$. This would give $\xi=1$, a contradiction.
Therefore $\xi\in\rho (T^m)$ for all $m\ge1$.

For $r>1$, the estimate \eqref{rm} gives
\[
  \|(r\xi -T^m)^{-1}\|
  \le \frac{C}{|r\xi-1|},
  \qquad m\ge1.
\]
Letting $r\downarrow1$ and using norm continuity of the resolvent for
each fixed $m$, we obtain
\[
  \|(\xi -T^m)^{-1}\|
  \le \frac{C}{|\xi-1|},
  \qquad m\ge1.
\]
Taking the supremum over $m,$  the estimate \eqref{rm1} follows. The statement
for closed arcs follows immediately.
\end{proof}

The next corollary is now immediate. It is a complete discrete analogue of Kato's
characterisation of holomorphic semigroups mentioned in the introduction.

\begin{corollary}\label{DisKato}
Let $T\in\L(X)$ be power bounded and assume that
\[
        \sigma(T)\cap\T\subset\{1\}.
\]
Then $T$ is a Ritt operator if and only if there exists
$\zeta\in\T\setminus\{1\}$ such that
\begin{equation}\label{kato_ness}
        \sup_{n\ge1}\|(\zeta -T^n)^{-1}\|<\infty.
\end{equation}
If $T$ is Ritt, then this estimate holds for every
$\zeta\in\T\setminus\{1\}$.
\end{corollary}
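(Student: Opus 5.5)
The corollary follows by combining the two directions already established; no new estimates are needed.

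For the sufficiency part, suppose $T$ is power bounded, $\sigma(T)\cap\T\subset\{1\}$, and \eqref{kato_ness} holds for some $\zeta\in\T\setminus\{1\}$. These are exactly the hypotheses of Theorem~\ref{thm:one-point-power-Kato}, so $T$ is Ritt. Recall how that theorem works. A Neumann series perturbation spreads the one-point bound to a closed arc $I\ni\zeta$. Lemma~\ref{lem:hitting-arc} then picks, for $\xi\in\T$ near $1$, a power $n(\xi)$ with $\xi^{n}\in I$ and $n|\xi-1|\le 2\pi$. The factorisation $\xi^n-T^n=(\xi-T)Q_n(\xi,T)$ converts this into a local Ritt bound on the circle, and Corollary~\ref{cor:Lyubich-propagation} propagates it off the circle. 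Here we only invoke the theorem.

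For the necessity part, suppose $T$ is Ritt. Corollary~\ref{ritt_converse} gives, for every $\zeta\in\T\setminus\{1\}$, that $\zeta\in\rho(T^n)$ for all $n\ge1$ and
\[
\sup_{n\ge1}\|(\zeta-T^n)^{-1}\|\le \frac{C}{|\zeta-1|}<\infty .
\]
This proves \eqref{kato_ness} for every such $\zeta$, hence in particular for some $\zeta$, and it also gives the final assertion. Behind this step, Lemma~\ref{lem:direct-sum-powers-ritt} shows that $S_T=\bigoplus_m T^m$ is Ritt on $\ell^\infty(X)$. Theorem~\ref{ritt_constants} then bounds the Ritt constants $R(T^m)$ uniformly in $m$, and letting $r\downarrow1$ in the resolvent bound at $r\zeta$ gives the estimate on the circle. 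The power boundedness and peripheral spectral assumptions hold automatically when $T$ is Ritt, so they add no extra burden in this direction.

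The only point to check is that the hypotheses line up. In the sufficiency direction, power boundedness and $\sigma(T)\cap\T\subset\{1\}$ are assumed in the corollary and are exactly what Theorem~\ref{thm:one-point-power-Kato} needs. In the necessity direction, Corollary~\ref{ritt_converse} requires nothing beyond the Ritt property. So there is no real obstacle; the substantive work lies in the earlier results, chiefly the uniform Ritt constants for powers in Theorem~\ref{ritt_constants}.
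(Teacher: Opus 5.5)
Your proposal is correct and follows the paper's route. The paper states this corollary as immediate: sufficiency is Theorem~\ref{thm:one-point-power-Kato}, and both the necessity and the assertion for every $\zeta\in\T\setminus\{1\}$ come from Corollary~\ref{ritt_converse}, which rests on Lemma~\ref{lem:direct-sum-powers-ritt} and Theorem~\ref{ritt_constants}.
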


We finish this subsection with a discrete counterpart of a result due to Pazy for
strongly continuous semigroups on Banach spaces, see \cite[Chapter~2.5, Corollary~5.8]{Pazy}.
It shows that, for contractions
on uniformly convex spaces, the discrete analogue of the classical
Beurling--Kato zero--two condition (see Corollary \ref{bk-example}) is forced by the Ritt property. The proof is based on Kato's criterion proved above.

\begin{theorem}\label{Pazy}
Let $X$ be a uniformly convex Banach space, and let $T$ be a contractive Ritt operator. Then
\begin{equation}\label{Rap}
\sup_{n\ge 1}\|I-T^n\|<2.
\end{equation}
\end{theorem}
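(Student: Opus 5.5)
Assume, to reach a contradiction, that $\sup_{n\ge1}\|I-T^n\|=2$. Pick $n_k\ge1$ and unit vectors $x_k\in X$ with $\|x_k-T^{n_k}x_k\|\to2$. Because $T$ is a contraction, $\|T^{n_k}x_k\|\le1$. Put $y_k:=-T^{n_k}x_k$. Then $\|x_k\|=1$, $\|y_k\|\le 1$ and $\|x_k+y_k\|\to2$, so $\|y_k\|\to1$. Uniform convexity now gives $\|x_k-y_k\|\to0$, that is,
\[
\|(I+T^{n_k})x_k\|\to0 .
\]
So $-1$ is an approximate eigenvalue along the family $(T^{n_k})_{k\ge1}$. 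This is exactly the mechanism in Pazy's argument.

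Next we use Kato's criterion in the necessity form, Corollary~\ref{ritt_converse} with $\xi=-1\in\T\setminus\{1\}$. Since $T$ is Ritt, $-1\in\rho(T^m)$ for every $m\ge1$, and
\[
\sup_{m\ge1}\|(I+T^m)^{-1}\|\le \frac{C}{2}<\infty .
\]
It follows that
\[
1=\|x_k\|\le \|(I+T^{n_k})^{-1}\|\,\|(I+T^{n_k})x_k\|\le \frac C2\,\|(I+T^{n_k})x_k\|\to0,
\]
a contradiction. Hence $\sup_{n\ge1}\|I-T^n\|<2$.

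The only step needing care is the uniform convexity estimate, and the difficulty is that $\|y_k\|$ may be strictly less than $1$. We handle this by normalizing. Set $\tilde y_k:=y_k/\|y_k\|$; this is well defined for large $k$, since $\|y_k\|\to1$ by the triangle inequality. Then $\|x_k+\tilde y_k\|\ge \|x_k+y_k\|-(1-\|y_k\|)\to2$. By the definition of the modulus of convexity this forces $\|x_k-\tilde y_k\|\to0$, and therefore $\|x_k-y_k\|\to0$. This uses uniform convexity in a uniform way, which is essential because both $n_k$ and $x_k$ vary. No compactness is needed: the whole argument rests on the uniform-in-$m$ resolvent bound at the single point $-1$ supplied by Corollary~\ref{ritt_converse}.
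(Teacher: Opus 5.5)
Your proposal is correct and follows essentially the same argument as the paper: uniform convexity converts $\|x_k-T^{n_k}x_k\|\to2$ into $\|(I+T^{n_k})x_k\|\to0$, which contradicts the uniform bound on $\|(I+T^m)^{-1}\|$ over all $m$. That bound is the necessity direction of the discrete Kato criterion at $\zeta=-1$, which the paper invokes through Corollary~\ref{DisKato}, i.e.\ via Corollary~\ref{ritt_converse}. Your normalization step for $\|y_k\|<1$ correctly fills in a detail the paper leaves implicit.
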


\begin{proof}
Suppose, to the contrary, that
\[
        \sup_{n\ge1}\|I-T^n\|=2.
\]
Then there exist  $(n_k)_{k \ge 1}\subset \mathbb N$ and $(x_k)_{k \ge 1}\subset X$ with $\|x_k\|=1$ for all $k$ such that
\begin{equation}\label{1UC}
\lim_{k\to\infty}\|x_k-T^{n_k}x_k\|=2.
\end{equation}
By \eqref{1UC} and uniform convexity of $X$, it follows that
\[
\lim_{k\to\infty}\|x_k+T^{n_k}x_k\|=0.
\]
On the other hand, Corollary~\ref{DisKato} with $\zeta=-1$ yields a constant $K>0$ such that
\[
1=\|(I+T^{n_k})^{-1}(I+T^{n_k})x_k\|\le K\|(I+T^{n_k})x_k\|\to 0,\qquad k\to\infty,
\]
which is impossible. Hence \eqref{Rap} holds.
\end{proof}
Thus the strict ordinary zero--two gap, which is not forced by the Ritt
property in general, is forced for contractive Ritt operators on uniformly
convex spaces.  The next example shows that the uniform convexity assumption
cannot simply be omitted.
\begin{example}\label{ex:projection-non-UC}
The uniform convexity assumption in Theorem~\ref{Pazy} cannot simply be removed. Let
$X=\C^2$ with
\[
        \|(x,y)\|=\max\{|x|,|x+y|\},
\]
and let $T(x,y)=(x,0)$. Then $T^2=T$, hence $T$ is Ritt, and $T$ is contractive since
\[
        \|T(x,y)\|=|x|\le \max\{|x|,|x+y|\}=\|(x,y)\|.
\]
However
\[
        \|I-T\|=2.
\]
Indeed, the upper bound follows from $|y|\le |x|+|x+y|$, and equality is obtained by taking $x=-y/2$. Thus
\[
        \sup_{n\ge1}\|I-T^n\|=2,
\]
although $T$ is a contractive Ritt operator.
\end{example}

\subsection{Defect vs exponential defect: a unified KB picture}

We now assemble the two directions developed so far. Section~\ref{sec:BK-sufficiency}
gave sufficient conditions for the Ritt property in terms of the powers $(T^n)_{n \ge 1}$,
whereas Theorem~\ref{thm:HS-disc-necessity-A11} and Corollary~\ref{DisKato} give the corresponding necessary conditions in terms of the same power quantities. The purpose of the present subsection is
to collect the defect, exponential-defect, and discrete Kato statements, and to separate the
sufficiency statements from the necessity and equivalence statements.

Let $T\in\L(X)$ be power-bounded, with $\|T^n\|\le M$ for $n\ge0$, and
$\sigma(T)\cap \mathbb T \subset \{1\}$.
Fix a function $w\in A^{1,1}(\D),$ and recall the notation
\[
  m_I(w)=\min_{\eta\in I}|w(\eta)|,
\]
for a nonempty closed arc $I\subset\T.$
The number \(m_I(w)\) is the comparison level used in the defect
conditions below.

\subsubsection{Defect assumptions}

The next definition fixes the terminology used throughout the rest of the section.
Both assumptions depend on the same arc $I$ and the same comparison level
$m_I(w)$.

\begin{definition}\label{def:defects}
Let \(I\subset\T\) be a nonempty closed arc.
\begin{enumerate}
\item[(D)] We say that \(w\) has a \emph{defect on \(I\)} if
\[
   \sup_{n\ge1}\|w(T^n)\|<m_I(w).
\]

\item[(ED)] We say that \(w\) has an \emph{exponential defect on \(I\)} if
\[
   \gamma(w,T)<m_I(w).
\]
Equivalently, there exist constants \(C\ge1\) and
\(\eta\in[0,m_I(w))\) such that
\[
   a_N(w,T)\le C\eta^N,\qquad N\in\N.
\]
\end{enumerate}
\end{definition}

An elementary relation between \textup{(D)} and \textup{(ED)} is as follows.
\begin{lemma}\label{lem:D-ED-rel}
Let $I\subset\T$ be a nonempty closed arc. Then the following assertions hold.
\begin{enumerate}
\item[(i)] If $w$ satisfies \textup{(D)} on $I$, then it satisfies \textup{(ED)} on
$I$, and more precisely
\[
   \gamma(w,T)\le \sup_{n\ge1}\|w(T^n)\|<m_I(w).
\]

\item[(ii)] If $w$ satisfies \textup{(ED)} on $I$, then there exists $N\ge1$ such that
\[
   \sup_{n\ge1}\|w(T^n)^N\|<m_I(w)^N.
\]
Equivalently, $W:=w^N$ satisfies \textup{(D)} on the same arc $I$.
\end{enumerate}
\end{lemma}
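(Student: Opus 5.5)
Both parts follow directly from the submultiplicativity of $(a_N(w,T))_{N\ge1}$ established in Lemma~\ref{lem:submult-radius}, together with the identity $\gamma(w,T)=\inf_{N\ge1}a_N(w,T)^{1/N}$ from \eqref{eq:gamma-def}. No operator theory beyond this is needed. The only subtle point is the bookkeeping in (ED): the equivalent formulation with constants $C\ge1$ and $\eta<m_I(w)$ has to be matched with $\gamma(w,T)<m_I(w)$.

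For (i), we take $N=1$ in the infimum formula. This gives
\[
\gamma(w,T)\le a_1(w,T)=\sup_{n\ge1}\|w(T^n)\|,
\]
and the latter is $<m_I(w)$ by (D). If one prefers to see this without the infimum formula, use $a_N(w,T)\le a_1(w,T)^N$, which comes from submultiplicativity, and then take $N$th roots.

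For (ii), assume $\gamma(w,T)<m_I(w)$. In particular $m_I(w)>0$. Since $\gamma(w,T)$ is the infimum of $a_N(w,T)^{1/N}$, there is some $N\ge1$ with $a_N(w,T)^{1/N}<m_I(w)$. Raising to the $N$th power gives
\[
\sup_{n\ge1}\|w(T^n)^N\|<m_I(w)^N.
\]
Now put $W:=w^N$. Then $W\in A^{1,1}(\D)$, since $A^{1,1}(\D)$ is an algebra. We also have $W(T^n)=w(T^n)^N$, by multiplicativity of the absolutely convergent $A^1$-calculus for the power-bounded operator $T^n$. Finally, $m_I(W)=\min_{\eta\in I}|w(\eta)|^N=m_I(w)^N$. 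So $W$ satisfies (D) on $I$. The converse direction of the stated equivalence is just (i) applied to $W$, combined with $\gamma(W,T)^{1/N}=\gamma(w,T)$; the latter holds because $a_K(W,T)=a_{NK}(w,T)$.

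It remains to check the reformulation of (ED). If $a_N(w,T)\le C\eta^N$ with $\eta<m_I(w)$, then taking $N$th roots and letting $N\to\infty$ gives $\gamma(w,T)\le\eta<m_I(w)$. Conversely, suppose $\gamma(w,T)<m_I(w)$. Pick $\eta$ with $\gamma(w,T)<\eta<m_I(w)$. Then $a_N(w,T)\le\eta^N$ for all large $N$, and the finitely many remaining indices are absorbed into a constant $C\ge1$. The only mild obstacle is the degenerate case $\gamma(w,T)=0$, which is handled the same way with any $\eta\in(0,m_I(w))$.
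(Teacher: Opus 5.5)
Your proposal is correct and follows the paper's proof. Part (i) comes from $a_N(w,T)\le a_1(w,T)^N$ (equivalently, taking $N=1$ in the infimum formula), and part (ii) from choosing $N$ with $a_N(w,T)^{1/N}<m_I(w)$. Your additional checks are also correct: $W\in A^{1,1}(\D)$, $W(T^n)=w(T^n)^N$, $m_I(W)=m_I(w)^N$, and the $C\eta^N$ reformulation of (ED). The paper makes the first three of these only later, in the proof of Theorem~\ref{thm:KB-expdef-suff}.
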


\begin{proof}
For (i), put
\[
   A:=\sup_{n\ge1}\|w(T^n)\|.
\]
Then
\[
   a_N(w,T)=\sup_{n\ge1}\|w(T^n)^N\|\le A^N,
\]
and hence $\gamma(w,T)\le A<m_I(w)$.

For (ii), by the definition of $\gamma(w,T)$ as
\[
   \gamma(w,T)=\lim_{N\to\infty}a_N(w,T)^{1/N},
\]
we may choose $N\ge1$ such that
\[
   a_N(w,T)^{1/N}<m_I(w).
\]
This is exactly
\[
   \sup_{n\ge1}\|w(T^n)^N\|<m_I(w)^N.
\]
Since $W=w^N$, the last inequality says exactly that $W$ satisfies
\textup{(D)} on $I$.
\end{proof}
Thus, on the sufficiency side, \textup{(ED)} reduces to \textup{(D)}
after replacing \(w\) by a suitable power \(w^N\).  The point of keeping
\textup{(ED)} as a separate condition is the converse direction: the Ritt
property does not force, in general, a strict ordinary defect for the original
symbol \(w\), see Example  \ref{ex:projection-exp-not-ordinary} below.

\subsubsection{Sufficiency: defect and exponential defect both imply Ritt}

The sufficient direction is first stated for \textup{(D)}, since this is the
hypothesis used directly in that argument. The exponential version then follows
from Lemma~\ref{lem:D-ED-rel} by passing to a power of the symbol.

The ordinary defect case has already been established in
Theorem~\ref{thm:power-KB-arc}. In the terminology of
Definition~\ref{def:defects}, it takes the following form.

\begin{corollary}\label{cor:KB-defect-suff}
Let $T\in\L(X)$ be power-bounded and assume that
\[
  \sigma(T)\cap\T\subset\{1\}.
\]
Let $w\in A^{1,1}(\D)$, and let
$I\subset\T\setminus\{1\}$ be a closed arc of positive length.
If $w$ satisfies \textup{(D)} on $I$, then $T$ is a Ritt operator.
\end{corollary}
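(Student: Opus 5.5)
This corollary is a restatement of Theorem~\ref{thm:power-KB-arc} in the language of Definition~\ref{def:defects}, so the plan is simply to unpack condition \textup{(D)} and invoke that theorem. By definition, $w$ has a defect on $I$ exactly when
\[
   \sup_{n\ge1}\|w(T^n)\|<m_I(w)=\min_{\eta\in I}|w(\eta)|,
\]
and this is verbatim the hypothesis of Theorem~\ref{thm:power-KB-arc}. The standing assumptions also match: $T$ is power bounded, $\sigma(T)\cap\T\subset\{1\}$, $w\in A^{1,1}(\D)$, and $I\subset\T\setminus\{1\}$ is a closed arc of positive length.

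To make the mechanism explicit, the argument behind Theorem~\ref{thm:power-KB-arc} runs in three steps.

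\emph{Step 1.} Proposition~\ref{prop:defect-to-arc-resolvent} turns \textup{(D)} into the uniform arc-resolvent bound
\[
   \sup_{n\ge1}\sup_{\eta\in I}\|(\eta-T^n)^{-1}\|\le \frac{M\|w\|_{A^{1,1}}}{m_I(w)-\rho},
\]
where $\rho=\sup_{n\ge1}\|w(T^n)\|$. This uses the divided-difference factorisation of Lemma~\ref{lem:divdiff-A11}, applied to the commuting factors $(\eta-T^n)$ and $h_\eta(T^n)$.

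\emph{Step 2.} Theorem~\ref{thm:arc-resolvent-powers-ritt} converts this bound into a local Ritt estimate on $\T$ near $1$. For $\xi$ close to $1$, the hitting Lemma~\ref{lem:hitting-arc} supplies an $n$ with $\xi^n\in I$ and $n|\xi-1|\le2\pi$, and one factors $\xi^n-T^n=(\xi-T)Q_n(\xi,T)$.

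\emph{Step 3.} Corollary~\ref{cor:Lyubich-propagation} propagates the local estimate to the full Ritt resolvent bound.

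There is no genuine obstacle. The only point to check is that the positivity of the defect gap $m_I(w)-\rho>0$ is exactly what makes the constant in Step 1 finite. Note also that \textup{(D)} forces $m_I(w)>0$, since otherwise the strict inequality would be impossible, so the arc-resolvent estimate is nonvacuous.
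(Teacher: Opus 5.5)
Your proposal is correct and matches the paper's proof exactly: condition \textup{(D)} is verbatim the hypothesis of Theorem~\ref{thm:power-KB-arc}, and the corollary follows by invoking it. Your three-step account (divided differences via Proposition~\ref{prop:defect-to-arc-resolvent}, the hitting Lemma~\ref{lem:hitting-arc}, then Corollary~\ref{cor:Lyubich-propagation}) accurately reflects how that theorem is proved in the paper.
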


\begin{proof}
By the definition of \textup{(D)}, the defect hypothesis of
Theorem~\ref{thm:power-KB-arc} is satisfied. Hence $T$ is a Ritt operator.
\end{proof}




\begin{theorem}\label{thm:KB-expdef-suff}
Let $T\in\L(X)$ be power-bounded and assume that $\sigma(T)\cap\T\subset\{1\}$.
Let $w\in A^{1,1}(\D)$ and let $I\subset\T\setminus\{1\}$ be a closed arc of positive length.
If $w$ satisfies \textup{(ED)} on $I$, then $T$ is a Ritt operator.
\end{theorem}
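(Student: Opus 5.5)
The plan is to reduce \textup{(ED)} to \textup{(D)} for a power of the symbol, and then apply Corollary~\ref{cor:KB-defect-suff} to that power.

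\emph{Step 1 (reduction).} By Lemma~\ref{lem:D-ED-rel}\textup{(ii)} there is $N\ge1$ with
\[
  \sup_{n\ge1}\|w(T^n)^N\|<m_I(w)^N .
\]
Put $W:=w^N$. Then $W(T^n)=w(T^n)^N$ for every $n\ge1$. This holds because the $A^1$-calculus of a power-bounded operator is an algebra homomorphism: absolutely convergent series multiply as Cauchy products, and $T^n$ is power bounded.

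\emph{Step 2 (membership and comparison level).} We must check that $W\in A^{1,1}(\D)$.

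\begin{itemize}
\item The space $A^{1,1}(\D)$ is a Banach algebra under the norm $\sum(n+1)|c_n|$, since $(n+m+1)\le(n+1)(m+1)$. Hence $\|fg\|_{A^{1,1}}\le\|f\|_{A^{1,1}}\|g\|_{A^{1,1}}$, and by induction $W\in A^{1,1}(\D)$.
\item Pointwise $|W(\eta)|=|w(\eta)|^N$ on $I$. Therefore $m_I(W)=m_I(w)^N$.
\end{itemize}

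So the inequality from Step 1 reads $\sup_{n\ge1}\|W(T^n)\|<m_I(W)$, which is exactly \textup{(D)} for $W$ on the same arc $I$.

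\emph{Step 3 (conclusion).} Apply Corollary~\ref{cor:KB-defect-suff} (equivalently Theorem~\ref{thm:power-KB-arc}) with $W$ in place of $w$. Its remaining hypotheses are unchanged: $T$ is power bounded, $\sigma(T)\cap\T\subset\{1\}$, and $I\subset\T\setminus\{1\}$ is closed of positive length. The conclusion is that $T$ is Ritt.

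There is no serious obstacle here. The only points requiring care are the identity $W(T^n)=w(T^n)^N$ and the algebra property of $A^{1,1}$. One should also note that $m_I(w)>0$ is automatic in this setting, since $\gamma(w,T)\ge0$ and $\gamma(w,T)<m_I(w)$.
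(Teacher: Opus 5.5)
Your proposal is correct and follows the paper's proof essentially verbatim: you reduce (ED) to (D) for $W=w^N$ via Lemma~\ref{lem:D-ED-rel}(ii), check that $W\in A^{1,1}(\D)$, $W(T^n)=w(T^n)^N$ and $m_I(W)=m_I(w)^N$, and then apply Corollary~\ref{cor:KB-defect-suff}. Your justification of the Banach algebra property via $(n+m+1)\le(n+1)(m+1)$ also fills in a detail the paper only asserts.
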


\begin{proof}
By Lemma~\ref{lem:D-ED-rel}(ii), choose $N$ so that
\[
  \sup_{n\ge1}\|w(T^n)^N\|<m_I(w)^N.
\]
Put $W:=w^N$.  Since $A^{1,1}(\D)$ is a Banach algebra,
$W\in A^{1,1}(\D)$.  Moreover, $W(T^n)=w(T^n)^N$ and
$m_I(W)=m_I(w)^N$.  Hence $W$ satisfies \textup{(D)} on $I$.  Corollary~\ref{cor:KB-defect-suff}
applied to $W$ therefore implies that $T$ is Ritt.
\end{proof}

\subsubsection{Necessity: Ritt implies exponential defect}

The converse direction has a different form. 
 The estimate
obtained from the Hardy--Sobolev calculus is instead
\[
        a_N(w,T)\le C\eta^N,\qquad N\in\mathbb N,
\]
with \(\eta<\|w\|_\infty\).  This gives the strict exponential defect.

\begin{theorem}\label{thm:Ritt-necessity-exp}
Let $T$ be a Ritt operator and let $w\in A^{1,1}(\D)$ be such that 
$|w(1)|<\|w\|_\infty$.
Then there exist constants $C\ge1$ and $\eta\in(0,\|w\|_\infty)$ such that
\[
   a_N(w,T)=\sup_{n\ge1}\|w(T^n)^N\| \le C\,\eta^N,\qquad N\in\N.
\]
In particular, for every nonempty closed arc $I\subset\T$ with $m_I(w)>\eta$, the
function $w$ satisfies \textup{(ED)} on $I$.
\end{theorem}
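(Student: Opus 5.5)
This statement is essentially a repackaging of Theorem~\ref{thm:HS-disc-necessity-A11} in the terminology of Definition~\ref{def:defects}. The plan is to reduce to that theorem and then read off the (ED) condition.

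First, since $T$ is Ritt, Proposition~\ref{prop:Stolz-localization-prelim} provides some $\sigma>1$ for which $T$ is of Stolz type $\sigma$. The hypotheses $w\in A^{1,1}(\D)$ and $|w(1)|<\|w\|_\infty$ are exactly those of Theorem~\ref{thm:HS-disc-necessity-A11}. That theorem therefore yields $C>0$ and $\eta\in(0,\|w\|_\infty)$ with $\sup_{n\ge1}\|w(T^n)^N\|\le C\eta^N$ for all $N\ge1$. By \eqref{eq:aN-def}, the left-hand side is $a_N(w,T)$. Replacing $C$ by $\max\{C,1\}$ gives $C\ge1$, as required in the statement.

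Second, we derive the (ED) consequence. Let $I\subset\T$ be a nonempty closed arc with $m_I(w)>\eta$. By Lemma~\ref{lem:submult-radius}, the limit $\gamma(w,T)=\lim_N a_N(w,T)^{1/N}$ exists. Taking $N$th roots in the bound and letting $N\to\infty$ gives
\[
\gamma(w,T)\le\lim_{N\to\infty}C^{1/N}\eta=\eta<m_I(w).
\]
This is exactly (ED) on $I$. It also matches the equivalent reformulation in Definition~\ref{def:defects}, with the same $C$ and the same $\eta\in[0,m_I(w))$.

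There is no real obstacle, since all the analytic work (the bounded $\HS$-calculus and the uniform $\HS(S_{\widetilde\sigma})$ bound for $w(z^n)^N$) is already done in Theorem~\ref{thm:HS-disc-necessity-A11}. The only point to check is that such arcs exist. Because $\eta<\|w\|_\infty$ and $|w|$ is continuous on $\T$, there are nonempty closed arcs around a maximum point of $|w|$ on which $|w|>\eta$. The "in particular" clause is therefore not vacuous, although logically it holds for any arc satisfying $m_I(w)>\eta$.
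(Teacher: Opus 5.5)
Your proposal is correct and follows essentially the same route as the paper. You invoke Theorem~\ref{thm:HS-disc-necessity-A11} for the bound $a_N(w,T)\le C\eta^N$, then take $N$th roots via Lemma~\ref{lem:submult-radius} to get $\gamma(w,T)\le\eta<m_I(w)$; your extra remarks on normalising $C\ge1$ and on the existence of such arcs are harmless additions.
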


\begin{proof}
Choose $\sigma>1$ such that $T$ is of Stolz type $\sigma$. Applying
Theorem~\ref{thm:HS-disc-necessity-A11} to $T$ and $w$, we obtain constants
$C\ge1$ and $\eta\in(0,\|w\|_\infty)$ such that
\[
   a_N(w,T)=\sup_{n\ge1}\|w(T^n)^N\|\le C\,\eta^N,
   \qquad N\in\N.
\]
Since $(a_N(w,T))_{N\ge1}$ is submultiplicative by
Lemma~\ref{lem:submult-radius}, we obtain
\[
\gamma(w,T)=\lim_{N\to\infty} a_N(w,T)^{1/N}\le \eta<\|w\|_\infty.
\]
Thus, if $m_I(w)>\eta$, then $\gamma(w,T)<m_I(w)$, which is exactly
\textup{(ED)} on $I$.
\end{proof}
\begin{example}\label{ex:projection-exp-not-ordinary}
A strict ordinary defect for the symbol \(w(z)=1-z\) is not necessary
for the Ritt property without
additional norm assumptions.
  Let $H=\C^2$ with the Euclidean norm and, for
$a>0$, put
\[
        P_a=\begin{pmatrix}1&a\\0&0\end{pmatrix}.
\]
Then $P_a^2=P_a$, so $P_a$ is power bounded and has the Ritt property. On the other hand,
\[
        I-P_a=\begin{pmatrix}0&-a\\0&1\end{pmatrix},
        \qquad
        \|I-P_a\|=\sqrt{1+a^2}.
\]
Thus for the symbol \(w(z)=1-z\) one has
\[
        \sup_{n \ge 1}\|w(P_a^n)\|= \sup_{n\ge1}\|I-P_a^n\|=\sqrt{1+a^2},
\]
which can be arbitrarily large.  In particular, $\|w(P_a^n)\|>2$ for $a >\sqrt 3.$

For the same symbol, however,
\(w(P_a)=I-P_a\) is again a non-zero projection. Hence
\[
        a_N(w,P_a)=\|I-P_a\|,\qquad N\ge1,
\]
and therefore
\[
        \gamma(w,P_a)=1<2=\|w\|_\infty .
\]
Thus the converse cannot, in general, be formulated with the ordinary defect
for the symbol \(w\).
\end{example}

\subsection{Alternative formulations using $\|w\|_\infty$}

We record two reformulations in which the arc is not part of the assumptions.
It is selected afterwards, near a point where \(|w|\) attains its boundary
maximum \(\|w\|_\infty\).



\begin{theorem}
\label{thm:KB-defect-supnorm}
Let $T\in\L(X)$ be power-bounded and assume
\[
  \sigma(T)\cap\T\subset\{1\}.
\]
Let $w\in A^{1,1}(\D)$ be such that
\[
  \sup_{n\ge1}\|w(T^n)\|<\|w\|_\infty.
\]
Then $T$ is a Ritt operator.
\end{theorem}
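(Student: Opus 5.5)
The plan is to reduce the statement to Theorem~\ref{thm:power-KB-arc}. That theorem needs a closed arc $I\subset\T\setminus\{1\}$ of positive length with $\sup_{n\ge1}\|w(T^n)\|<m_I(w)$. I will build $I$ around a point of $\T$ where $|w|$ attains $\|w\|_\infty$. The only danger is that this maximum might be attained only at the vertex $1$. Ruling that out is where the spectral hypothesis enters, and I expect it to be the main, though mild, obstacle. Put $\rho:=\sup_{n\ge1}\|w(T^n)\|<\|w\|_\infty$.

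I will split into two cases according to whether $1\in\sigma(T)$.

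\emph{Case $1\notin\sigma(T)$.} Then $\sigma(T)\cap\T=\varnothing$, so $\sigma(T)\subset\D$ and the spectral radius $r(T)$ is less than $1$. The resolvent $(z-T)^{-1}$ is holomorphic on $\{|z|>r(T)\}$, hence bounded on every set $\{1\le |z|\le R\}$. For large $|z|$ the Neumann series together with power boundedness gives $\|(z-1)(z-T)^{-1}\|\le M|z-1|/(|z|-1)$, which is bounded. Hence \eqref{eq:Ritt-resolvent-prelim} holds directly and $T$ is Ritt; no defect argument is needed here.

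\emph{Case $1\in\sigma(T)$.} Here I invoke Remark~\ref{rem:w1-from-gamma}. Since $1\in\sigma(T^n)$ for every $n$, Lemma~\ref{lem:approx-point} gives $w(1)\in\sigma(w(T^n))$. Therefore $|w(1)|\le\|w(T^n)\|\le\rho<\|w\|_\infty$, so the separation $|w(1)|<\|w\|_\infty$ holds. The function $w$ is continuous on $\T$, because $w\in A^{1,1}(\D)\subset A^1(\D)$, so its boundary maximum is attained at some $\eta_0\in\T$, and necessarily $\eta_0\neq1$. By continuity of $|w|$ at $\eta_0$, and since $\rho<|w(\eta_0)|$, there is a small closed arc $I\ni\eta_0$ of positive length with $I\subset\T\setminus\{1\}$ and $m_I(w)>\rho$. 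This is exactly the defect hypothesis of Theorem~\ref{thm:power-KB-arc}, and applying that theorem shows $T$ is Ritt.

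Finally, the statement implicitly excludes constant $w$: if $w\equiv c$, then $\|w(T^n)\|=|c|=\|w\|_\infty$ and the hypothesis cannot hold.
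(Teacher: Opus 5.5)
Your proof is correct and follows the paper's route: choose a closed arc $I\subset\T\setminus\{1\}$ near a point where $|w|$ attains $\|w\|_\infty$, so that $m_I(w)>\sup_{n\ge1}\|w(T^n)\|$, and then apply Theorem~\ref{thm:power-KB-arc}. The case split on whether $1\in\sigma(T)$ is unnecessary, though. If $|w|$ attains its maximum only at $1$, continuity still gives $|w|>\rho$ on a neighbourhood of $1$ in $\T$, so a closed arc adjacent to $1$ but not containing it already works; this is why the paper's proof needs only a single continuity remark.
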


\begin{proof}
Put
\[
        \beta:=\sup_{n\ge1}\|w(T^n)\|.
\]
There is a point $\eta_0\in\T$ with $|w(\eta_0)|=\|w\|_\infty$. 
By continuity, there exists a closed arc
$I\subset\T\setminus\{1\}$ of positive length such that $m_I(w)>\beta$.
Hence $w$ satisfies \textup{(D)} on $I$, and Corollary~\ref{cor:KB-defect-suff}
yields that $T$ is Ritt.
\end{proof}

\begin{theorem}
\label{thm:KB-expdef-supnorm}
Let $T\in\L(X)$ be power-bounded and assume
\[
  \sigma(T)\cap\T\subset\{1\}.
\]
Let $w\in A^{1,1}(\D)$ satisfy 
\[
  \gamma(w,T)<\|w\|_\infty.
\]
Then $T$ is a Ritt operator.
\end{theorem}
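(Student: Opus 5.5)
The plan is to reduce Theorem~\ref{thm:KB-expdef-supnorm} to the arc version, Theorem~\ref{thm:KB-expdef-suff}, in exactly the way Theorem~\ref{thm:KB-defect-supnorm} was reduced to Corollary~\ref{cor:KB-defect-suff}. Put $\beta:=\gamma(w,T)<\|w\|_\infty$. The quantity $\gamma(w,T)$ is well defined by Lemma~\ref{lem:submult-radius}, since $T$ is power bounded and so $a_N(w,T)<\infty$. What remains is to produce a closed arc $I\subset\T\setminus\{1\}$ of positive length with $m_I(w)>\beta$. Once such an arc is available, $w$ satisfies \textup{(ED)} on $I$, and Theorem~\ref{thm:KB-expdef-suff} gives the Ritt property.

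To build the arc, choose $\eta_0\in\T$ with $|w(\eta_0)|=\|w\|_\infty$. This is possible because $w\in A^{1,1}(\D)\subset A^1(\D)$ is continuous on $\T$. The one point needing care is that the arc must avoid $1$, so I will first show $\eta_0\neq 1$. I see two cases.

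If $1\in\sigma(T)$, Remark~\ref{rem:w1-from-gamma} gives $|w(1)|\le\gamma(w,T)<\|w\|_\infty$. Hence $1$ is not a maximum point of $|w|$ on $\T$, and $\eta_0\ne1$.

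If $1\notin\sigma(T)$, then $\sigma(T)\subset\D$ by the hypothesis $\sigma(T)\cap\T\subset\{1\}$. In that case $T$ is already Ritt: the resolvent $(z-T)^{-1}$ is bounded near $\overline{\D}$, and for large $|z|$ the Neumann series gives $|z-1|\,\|(z-T)^{-1}\|$ bounded. So nothing more is needed.

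Alternatively, one could avoid the case split. Suppose $\eta_0=1$ were the only maximum point. Continuity of $w$ gives points $\eta\ne1$ near $1$ with $|w(\eta)|>\beta$, and a small closed arc around such an $\eta$, kept away from $1$, still satisfies $m_I(w)>\beta$. This works because it only requires $|w|>\beta$ somewhere on $\T\setminus\{1\}$, and that follows from $\|w\|_\infty>\beta$ together with continuity.

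In the main case $\eta_0\neq1$, continuity of $|w|$ at $\eta_0$ gives $\varepsilon>0$ such that the closed arc $I=\{\eta_0e^{it}:|t|\le\varepsilon\}$ misses $1$ and satisfies $|w|>\beta$ on $I$. By compactness of $I$, $m_I(w)>\beta=\gamma(w,T)$, which is condition \textup{(ED)}. Theorem~\ref{thm:KB-expdef-suff} then finishes the proof.

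I expect no real obstacle. The only subtle point is making sure the chosen arc lies in $\T\setminus\{1\}$, and the continuity argument (or Remark~\ref{rem:w1-from-gamma}) handles it.
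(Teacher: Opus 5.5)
Your proposal is correct and follows the paper's proof: choose a closed arc $I\subset\T\setminus\{1\}$ near a maximum point of $|w|$ with $m_I(w)>\gamma(w,T)$, and apply Theorem~\ref{thm:KB-expdef-suff}. The case split via Remark~\ref{rem:w1-from-gamma} is harmless but unnecessary: as you note in your alternative, continuity alone gives such an arc away from $1$, and that is the argument the paper uses (with an intermediate level $\beta$ strictly between $\gamma(w,T)$ and $\|w\|_\infty$).
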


\begin{proof}
Choose $\beta$ with
\[
        \gamma(w,T)<\beta<\|w\|_\infty .
\]
As in the previous proof, 
there exists a closed arc
$I\subset\T\setminus\{1\}$ of positive length such that $m_I(w)>\beta$.
Thus $w$ satisfies \textup{(ED)} on $I$. Theorem~\ref{thm:KB-expdef-suff}
therefore implies that $T$ is Ritt.
\end{proof}

\begin{corollary}
\label{cor:Ritt-iff-expdef-supnorm}
Let $T\in\L(X)$ be power-bounded and assume
\[
  \sigma(T)\cap\T\subset\{1\}.
\]
Let $w\in A^{1,1}(\D)$ satisfy $|w(1)|<\|w\|_\infty$.
Then the following are equivalent:
\begin{enumerate}
\item[(i)] $T$ is a Ritt operator.
\item[(ii)] $\gamma(w,T)<\|w\|_\infty$.
\item[(iii)] There exist constants $C\ge1$ and $\eta\in(0,\|w\|_\infty)$ such that
\[
   \sup_{n\ge1}\|w(T^n)^N\| \le C\,\eta^N, \qquad N\in\N.
\]
\end{enumerate}
\end{corollary}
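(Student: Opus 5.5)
The plan is to close the cycle (i)\(\Rightarrow\)(iii)\(\Rightarrow\)(ii)\(\Rightarrow\)(i). Each link is essentially one of the results assembled above, so the proof reduces to checking that their hypotheses are met under the standing assumptions: \(T\) power bounded, \(\sigma(T)\cap\T\subset\{1\}\), \(w\in A^{1,1}(\D)\), and \(|w(1)|<\|w\|_\infty\).

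\emph{(i)\(\Rightarrow\)(iii).} Assume \(T\) is Ritt. Choose \(\sigma>1\) such that \(T\) is of Stolz type \(\sigma\); this is possible by Proposition~\ref{prop:Stolz-localization-prelim}. The separation hypothesis \(|w(1)|<\|w\|_\infty\) is exactly what Theorem~\ref{thm:HS-disc-necessity-A11} (equivalently Theorem~\ref{thm:Ritt-necessity-exp}) requires. That theorem produces \(C>0\) and \(\eta\in(0,\|w\|_\infty)\) with \(\sup_{n\ge1}\|w(T^n)^N\|\le C\eta^N\) for all \(N\ge1\). Replacing \(C\) by \(\max(C,1)\) gives (iii) in the stated form.

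\emph{(iii)\(\Rightarrow\)(ii).} By Lemma~\ref{lem:submult-radius} the limit \(\gamma(w,T)=\lim_N a_N(w,T)^{1/N}\) exists. Taking \(N\)-th roots in (iii) gives
\[
\gamma(w,T)\le \lim_{N\to\infty} C^{1/N}\eta=\eta<\|w\|_\infty .
\]

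\emph{(ii)\(\Rightarrow\)(i).} This is precisely Theorem~\ref{thm:KB-expdef-supnorm}, whose hypotheses are power boundedness, \(\sigma(T)\cap\T\subset\{1\}\), \(w\in A^{1,1}(\D)\) and \(\gamma(w,T)<\|w\|_\infty\). The condition \(|w(1)|<\|w\|_\infty\) is not needed in this direction.

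The one point that needs care sits inside Theorem~\ref{thm:KB-expdef-supnorm}: one must find a closed arc \(I\subset\T\setminus\{1\}\) of positive length with \(m_I(w)>\beta\), where \(\gamma(w,T)<\beta<\|w\|_\infty\). If \(|w|\) attains its boundary maximum only at \(\eta_0=1\), such an arc need not exist. In our setting \(|w(1)|<\|w\|_\infty\), so any maximizer \(\eta_0\) satisfies \(\eta_0\neq1\). By continuity of \(w\) on \(\T\), a small closed arc around \(\eta_0\) avoids \(1\) and has \(m_I(w)>\beta\). I would state this explicitly in the proof, since it is where the separation hypothesis enters the sufficiency direction. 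Everything else is a direct appeal to the cited results, so I expect no real obstacle.
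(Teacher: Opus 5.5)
Your cycle (i)$\Rightarrow$(iii)$\Rightarrow$(ii)$\Rightarrow$(i) is correct and uses the same ingredients as the paper's proof: Theorem~\ref{thm:HS-disc-necessity-A11} for the necessity direction, Lemma~\ref{lem:submult-radius} to pass from (iii) to (ii), and Theorem~\ref{thm:KB-expdef-supnorm} for sufficiency. One correction to your closing remark: the arc exists even when $1$ is the only maximizer of $|w|$ on $\T$. The set $\{\eta\in\T:|w(\eta)|>\beta\}$ is a nonempty open subset of $\T$, so it still contains a closed arc of positive length after deleting the point $1$. Hence $|w(1)|<\|w\|_\infty$ is used only in (i)$\Rightarrow$(iii), as your own earlier sentence correctly says.
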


\begin{proof}
The equivalence of (ii) and (iii) follows from Lemma~\ref{lem:submult-radius} applied
to the sequence $a_N(w,T)$. The implication (ii)$\Rightarrow$(i) is exactly
Theorem~\ref{thm:KB-expdef-supnorm}.
Finally, if \(T\) is Ritt, choose \(\sigma>1\) such that \(T\) is of Stolz type
\(\sigma\).
By Theorem~\ref{thm:HS-disc-necessity-A11}, applied with this
Stolz-type parameter, there exist constants $C\ge1$ and
$\eta\in(0,\|w\|_\infty)$ such that
\[
  a_N(w,T)=\sup_{n\ge1}\|w(T^n)^N\|\le C\eta^N, \qquad N\in\N,
\]
which is (iii).
\end{proof}

The preceding results complete the passage from the Hardy--Sobolev calculus
back to the power quantities used in the Beurling--Kato argument. Under the
stated spectral assumptions, the Ritt property is equivalent to
the strict inequality
\[
        \gamma(w,T)<\|w\|_\infty
\]
for each non-constant \(w\in A^{1,1}(\mathbb D)\) satisfying
\(|w(1)|<\|w\|_\infty\).  Moreover, the strict inequality for one such function $w$ implies the Ritt property. 

The condition is invariant under equivalent
renormings of \(X\), since equivalent norms change \(a_N(w,T)\) by at most
a fixed multiplicative factor, which disappears after taking the \(N\)-th
root.

\section{Convex combinations preserve the Ritt property}\label{S8x}

The equivalence between the Ritt property and exponential defects also yields
a permanence property for convex combinations of powers of a Ritt operator.
The following statement was one of the main results of \cite{GomTom-Indiana}.
The proof there proceeds by direct resolvent estimates for the convex
combination and is considerably more involved. The proof below is simpler,
but it does not preserve quantitative control of the Stolz type of the
convex combination.

\begin{theorem}\label{thm:convex-combinations-Ritt}
Let $T$ be a Ritt operator, and let
\[
F(z)=\sum_{k=0}^\infty \alpha_k z^k,\qquad \alpha_k\ge 0,\qquad \sum_{k=0}^\infty \alpha_k=1.
\]
Then $F(T)$ is a Ritt operator.
\end{theorem}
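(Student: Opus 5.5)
The plan is to verify the hypotheses of Corollary~\ref{cor:Ritt-iff-expdef-supnorm} for $S:=F(T)$ with the symbol $w(z)=1-z$. This symbol lies in $A^{1,1}(\D)$ and satisfies $|w(1)|=0<2=\|w\|_\infty$. If $\alpha_0=1$, then $S=I$, which is trivially Ritt, so we assume $F$ is non-constant.

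\emph{Step 1: power boundedness and peripheral spectrum.} For each $n$, the function $F^n$ again has nonnegative Taylor coefficients summing to $1$. Hence $\|S^n\|=\|F^n(T)\|\le\sup_k\|T^k\|$, so $S$ is power bounded. By Lemma~\ref{lem:approx-point}, $\sigma(S)\subset F(\sigma(T))\subset F(\D\cup\{1\})$. We have $F(1)=1$. For $z\in\D$ and $F$ non-constant, $|F(z)|\le\sum_k\alpha_k|z|^k<1$. Therefore $\sigma(S)\cap\T\subset\{1\}$.

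\emph{Step 2: $F$ maps Stolz domains into themselves.} Let $z\in S_\sigma$. By Lemma~\ref{lem:Stolz-union-compact}, $z^k\in S_\sigma$, i.e. $|1-z^k|<\sigma(1-|z|^k)$, for every $k\ge1$. Consequently
\[
|1-F(z)|\le\sum_k\alpha_k|1-z^k|\le\sigma\sum_k\alpha_k(1-|z|^k)\le\sigma\bigl(1-|F(z)|\bigr),
\]
with strict inequality since $F$ is non-constant. So $F(S_\sigma)\subset S_\sigma$. Applying Lemma~\ref{lem:Stolz-union-compact} once more gives $F(z)^n\in S_\sigma$ for all $n\ge1$. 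Fix $\tau>\sigma_0$, where $T$ is of Stolz type $\sigma_0$. Then
\[
\sup_{n\ge1}\ \sup_{z\in \overline{S_\tau}}|1-F(z)^n|\le\beta:=\sup_{u\in \overline{S_\tau}}|1-u|<2.
\]
Here $\beta<2$ because $\overline{S_\tau}\subset\D\cup\{1\}$ is compact and $|1-u|=2$ forces $u=-1$. I regard this Stolz-invariance step as the conceptual heart of the argument.

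\emph{Step 3: Hardy--Sobolev estimate.} Put $G_{n,N}(z):=(1-F(z)^n)^N$. This function lies in $A^1(\D)$, so by Corollary~\ref{cor:A1-compatibility} we have $(G_{n,N})_{\HS}(T)=(I-S^n)^N$. Its derivative is $G_{n,N}'=-N(1-F^n)^{N-1}(F^n)'$. The integral of $|(F^n)'|$ over $\Gamma_\tau$ is controlled, as in the proof of Proposition~\ref{prop:A1-HS}, by $C_\tau\|F^n\|_{A^1}=C_\tau$, uniformly in $n$. This uniformity is the point where I expect care is needed: we only know $F\in A^1$, so $F'$ need not be bounded. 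We therefore must not estimate $F'$ directly, but use the $A^1$-norm of $F^n$ as a whole. Combining this with Step 2 gives
\[
\|G_{n,N}\|_{\HS(S_\tau)}\le\beta^N+C_\tau N\beta^{N-1}.
\]
The bounded calculus of Theorem~\ref{thm:HS-calc-Ritt} then yields $\sup_n\|(I-S^n)^N\|\le C(N+1)\beta^{N-1}\le C'\eta^N$ for any fixed $\eta\in(\beta,2)$. Hence $\gamma(w,S)\le\eta<2=\|w\|_\infty$, and Corollary~\ref{cor:Ritt-iff-expdef-supnorm} (or Theorem~\ref{thm:KB-expdef-supnorm}) shows that $S=F(T)$ is Ritt.
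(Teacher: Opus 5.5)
Your proof is correct. Its skeleton is the paper's: power boundedness from the convolution powers of $(\alpha_k)$, the peripheral spectrum via Lemma~\ref{lem:approx-point}, an exponential defect for $w(z)=1-z$ extracted from the Hardy--Sobolev calculus, and then Theorem~\ref{thm:KB-expdef-supnorm}. The difference lies in how you bound $\sup_n\|(I-F(T)^n)^N\|$.

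The paper never evaluates $F$ on a Stolz domain. It applies the calculus only to the polynomials $W_{k_1,\dots,k_N}(z)=\prod_{j=1}^N(1-z^{k_j})$. Their $\HS(S_{\tau_1})$-norms are at most $\alpha^N+C_{\tau_1}N\alpha^{N-1}$, where $\alpha=\sup_{S_{\tau_1}}|1-z|<2$, by Lemmas~\ref{lem:Stolz-union-compact} and~\ref{lem:kernel-bound-Stolz}. The paper then averages on the operator side, writing $(I-F(T)^n)^N=\sum\beta_{n,k_1}\cdots\beta_{n,k_N}\prod_j(I-T^{k_j})$ as an absolutely convergent convex combination.

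You do the averaging on the scalar side instead. The estimate $|1-F(z)|\le\sum_k\alpha_k|1-z^k|\le\sigma(1-|F(z)|)$ shows that $F$, hence every $F^n$, maps each Stolz domain into itself. So a single function $(1-F^n)^N$ can be fed into the calculus. You control its derivative term by $\|F^n\|_{A^1}=1$, via the proof of Proposition~\ref{prop:A1-HS}, rather than by a bound on $F'$; as you correctly note, such a bound is not available for $F\in A^1(\D)$.

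What each route buys: the paper's needs nothing beyond invariance under monomials and explicit polynomial test functions. Yours avoids the $N$-fold operator series and produces the extra fact $F(S_\sigma)\subset S_\sigma$. That gives $\sigma(F(T))\subset F(\overline{S_\sigma})\subset\overline{S_\sigma}$, i.e. spectral localisation of $F(T)$ in the same Stolz domain as $\sigma(T)$. It still does not give the resolvent bounds needed for Stolz type $\sigma$.

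The only point you leave implicit is that $(G_{n,N})_{\HS}(T)$, which Corollary~\ref{cor:A1-compatibility} identifies with the $A^1$-series, equals $(I-F(T)^n)^N$. This follows from multiplicativity of the $A^1$-calculus for power-bounded $T$.
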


\begin{proof}

If $F\equiv1$, then $F(T)=I$, and the conclusion is immediate. Hence we may
assume that $F$ is non-constant.

Since $T$ is Ritt, we have
\begin{equation}\label{eq:convex-ritt-powerbd}
\|T^k\|\le M,\qquad k\ge 0,
\end{equation}
and
\[
\sigma(T)\subset \overline{\D},\qquad \sigma(T)\cap\T\subset\{1\}.
\]

We first show that $F(T)$ is power bounded. For each $n\in\mathbb N$ write
\[
F(z)^n=\sum_{k=0}^\infty \beta_{n,k}z^k,
\]
where $(\beta_{n,k})_{k\ge0}$ is the $n$-fold convolution of $(\alpha_k)_{k\ge0}$. Then
\[
\beta_{n,k}\ge0,\qquad \sum_{k=0}^\infty \beta_{n,k}=1.
\]
Since $F(T)=\sum_{k=0}^\infty \alpha_kT^k$ with absolute convergence in the operator norm, the same holds for every power, and
\[
F(T)^n=\sum_{k=0}^\infty \beta_{n,k}T^k.
\]
Therefore
\begin{equation}\label{eq:convex-ritt-powerbd-F}
\|F(T)^n\|\le \sum_{k=0}^\infty \beta_{n,k}\|T^k\|
\le M\sum_{k=0}^\infty \beta_{n,k}=M,
\qquad n\in\mathbb N.
\end{equation}
By Lemma~\ref{lem:approx-point},
\[
\sigma(F(T))\subset F(\sigma(T))\subset \overline{\D}.
\]
Moreover, if $\lambda\in\sigma(F(T))\cap\T$, then $\lambda=F(\mu)$ for some
$\mu\in\sigma(T)\subset\overline{\D}$. Since $F$ is a convex combination of the monomials $z^k$,
we have $|F(z)|<1$ whenever $|z|<1$. Hence $|\lambda|=1$ forces $|\mu|=1$, and therefore
$\mu\in\sigma(T)\cap\T\subset\{1\}$. Consequently $\mu=1$ and
$\lambda=F(1)=1$. Thus
\begin{equation}\label{eq:convex-ritt-spectrum-F}
\sigma(F(T))\subset \overline{\D},\qquad \sigma(F(T))\cap\T\subset\{1\}.
\end{equation}

To prove that $F(T)$ is Ritt, it remains to verify an exponential defect estimate for
$1-z$. Since $T$ is Ritt, Theorem~\ref{thm:Ritt-iff-HS} yields a parameter $\tau_1>1$ and a bounded \(\HS\)-calculus on $S_{\tau_1}$ for~$T$. 
Let $C_{\tau_1}$ be the corresponding constant from Lemma~\ref{lem:kernel-bound-Stolz} and set 
\[
\alpha:=\sup_{z\in S_{\tau_1}}|1-z|.
\]
Then Lemma~\ref{lem:geometry}\textup{(ii)} gives
\[
        \alpha
        =
        \frac{2\tau_1}{\tau_1+1}
        <2.
\]
Fix $\eta\in(\alpha,2)$. Since $\alpha<\eta$, there exists a constant $C_\eta\ge1$ such that
\begin{equation}\label{eq:convex-ritt-alpha-bound}
\alpha^N+C_{\tau_1}N\alpha^{N-1}\le C_\eta\eta^N,
\qquad N\in\mathbb N.
\end{equation}

For integers $k_1,\dots,k_N\ge0$, define
\[
W_{k_1,\dots,k_N}(z):=\prod_{j=1}^N(1-z^{k_j}),\qquad z\in\D.
\]
If \(k_j=0\) for some \(j\), then
\(W_{k_1,\dots,k_N}\equiv0\), and all the required estimates are
immediate.  We may therefore assume that \(k_j\ge1\) for every \(j\).

By Lemma~\ref{lem:Stolz-union-compact}, each map $z\mapsto z^{k_j}$ leaves $S_{\tau_1}$ invariant, hence
\[
\sup_{z\in S_{\tau_1}}|W_{k_1,\dots,k_N}(z)|\le \alpha^N.
\]
Moreover, the product rule gives
\[
W'_{k_1,\dots,k_N}(z)
=- \sum_{
1\le j\le N
} k_j z^{k_j-1}\prod_{\ell\ne j}(1-z^{k_\ell}),
\]
so that on $\Gamma_{\tau_1}$,
\[
|W'_{k_1,\dots,k_N}(z)|\le \alpha^{N-1}\sum_{
1\le j\le N
} k_j|z|^{k_j-1}.
\]
Using Lemma~\ref{lem:kernel-bound-Stolz}, we obtain
\[
\int_{\Gamma_{\tau_1}}|W'_{k_1,\dots,k_N}(z)|\,|dz|
\le \alpha^{N-1}\sum_{
1\le j\le N
}\int_{\Gamma_{\tau_1}}k_j|z|^{k_j-1}|dz|
\le C_{\tau_1}N\alpha^{N-1}.
\]
Hence
\[
\|W_{k_1,\dots,k_N}\|_{\HS(S_{\tau_1})}
\le \alpha^N+C_{\tau_1}N\alpha^{N-1}
\le C_\eta\eta^N.
\]
By the bounded \(\HS\)-calculus for $T$, there is a constant $K_{\tau_1}$ such that
\begin{equation}\label{eq:convex-ritt-product-bound}
\|W_{k_1,\dots,k_N}(T)\|\le K_{\tau_1}C_\eta\eta^N,
\qquad k_1,\dots,k_N\ge0.
\end{equation}
Since the \(\HS\)-calculus is multiplicative and compatible with polynomials,
\[
W_{k_1,\dots,k_N}(T)=\prod_{j=1}^N(I-T^{k_j}).
\]

Now fix $n\in\mathbb N$. As above,
\[
I-F(T)^n=\sum_{k=0}^\infty \beta_{n,k}(I-T^k).
\]
Therefore
\[
(I-F(T)^n)^N
=\sum_{k_1,\dots,k_N\ge0}\beta_{n,k_1}\cdots\beta_{n,k_N}
\prod_{j=1}^N(I-T^{k_j}),
\]
where the series converges absolutely in the operator norm by \eqref{eq:convex-ritt-product-bound}. Using \eqref{eq:convex-ritt-product-bound} and
\[
\sum_{k_1,\dots,k_N\ge0}\beta_{n,k_1}\cdots\beta_{n,k_N}=
\Bigl(\sum_{k=0}^\infty \beta_{n,k}\Bigr)^N=1,
\]
we obtain
\[
\|(I-F(T)^n)^N\|
\le K_{\tau_1}C_\eta\eta^N,
\qquad n,N\in\mathbb N.
\]
Taking the supremum over $n\ge1$ and then $N$th roots gives
\[
\gamma(1-z,F(T))\le \eta<2.
\]
By Theorem~\ref{thm:KB-expdef-supnorm}, applied to the power-bounded operator $F(T)$ and the symbol $w(z)=1-z$, it follows from \eqref{eq:convex-ritt-spectrum-F} that $F(T)$ is a Ritt operator.
\end{proof}

\section{A positive domination result for Ritt operators}\label{sec:positive-domination-ritt}

The preceding sections established a discrete Kato criterion for Ritt operators in terms of uniform one-point resolvent bounds for the powers. We finish with a positive-domination application of this criterion. The order-theoretic input is a resolvent comparison for positive operators, in the form recorded below. Applied to the pairs $S^n\le T^n$, it transfers the resolvent bounds required by the discrete Kato criterion from $T$ to $S$.

Throughout this section \(X\) is a Banach lattice.  For the background on
Banach lattices and positive operators we refer, for instance, to
\cite{Meyer}.
For $a>0$, $b>0$ and $\zeta\in\T$, define the classes of bounded positive operators
\[
\mathcal K_\zeta(X):=\{T\in\L(X): r(T)=1,\ \zeta\in\rho(T),\ T\ge0\},
\]
and
\[
\mathcal K_\zeta(X,a,b):=
\{T\in\mathcal K_\zeta(X):\sup_{k\ge1}\|T^k\|\le a,
\ \|(\zeta -T)^{-1}\|\le b\},
\]
where $r(T)$ stands for the spectral radius of $T.$
Following \cite[Lemma~4]{Gluck}, we record the following consequence of \cite[Theorem~1.4]{Rabiger}.

\begin{lemma}\label{lem:positive-dominated-resolvent}
There exists a constant $C_\zeta(a,b)>0$ such that, whenever
\[
   0\le S\le T,
   \qquad
   T\in\mathcal K_\zeta(X,a,b),
\]
one has $\zeta\in\rho(S)$ and
\begin{equation}\label{eq:positive-dominated-resolvent-bound}
   \|(\zeta -S)^{-1}\|\le C_\zeta(a,b).
\end{equation}
\end{lemma}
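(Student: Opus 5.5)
The plan is to derive the uniform bound \eqref{eq:positive-dominated-resolvent-bound} from a qualitative spectral statement by a direct-sum (compactness) argument. This is the same device as in Lemma~\ref{lem:direct-sum-powers-ritt} and Theorem~\ref{ritt_constants}. The qualitative input I will take from \cite[Theorem~1.4]{Rabiger}, in the following form (I am assuming it can be read this way, as in \cite[Lemma~4]{Gluck}). Let $0\le S\le T$ on a Banach lattice $Y$, let $T$ be power bounded with $r(T)\le1$, and let $\zeta\in\T\cap\rho(T)$. Then $\zeta\in\rho(S)$.

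\emph{Step 1 (reduction).} Argue by contradiction. Suppose no such constant $C_\zeta(a,b)$ exists. Then there are Banach lattices $X_j$ and operators $0\le S_j\le T_j$ on $X_j$ with $T_j\in\mathcal K_\zeta(X_j,a,b)$ such that either $\zeta\in\sigma(S_j)$, or $\zeta\in\rho(S_j)$ and $\|(\zeta-S_j)^{-1}\|\ge j$. Although the lemma is stated for a fixed $X$, the constant should not depend on the lattice, and allowing varying $X_j$ costs nothing. In the first case I first reduce to the second case. For $r>1$, the Neumann series and positivity give $|(r\zeta-S_j)^{-1}x|\le (r-S_j)^{-1}|x|\le (r-T_j)^{-1}|x|$, so these resolvents exist. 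Letting $r\downarrow1$, the norms $\|(r\zeta-S_j)^{-1}\|$ must blow up. Hence in all cases we may replace the offending point by a point $\zeta_j=r_j\zeta$ with $r_j\downarrow1$ and $\|(\zeta_j-S_j)^{-1}\|\ge j$. I expect this perturbation to be handled by replacing $T_j$ by $T_j/r_j$ and $S_j$ by $S_j/r_j$. These are still positive and dominated, power bounded by $a$, and for $r_j$ close to $1$ their resolvents at $\zeta$ are uniformly close to those of $T_j$ by the Neumann series. So the hypotheses survive with $b$ replaced by $2b$.

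\emph{Step 2 (direct sum).} Put $Y:=\ell^\infty(X_j)$, which is a Banach lattice for the coordinatewise order, and set $\mathbf T:=\bigoplus_j T_j$ and $\mathbf S:=\bigoplus_j S_j$. Then $0\le\mathbf S\le\mathbf T$ and $\sup_k\|\mathbf T^k\|\le a$, so $r(\mathbf T)\le1$. Moreover $\zeta\in\rho(\mathbf T)$ with $(\zeta-\mathbf T)^{-1}=\bigoplus_j(\zeta-T_j)^{-1}$, which has norm at most $b$ (or $2b$ after Step 1). The qualitative theorem then gives $\zeta\in\rho(\mathbf S)$. Since $\zeta-\mathbf S$ is diagonal, its inverse is diagonal as well. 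This can be checked by compressing to the $j$-th coordinate, using the projection onto that coordinate and the inclusion into $Y$, both of which commute with $\mathbf S$. Hence each $\zeta-S_j$ is invertible, and
\[
\|(\zeta-S_j)^{-1}\|\le\|(\zeta-\mathbf S)^{-1}\|<\infty
\]
uniformly in $j$. This contradicts $\|(\zeta-S_j)^{-1}\|\ge j$.

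\emph{Main obstacle.} The delicate point is the qualitative theorem itself, namely in which form \cite{Rabiger} applies to $\mathbf T$. In particular, $r(\mathbf T)=1$ needs checking. Each $r(T_j)=1$, so $1\in\sigma(T_j)$ by the Krein--Rutman/Pringsheim property of positive operators. Consequently $\|(\lambda-T_j)^{-1}\|\ge(\lambda-1)^{-1}$ for $\lambda>1$, which gives $1\in\sigma(\mathbf T)$. If that theorem requires more, for instance a condition on the peripheral spectrum, I would try to verify it for the direct sum from uniform power boundedness. Failing that, I would fall back on the ultrapower $\mathbf T^{\mathcal U}$, which is again a lattice operator and on which approximate eigenvectors of the $S_j$ become genuine eigenvectors of $\mathbf S^{\mathcal U}$ at $\zeta$. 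Positivity and the domination $|\mathbf S^{\mathcal U}x|\le\mathbf T^{\mathcal U}|x|$ would then force a $\zeta$-type eigenvector for $\mathbf T^{\mathcal U}$, contradicting $\zeta\in\rho(\mathbf T^{\mathcal U})$.
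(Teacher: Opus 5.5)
Your proposal is correct and follows the paper's argument: argue by contradiction, pass to the coordinatewise operators on $\ell^\infty$, and apply R\"abiger's theorem to the direct sum to obtain a uniform bound for the individual resolvents. The only difference in execution is that the paper tests the lower bound of $\zeta-\mathcal S$ on single-coordinate vectors, whereas you invert diagonally. Your Step~1 perturbation is superfluous, because $\zeta\in\sigma(S_j)$ is already excluded by the qualitative theorem, or by the direct-sum step itself. Likewise, $r(\mathbf T)=1$ follows simply from $\sigma(T_1)\subset\sigma(\mathbf T)$ together with the uniform power bound.
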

\begin{proof}
The assertion \(\zeta\in\rho(S)\) follows from \cite[Theorem~1.4]{Rabiger}.
It remains to obtain a uniform resolvent bound. It is enough to prove that
there is a constant \(C_\zeta(a,b)>0\) such that
\[
        \|x\|\le C_\zeta(a,b)\|(\zeta -S)x\|,
        \qquad x\in X,
\]
for all \(S,T\) satisfying \(0\le S\le T\) and \(T\in\mathcal K_\zeta(X,a,b)\).

Suppose that no such constant exists. 
Then there exist sequences
\[
   (T_n)_{n\ge1},\ (S_n)_{n\ge1}\subset\mathcal L(X),
   \qquad
   (x_n)_{n\ge1}\subset X,
\]
such that
\[
   T_n\in\mathcal K_\zeta(X,a,b),\qquad
   0\le S_n\le T_n,\qquad
   \|x_n\|=1,
\]
and
\[
       \lim_{n \to \infty}\|(\zeta -S_n)x_n\|=0 .
\]
Consider the Banach lattice
\[
        \mathcal X:=\ell^\infty(X)
\]
and the coordinatewise operators
\[
        \mathcal T(y_n)_{n\ge1}:=(T_ny_n)_{n\ge1},
        \qquad
        \mathcal S(y_n)_{n\ge1}:=(S_ny_n)_{n\ge1}.
\]
Then
\[
        \mathcal T\in\mathcal K_\zeta(\mathcal X,a,b),
        \qquad
        0\le \mathcal S\le\mathcal T .
\]
By \cite[Theorem~1.4]{Rabiger}, \(\zeta\in\rho(\mathcal S)\). Hence there is
\(c>0\) such that
\[
        c\|y\|_{\mathcal X}
        \le
        \|(\zeta -\mathcal S)y\|_{\mathcal X},
        \qquad y\in\mathcal X .
\]
For each \(n\), apply this estimate to the vector
\[
        y^{(n)}=(0,\ldots,0,x_n,0,\ldots)\in\mathcal X,
\]
where \(x_n\) is placed in the \(n\)-th coordinate. Then
\[
        c
        \le
        \|(\zeta -\mathcal S)y^{(n)}\|_{\mathcal X}
        =
        \|(\zeta -S_n)x_n\|,
\]
which contradicts \(\|(\zeta -S_n)x_n\|\to0\). Thus the uniform lower bound
holds, and therefore
\[
        \|(\zeta -S)^{-1}\|\le C_\zeta(a,b).
\]
\end{proof}

The next result is a discrete version of \cite[Theorem 1]{Gluck}.
\begin{corollary}\label{cor:positive-domination-ritt}
Let $X$ be a Banach lattice. If $T$ is a positive Ritt operator and $S$ satisfies
\[
   0\le S\le T,
\]
then $S$ is a Ritt operator.
\end{corollary}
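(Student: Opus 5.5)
The plan is to verify the hypotheses of the discrete Kato criterion, Corollary~\ref{DisKato}, for $S$, using the point $\zeta=-1$. The resolvent bound for the powers of $S$ will come from Lemma~\ref{lem:positive-dominated-resolvent}, applied to the pairs $0\le S^n\le T^n$.

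The first step collects the needed facts about $T$. Since $T$ is Ritt, $M:=\sup_{k\ge0}\|T^k\|<\infty$ and $\sigma(T)\cap\T\subset\{1\}$. By Corollary~\ref{ritt_converse}, $\zeta\in\rho(T^n)$ for every $n\ge1$ and
\[
b:=\sup_{n\ge1}\|(\zeta-T^n)^{-1}\|<\infty.
\]
Positivity and $0\le S\le T$ give $0\le S^n\le T^n$ by induction, since $S^{n+1}=S S^n\le S T^n\le T T^n$. Hence $|S^nx|\le T^n|x|$ for all $x\in X$, so $\|S^n\|\le\|T^n\|\le M$ and $S$ is power bounded. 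The same domination gives $r(S)\le r(T)\le 1$.

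The second step deals with the normalisation $r(T^n)=1$ built into the class $\mathcal K_\zeta$. If $r(T)<1$, then $T^n\to0$ in norm and $\|S^n\|\le\|T^n\|$ forces $r(S)<1$. Then $\sigma(S)\subset\D$, and since $S$ is power bounded with spectrum in $\D$, $S$ is Ritt by the resolvent definition. So assume $r(T)=1$; then $r(T^n)=1$ for all $n$, and $T^n\in\mathcal K_\zeta(X,M,b)$ for every $n\ge1$. Lemma~\ref{lem:positive-dominated-resolvent} with the pair $0\le S^n\le T^n$ yields $\zeta\in\rho(S^n)$ and $\|(\zeta-S^n)^{-1}\|\le C_\zeta(M,b)$, uniformly in $n$.

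The third step, the peripheral spectral condition $\sigma(S)\cap\T\subset\{1\}$, is the main obstacle. I would argue by contradiction. Let $\lambda\in\sigma(S)\cap\T$ with $\lambda\ne1$. Spectral mapping gives $\lambda^n\in\sigma(S^n)$, while Lemma~\ref{lem:positive-dominated-resolvent} applied with any $\zeta'\in\T\setminus\{1\}$ gives $\zeta'\in\rho(S^n)$ for all $n$. Taking $\zeta'=\lambda^n$ would be a contradiction, but $\lambda^n$ might equal $1$ when $\lambda$ is a root of unity; choosing $n=1$ and $\zeta'=\lambda$ avoids this. The point to check is that $\zeta'$ ranges over all of $\T\setminus\{1\}$: Corollary~\ref{ritt_converse} gives $\lambda\in\rho(T)$ with a finite resolvent bound, so $T\in\mathcal K_\lambda(X,M,b')$, and Lemma~\ref{lem:positive-dominated-resolvent} then gives $\lambda\in\rho(S)$, contradicting $\lambda\in\sigma(S)$. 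Hence $\sigma(S)\cap\T\subset\{1\}$.

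With power boundedness, the peripheral spectral condition and the uniform bound $\sup_n\|(\zeta-S^n)^{-1}\|<\infty$ in hand, Corollary~\ref{DisKato} (equivalently Theorem~\ref{thm:one-point-power-Kato}) shows that $S$ is Ritt.
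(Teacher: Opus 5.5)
Your proposal is correct and follows the paper's proof essentially step for step: $0\le S^n\le T^n$ gives power boundedness and $r(S)\le r(T)$, the case $r(T)<1$ is disposed of directly, Lemma~\ref{lem:positive-dominated-resolvent} applied to $T^n\in\mathcal K_\zeta(X,M,b)$ gives the uniform bound on $(\zeta-S^n)^{-1}$, the case $n=1$ with arbitrary $\zeta\in\T\setminus\{1\}$ gives $\sigma(S)\cap\T\subset\{1\}$, and Corollary~\ref{DisKato} concludes. The only cosmetic difference is that you fix $\zeta=-1$ for the power-resolvent bound, whereas the paper keeps $\zeta$ arbitrary throughout.
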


\begin{proof}
Since $T$ is 
Ritt, it is power bounded. From $0\le S\le T$ 
we get, by induction,
\[
   0\le S^n\le T^n,
   \qquad n\ge1.
\]
Since $X$ is a Banach lattice, \(0\le A\le B\) implies
\(\|A\|\le\|B\|\). Hence, from \(0\le S^n\le T^n\), we obtain
\[
        \|S^n\|\le \|T^n\|,\qquad n\ge1.
\]
Therefore
\[
        r(S)=\lim_{n\to\infty}\|S^n\|^{1/n}
        \le
        \lim_{n\to\infty}\|T^n\|^{1/n}
        =r(T).
\]

If $r(T)<1$, then $r(S)\le r(T)<1$, and $S$ is Ritt. We may therefore assume that
$r(T)=1$. Fix $\zeta\in\T\setminus\{1\}$. Since $T$ is Ritt, Corollary~\ref{DisKato} gives a constant
$b_\zeta>0$ such that
\[
   \sup_{n\ge1}\|(\zeta -T^n)^{-1}\|\le b_\zeta .
\]
Let
\[
   a:=\sup_{n\ge1}\|T^n\|<\infty .
\]
For every $n\ge1$, the operator $T^n$ is positive, satisfies
$r(T^n)=1$, and belongs to $\mathcal K_\zeta(X,a,b_\zeta)$. 
Since
\(0\le S^n\le T^n\), Lemma~\ref{lem:positive-dominated-resolvent} gives
\[
   \zeta\in\rho(S^n),
   \qquad
   \|(\zeta-S^n)^{-1}\|
   \le C_\zeta(a,b_\zeta),
   \qquad n\ge1.
\]
Thus
\[
   \sup_{n\ge1}\|(\zeta -S^n)^{-1}\|<\infty .
\]
Since \(\zeta\in\mathbb T\setminus\{1\}\) was arbitrary, the case
\(n=1\) also shows that
\[
   \sigma(S)\cap\mathbb T\subset\{1\}.
\]
Corollary~\ref{DisKato} now implies that $S$ is Ritt.
\end{proof}

\begin{remark}
The assertion of Corollary~\ref{cor:positive-domination-ritt} in the case $T=I$ is classical.  It can also be deduced by combining \cite[Theorem~1.5]{Dungey} with \cite[Theorem~1]{Gluck}.
\end{remark}

\end{document}